%% uctest.tex 11/3/94
%% Copyright (C) 1988-2004 Daniel Gildea, BBF, Ethan Munson.
%
% This work may be distributed and/or modified under the
% conditions of the LaTeX Project Public License, either version 1.3
% of this license or (at your option) any later version.
% The latest version of this license is in
%   http://www.latex-project.org/lppl.txt
% and version 1.3 or later is part of all distributions of LaTeX
% version 2003/12/01 or later.
%
% This work has the LPPL maintenance status "maintained".
%
% The Current Maintainer of this work is Daniel Gildea.

\documentclass[10pt]{ucthesis}
\def\dsp{\def\baselinestretch{2.0}\large\normalsize}
\def\hsp{\def\baselinestretch{1.5}\large\normalsize}
\def\ssp{\def\baselinestretch{1.0}\large\normalsize}
\hsp

\usepackage{amsfonts}
\usepackage{eucal}
\usepackage{amscd}
\usepackage{amsthm}
\usepackage{amsmath}
\usepackage{amssymb}
\usepackage{listings}
\usepackage{tikz}
\usepackage{comment}
\usepackage{array} %for vertical thick lines in tables
\usepackage{multirow} %multirow tables
\usepackage{cleveref}
\usepackage[osf]{mathpazo} %for Palantino

\usepackage{listings}
\lstset{breaklines=true, language=Python, basicstyle=\footnotesize, numbers=left, frame=single}

\newcommand{\zz}{\mathbb Z}
\newcommand{\zp}{{\mathbb Z}_p}
\newcommand{\qq}{\mathbb Q}
\newcommand{\qp}{{\mathbb Q}_p}
\newcommand{\hh}{\mathbb H}
\newcommand{\oo}{\mathbb O}
\newcommand{\rr}{\mathbb R}
\newcommand{\cc}{\mathbb C}
\newcommand{\ii}{\vec{\textbf{\i}}}
\newcommand{\jj}{\vec{\textbf{\j}}}
\newcommand{\kk}{\vec{\textbf{k}}}
\newcommand{\ee}{\textbf{1}}
\newcommand{\val}{\text{val}}
\newcommand{\der}{\mathfrak{Der}_{\oo}}

\theoremstyle{definition}
\newtheorem{definition}{Definition}[section]

\theoremstyle{plain}
\newtheorem{theorem}{Theorem}[section]
\newtheorem{proposition}{Proposition}[section]
\newtheorem{lemma}{Lemma}[section]
\newtheorem{corollary}{Corollary}[section]

\newenvironment{Array}{\ssp\begin{array}}{\end{array}\dsp}

% Macro for 'List of Symbols', 'List of Notations' etc...
% To add new symbols, if \label doesn't work try \newnot instead
\def%%%%%%%%%%%%%%%%%%%%%%%
%Sample List of Symbols
%%%%%%%%%%%%%%%%%%%%%%%
\begin{tabbing}
% YOU NEED TO ADD THE FIRST ONE MANUALLY TO ADJUST THE TABBING AND SPACES
$k$~~~~~~~\=\parbox{5in}{\quad A field\hfill \pageref{symbol:k}}\\
%ADD THE REST OF SYMBOLS WITH THE HELP OF MACRO
\addsymbol V,W: {General composition algebras}{symbol:GeneralCompAlg}
\addsymbol N: {Norm of a composition algebra}{symbol:N}
\addsymbol B: {Bilinear form induced by $N$}{symbol:B}
\addsymbol T: {Trace of a composition algebra (\emph{or a general torus})}{symbol:T}
\addsymbol \overline{v}: {Involution on $v \in V$}{symbol:Involution}
\addsymbol \eta: {Embedding of one composition algebra into another}{symbol:embeddings}
\addsymbol l: {Splitting element}{symbol:l}
\addsymbol K: {General quadratic composition algebras}{symbol:K}
\addsymbol \mathfrak{B}: {General quaternion composition algebras}{symbol:frakB}
\addsymbol \mathfrak{C}: {General octonion composition algebras}{symbol:frakC}
% .
% .
\addsymbol \theta: {An embedding of $\text{SL}_3(k)$ into $\text{Aut}(\oo)$}{symbol:theta}
\addsymbol \gamma: {A (\emph{long root}) embedding of $\text{SL}_2(k)$ into $\text{Aut}(\oo)$}{symbol:gamma}
\addsymbol \delta: {A (\emph{short root}) embedding of $\text{SL}_2(k)$ into $\text{Aut}(\oo)$}{symbol:delta}
\addsymbol \gamma_{\vec{u}}^{\pm}(s): {An automorphism of $\oo$ dependant on the embedding $\eta_{\vec{u}}$}{symbol:gammapm}
\addsymbol \delta_{\vec{u}}^{\pm}(s): {An automorphism of $\oo$ dependant on the embedding $\eta_{\vec{u}}$}{symbol:delta}
% .
% .
\addsymbol \Phi: {Set of roots}{symbol:Phi}
\addsymbol \Phi^+: {Set of positive roots}{symbol:Phiplus}
\addsymbol \Delta: {Set of simple roots}{symbol:Delta}
\addsymbol E: {Root space of $\Phi$}{symbol:E}
\addsymbol ( \cdot , \cdot ): {Inner product on $E$}{symbol:rootinnerproduct}
\addsymbol \left| \,\cdot\, \right|^2: {Norm-squared on $E$}{symbol:Phi}
\addsymbol W: {The Weyl group}{symbol:W}
\addsymbol s_{\alpha}: {A reflection in $W$}{symbol:reflection}
\addsymbol G_{\Phi}(k): {Chevalley group constructed from a root system $\Phi$ and a field $k$}{symbol:ChevGroup}
\addsymbol e_{\alpha}(t): {The Chevalley generators}{symbol:ChevElements}
\addsymbol w_{\alpha}(t): {A Chevalley (Weyl) element}{symbol:ChevElements}
\addsymbol h_{\alpha}(t): {A Chevalley (toral) element}{symbol:ChevElements}
\addsymbol N_{ij}: {Chevalley structure constants.  No relation to the norm $N$.}{symbol:ChevalleyStructureConstants}
\addsymbol T_{\Phi}: {The maximal torus of $G_{\Phi}(k)$ generated by the $h_{\alpha}(t)$}{symbol:Torus}
\addsymbol N_{\Phi}: {The normalizer of $T_{\Phi}$ generated by the $w_{\alpha}(t)$}{symbol:Normalizer}
% .
% .
\addsymbol \mathfrak{b}: {A Chevalley basis of $\mathfrak{g}$}{symbol:ChevBasis}
\addsymbol E_{\alpha}: {A Chevalley basis element}{symbol:Ealpha}
\addsymbol H_{\alpha}: {A Chevalley basis element}{symbol:Halpha}
\addsymbol \der: {The algebra of derivations on $\oo$}{symbol:Der}
% .
% .
\addsymbol \Phi^{\vee}: {Set of coroots}{symbol:Phivee}
\addsymbol X^\bullet (T): {Group of characters of a torus $T$}{symbol:charactersandcocharacters}
\addsymbol X_\bullet (T): {Group of cocharacters of a torus $T$}{symbol:charactersandcocharacters}
\addsymbol \langle \cdot,\cdot \rangle: {Pairing between $\Phi$ and $\Phi^\vee$}{symbol:pairing}
\addsymbol \mathcal{A}: {The affine apartment constructed from $T_\Phi$}{symbol:apartment}
\addsymbol H_{\alpha+n}: {Vanishing hyperplanes in $\mathcal{A}$}{symbol:hyperplane}
\addsymbol \mathcal{B}(G): {The Bruhat-Tits building of $G$}{symbol:building}
% .
% .
\addsymbol \Lambda: {Lattice in $\oo$}{symbol:lattice}
\addsymbol \Lambda^\ast: {Dual lattice in $\oo$}{symbol:duallattice}
\addsymbol \Lambda_r: {Lattice filtration in $\oo$}{symbol:latticefiltration}
\addsymbol \underline\Lambda_r: {Standard lattice sequence in $\oo$}{symbol:standardlattice}
\addsymbol \val: {Valuation on either $k$ or on an algebra}{symbol:val}
% .
% .
% ALWAYS KEEP THE FOLLOWING LINE
\end{tabbing}  \clearpage{%%%%%%%%%%%%%%%%%%%%%%%
%Sample List of Symbols
%%%%%%%%%%%%%%%%%%%%%%%
\begin{tabbing}
% YOU NEED TO ADD THE FIRST ONE MANUALLY TO ADJUST THE TABBING AND SPACES
$k$~~~~~~~\=\parbox{5in}{\quad A field\hfill \pageref{symbol:k}}\\
%ADD THE REST OF SYMBOLS WITH THE HELP OF MACRO
\addsymbol V,W: {General composition algebras}{symbol:GeneralCompAlg}
\addsymbol N: {Norm of a composition algebra}{symbol:N}
\addsymbol B: {Bilinear form induced by $N$}{symbol:B}
\addsymbol T: {Trace of a composition algebra (\emph{or a general torus})}{symbol:T}
\addsymbol \overline{v}: {Involution on $v \in V$}{symbol:Involution}
\addsymbol \eta: {Embedding of one composition algebra into another}{symbol:embeddings}
\addsymbol l: {Splitting element}{symbol:l}
\addsymbol K: {General quadratic composition algebras}{symbol:K}
\addsymbol \mathfrak{B}: {General quaternion composition algebras}{symbol:frakB}
\addsymbol \mathfrak{C}: {General octonion composition algebras}{symbol:frakC}
% .
% .
\addsymbol \theta: {An embedding of $\text{SL}_3(k)$ into $\text{Aut}(\oo)$}{symbol:theta}
\addsymbol \gamma: {A (\emph{long root}) embedding of $\text{SL}_2(k)$ into $\text{Aut}(\oo)$}{symbol:gamma}
\addsymbol \delta: {A (\emph{short root}) embedding of $\text{SL}_2(k)$ into $\text{Aut}(\oo)$}{symbol:delta}
\addsymbol \gamma_{\vec{u}}^{\pm}(s): {An automorphism of $\oo$ dependant on the embedding $\eta_{\vec{u}}$}{symbol:gammapm}
\addsymbol \delta_{\vec{u}}^{\pm}(s): {An automorphism of $\oo$ dependant on the embedding $\eta_{\vec{u}}$}{symbol:delta}
% .
% .
\addsymbol \Phi: {Set of roots}{symbol:Phi}
\addsymbol \Phi^+: {Set of positive roots}{symbol:Phiplus}
\addsymbol \Delta: {Set of simple roots}{symbol:Delta}
\addsymbol E: {Root space of $\Phi$}{symbol:E}
\addsymbol ( \cdot , \cdot ): {Inner product on $E$}{symbol:rootinnerproduct}
\addsymbol \left| \,\cdot\, \right|^2: {Norm-squared on $E$}{symbol:Phi}
\addsymbol W: {The Weyl group}{symbol:W}
\addsymbol s_{\alpha}: {A reflection in $W$}{symbol:reflection}
\addsymbol G_{\Phi}(k): {Chevalley group constructed from a root system $\Phi$ and a field $k$}{symbol:ChevGroup}
\addsymbol e_{\alpha}(t): {The Chevalley generators}{symbol:ChevElements}
\addsymbol w_{\alpha}(t): {A Chevalley (Weyl) element}{symbol:ChevElements}
\addsymbol h_{\alpha}(t): {A Chevalley (toral) element}{symbol:ChevElements}
\addsymbol N_{ij}: {Chevalley structure constants.  No relation to the norm $N$.}{symbol:ChevalleyStructureConstants}
\addsymbol T_{\Phi}: {The maximal torus of $G_{\Phi}(k)$ generated by the $h_{\alpha}(t)$}{symbol:Torus}
\addsymbol N_{\Phi}: {The normalizer of $T_{\Phi}$ generated by the $w_{\alpha}(t)$}{symbol:Normalizer}
% .
% .
\addsymbol \mathfrak{b}: {A Chevalley basis of $\mathfrak{g}$}{symbol:ChevBasis}
\addsymbol E_{\alpha}: {A Chevalley basis element}{symbol:Ealpha}
\addsymbol H_{\alpha}: {A Chevalley basis element}{symbol:Halpha}
\addsymbol \der: {The algebra of derivations on $\oo$}{symbol:Der}
% .
% .
\addsymbol \Phi^{\vee}: {Set of coroots}{symbol:Phivee}
\addsymbol X^\bullet (T): {Group of characters of a torus $T$}{symbol:charactersandcocharacters}
\addsymbol X_\bullet (T): {Group of cocharacters of a torus $T$}{symbol:charactersandcocharacters}
\addsymbol \langle \cdot,\cdot \rangle: {Pairing between $\Phi$ and $\Phi^\vee$}{symbol:pairing}
\addsymbol \mathcal{A}: {The affine apartment constructed from $T_\Phi$}{symbol:apartment}
\addsymbol H_{\alpha+n}: {Vanishing hyperplanes in $\mathcal{A}$}{symbol:hyperplane}
\addsymbol \mathcal{B}(G): {The Bruhat-Tits building of $G$}{symbol:building}
% .
% .
\addsymbol \Lambda: {Lattice in $\oo$}{symbol:lattice}
\addsymbol \Lambda^\ast: {Dual lattice in $\oo$}{symbol:duallattice}
\addsymbol \Lambda_r: {Lattice filtration in $\oo$}{symbol:latticefiltration}
\addsymbol \underline\Lambda_r: {Standard lattice sequence in $\oo$}{symbol:standardlattice}
\addsymbol \val: {Valuation on either $k$ or on an algebra}{symbol:val}
% .
% .
% ALWAYS KEEP THE FOLLOWING LINE
\end{tabbing}  \clearpage}
\def\addsymbol #1: #2#3{$#1$ \> \parbox{5in}{\quad #2 \hfill \pageref{#3}}\\}
\def\newnot#1{\label{#1}}

\begin{document}

% Declarations for Front Matter

\title{Lattice Filtrations for $G_2$ of a $\lowercase{p}$\,-adic Field}
\author{Paul Tokorcheck}
\degreeyear{2012}
\degreemonth{June}
\degree{DOCTOR OF PHILOSOPHY}
\chair{Martin H. Weissman}
\committeememberone{Robert Boltje}
\committeemembertwo{Hirotaka Tamanoi}
\numberofmembers{3} %% (including chair) possible: 3, 4, 5, 6
\deanlineone{Dean Tyrus Miller}
\deanlinetwo{Vice Provost and Dean of Graduate Studies}
\deanlinethree{}
\field{Mathematics}
\campus{Santa Cruz}

\begin{frontmatter}

\maketitle

%\copyrightpage

\tableofcontents
\listoffigures

%\listoftables

\newpage
\chapter*{List of Symbols\hfill} \addcontentsline{toc}{chapter}{List of Symbols}
 \clearpage

\begin{abstract}
\thispagestyle{plain}
\pagestyle{plain}
In this work we aim to describe, in significant detail, certain filtrations and sequences of lattices inside of the split octonion algebra $\oo$, when $\oo$ is constructed over the field $\qp$.  There are several reasons why one would like to consider such lattice filtrations, not the least of which is the connection that they have to certain filtrations of subgroups of the automorphism group $G_2 = \text{Aut}(\oo)$.

It is an eventual goal to try to uncover previously unknown supercuspidal representations of $G_2$ by examining representations of the subgroups making up these subgroup filtrations, which are known as Moy-Prasad filtrations.  When $G_2$ is constructed over $\qp$, each of the filtration subgroups will be compact and open, and normal in the previous subgroup in the filtration, so that the respective quotients are all finite groups.  A basic strategy then, is to identify representations of these finite quotients, extend them to representations of the filtration subgroups, and then induce them to representations of the whole group $G_2$.

To understand the lattice filtrations that identify the Moy-Prasad filtration subgroups, the work of W.T. Gan and J.K. Yu in \cite{GanYu2003} is indispensable.  In their article, they draw connections between certain lattice filtrations, octonion orders, maximinorante norms, and points in the Bruhat-Tits building $\mathcal{B}(G_2)$.  Their main idea was to use the norm-preserving quadratic form inherent in $\oo$, along with the natural $8$-dimensional representation of $G_2 = \text{Aut}(\oo)$, to create a canonical embedding of the building into $\mathcal{B}(\text{SO}(\oo))$.   Since the latter building had been previously described as the set of ``maximinorante norms'' on $\oo$, they arrived at an explicit description of $\mathcal{B}(G_2)$ in terms of certain maximinorante norms and orders in $\oo$.

The present work then attempts to describe all of these structures in detail, beginning with general composition algebras and the construction of the group $G_2$.  Then we will construct the Bruhat-Tits building $\mathcal{B}(G_2)$ via the coroot lattice of type $G_2$, though we will mainly concern ourselves only with the standard apartment in $\mathcal{B}(G_2)$.  Finally we will define our lattice filtrations and draw the connections between them and points $\mathcal{B}(G_2)$ outlined in \cite{GanYu2003}, which will reveal the action of the group $G_2$ on its own building.  Along the way, we identify many important structures and facts about the group itself.
\end{abstract}

\begin{preface}
This document contains the entire content of the author's Ph.D. Dissertation, written at UC Santa Cruz and submitted on $14$ June $2012$.  Though the title page has not been changed, a few new elements have subsequently been added, such as a List of Symbols.  Many typographical errors have been fixed, and the overall formatting has been changed from the UC standard to improve readability.
\end{preface}

\begin{dedication}
\null\vfil
{\large
\begin{center}
To Patrice Boyle, Karen Madura, Tiffany Darden, Eve Krammer, and the rest of the Seabright neighborhood.  It was you that transformed Santa Cruz into a home that I will miss.  I could not have done this without you.
\end{center}}
\vfil\null
\end{dedication}

\begin{acknowledgements}
I would like to thank the UCSC Department of Mathematics, and in particular Robert Boltje and Geoffrey Mason, whose excellent and unforgiving instruction in all things algebraic put me on the right path over my first few years in Santa Cruz.  I am also grateful to Tony Tromba, who asked Corey Shanbrom and myself to help in the revision of his textbook, \textbf{Vector Calculus}, and also to write the companion guide attached to it.  Through that project I learned an enormous amount about writing, textbooks, and the inner workings of the publishing industry.  Finally, I thank my advisor, Marty Weissman.  Marty introduced me to many branches of mathematics that I never knew existed, and helped me along with a seemingly infinite amount of patience.  I am very lucky to have been his student.
\end{acknowledgements}

\end{frontmatter}

%%%%%%%%%%%%%%%%%%%%%%%%%%%%%%%%%%%%%%%%%%%%%%%%%%%%%%%%%%%%%%%%%%%%%%%%%%%%%%%%%%%%%%%%%%%%%%%%%%%

\part{Part I: General Fields}

\chapter{Introduction}

In Part I, we will review the general theory of composition algebras over a field $k$, and their $k$- algebra automorphisms.  Composition algebras come in several different shapes and sizes, but we generally refer to those of dimension $8$ as octionion (or Cayley) algebras.  This will lead us to an initial definition:

\begin{definition}
If $\oo$ is an octonion algebra over a field $k$, the group $G_{2,\oo}$ is the group of $k$-algebra automorphisms of $\oo$.
\end{definition}

However, the nomenclature we use implies that when constructed over the field $k=\cc$, the group $G_2$ should be associated to the exceptional complex semi-simple Lie algebra $\frak{g}_2$.  A priori, this connection is far from obvious.  We deal with this connection in Chapters \ref{chevalleygroups} and \ref{G2generators} where we first describe the Chevalley construction of a group of Lie type from a Lie algebra, and then identify particular automorphisms of our octonion algebra which act as the Chevalley generators of the group.  These associations between automorphisms and Chevalley elements will later allow us to describe other structures in $G_2$ very explicitly.  We will also be able to calculate certain structure constants for $G_2$ by specifying a choice and ordering of the generators.

Though we will later take $k$ to be the local non-archimedean field $k=\qp$, the results discussed in Part I are more general.  Therefore we may take $k$ to be any field of characteristic not equal to $2$.

\chapter{Composition Algebras and the Octonions.}
\label{compositionalgebras}

\section{Objects}
\label{objects}

Taking $k$\label{symbol:k} to be an arbitrary field of characteristic not $2$, we use standard notation from vector calculus: namely, the symbols $\ii$, $\jj$, and $\kk$ will denote the standard unit vectors in $k^3$, while $\circ$ and $\times$ will denote the usual dot product and cross product.  We begin with a preliminary definition.

\begin{definition}
\label{def:compalg}
A \textbf{composition algebra}\label{symbol:GeneralCompAlg} $V$ is a unital $k$-algebra\footnote{We do not require $V$ to be either commutative or associative.}, which is further endowed with a quadratic form\label{symbol:N} $N \colon V \rightarrow k$, with the following properties:
\begin{enumerate}
\item $N$ is multiplicative.

\item $N$ is nondegenerate.  That is, the associated symmetric bilinear form $B$ is nondegenerate, where $B$ is defined\label{symbol:B} by
\[ B(v,w) = \frac{1}{2} [N(v+w) - N(v) - N(w)]. \]
\end{enumerate}

The quadratic form $N$ is called the \textbf{norm} of the composition algebra $V$.\footnote{Though term `norm' in this context is standard, we point out that $N$ is \emph{not} a norm in the analytic sense of a normed vector space.  In fact, since $N$ maps into an arbitrary field $k$ which will not be ordered in general, concepts such as `positive definite' or `triangle inequality' are unlikely to have any meaning in our context.  Conversely, the standard norms placed on vector spaces are generally not quadratic.}
\end{definition}

The nondegenerate quadratic form $N$ places quite a bit of structure on $V$.  For example, the bilinear form $B$ can be used to define orthogonality in $V$, in the sense that $v,w \in V$ are called orthogonal if $B(v,w)=0$.  The quadratic form $N$ also leads to the following structures:

\begin{definition}
Let $V$ be a composition algebra with norm $N$. Then we have also a \textbf{trace}\label{symbol:T}, defined on $v \in V$ by \[ T(v) := N(v + \ee) - N(v) - N(\ee) = 2B(v,\ee), \]
and an \textbf{involution}\label{symbol:Involution}, defined on $v \in V$ by
\[ \overline{v} := T(v)\ee - v. \]
\end{definition}

\begin{lemma}
We have the following facts about the identity:
\begin{enumerate}
\item $N(\ee) = 1.$

\item $B(\ee,\ee) = \frac{1}{2} [N(2 \cdot \ee) - 2N(\ee)] = \frac{1}{2} [4N(\ee) - 2N(\ee)] = 1.$

\item $T(\ee) = 2B(\ee,\ee) =  2.$

\item $\overline{\ee} = T(\ee)\ee - \ee = \ee.$
\end{enumerate}
\end{lemma}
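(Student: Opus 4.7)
The plan is to establish part (a) first, since parts (b), (c), (d) are already displayed as near-immediate algebraic consequences of (a) together with the definitions of $B$, $T$, and the involution.

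For part (a), I would use the multiplicativity of $N$ applied to the product $\ee \cdot \ee = \ee$. This gives
\[ N(\ee) = N(\ee \cdot \ee) = N(\ee) \cdot N(\ee), \]
so $N(\ee)^2 = N(\ee)$, which means $N(\ee) \bigl(N(\ee) - 1\bigr) = 0$. Thus $N(\ee)$ is either $0$ or $1$. To rule out $N(\ee) = 0$, I would invoke nondegeneracy: if $N(\ee) = 0$, then for any $v \in V$ we would have $N(v) = N(v \cdot \ee) = N(v) \cdot N(\ee) = 0$, so $N$ would vanish identically, contradicting the nondegeneracy assumption. Hence $N(\ee) = 1$.

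With (a) in hand, the remaining items are direct computation. For (b), substitute $v = w = \ee$ into the definition of $B$ and use $N(\ee) = 1$ and $N(2\ee) = 4 N(\ee)$ (since $N$ is quadratic), giving $B(\ee,\ee) = \tfrac{1}{2}(4 - 2) = 1$. For (c), plug $v = \ee$ into the definition of $T$ and use parts (a) and (b):
\[ T(\ee) = 2 B(\ee,\ee) = 2. \]
For (d), substitute into the definition of the involution: $\overline{\ee} = T(\ee)\ee - \ee = 2\ee - \ee = \ee$.

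The only real obstacle is the nondegeneracy argument in (a); everything else is a substitution. One subtle point to justify cleanly is that a quadratic form scales as $N(2\ee) = 4N(\ee)$, which is used implicitly in part (b) — this follows directly from $N$ being a quadratic form (i.e., homogeneous of degree $2$), and is worth noting explicitly so that the compact chain of equalities given in the statement is fully justified.
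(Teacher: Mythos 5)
Your proof is correct and follows the same route the paper intends: the paper simply asserts that all four statements follow directly from the definitions, and your write-up is a faithful elaboration of exactly that, with (b)--(d) being substitutions once (a) is known. The one step that is not purely definitional --- ruling out $N(\ee)=0$ via multiplicativity and nondegeneracy --- is handled correctly in your argument, as is the quadratic scaling $N(2\ee)=4N(\ee)$ needed for (b).
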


\begin{proof}
All statements follow directly from the definitions.
\end{proof}

We next justify the terminology used in our last definition, and verify a few other facts.

\begin{lemma}
\label{lem:normfacts}
Let $V$ be any composition algebra.  Then for all $v,w \in V$:
\begin{enumerate}
\item Both the trace and involution are linear.

\item The subalgebra of $V$ fixed by the involution is equal to $k\ee$.

\item $T(v) \ee = v + \overline{v}.$

\item $\overline{\overline{v}} = v.$

\item $\overline{vw} = \overline{w} \cdot \overline{v}.$

\item $B(v,w) = \frac{1}{2}T(v \overline{w}) = B(v\overline{w},1).$

\item $N(v) \ee = v \overline{v}.$

\item $v^2 - T(v)v + N(v)\ee = 0.$
\end{enumerate}
\end{lemma}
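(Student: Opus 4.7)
The plan is to dispatch the bookkeeping items (a), (c), (d), (b) first, then establish the polarization identities needed for the rest. Since $T(v) = 2B(v,\ee)$ and $\overline v = T(v)\ee - v$, part (a) is immediate from the bilinearity of $B$, and (c) is just a rearrangement of the definition of the involution. For (d), expanding $\overline{\overline v} = T(\overline v)\ee - \overline v$ using linearity of $T$ and $T(\ee) = 2$ gives $T(\overline v) = T(v)$ and hence $\overline{\overline v} = v$. Part (b) then follows by observing that $\overline v = v$ iff $2v = T(v)\ee$ iff $v \in k\ee$, while $k\ee$ is plainly fixed by the involution.

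The central tool for (e)--(h) will be the polarizations of the multiplicativity relation $N(ab) = N(a)N(b)$. Polarizing separately in each argument yields $B(ab, a'b) = B(a,a')N(b)$ and $B(ab, ab') = N(a)B(b,b')$, and polarizing in both yields $B(ab, a'b') + B(ab', a'b) = 2B(a,a')B(b,b')$. I will prove (h) first by using the last identity with $(a, a', b, b') = (v, \ee, v, w)$ to obtain $B(v^2, w) = T(v)B(v,w) - B(vw, v)$, and then using $B(vw, v) = B(vw, v\cdot \ee) = N(v)B(w, \ee)$ to simplify. Combining, $B(v^2 - T(v)v + N(v)\ee,\, w) = 0$ for all $w \in V$, and nondegeneracy of $B$ gives (h).

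Part (g) is then immediate: $v\overline v = v(T(v)\ee - v) = T(v)v - v^2 = N(v)\ee$. For (f), setting $a = \ee$ in the mixed polar identity produces $T(vw) = T(v)T(w) - 2B(v,w)$; using this together with $v\overline w = T(w)v - vw$ gives $\tfrac{1}{2}T(v\overline w) = B(v,w)$, and the second equality in (f) is the definition of $T$. Finally, for (e), I will linearize (h) by substituting $v \to v+w$ and subtracting the instances at $v$ and $w$, which yields $vw + wv = T(v)w + T(w)v - 2B(v,w)\ee$. Expanding $\overline{vw} = T(vw)\ee - vw$ and $\overline w\,\overline v = (T(w)\ee - w)(T(v)\ee - v)$ and applying both this linearized relation and the formula for $T(vw)$ will show they agree.

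The main obstacle is the passage from multiplicativity of $N$ to the degree-two identity (h): composition algebras are not assumed associative, so one cannot manipulate $v\overline v$ formally via cancellation, and everything must be routed through the polar identities and the nondegeneracy of $B$. Once (h) is in hand, the remaining parts fall out by straightforward algebraic manipulation.
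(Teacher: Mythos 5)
Your proposal is correct, and every step checks out: the three polar identities $B(ab,a'b)=B(a,a')N(b)$, $B(ab,ab')=N(a)B(b,b')$, and $B(ab,a'b')+B(ab',a'b)=2B(a,a')B(b,b')$ do follow by linearizing $N(xy)=N(x)N(y)$ in one or both arguments, the specialization $(a,a',b,b')=(v,\ee,v,w)$ together with $B(vw,v\ee)=N(v)B(w,\ee)$ and nondegeneracy does yield (h), and the subsequent derivations of (g), (f), and (e) from (h) are sound (I verified in particular that substituting the linearized form of (h), $vw+wv=T(v)w+T(w)v-2B(v,w)\ee$, into the expansion of $\overline{w}\,\overline{v}$ recovers $T(vw)\ee-vw=\overline{vw}$). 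The comparison with the paper is somewhat one-sided: the paper offers no proof at all, only a citation to Jacobson and Springer--Veldkamp, so you have supplied an argument where the text supplies none. What you have written is essentially the standard polarization argument from those references, but your logical ordering is worth remarking on: you establish the minimal equation (h) first, purely from multiplicativity and nondegeneracy, and then obtain $v\overline{v}=N(v)\ee$ and the anti-automorphism property as corollaries, whereas treatments that start from $N(v)\ee=v\overline{v}$ (as a fragmentary draft proof in this document's source does) must take care not to argue circularly. Your route buys a clean, self-contained derivation that never invokes associativity, which, as you correctly flag, is the one trap in this lemma.
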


\begin{proof}
All statements and their proofs can be found in either \cite{Jacobson1958} or \cite{SpringVeld2000}.
\end{proof}

Looking carefully at Lemma \ref{lem:normfacts}, we see that all of the other structures ($N$, $T$, and $B$) can be described solely in terms of the involution.  However, it is still necessary for the resulting $N$ and $B$ to satisfy the properties of Definition \ref{def:compalg}, if $V$ is to be considered a composition algebra.

We now describe a number of examples of compositions algebras.

\begin{enumerate}
\item The field $k$ itself is a composition algebra of dimension $1$, with quadratic form $N(v) = v^2$, trace given by $T(v) = 2v$, and trivial involution.  This norm is clearly multiplicative and nondegenerate.

\item The vector space $k \times k$ is an two dimensional $k$-algebra under component-wise multiplication, with identity $\ee = (1,1)$.  The norm form $N$ is given by $N(a,b) = ab$, which is again multiplicative and non-degenerate, making this a composition algebra.  The trace is given by $T(a,b) = a+b$, and the involution is given by $\overline{(a,b)} = (b,a)$.  Note that this algebra is both associative and commutative.

\item Perhaps the most well-known composition algebra is the algebra of $2 \times 2$ matrices over a field $k$, $\text{Mat}_2(k)$.  Here, $N$ is given by the determinant, which gives rise to the usual trace and involution on $\text{Mat}_2(k)$:
    \[          \overline{\left(
                   \begin{Array}{cc}
                     a & b \\
                     c & d \\
                   \end{Array}
                 \right)} = \left(
                   \begin{Array}{cc}
                     d & -b \\
                     -c & a \\
                   \end{Array}
                 \right). \]
    It is well known that the determinant is multiplicative.  For non-degeneracy, note that for $v,w \in \text{Mat}_2(k)$,
    \begin{gather*}
    B(v,w) = \frac{1}{2}[ \det(v+w) - \det(v) - \det(w) ] = 0
    \intertext{implies that}
    \det(v+w) = \det(v) + \det(w).
    \end{gather*}
    But since the determinant is not an additive homomorphism, for this to be true for all $v \in \text{Mat}_2(k)$ we have $w=0$ necessarily.  Therefore $\text{Mat}_2(k)$ is a composition algebra, and all the appropriate relations are satisfied between these structures.  Note that this algebra is associative but \emph{not} commutative.

\item Zorn's octonions\cite{Zorn1933} $\oo$ are given as a set by:
\[ \oo := \left\{ \left(
                   \begin{Array}{cc}
                     a & \vec{v} \\
                     \vec{w} & d \\
                   \end{Array}
                 \right) \biggl\vert\; a,d \in k, \text{ and } \vec{v},\vec{w} \in k^3 \right\}. \]
Addition in this set is defined in the regular way (entry-wise), and it also has a multiplication, given by:
\[ \left(
                   \begin{Array}{cc}
                     a & \vec{v} \\
                     \vec{w} & d \\
                   \end{Array}
                 \right)\left(
                   \begin{Array}{cc}
                     \alpha & \vec{\phi} \\
                     \vec{\psi} & \delta \\
                   \end{Array}
                 \right) = \left(
                   \begin{Array}{cc}
                     a\alpha + \vec{v} \circ \vec{\psi} & a \vec{\phi} + \delta \vec{v} - \vec{w} \times \vec{\psi} \\
                     \alpha \vec{w} + \delta \vec{\psi} + \vec{v} \times \vec{\phi} & d \delta + \vec{w} \circ \vec{\phi} \\
                   \end{Array}
                 \right). \]
The identity is the usual $\ee = \left(
                   \begin{Array}{cc}
                     1 & 0 \\
                     0 & 1 \\
                   \end{Array}
                 \right)$, and $\oo$ is also equipped with a necessary norm, which is analogous to the determinant in $\text{Mat}_2(k)$:
\[ N \left(
                   \begin{Array}{cc}
                     a & \vec{v} \\
                     \vec{w} & d \\
                   \end{Array}
                 \right) = ad - \vec{v} \circ \vec{w}. \]
It follows directly from Definition $3$ that the trace $T$ on $\oo$ is the usual trace, defined as the sum of the diagonal entries, and the involution is
\[ \overline{\left(
                   \begin{Array}{cc}
                     a & \vec{v} \\
                     \vec{w} & d \\
                   \end{Array}
                 \right)} = \left(
                   \begin{Array}{cc}
                     d & -\vec{v} \\
                     -\vec{w} & a \\
                   \end{Array}
                 \right). \]

Showing that this quadratic norm form (determinant) is multiplicative is straightforward and requires only the identity
\[ (\vec{v} \times \vec{\phi}) \circ (\vec{w} \times \vec{\psi}) = (\vec{v} \circ \vec{w})(\vec{\phi} \circ \vec{\psi}) - (\vec{v} \circ \vec{\psi})(\vec{\phi} \circ \vec{w}).  \]

Showing that $N$ is nondegenerate is similar to Example (c):
\[
B \left(\left(
                   \begin{Array}{cc}
                     a & \vec{v} \\
                     \vec{w} & d \\
                   \end{Array}
                 \right),\left(
                   \begin{Array}{cc}
                     \alpha & \vec{\phi} \\
                     \vec{\psi} & \delta \\
                   \end{Array}
                 \right)\right) = 0 \]
implies that
\[ N\left(
                   \begin{Array}{cc}
                     a + \alpha & \vec{v}+\vec{\phi} \\
                     \vec{w} +\vec{\psi} & d + \delta \\
                   \end{Array}
                 \right) = N\left(
                   \begin{Array}{cc}
                     a & \vec{v} \\
                     \vec{w} & d \\
                   \end{Array}
                 \right) + N\left(
                   \begin{Array}{cc}
                     \alpha & \vec{\phi} \\
                     \vec{\psi} & \delta \\
                   \end{Array}
                 \right), \]
which implies that
\[ a \delta + \alpha d + (\vec{v} \circ \vec{\psi}) + (\vec{\phi} \circ \vec{w}) = 0.\]
This will be true for arbitrary $a,d \in k$ and $\vec{v},\vec{w} \in k^3$ if and only if $\alpha = \delta = 0 \in k$ and $\vec{\phi} = \vec{\psi} = 0 \in k^3$.  The algebra $\oo$ is therefore a composition algebra.  Note that this algebra is neither associative nor commutative.  However, it is \textbf{alternative}, in the sense of the definition below.
\end{enumerate}

\begin{definition}
A $k$-algebra $V$ is called \textbf{alternative} if, for all $v,w \in V$, we have:
\begin{gather*}
v(vw) = (vv)w, \\
v(ww) = (vw)w, \\
v(wv) = (vw)v.
\end{gather*}
Equivalently, $V$ is alternative if for all $v,w \in V$, the subalgebra generated by $v$ and $w$ is associative.
\end{definition}

\begin{definition}
A composition algebra is called \textbf{split} if it contains zero divisors.  Otherwise, it is called \textbf{non-split}.
\end{definition}

Since it is not difficult to find zero-divisors in each of the examples $2-4$ above, they are all examples of split composition algebras.  However, there are examples of composition algebras which are \emph{not} split, such as the field $\cc$ of complex numbers, Hamilton's quaternions $\hh$, or Graves' Octonions $\oo_G$, all of which are normed division algebras and may be constructed using the Cayley-Dickson construction from $k = \rr$.

\section{Morphisms}
\label{morphisms}

\begin{definition}
Let $(V,N)$ and $(V',N')$ be two composition algebras.  We call $f: (V,N) \rightarrow (V',N')$ a \textbf{morphism of composition algebras} if it is a morphism of algebras from $V$ to $V'$ which additionally preserves the norm form $N$:
\[ N'(f(v)) = N(v), \quad \forall\, v \in V. \]
\end{definition}

While this is a perfectly fine definition, we can also prove that this is always the case: any morphism of algebras will \emph{necessarily} preserve $N$, as outlined in the following theorem.

\begin{theorem}
\label{morphismspreservenorm}
Let $(V,N)$ and $(V',N')$ be two composition algebras, and $f: (V,N) \rightarrow (V',N')$ be a morphism of composition algebras. Then $N'(f(v)) = N(v)$ for all  $v \in V$.
\end{theorem}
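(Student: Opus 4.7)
The plan is to exploit the quadratic identity $v^2 - T(v)v + N(v)\ee = 0$ from Lemma \ref{lem:normfacts}(h), together with its counterpart in $V'$, in order to pin down the value of $N'(f(v))$. Since $f$ is a $k$-linear, unital, multiplicative map, applying $f$ to this identity gives
\[
f(v)^2 - T(v)\,f(v) + N(v)\,f(\ee) \;=\; 0
\]
in $V'$. But the same lemma applied directly to the element $f(v) \in V'$ also yields
\[
f(v)^2 - T'(f(v))\,f(v) + N'(f(v))\,f(\ee) \;=\; 0,
\]
where $T'$, $N'$ denote the trace and norm on $V'$ and $f(\ee)$ is the identity of $V'$. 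Subtracting these two relations produces the key equation
\[
\bigl[T'(f(v)) - T(v)\bigr]\,f(v) \;=\; \bigl[N'(f(v)) - N(v)\bigr]\,f(\ee).
\]

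First I would dispatch the generic case, in which $f(v)$ and $f(\ee)$ are $k$-linearly independent in $V'$. In that situation the two scalar coefficients on either side must each vanish, giving $N'(f(v)) = N(v)$ (and, as a byproduct, $T'(f(v)) = T(v)$). Conceptually this is the uniqueness of the monic degree-two minimal polynomial satisfied by a non-scalar element of a composition algebra: both identities above exhibit such a polynomial for $f(v)$, and they must therefore agree coefficient by coefficient.

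The residual case --- and what I expect to be the main obstacle --- is when $f(v) = t\,f(\ee)$ for some scalar $t \in k$. Here $v - t\ee \in \ker(f)$, so the question reduces to controlling the kernel of $f$. I would first observe that applying the same quadratic identity to any $z \in \ker(f)$ forces $N(z) = 0$, because $f(z^2) = -N(z)\,f(\ee)$ must vanish; thus $\ker(f)$ is always contained in the null cone of $N$. Combined with the fact that $\ker(f)$ is a two-sided ideal not containing $\ee$, one can then invoke a structural simplicity property of $V$ (no nonzero two-sided ideal inside the null cone of $N$) to conclude $\ker(f) = 0$. With that in hand, $v = t\ee$, and the norm identity $N(v) = t^2 = N'(t\,f(\ee)) = N'(f(v))$ follows immediately. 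The real content of the proof therefore lives in that structural case analysis; the Cayley--Hamilton-style comparison above is precisely what reduces the theorem to it.
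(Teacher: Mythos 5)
Your main case is exactly the paper's argument: both proofs observe that $f(v)$ satisfies the monic quadratic $X^2 - T(v)X + N(v)$ obtained by pushing the identity of Lemma \ref{lem:normfacts}(h) through $f$, and both conclude by matching coefficients against the quadratic $X^2 - T'(f(v))X + N'(f(v))$ satisfied intrinsically by $f(v)$. That matching is legitimate precisely when $f(v) \notin k\,f(\ee)$, i.e.\ when $f(v)$ has a degree-two minimal polynomial. You are right that the residual case $f(v) = t\,f(\ee)$ is where the real content lies; the paper's own proof silently skips it by asserting that $N(v)$ ``is the constant term of the characteristic polynomial of $f(v)$,'' which presupposes $f(v)\notin k\ee$ and hence, in effect, that $f$ is injective.

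The gap is in your proposed resolution of that case. The structural property you invoke --- that a composition algebra has no nonzero two-sided ideal contained in the null cone of $N$ --- is false for the split quadratic algebra $K = k\times k$ with $N(a,b)=ab$: the ideal $k\times\{0\}$ is two-sided, avoids $\ee=(1,1)$, and consists entirely of norm-zero elements. This is not a repairable technicality: the projection $\pi_1\colon k\times k\to k$, $(a,b)\mapsto a$, is a unital $k$-algebra morphism between composition algebras with $N'(\pi_1(1,0)) = 1 \neq 0 = N(1,0)$, so the statement that every algebra morphism preserves the norm is actually false in this one case. Your kernel analysis ($z\in\ker f$ forces $N(z)=0$, so $\ker f$ is an ideal inside the null cone) is correct and does close the case for $V$ of dimension $1$, $4$, or $8$, and for $V$ a quadratic field extension, since those algebras have no such ideals; but for $V\cong k\times k$ no argument can succeed, and both your proof and the paper's need the additional hypothesis that $f$ is injective (equivalently, one must exclude non-injective morphisms out of $k\times k$).
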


\begin{proof}
With the hypotheses of our theorem, we choose $v \in V$.  Suppose first that $v \in k\ee \subseteq V$.  In this case, $f$ (as an algebra morphism) acts trivially on $v$, and in both $V$ and $V'$ the norm of $v$ is just $N(v)= v \overline{v} = v^2$. Thus $f$ preserves the norm in this case.

Suppose now that $v \notin k\ee$.  Examine the ideal $I = \left\{ P \in k[X] \mid P(v)=0 \right\} \subseteq V$.  Note that $I$ contains no nonzero elements of $\deg P \leq 1$ (else $v \in k\ee$).  Since $k[X]$ is a PID, $I = \langle P_v \rangle$ for some monic polynomial $P_v \in k[X]$.  In fact, by Lemma $2$(h), the characteristic polynomial $v^2 - T(v)v + N(v)\ee$ is in $I$, and this must be the unique polynomial equal to $P_v$ (else you could subtract the two monic polynomials and the result would be linear in $I$).

If we apply the algebra morphism $f$ to $P_v$, we find that
\[ f(0) = f\biggl(v^2 - T(v)v + N(v)\ee\biggr) = f(v)^2 - T(v)f(v) + N(v)\ee = 0. \]
Therefore $N(v)$ is also the constant term of the characteristic polynomial of $f(v) \in V'$, and our statement follows.
\end{proof}

Therefore, morphisms of algebras and morphisms of composition algebras are equivalent concepts, and the category of composition algebras and their morphisms is actually a full subcategory of the $k$-algebras.

Now, it happens that the examples of split composition algebras listed in Section $2.1$ are each embedded in the next as composition algebras:
\[ k \hookrightarrow k \times k \hookrightarrow \text{Mat}_2(k) \hookrightarrow \oo. \]

For example, we have the diagonal embedding\label{symbol:embeddings} $\eta_1: v \mapsto (v,v)$ of $k$ into $k \times k$.  The (other) diagonal embedding of $k \times k$ into $\text{Mat}_2(k)$ may be given by
\[ \eta_2: (a,b) \mapsto \left(
                   \begin{Array}{cc}
                     a & 0 \\
                     0 & b \\
                   \end{Array}
                 \right).
 \]

Since $\eta_1$ must preserve the identity, but must also be linear, $\eta_1$ is unique.  However, the embedding $\eta_2$ is not unique, nor are the embeddings $\text{Mat}_2(k) \hookrightarrow \oo$.   In the latter case, we may choose any unit vector $\vec{u} \in k^3$ (i.e., having $u \circ u = 1$) to yield an embedding
\[ \eta_3: \left(
     \begin{Array}{cc}
       a & b \\
       c & d \\
     \end{Array}
   \right) \mapsto \left(
     \begin{Array}{cc}
       a & b \vec{u} \\
       c \vec{u} & d \\
     \end{Array}
   \right). \]
It is easily verified that all of these maps are injective algebra morphisms, and hence they preserve the respective forms $N$.  In the last map, we will often choose $\vec{u}$ to be equal to one of the standard basis vectors $\ii$, $\jj$, or $\kk$.  When necessary, we will specify our choice of embedding by writing $\eta_{\ii}(\text{Mat}_2(k))$, $\eta_{\jj}(\text{Mat}_2(k))$, $\eta_{\kk}(\text{Mat}_2(k))$, as appropriate.

\section{Generating and Decomposing Composition Algebras}
\label{decompositions}

The goal of this section is to describe how a composition algebra can be decomposed into a composition algebra of smaller dimension and its orthogonal complement.  We will use this decomposition later to describe certain automorphisms of $\oo$, whose action is defined by the choice of decomposition.  A secondary goal is to classify all possible composition algebras by their dimension, and by whether or not they are split. This will help clarify the question of ``which $G_2$'' we refer to at any given time.

Toward these ends, we will describe how to generate a new composition algebra from an given one, using a variation of the Cayley-Dickson construction.  As always, we will refer to our set of examples for insight.

Let $(V,N)$ be any composition algebra, with its usual structure given by the norm form $N$.  We form the space $W := V \oplus Vl$, where at this point $l$ is simply a formal variable\label{symbol:l}.  For any nonzero $\mu \in k^{\times}$, multiplication in $W$ is given by \cite{Jacobson1958}
\[ (a+bl)(c+dl) = (ac + \mu \overline{d}b) + (da + b\overline{c})l. \]

Note that in our previous examples of split composition algebras we have taken $\mu = 1$.  If we take $\mu = -1$, this is simply the familiar Caley-Dickson construction which can be used with $k=\rr$ to construct the normed division algebras $\cc$, $\hh$, and $\oo_G$.

\begin{proposition}
The map $a+bl \mapsto \overline{a}-bl$ defines an involution on $W$.
\end{proposition}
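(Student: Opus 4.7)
The plan is to verify the three defining properties of an involution on $W$: $k$-linearity, the anti-multiplicativity identity $\sigma(xy) = \sigma(y)\sigma(x)$, and self-inversion $\sigma^2 = \text{id}$, where $\sigma$ denotes the proposed map $a+bl \mapsto \overline{a} - bl$.

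First I would handle linearity, which is essentially immediate: the map is the direct sum of the (linear) bar involution on $V$ and the scalar $-1$ on the $Vl$-component, so it inherits linearity from Lemma \ref{lem:normfacts}(a). Self-inversion is also a one-line computation, using Lemma \ref{lem:normfacts}(d):
\[ \sigma(\sigma(a+bl)) = \sigma(\overline{a} - bl) = \overline{\overline{a}} - (-b)l = a + bl. \]

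The main content is the anti-multiplicativity check, and this is where I expect the bookkeeping to be most delicate. Starting from the product formula in $W$, I would compute
\[ \sigma\bigl((a+bl)(c+dl)\bigr) = \sigma\bigl((ac + \mu \overline{d}b) + (da + b\overline{c})l\bigr) = \overline{ac + \mu \overline{d}b} - (da + b\overline{c})l, \]
and then expand the bar using linearity together with Lemma \ref{lem:normfacts}(d) and (e) to obtain $\overline{c}\,\overline{a} + \mu \overline{b}d$ in the $V$-component. On the other side, using the product formula again,
\[ \sigma(c+dl)\sigma(a+bl) = (\overline{c} - dl)(\overline{a} - bl) = \bigl(\overline{c}\,\overline{a} + \mu\,\overline{-b}\cdot(-d)\bigr) + \bigl((-b)\overline{c} + (-d)\overline{\overline{a}}\bigr)l. \]
Simplifying the signs and applying Lemma \ref{lem:normfacts}(a),(d) collapses this to exactly $(\overline{c}\,\overline{a} + \mu \overline{b}d) - (b\overline{c} + da)l$, which matches the first expression.

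The only potential pitfall is keeping the signs and the order of the factors straight in the $\mu\overline{d}b$ term — since $V$ need not be commutative or associative (as in the octonion case), one must be careful not to silently reorder $\overline{d}b$ into $b\overline{d}$, and likewise one must use $\overline{xy} = \overline{y}\,\overline{x}$ in the correct direction. Once that is handled the proposition follows directly from Lemma \ref{lem:normfacts}.
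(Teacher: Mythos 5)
Your proposal is correct and follows essentially the same route as the paper: both arguments verify anti-multiplicativity by expanding the Cayley–Dickson product formula and using linearity of the bar on $V$ together with $\overline{\overline{v}}=v$ and $\overline{vw}=\overline{w}\,\overline{v}$ (the paper computes $\overline{(a+bl)}\cdot\overline{(c+dl)}$ and matches it to $\overline{(c+dl)(a+bl)}$, which is the same identity you check with the roles of the two factors swapped). Your sign and ordering bookkeeping, including $\overline{\,\overline{d}b\,}=\overline{b}d$, is accurate, so the proof goes through.
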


\begin{proof}
The map clearly has order two, and is $k$-linear since $\overline{a}$ is linear on $V$.  We also have:
\begin{align*}
\overline{(a+bl)} \cdot \overline{(c+dl)} & = (\overline{a}-bl)(\overline{c}-dl) \\
    & = \left( \overline{a} \cdot \overline{c} + \mu \overline{d}b \right) + \left( -d\overline{a} - b\overline{\overline{c}} \right)l \\
    & = (\overline{ca} + \mu \overline{\overline{b}d}) - (bc + d\overline{a})l \\
    & = \overline{(ca + \mu \overline{b}d)} - (bc + d\overline{a})l \\
    & = \overline{(ca + \mu \overline{b}d) + (bc + d\overline{a})l} \\
    & = \overline{(c+dl)(a+bl)}. \\
\end{align*}
\end{proof}

We often write $\overline{a+bl} = \overline{a}-bl$, but note that there are two distinct involutions present in this expression, one on $V$ and one on $W$.  Thus if we associate $V = \left\{a +bl \in W \mid b =0 \right\}$, the involutions on $V$ and $W$ will coincide.

\begin{definition}
We define a norm form $N_W(w) := w \overline{w}$ for all $w \in W$, and therefore a trace $T_W$ and bilinear form $B_W$ constructed as we described previously.
\end{definition}

It is immediate that $N_W$ is multiplicative as
\[ N_W(vw) = vw\overline{vw} = vw \bar{w} \bar{v} = v N_W(w) \bar{v} = N_W(w) v\bar{v} = N_W(v)N_W(w). \]
However, at this point we do not yet know that the associated $B_W$ is non-degenerate.  In fact, this will not always be the case, and thus algebras $W$ constructed in this way may or may not be composition algebras.  Nevertheless, we collect here a number of facts about $W$ which we will have occasion to use later.  We will neglect the subscript on our maps $N$, $B$, or $T$ when the context is clear.

\begin{lemma}
For all $a \in V$, we have $al = l \overline{a}$.
\end{lemma}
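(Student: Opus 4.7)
The plan is a direct computation using the Cayley--Dickson multiplication formula
\[ (x+yl)(z+wl) = (xz + \mu \overline{w}y) + (wx + y\overline{z})l, \]
applied in each direction of the claimed equality.

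First I would identify $a \in V$ with the element $a + 0 \cdot l \in W$, and identify $l \in W$ with $0 + 1 \cdot l$ (where $1 = \ee$ is the identity of $V$). Then $al$ just means the formal symbol $al$, i.e., the element $0 + a \cdot l$ of $W$; one can double-check this by plugging $x = a$, $y = 0$, $z = 0$, $w = 1$ into the formula above, giving
\[ a \cdot l = (a \cdot 0 + \mu \overline{1} \cdot 0) + (1 \cdot a + 0 \cdot \overline{0})l = 0 + al. \]

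Next I would compute $l \cdot \overline{a}$ by setting $x = 0$, $y = 1$, $z = \overline{a}$, $w = 0$ in the same formula:
\[ l \cdot \overline{a} = (0 \cdot \overline{a} + \mu \overline{0} \cdot 1) + (0 \cdot 0 + 1 \cdot \overline{\overline{a}})l = 0 + \overline{\overline{a}}\,l. \]
Applying Lemma \ref{lem:normfacts}(d), which says $\overline{\overline{a}} = a$, the right-hand side becomes $al$. Comparing the two computations yields $al = l\overline{a}$, as desired.

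There is no real obstacle here beyond careful bookkeeping: the only substantive input is the involutive property $\overline{\overline{a}} = a$ of the involution on $V$, and the rest is a direct substitution into the definition of multiplication on $W = V \oplus Vl$.
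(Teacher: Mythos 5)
Your proof is correct, but it follows a genuinely different route from the paper's. You substitute directly into the defining multiplication formula on $W = V \oplus Vl$ and reduce everything to the single identity $\overline{\overline{a}} = a$ from Lemma \ref{lem:normfacts}; this is the shortest argument available and makes no use of the quadratic form on $W$ at all. The paper instead linearizes the identity $N(v)\ee = v\overline{v}$ at $v = a + b$ to obtain $2B(a,b)\ee = a\overline{b}\,\ee + b\overline{a}\,\ee$, then sets $b = l$ and uses $B(a,l) = 0$ together with $\overline{l} = -l$ to conclude $0 = -al + l\overline{a}$. What the paper's method buys is independence from the explicit doubling formula: it establishes $al = l\overline{a}$ for any trace-zero element $l$ orthogonal to $V$ inside an ambient algebra satisfying $v\overline{v} = N(v)\ee$, and the same linearization template is reused immediately afterward to prove the relations $(ab)l = b(al)$ in Lemma \ref{associationlemma}, where a direct expansion would be less clean. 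What your method buys is brevity and fewer hypotheses in this specific setting---in particular, you do not need to know in advance that $B(a,l)=0$, a fact the paper's proof quietly invokes before the orthogonality $Vl = V^{\perp}$ is formally established. One bookkeeping point you handle correctly and should keep explicit: the product $a \cdot l$ computed from the multiplication formula coincides with the formal element $al \in Vl$, so the two possible readings of the symbol $al$ agree and the statement is unambiguous.
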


\begin{proof}
Let $v \in W$ be arbitrary.  We have that $N(v)\ee = (v \overline{v})\ee$, which we can linearize $v$ as $v=a+b$, and
\begin{align*}
N(a+b)\ee &= ((a+b)\overline{(a+b)})\ee = (a\overline{a} + a\overline{b} + b\overline{a} + b\overline{b})\ee
\intertext{implies that}
2B(a,b)\ee &= a\overline{b}\ee + b\overline{a}\ee.
\end{align*}
Therefore, if we choose $a$ to be from $V$ and $b=l$, then $B(a,l)=0$ and $\overline{l} = -l$, and $0 = -al\ee + l\overline{a}\ee$.
\end{proof}

\begin{lemma}
\label{associationlemma}
For all $a,b \in V$, we have that $(ab)l = b(al)$, and $l(ab) = (lb)a$.
\end{lemma}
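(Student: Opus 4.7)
The plan is to prove both identities by direct application of the product formula
\[ (x+yl)(z+wl) = (xz + \mu \overline{w}y) + (wx + y\overline{z})l \]
to the elements in question. Since $a$, $b$, and $ab$ all lie in $V$, and $al$, $l$ itself all have zero $V$-component, most terms in the formula collapse to zero and the identities fall out after recording which surviving piece lands in $V$ versus $Vl$.

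For the first identity $(ab)l = b(al)$, I would compute $b \cdot (al)$ by taking the first factor to be $b + 0\cdot l$ and the second factor to be $0 + a\cdot l$, so that in the notation above $x=b$, $y=0$, $z=0$, $w=a$. The $V$-part $xz + \mu\overline{w}y$ is then $b\cdot 0 + \mu\overline{a}\cdot 0 = 0$, and the coefficient of $l$ in the result is $wx + y\overline{z} = ab + 0 = ab$. Hence $b(al) = (ab)l$ with no further work.

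For the second identity $l(ab) = (lb)a$, I would evaluate each side independently. Taking $x=0$, $y=1$, $z=ab$, $w=0$ in the formula gives $l(ab) = 0 + (1\cdot\overline{ab})l = \overline{ab}\cdot l$. For the right-hand side, the same formula first yields $lb = \overline{b}\cdot l$, after which computing $(\overline{b}\,l)\cdot a$ with $x=0$, $y=\overline{b}$, $z=a$, $w=0$ produces $(\overline{b}\,\overline{a})\cdot l$. The two sides are reconciled by the antihomomorphism property $\overline{ab} = \overline{b}\,\overline{a}$, which is part (e) of Lemma \ref{lem:normfacts}, so both sides equal $(\overline{b}\,\overline{a})l$.

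There is no real obstacle here; both statements reduce to a single line of symbol manipulation using the product formula. The only point requiring care is the asymmetry in the $Vl$-component of the formula, which twists the left factor's $V$-part past the right factor's $Vl$-part (hence $wx$ rather than $xw$), and the invocation of the involution's antihomomorphism property when comparing the two expressions in the second identity.
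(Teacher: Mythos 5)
Your computation is correct, but it takes a genuinely different route from the paper's. You prove both identities by brute-force expansion of the Cayley--Dickson product formula $(x+yl)(z+wl) = (xz + \mu \overline{w}y) + (wx + y\overline{z})l$, reducing everything to reading off the surviving $Vl$-component and invoking $\overline{ab} = \overline{b}\,\overline{a}$ from Lemma \ref{lem:normfacts}(e); each step checks out, including the intermediate identities $l(ab) = \overline{ab}\,l$ and $lb = \overline{b}\,l$. The paper instead argues intrinsically: it linearizes $N(v)w = (v\overline{v})w = v(\overline{v}w)$ (which requires the alternative law in $W$) to obtain $2B(c,d)w = c(\overline{d}w) + d(\overline{c}w)$, then specializes $c,w \in V$ and $d = l$, using $B(c,l)=0$ and $\overline{l}=-l$. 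Your version is shorter and more elementary, but it is tied to $W$ being literally the doubled algebra with that product formula. The paper's version uses only the composition-algebra axioms together with $l \in V^{\perp}$, $\overline{l} = -l$, so it applies to \emph{any} splitting element orthogonal to a subalgebra $V$ inside an ambient composition algebra --- which is the generality in which the lemma is later invoked (e.g.\ in the proof that $\gamma_{\ii}$ surjects onto $\text{Aut}(\oo/\eta_{\ii}(\text{Mat}_2(k)))$, where $l$ is an arbitrary element of $\text{Mat}_2(k)^{\perp}$ with $N(l) \neq 0$). If you want your argument to cover that use, you would need to add the observation that any such decomposition $W = V \oplus Vl$ necessarily carries the doubling multiplication.
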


\begin{proof}
Let $v,w \in W$ be arbitrary.  Using the same arguments as the last lemma, we use the alternative property of $W$ to write $N(v)w = (v \overline{v})w = v (\overline{v}w)$, which we can linearize as $v=c+d$, and
\begin{align*}
N(c+d)w &= (c+d)(\overline{(c+d)}w) \\
    & = (c+d)(\overline{c}w + \overline{d}w) \\
    & = c(\overline{c}w) + c(\overline{d}w) + d(\overline{c}w) + d(\overline{d}w) \\
    & = (c\overline{c})w + c(\overline{d}w) + d(\overline{c}w) + (d\overline{d})w
\intertext{implies that}
2B(c,d)w &= c(\overline{d}w) + d(\overline{c}w).
\end{align*}
Now, we can again choose $c,w \in V$ and $d=l$, so that
\begin{align*}
0 = 2B(c,l)w &= c(\overline{l}w) + l(\overline{c}w) \\
    & = -c(lw) + l(\overline{c}w) \\
    & = -c(\overline{w}l) + \overline{(\overline{c}w)}l,
\intertext{which implies that}
c(\overline{w}l) &= (\overline{w}c)l. \\
\end{align*}
Our statement follows, and the second statement is obtained by taking conjugates of the first.
\end{proof}

\begin{lemma}
We have the following identities on the element $l \in W$:
\begin{enumerate}
\item $l^2 = \mu$.

\item $\overline{bl} = -bl$ for every $b \in V$.

\item $T_W(bl) = bl + \overline{bl} = 0$.

\item $N_W(bl) = bl\overline{bl} = -blbl = -b\overline{b}l^2 = -N_V(b)\mu$.
\end{enumerate}
\end{lemma}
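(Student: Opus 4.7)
The plan is to prove each of the four identities directly from the Cayley--Dickson multiplication rule, the definition of the involution on $W$, and the earlier lemmas (in particular Lemma \ref{lem:normfacts}(g)). All four items are essentially one-line computations, so the proof really amounts to checking that the algebraic bookkeeping works out.

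For (a), I would apply the multiplication formula $(a+bl)(c+dl) = (ac + \mu \overline{d}b) + (da + b\overline{c})l$ to the product $(0+\ee l)(0+\ee l)$. The second summand vanishes, and the first collapses to $\mu\overline{\ee}\ee = \mu\ee$, giving $l^2 = \mu$. Part (b) is immediate from the definition of the involution on $W$: setting $a=0$ in $\overline{a+bl} = \overline{a}-bl$ yields $\overline{bl}=-bl$. Part (c) follows at once, since by definition $T_W(bl) = bl + \overline{bl}$, which is zero by (b).

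For (d), I start with $N_W(bl) = (bl)\overline{(bl)}$ and use (b) to rewrite this as $-(bl)(bl)$. Applying the multiplication rule to $(bl)(bl)$, with the coefficients of $l$ in both factors equal to $b$ and both scalar parts zero, produces $(bl)(bl) = \mu\overline{b}b$. Then Lemma \ref{lem:normfacts}(g) gives $\overline{b}b = b\overline{b} = N_V(b)\ee$, so $(bl)(bl) = \mu N_V(b)\ee$, and therefore $N_W(bl) = -N_V(b)\mu$. The intermediate form $-b\overline{b}l^2$ displayed in the statement is then recovered by substituting $l^2=\mu$ from part (a) and reading the chain backward.

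The only point requiring any care is in (d), where the chain $-blbl = -b\overline{b}l^2$ looks at first as though it requires associating across four factors. One could justify that rearrangement by invoking Artin's theorem and the alternative structure of $W$, so that $b$ and $l$ generate an associative subalgebra in which $lb = \overline{b}l$ (from the preceding lemma) may be used freely. I prefer the direct computation via the multiplication rule because it bypasses this subtlety, but either route gives the same conclusion, and neither presents a genuine obstacle.
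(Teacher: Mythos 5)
Your proof is correct and is essentially the argument the paper intends: the paper's own proof of this lemma just says ``all are immediate from the definitions and previous lemmas,'' and your computations (the Cayley--Dickson multiplication rule for (a) and (d), the definition of the involution on $W$ for (b) and (c), and Lemma \ref{lem:normfacts}(g) to identify $b\overline{b}$ with $N_V(b)\ee$) are exactly the details being elided. Your closing remark about avoiding the four-factor reassociation in (d) by computing $(bl)(bl)$ directly from the doubling formula is a sensible way to sidestep the one genuinely delicate point.
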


\begin{proof}
All are immediate from the definitions and previous lemmas.
\end{proof}

\begin{lemma} Suppose that $W$ is a composition algebra; i.e., that $B_W$ is nondegenerate.  Associate
\[ V = \left\{ a + bl \in W \mid b=0 \right\} \quad \text{and} \quad V l = \left\{ a + bl \in W \mid a=0 \right\}.\]
Then $V l = V^{\perp}$.
\end{lemma}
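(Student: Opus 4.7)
The plan is to prove the set equality $Vl = V^\perp$ via a containment together with a dimension count, exploiting the hypothesis that $B_W$ is nondegenerate on all of $W$.

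First I will verify the containment $Vl \subseteq V^\perp$. Using the identity from Lemma \ref{lem:normfacts}(f), which carries over to $W$ in the same form, I can write $B_W(a, bl) = \tfrac{1}{2} T_W(a \cdot \overline{bl})$ for any $a, b \in V$. Since the preceding lemma records that $\overline{bl} = -bl$, this becomes $-\tfrac{1}{2} T_W(a(bl))$. Plugging $a$ and $bl$ into the multiplication law $(c+dl)(e+fl) = (ce + \mu \overline{f}d) + (fc + d\overline{e})l$ with $c=a$, $d=0$, $e=0$, $f=b$ yields $a(bl) = (ba)l$, which lies in $Vl$. But the same lemma tells us that $T_W$ vanishes on every element of the form $xl$, so $T_W((ba)l) = 0$, whence $B_W(a, bl) = 0$. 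Thus every element of $Vl$ is orthogonal to every element of $V$, giving $Vl \subseteq V^\perp$.

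Next I will handle the reverse containment by comparing dimensions. As a $k$-vector space $W$ is the internal direct sum $V \oplus Vl$, and left multiplication by $l$ is a $k$-linear bijection from $V$ onto $Vl$, so $\dim_k Vl = \dim_k V$ and $\dim_k W = 2 \dim_k V$. Because $B_W$ is nondegenerate on $W$ by hypothesis, the standard linear-algebra fact that $\dim_k U + \dim_k U^\perp = \dim_k W$ for any subspace $U \subseteq W$ gives $\dim_k V^\perp = \dim_k W - \dim_k V = \dim_k V = \dim_k Vl$. Combined with the inclusion $Vl \subseteq V^\perp$ established above, this forces $Vl = V^\perp$.

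The only real subtlety in the argument is the initial trace computation, and even this is only a matter of correctly unwinding the Cayley--Dickson multiplication and appealing to the two preceding lemmas; the rest is the standard dimension formula for the orthogonal complement of a subspace with respect to a nondegenerate bilinear form. I do not expect any genuine obstacle here, since we are told to assume $B_W$ is nondegenerate — the content of the lemma is precisely that, under that hypothesis, the formal summand $Vl$ automatically realizes the orthogonal complement of $V$ in $W$.
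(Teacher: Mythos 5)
Your proof is correct, but it diverges from the paper's argument in both halves, so a comparison is worth recording. For the containment $Vl \subseteq V^{\perp}$, the paper expands $B_W(a,bl)$ directly from the definition as $N(a+bl)-N(a)-N(bl)$ and cancels terms using the Cayley--Dickson multiplication rule; you instead route the computation through the identity $B(v,w)=\tfrac{1}{2}T(v\overline{w})$ of Lemma \ref{lem:normfacts}(f) together with the vanishing of $T_W$ on $Vl$. Both are legitimate (note that Lemma \ref{lem:normfacts} applies to $W$ precisely because the hypothesis makes $W$ a composition algebra), and the computations are of comparable length. The more substantive difference is in the reverse containment: the paper takes $w=a+bl\in V^{\perp}$, reduces to $B_W(a,V)=0$, and concludes $a=0$ from nondegeneracy of the form restricted to $V$, whereas you count dimensions, using $\dim_k U+\dim_k U^{\perp}=\dim_k W$ for the nondegenerate form $B_W$ together with $\dim_k Vl=\dim_k V=\tfrac{1}{2}\dim_k W$. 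Your version has the small advantage of invoking only the stated hypothesis (nondegeneracy of $B_W$ on all of $W$), while the paper's step tacitly also uses that $B_V$ is nondegenerate on $V$; the paper's version, on the other hand, avoids any appeal to finite-dimensionality or to the dimension formula for orthogonal complements. One cosmetic slip: the map $V\to Vl$ you use is $b\mapsto bl$, which is \emph{right} multiplication by $l$, not left; this does not affect the argument, since $\dim_k Vl=\dim_k V$ already follows from the direct-sum construction $W=V\oplus Vl$.
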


\begin{proof}
For $a \in V$ and $bl \in Vl$, we have:
\begin{align*}
B_W(a , bl) & = N(a+bl) - N(a) - N(bl) \\
    & = (a+bl)\overline{(a+bl)} - a\overline{a} + N_V(b) \\
    & = (a+bl)(\overline{a}-bl) - a\overline{a} + N_V(b) \\
    & = (a\overline{a} + \mu \overline{(-b)}b) + (-ba + ba)l - (a\overline{a}) + N_V(b) \\
    & = 0.
\end{align*}
Therefore $Vl \subseteq V^{\perp}$.  Now, suppose that $w \in V^{\perp}$.  Write $w = a+bl \in V^{\perp} \subset W$.  Then
\[ B_W(w,V) = B_W(a+bl,V) = B_W(a,V) + B_W(bl,V) = B_W(a,V). \]
and this last expression equals zero if and only if $a=0$ by nondegeneracy of $B_W$.  Therefore $V l = V^{\perp}$.
\end{proof}

From these considerations, when the constructed $W$ is a composition algebra, we may write $W = V \oplus V l$, and this sum is both direct and orthogonal.  However, we have not yet determined the conditions under which $N_W$ will be nondegenerate.  The following theorem was originally proven by A. Hurwitz in \cite{Hurwitz1898} in the case of the normed division algebras over $\rr$ but has since been extended to include arbitrary fields.

\begin{theorem}
All composition algebras are obtained by repeated doubling, starting from $k\ee$. Composition algebras of dimension $1$ or $2$ are commutative and associative, those of dimension $4$ are associative but not commutative, and those of dimension $8$ are alternative, but neither commutative nor associative.  The constructed norm $N_W$ on $W$ will be nondegenerate if and only if $V$ was associative. Therefore the possible dimensions of a composition algebra are $1$, $2$, $4$, and $8$.
\end{theorem}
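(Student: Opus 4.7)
The plan is induction on dimension via the Cayley--Dickson doubling construction of this section. First I would show that any composition algebra $W$ is exhausted by iterated doubling from $k\ee$: trivially $k\ee \subseteq W$, and given a proper composition subalgebra $V \subsetneq W$, nondegeneracy of $B_W$ on both $V$ and $W$ produces an internal orthogonal decomposition $W = V \oplus V^{\perp}$ containing some $l \in V^{\perp}$ with $\mu := -N(l) \neq 0$. Using the polarization identity $al = l\overline{a}$ (derived from $v\overline{w} + w\overline{v} = 2B(v,w)\ee$ applied to $v \in V$, $w = l$) together with Lemma \ref{associationlemma}, I would verify that $V + Vl$ is closed under multiplication with product matching the Cayley--Dickson formula at parameter $\mu$; hence $V + Vl$ is a composition subalgebra of twice the dimension of $V$, and iterating this procedure eventually exhausts $W$.

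Next I would track algebraic properties through each doubling by direct computation on the Cayley--Dickson formula. The base $k\ee$ is commutative and associative with trivial involution, so its double (dimension $2$) remains commutative and is automatically associative as a $2$-dimensional unital algebra. The nontrivial involution on this stage makes $al \neq la$ for generic $a$, so the dimension-$4$ double is non-commutative; but its associators expand into combinations of associators and commutators inside the commutative-and-associative dimension-$2$ stage, all of which vanish, so dimension $4$ is associative. At the next doubling, the same identities yield $(a,b,l) = (ab-ba)l \neq 0$ for non-commuting $a,b$, forcing non-associativity at dimension $8$; but $(x,x,y)$ and $(x,y,x)$ continue to vanish because the dimension-$4$ stage is associative, so alternativity is preserved.

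The key remaining point --- that the doubling tower terminates at dimension $8$ --- comes down to the ``if and only if'' claim: $N_W$ on $V \oplus Vl$ is nondegenerate iff $V$ is associative. I would verify this by computing the multiplicativity defect $N_W(xy) - N_W(x)N_W(y)$ directly from the Cayley--Dickson formula; the result is a bilinear-in-associator expression in $V$ that vanishes identically iff $V$ is associative. When $V$ is associative, multiplicativity combined with nondegeneracy of $N_V$ and $\mu \neq 0$ gives nondegeneracy of $N_W \cong N_V \perp (-\mu)\, N_V$. When $V$ is not associative --- as happens precisely at the $8$-dimensional octonion stage --- multiplicativity fails, $W$ is not a composition algebra, and the process terminates. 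The possible dimensions are therefore exactly $1$, $2$, $4$, and $8$. The main obstacle is this last ``iff'' computation, as it requires carefully expanding $N_W$ on a non-associative doubling and extracting the associator of $V$ as the obstruction to multiplicativity.
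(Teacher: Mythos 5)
The paper does not actually prove this theorem; it defers entirely to \cite{Jacobson1958}, \cite{Kaplansky1953}, and \cite{SpringVeld2000}. Your sketch is, in outline, exactly the standard argument from those sources (the internal Cayley--Dickson doubling of Springer--Veldkamp \S 1.5): exhaust $W$ by doubling from $k\ee$ using an anisotropic $l \in V^{\perp}$, propagate commutativity/associativity/alternativity data through each doubling via the identities $al = l\overline{a}$ and Lemma \ref{associationlemma}, and terminate the tower by showing the double of a non-associative algebra fails to be a composition algebra. Two small points are worth making explicit. First, you have quietly corrected the theorem's phrasing: as a quadratic form $N_W \cong N_V \perp (-\mu)N_V$ is nondegenerate whenever $N_V$ is, so nondegeneracy is not the obstruction --- what genuinely fails when $V$ is non-associative is \emph{multiplicativity} of $N_W$, which is exactly the defect you compute; this is the form of the statement in the cited references. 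Second, the termination step needs one more sentence than you give it: to rule out dimensions above $8$ you must observe that if $W$ properly contained an $8$-dimensional composition subalgebra $V_3$, the doubling construction would exhibit $V_3 \oplus V_3 l$ as a subalgebra of $W$ on which the restriction of $N$ is multiplicative (being the restriction of the multiplicative norm of $W$), contradicting your ``iff'' since $V_3$ is non-associative --- and the non-associativity of \emph{every} $8$-dimensional stage, not just a chosen one, follows inductively because every $4$-dimensional stage is the double of a $2$-dimensional algebra with nontrivial involution and hence non-commutative. With those clarifications your proposal is a correct and complete outline of the classical proof.
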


\begin{proof}
For general fields, proofs of these statements may be found in either \cite{Jacobson1958}, \cite{Kaplansky1953}, or \cite{SpringVeld2000}.
\end{proof}

\begin{comment}
(Sketch)\footnote{This could use work.  Also look in [SV] p14.} Starting with any composition algebra $W$ and a finite dimensional, non-isotropic subalgebra $V \subset W$ (for example, $k \ee \subset W$), we identify an $l \in V^{\perp}$ with the desired splitting properties.  Then we construct $V_1 := V \oplus V l$, and show that this is again a finite dimensional non-isotropic subalgebra contained in $W$.  If it is equal to $W$, we are done; if not, we may repeat this process on $V_1$.  At some point however, the process must terminate, for after at most the fourth iteration the subalgebra $V_{i+1} := V_i \oplus V_i l_i$ will be a non-alternative subalgebra of an alternative algebra $W$, which is impossible.
\end{comment}

With this result, for a fixed field $k$ we may now refer to composition algebras of dimension $2$ as \textbf{quadratic} $k$-algebras (and denote them by $K$\label{symbol:K}), composition algebras of dimension $4$ as \textbf{quaternion} $k$-algebras (and denote them by $\mathfrak{B}$\label{symbol:frakB}), and composition algebras of dimension $8$ as \textbf{Cayley} or \textbf{octonion} $k$-algebras (and denote them by $\mathfrak{C}$\label{symbol:frakC}).  We will continue to refer to general composition algebras as either $V$ or $W$.

In our examples of split composition algebras, we see that $k \times k$ can indeed be decomposed in this way; given $(a,b) \in k \times k$, we can write:
\[  (a,b) = \left( \frac{a+b}{2},\frac{a+b}{2} \right) + \left( \frac{a-b}{2},\frac{a-b}{2} \right)(1,-1). \]
The splitting element $l$ in this case is $(1,-1)$, and we can check that for all $a,b \in k$ and corresponding $(a,a),(b,b) \in \eta_1(k) \subset k \times k$,
\begin{align*}
B\bigl( \left( a,a \right) , \left( b,b \right)(1,-1) \bigr) & = \frac{1}{2}T\bigl( \left( a,a \right) \overline{\left( b,-b \right)} \bigr) \\
    & = \frac{1}{2}T\bigl( \left( a,a \right) \left( -b,b \right) \bigr) \\
    & = \frac{1}{2}T\bigl( \left( -ab,ab \right) \bigr) \\
    & = \frac{1}{2} \left( -ab + ab \right) \\
    & = 0.
\end{align*}
Therefore the two images $V,Vl$ are indeed orthogonal.

Also, we may associate $k \times k$ with its image (the diagonal matrices) in $\text{Mat}_2(k)$, and have the following decomposition:
\[  \left(
                   \begin{Array}{cc}
                     a & b \\
                     c & d \\
                   \end{Array}
                 \right) = \left(
                   \begin{Array}{cc}
                     a & 0 \\
                     0 & d \\
                   \end{Array}
                 \right) +\left(
                   \begin{Array}{cc}
                     b & 0 \\
                     0 & c \\
                   \end{Array}
                 \right)\left(
                   \begin{Array}{cc}
                     0 & 1 \\
                     1 & 0 \\
                   \end{Array}
                 \right). \]
The splitting element here is $l = \left(
                   \begin{Array}{cc}
                     0 & 1 \\
                     1 & 0 \\
                   \end{Array}
                 \right)$.

\begin{comment}
Note that it is again orthogonal to all elements $\left(
                   \begin{Array}{cc}
                     a & 0 \\
                     0 & d \\
                   \end{Array}
                 \right) \in \eta_2(k \times k)$.
\end{comment}

Finally, we examine the algebra $\oo$.  Here, the splitting element will depend on the choice of embedding, and for our three chosen embeddings (with $\vec{u}$ equal to either $\ii$, $\jj$, or $\kk$), we use (respectively) the splitting elements:
\[ J := \left(
     \begin{Array}{cc}
       0 & \jj \\
       \jj & 0 \\
     \end{Array}
   \right), \quad K := \left(
     \begin{Array}{cc}
       0 & \kk \\
       \kk & 0 \\
     \end{Array}
   \right), \quad I := \left(
     \begin{Array}{cc}
       0 & \ii \\
       \ii & 0 \\
     \end{Array}
   \right).
 \]
Again, the images $\eta_{\ii}(\text{Mat}_2(k))$ and $\eta_{\ii}(\text{Mat}_2(k))J$ are orthogonal, and respectively for the other choices of embedding.  Explicitly, an element of $\oo$ may be decomposed in any of the following ways:
\begin{align}
\left( \begin{Array}{cc}
       a & \langle v_1,v_2,v_3 \rangle \\
       \langle w_1,w_2,w_3 \rangle & d \\
     \end{Array} \right) & = \left(
     \begin{Array}{cc}
       a & v_1 \ii \\
       w_1 \ii & d \\
     \end{Array}
   \right) + \left(
     \begin{Array}{cc}
       v_2 & w_3 \ii \\
       -v_3 \ii & w_2 \\
     \end{Array}
   \right)\left(
     \begin{Array}{cc}
       0 & \jj \\
       \jj & 0 \\
     \end{Array}
   \right) \\
& = \left(
     \begin{Array}{cc}
       a & v_2 \jj \\
       w_2 \jj & d \\
     \end{Array}
   \right) + \left(
     \begin{Array}{cc}
       v_3 & w_1 \jj \\
       -v_1 \jj & w_3 \\
     \end{Array}
   \right)\left(
     \begin{Array}{cc}
       0 & \kk \\
       \kk & 0 \\
     \end{Array}
   \right) \\
& = \left(
     \begin{Array}{cc}
       a & v_3 \kk \\
       w_3 \kk & d \\
     \end{Array}
   \right) + \left(
     \begin{Array}{cc}
       v_1 & w_2 \kk \\
       -v_2 \kk & w_1 \\
     \end{Array}
   \right)\left(
     \begin{Array}{cc}
       0 & \ii \\
       \ii & 0 \\
     \end{Array}
   \right).
\end{align}
These decompositions will be used extensively in later computations.

\newpage

\chapter{Automorphisms of the Split Octonion Algebra.}
\label{automorphisms}

\section{Generalities}
\label{generalities}

Now the we have defined the split octonion algebra $\oo$, we shift our focus to its automorphism group $\text{Aut}(\oo)$. It will be assumed that any composition subalgebras of $\oo$ that are mentioned here are precisely those split algebras described in our previous examples.  We remind the reader that $k$ may be any field of $\text{char}\, k \neq 2$.

\begin{definition}
Let $W$ be a composition algebra and $V$ be a composition subalgebra of $W$.  We will use $\text{Aut}(W/V)$ to denote the set of $k$-algebra automorphisms of $W$ which act trivially on $V$.  It is clear that these form a subgroup of the group $\text{Aut}(W)$.  However, since the composition algebras $V,W$ will not be fields in general, we will \textbf{not} refer to these as ``Galois groups'' despite the similarities in notation and definition.\footnote{These groups are referred to in this way in \cite{Jacobson1958} and some other sources.}
\end{definition}

Note that any automorphism of any composition algebra $W$ is unital and linear, and thus preserves the embedded copy of $k \subset W$. Therefore we can write $\text{Aut}(W/k)$ or $\text{Aut}(W)$ interchangeably, without loss of meaning.

In order to understand the fairly complicated group $\text{Aut}(\oo/k) = \text{Aut}(\oo)$, it is helpful to consider some of its more notable subgroups.  We list a few important subgroups below, and describe how they arise.  To save on notation, we will denote by $g^{-t}$ the transpose inverse of a matrix $g$.  There will be nothing lost by this abbreviation, since these two operations commute.

\section{A subgroup isomorphic to $\text{SL}_{3}(k)$}
\label{SL3}

We begin with a definition, the claims of which will be verified in the following propositions.

\begin{definition}
The group $\text{SL}_{3}(k)$ is embedded in $\text{Aut}(\oo/k)$ via the morphism\label{symbol:theta}
\[ \theta: \text{SL}_{3}(k) \hookrightarrow \text{Aut}(\oo/k),\]
where, for $g \in \text{SL}_{3}(k)$, the element $\theta(g)$ is defined by
\[ [\theta(g)]\left(
                   \begin{Array}{cc}
                     a & \vec{v} \\
                     \vec{w} & d \\
                   \end{Array}
                 \right) = \left(
                   \begin{Array}{cc}
                     a & g\vec{v} \\
                     g^{-t}\vec{w} & d \\
                   \end{Array}
                 \right).
 \]
\end{definition}

\begin{proposition}
The maps $\theta(g): \oo \rightarrow \oo$ are automorphisms for each $g \in \text{SL}_3(k)$.
\end{proposition}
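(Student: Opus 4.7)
The plan is to verify that each $\theta(g)$ satisfies the three defining properties of a $k$-algebra automorphism: $k$-linearity, preservation of the identity $\ee$, and multiplicativity, and then separately check bijectivity. Linearity is immediate from the block-matrix formula, since $g$ and $g^{-t}$ act linearly on the off-diagonal entries while the diagonal scalars are untouched. The identity $\ee$ corresponds to $a=d=1$ and $\vec{v}=\vec{w}=\vec{0}$, and is thus obviously fixed. Bijectivity follows at once from the observation that $\theta(g^{-1})$ is a two-sided inverse: applying $\theta(g^{-1})$ after $\theta(g)$ multiplies off-diagonal entries by $g^{-1}g = I$ and $(g^{-1})^{-t}g^{-t} = g^t g^{-t} = I$, respectively.

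The substantive step is multiplicativity. I would expand both sides of
\[
\theta(g)\!\left(\begin{Array}{cc} a & \vec{v} \\ \vec{w} & d \end{Array}\right)\cdot \theta(g)\!\left(\begin{Array}{cc} \alpha & \vec{\phi} \\ \vec{\psi} & \delta \end{Array}\right) \stackrel{?}{=} \theta(g)\!\left[\left(\begin{Array}{cc} a & \vec{v} \\ \vec{w} & d \end{Array}\right)\left(\begin{Array}{cc} \alpha & \vec{\phi} \\ \vec{\psi} & \delta \end{Array}\right)\right]
\]
using the Zorn multiplication formula, and then compare entry by entry. The four block entries of the product reduce the whole identity to the following three vector-calculus assertions about $g \in \mathrm{SL}_3(k)$:
\begin{enumerate}
\item $(g\vec{v})\circ(g^{-t}\vec{\psi}) = \vec{v}\circ\vec{\psi}$ (and symmetrically $(g^{-t}\vec{w})\circ(g\vec{\phi}) = \vec{w}\circ\vec{\phi}$);
\item $(g\vec{v})\times(g\vec{\phi}) = g^{-t}(\vec{v}\times\vec{\phi})$;
\item $(g^{-t}\vec{w})\times(g^{-t}\vec{\psi}) = g(\vec{w}\times\vec{\psi})$.
\end{enumerate}
Identity (1) is a direct computation from $(g\vec{v})^{t}(g^{-t}\vec{\psi}) = \vec{v}^{t}g^{t}g^{-t}\vec{\psi} = \vec{v}^{t}\vec{\psi}$. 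Identities (2) and (3) are the content of the classical determinant formula $(gu)\times(gv) = \det(g)\cdot g^{-t}(u\times v)$, valid for every $g \in \mathrm{GL}_3(k)$; since $\det(g) = 1$ we obtain (2) directly, and (3) is the same identity applied to $h := g^{-t}$, which also lies in $\mathrm{SL}_3(k)$ and satisfies $h^{-t} = g$.

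The main obstacle, such as it is, lies in step (2)/(3): the determinant-one hypothesis is exactly what is needed to make the two-sided action of $g$ and $g^{-t}$ on the off-diagonal vectors mesh with the cross-product terms in Zorn's multiplication, so this is the place where the map would fail for a general $g \in \mathrm{GL}_3(k)$. Once the three identities above are in place, matching block entries between the expanded left and right sides of the multiplicativity equation is mechanical: the inner-product terms enter the diagonal blocks and are handled by (1), while the cross-product terms enter the off-diagonal blocks and are handled by (2) and (3). Combining linearity, identity-preservation, bijectivity via $\theta(g^{-1})$, and multiplicativity then yields $\theta(g) \in \mathrm{Aut}(\oo/k)$, as claimed.
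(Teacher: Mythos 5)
Your proof is correct and follows essentially the same route as the paper: both arguments reduce multiplicativity to the invariance $(g\vec{v})\circ(g^{-t}\vec{\psi}) = \vec{v}\circ\vec{\psi}$ together with the determinant identity $(g\vec{u})\times(g\vec{v}) = (\det g)\,g^{-t}(\vec{u}\times\vec{v})$, with $\det g = 1$ doing the essential work. Your explicit remark that the cross-product identity must also be applied to $h = g^{-t}$ (which again lies in $\mathrm{SL}_3(k)$) is a point the paper leaves implicit in its block computation, but it is the same argument.
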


\begin{proof}
It is easily verified that each $\theta(g)$ is linear, and invertibility follows from $g$ itself being invertible.

To show that $\theta(g)$ is multiplicative, we first note that given any two vectors $\vec{v},\vec{w} \in k^3$, and any invertible matrix $g \in \text{GL}_3(k)$, we have
\begin{align*}
g\vec{v} \circ g^{-t}\vec{w} & =(g\vec{v})^{t} (g^{-t}\vec{w}) \\
    & = \vec{v}^{t} g^{t} (g^{-t})\vec{w} \\
    & = \vec{v}^{t} \vec{w} \\
    & = \vec{v} \circ \vec{w}.
\end{align*}
It is less obvious (though still true) that under the same conditions one has
\[ g\vec{v} \times g\vec{w} = (\det g) (g^{-t}) (\vec{v} \times \vec{w}). \]
However, in our case we assume $g \in \text{SL}_3(k)$, so the above determinant is equal to one.  Armed with these facts, we now verify that
\begin{align*} [\theta(g)] \left(
                   \begin{Array}{cc}
                     a & \vec{v} \\
                     \vec{w} & d \\
                   \end{Array}
                 \right) [\theta(g)] \left(
                   \begin{Array}{cc}
                     \alpha & \vec{\phi} \\
                     \vec{\psi} & \delta \\
                   \end{Array}
                 \right) & = \left(
                   \begin{Array}{cc}
                     a & g\vec{v} \\
                     g^{-t} \vec{w} & d \\
                   \end{Array}
                 \right) \left(
                   \begin{Array}{cc}
                     \alpha & g\vec{\phi} \\
                     g^{-t} \vec{\psi} & \delta \\
                   \end{Array}
                 \right) \\
        & \hspace{-1cm} = \left(
                   \begin{Array}{cc}
                     a\alpha + g\vec{v} \circ g^{-t}\vec{\psi} & a g\vec{\phi} + \delta g\vec{v} - g^{-t}\vec{w} \times g^{-t}\vec{\psi} \\
                     \alpha g^{-t}\vec{w} + \delta g^{-t}\vec{\psi} + g\vec{v} \times g\vec{\phi} & d \delta + g^{-t}\vec{w} \circ g\vec{\phi} \\
                   \end{Array}
                 \right) \\
        & \hspace{-1cm} = \left(
                   \begin{Array}{cc}
                     a\alpha + \vec{v} \circ \vec{\psi} & g(a \vec{\phi} + \delta \vec{v} - \vec{w} \times \vec{\psi}) \\
                     g^{-t} (\alpha \vec{w} + \delta \vec{\psi} + \vec{v} \times \vec{\phi}) & d \delta + \vec{w} \circ \vec{\phi} \\
                   \end{Array}
                 \right) \\
        & \hspace{-1cm} = [\theta(g)] \left( \left(
                   \begin{Array}{cc}
                     a & \vec{v} \\
                     \vec{w} & d \\
                   \end{Array}
                 \right) \left(
                   \begin{Array}{cc}
                     \alpha & \vec{\phi} \\
                     \vec{\psi} & \delta \\
                   \end{Array}
                 \right) \right).
\end{align*}
\end{proof}

\begin{proposition}
The map $\theta$ is an injective morphism of groups.
\end{proposition}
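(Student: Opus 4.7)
The plan is to verify the two assertions separately and directly from the definition of $\theta$. Both reduce to elementary matrix manipulations, so I expect no real obstacle; the main ``trick'' is only to remember how transpose-inverse interacts with composition.

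First, for the homomorphism property $\theta(gh) = \theta(g)\theta(h)$, I apply both sides to a general element $\left(\begin{smallmatrix} a & \vec{v} \\ \vec{w} & d \end{smallmatrix}\right) \in \oo$. The left-hand side produces $\left(\begin{smallmatrix} a & (gh)\vec{v} \\ (gh)^{-t}\vec{w} & d \end{smallmatrix}\right)$, while the right-hand side, computed by applying $\theta(h)$ first and then $\theta(g)$, yields $\left(\begin{smallmatrix} a & g(h\vec{v}) \\ g^{-t}(h^{-t}\vec{w}) & d \end{smallmatrix}\right)$. Agreement follows from the identity $(gh)^{-t} = g^{-t} h^{-t}$, which itself is immediate from the fact that transpose reverses products and commutes with inversion. (Equivalently, one can cite that $g \mapsto g^{-t}$ is a group automorphism of $\mathrm{GL}_3(k)$, so it preserves products.) Combined with the previous proposition that each $\theta(g)$ lies in $\mathrm{Aut}(\oo/k)$, this shows $\theta$ is a group homomorphism.

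For injectivity, I determine the kernel. Suppose $\theta(g) = \mathrm{id}_\oo$. Apply $\theta(g)$ to an element of the form $\left(\begin{smallmatrix} 0 & \vec{v} \\ 0 & 0 \end{smallmatrix}\right)$: the hypothesis forces $g\vec{v} = \vec{v}$ for every $\vec{v} \in k^3$, so $g$ is the identity matrix. Hence $\ker \theta = \{I\}$ and $\theta$ is injective. (One could equally well read off injectivity from the bottom-left entries, using $g^{-t}\vec{w} = \vec{w}$ for all $\vec{w}$.) This completes the proof.
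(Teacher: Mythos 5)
Your proof is correct and matches the paper's approach, which simply notes that the verification is elementary and hinges on the fact that matrix inversion and transposition commute (equivalently, your identity $(gh)^{-t}=g^{-t}h^{-t}$). Your explicit kernel computation for injectivity is a reasonable filling-in of detail the paper leaves implicit.
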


\begin{proof}
This verification is elementary, and requires only the the fact that matrix inversion and transposition commute.
\end{proof}

We can easily see that the action of $\theta$ leaves the diagonal entries of an octonion element unchanged, which means that for $K = k \times k \subset \oo$ we have that $\theta(\text{SL}_3(k)) \subseteq \text{Aut}(\oo/K)$.  In fact, we can do better:

\begin{proposition}
Fix a tower of composition algebras satisfying $k \subset K \subset \mathfrak{B} \subset \mathfrak{C}$.  The image $\theta(\text{SL}_3(k))$ in $\text{Aut}(\oo)$ is equal to $\text{Aut}(\oo/K)$.
\end{proposition}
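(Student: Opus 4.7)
My plan is to exploit the Peirce decomposition of $\oo$ with respect to the two orthogonal idempotents in $K$, and then extract the matrix data of a given $\phi \in \text{Aut}(\oo/K)$ from its action on the two off-diagonal pieces.

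First I would set $e_1 = \left(\begin{Array}{cc} 1 & 0 \\ 0 & 0 \end{Array}\right)$ and $e_2 = \left(\begin{Array}{cc} 0 & 0 \\ 0 & 1 \end{Array}\right)$, so that $K = k e_1 \oplus k e_2$, and define
\[ V_1 = \left\{ \left(\begin{Array}{cc} 0 & \vec{v} \\ 0 & 0 \end{Array}\right) : \vec{v} \in k^3 \right\}, \qquad V_2 = \left\{ \left(\begin{Array}{cc} 0 & 0 \\ \vec{w} & 0 \end{Array}\right) : \vec{w} \in k^3 \right\}. \]
A direct computation with the multiplication formula shows $e_1 x = x$ and $x e_1 = 0$ for $x \in V_1$, with the symmetric statement for $V_2$. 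Hence
\[ V_1 = \{x \in \oo : e_1 x = x,\ x e_1 = 0\}, \qquad V_2 = \{x \in \oo : e_2 x = x,\ x e_2 = 0\}, \]
an algebraic characterization that makes $V_1$ and $V_2$ invariant under any $\phi \in \text{Aut}(\oo/K)$, since $\phi$ fixes $e_1, e_2$. Because $\oo = K \oplus V_1 \oplus V_2$, the restrictions of $\phi$ to $V_1$ and $V_2$ determine $\phi$ completely, and under the identifications $V_i \cong k^3$ they are given by some $g_1, g_2 \in \text{GL}_3(k)$.

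Next I would pin down the relation between $g_1$ and $g_2$ using two products. The product $V_1 \cdot V_2 \to K$ evaluates to
\[ \left(\begin{Array}{cc} 0 & \vec{v} \\ 0 & 0 \end{Array}\right)\left(\begin{Array}{cc} 0 & 0 \\ \vec{w} & 0 \end{Array}\right) = (\vec{v} \circ \vec{w})\, e_1, \]
and since $\phi$ fixes $e_1$, multiplicativity forces $(g_1 \vec{v}) \circ (g_2 \vec{w}) = \vec{v} \circ \vec{w}$ for all $\vec{v}, \vec{w}$, i.e.\ $g_1^t g_2 = I$, so $g_2 = g_1^{-t}$. Similarly the product $V_1 \cdot V_1 \to V_2$ evaluates to
\[ \left(\begin{Array}{cc} 0 & \vec{v} \\ 0 & 0 \end{Array}\right)\left(\begin{Array}{cc} 0 & \vec{\phi} \\ 0 & 0 \end{Array}\right) = \left(\begin{Array}{cc} 0 & 0 \\ \vec{v} \times \vec{\phi} & 0 \end{Array}\right), \]
and applying the identity $g_1 \vec{v} \times g_1 \vec{\phi} = (\det g_1)\, g_1^{-t}(\vec{v} \times \vec{\phi})$ that was used in the previous proposition, multiplicativity together with $g_2 = g_1^{-t}$ forces $\det g_1 = 1$.

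With $g_1 \in \text{SL}_3(k)$ and $g_2 = g_1^{-t}$ established, comparing with the definition of $\theta$ shows $\phi = \theta(g_1)$, which gives the reverse inclusion $\text{Aut}(\oo/K) \subseteq \theta(\text{SL}_3(k))$. The forward inclusion was already noted in the paragraph preceding the proposition. I do not expect a real obstacle here; the only thing to handle carefully is the verification that $V_1$ and $V_2$ admit an invariant algebraic characterization (so $\phi$ cannot mix them), and that $\phi$ acts on the diagonal pieces $e_1, e_2$ trivially rather than by some permutation — both of which follow at once from $\phi|_K = \mathrm{id}_K$.
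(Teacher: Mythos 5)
Your argument is correct and complete. Note that the paper does not actually supply a proof of this proposition; it simply cites pages $71$--$72$ of Jacobson's paper, so there is no internal argument to compare against. What you have written is essentially the standard proof (and, in substance, Jacobson's): the Peirce decomposition $\oo = K \oplus V_1 \oplus V_2$ relative to the idempotents $e_1, e_2$, the algebraic characterization $V_i = \{x : e_i x = x,\ x e_i = 0\}$ which forces any $\phi \in \text{Aut}(\oo/K)$ to preserve each $V_i$ separately (rather than permute them), and then the two multiplicative constraints $V_1 \cdot V_2 \subseteq k e_1$ and $V_1 \cdot V_1 \subseteq V_2$ which pin down $g_2 = g_1^{-t}$ and $\det g_1 = 1$ respectively. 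All the computations you invoke check out against the explicit Zorn multiplication formula, and the identity $g\vec{v} \times g\vec{w} = (\det g)\, g^{-t}(\vec{v} \times \vec{w})$ is already available from the proof that $\theta(g)$ is an automorphism. The one refinement worth recording is that the value of your write-up over the paper's citation is precisely its self-containedness: it turns a black-box reference into a verification that uses only structures already set up in Sections \ref{objects} and \ref{SL3}.
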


\begin{proof}
A proof of this may be found on pages $71-72$ of \cite{Jacobson1958}.
\end{proof}
% or in Lausch, p79.

\section{Some subgroups isomorphic to $\text{SL}_{2}(k)$}
\label{LongSL2}

In this definition, we begin with the particular embedding $\eta_{\ii}: \text{Mat}_2(k) \hookrightarrow \oo$, which corresponds to a choice of decomposition of $v \in \oo$ as $v = \eta_{\ii}(a)+\eta_{\ii}(b)J$, as discussed in Section \ref{decompositions}.  We will discuss the other embeddings of $\text{Mat}_2(k)$ into $\oo$ later on.

\begin{definition}
For the choice of embedding $\eta_{\ii}$, we have a related embedding of groups\label{symbol:gamma}
\[ \gamma_{\ii}: \text{SL}_2(k) \hookrightarrow \text{Aut}(\oo/k), \]
where $\gamma_{\ii}(g)$ is defined in the following way: after decomposing $v \in \oo$ into $v = \eta_{\ii}(a)+\eta_{\ii}(b)J$, with $a,b \in \text{Mat}_2(k)$, apply the element $g$:
\[ [\gamma_{\ii}(g)] \bigl( \eta_{\ii}(a)+\eta_{\ii}(b)J \bigr) = \eta_{\ii}(a)+\eta_{\ii}(gb)J. \]
\end{definition}

\begin{proposition}
The maps $\gamma_{\ii}(g): \oo \rightarrow \oo$ are automorphisms for each $g \in SL_2(k)$.
\end{proposition}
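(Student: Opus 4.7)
The plan is to verify directly that $\gamma_{\ii}(g)$ is $k$-linear, bijective, and multiplicative; linearity and bijectivity come essentially for free, so the real content is the multiplicativity, which is where the hypothesis $\det g = 1$ must be used. Linearity is clear because $\eta_{\ii}$ is linear and left multiplication by $g$ is a linear operation on $\text{Mat}_2(k)$. Bijectivity can be read off by writing down $\gamma_{\ii}(g^{-1})$ as a two-sided inverse, so the entire argument reduces to showing that $\gamma_{\ii}(g)$ respects the product.

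For multiplicativity, I would take two arbitrary elements $v = \eta_{\ii}(a)+\eta_{\ii}(b)J$ and $w = \eta_{\ii}(c)+\eta_{\ii}(d)J$ of $\oo$ and compute both sides using the Cayley--Dickson product of Section \ref{decompositions} (with splitting element $l = J$ and $\mu = 1$):
\[ (a+bJ)(c+dJ) = (ac + \overline{d}b) + (da + b\overline{c})J. \]
Applying $\gamma_{\ii}(g)$ to this product leaves the first summand alone and multiplies the second by $g$ on the left, while the product of the images $\gamma_{\ii}(g)(v)\cdot \gamma_{\ii}(g)(w) = (a + (gb)J)(c + (gd)J)$ expands via the same Cayley--Dickson formula to
\[ \bigl(ac + \overline{gd}\cdot gb\bigr) + \bigl(gd\cdot a + gb\cdot \overline{c}\bigr)J. \]
The components along $J$ match immediately after factoring $g$ on the left, so the whole problem collapses to the matrix identity $\overline{d}b = \overline{gd}\cdot gb$ for all $b,d \in \text{Mat}_2(k)$.

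The critical step is the identity $\overline{g}g = \det(g)\ee$ in $\text{Mat}_2(k)$, which is a special case of Lemma \ref{lem:normfacts}(h) together with the observation that $N = \det$ on $\text{Mat}_2(k)$. Since $\text{Mat}_2(k)$ is associative and $\overline{gd} = \overline{d}\,\overline{g}$ by Lemma \ref{lem:normfacts}(e), I may regroup
\[ \overline{gd}\cdot gb = \overline{d}\,\overline{g}\,g\,b = \overline{d}\,(\det g)\,b = \overline{d}b, \]
the last equality using $\det g = 1$. This is the only place in the entire argument where the restriction from $\text{GL}_2(k)$ to $\text{SL}_2(k)$ is essential; were $g$ merely invertible, one would pick up a scalar factor of $\det g$ and destroy multiplicativity.

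The hard part, such as it is, is primarily bookkeeping: one must recognize that the rearrangement $\overline{gd}\cdot gb = \overline{d}(\overline{g}g)b$ is legitimate inside the associative subalgebra $\eta_{\ii}(\text{Mat}_2(k)) \subset \oo$, even though $\oo$ itself is not associative, and apply the involution-reversal rule correctly to $\overline{gd}$. Once these observations are in place, the two expressions for the product coincide and the conclusion follows.
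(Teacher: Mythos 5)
Your argument is correct and follows essentially the same route as the paper's proof: reduce to multiplicativity via the Cayley--Dickson product, and use the fact that for $g \in \text{SL}_2(k)$ the involution satisfies $\overline{g}g = \det(g)\ee = \ee$ (the paper phrases this as $\overline{g} = g^{-1}$) to show $\overline{gd}\cdot gb = \overline{d}b$ inside the associative subalgebra $\eta_{\ii}(\text{Mat}_2(k))$. Your explicit attention to where associativity and $\det g = 1$ are used is, if anything, slightly more careful than the paper's write-up.
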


\begin{proof}
Again, it is easily verified that $\gamma_{\ii}(g)$ is linear, and invertibility follows from $g$ itself being invertible.  That it is also multiplicative depends upon the fact that $g \in \text{SL}_2(k)$ and $\det g = 1$, for in such a case the involution on $\text{Mat}_2(k) = \text{Mat}_2(k)$ corresponds to inversion: $\overline{g} = g^{-1}$.  Therefore,
\begin{align*}
 [\gamma_{\ii}(g)]((a+bl)(c+dl)) & = [\gamma_{\ii}(g)]((ac + \overline{d}b) + (da+bc)l) \\
    & = (ac + \overline{d}b) + (g(da+bc))l \\
    & = (ac + \overline{d} (g^{-1} g) b) + (g(da)+g(bc))l \\
    & = (ac + (\overline{d} \overline{g}) (g b)) + ((gd)a+(gb)c)l \\
    & = (ac + \overline{(gd)}(g b)) + ((gd)a+(gb)c)l \\
    & = (a + (g b)l)(c + (g d)l) \\
    & = [\gamma_{\ii}(g)](a+bl) \cdot [\gamma_{\ii}(g)](c+dl).
\end{align*}
\end{proof}

\begin{proposition}
The map $\gamma_{\ii}$ is an isomorphism of groups:
\[ \gamma_{\ii} \colon \text{SL}_2(k) \rightarrow \text{Aut}\bigl( \oo/\eta_{\ii}(\text{Mat}_2(k) )\bigr). \]
\end{proposition}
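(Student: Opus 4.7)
The plan is to verify the four usual conditions for $\gamma_{\ii}$ to be a group isomorphism onto $\text{Aut}\bigl(\oo/\eta_{\ii}(\text{Mat}_2(k))\bigr)$: the homomorphism property, the containment of the image in $\text{Aut}\bigl(\oo/\eta_{\ii}(\text{Mat}_2(k))\bigr)$, injectivity, and surjectivity. Throughout, write $V := \eta_{\ii}(\text{Mat}_2(k))$ so that Section \ref{decompositions} gives the orthogonal decomposition $\oo = V \oplus VJ$ governed by the Cayley-Dickson formula with $\mu = 1$; in particular $J^2 = \ee$.

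The first three conditions are essentially forced by the definition. Applying $\gamma_{\ii}(g)\gamma_{\ii}(h)$ to $\eta_{\ii}(a) + \eta_{\ii}(b)J$ produces $\eta_{\ii}(a) + \eta_{\ii}(ghb)J$, matching $\gamma_{\ii}(gh)$, which gives the homomorphism property. Setting $b = 0$ in the definition shows that $\gamma_{\ii}(g)$ acts trivially on $V$, so the image lies in $\text{Aut}(\oo/V)$. For injectivity, evaluating $\gamma_{\ii}(g) = \mathrm{id}$ on the element with $b$ equal to the identity matrix forces $g$ itself to equal the identity.

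Surjectivity is the substantive step. Given $\phi \in \text{Aut}(\oo/V)$, Theorem \ref{morphismspreservenorm} says that $\phi$ preserves $N$ and therefore also $B$. Since $\phi$ fixes $V$ pointwise and the decomposition $\oo = V \oplus VJ$ is orthogonal, $\phi$ sends $V^\perp = VJ$ into itself, so $\phi(J) = cJ$ for a unique $c \in V$; let $c_0 \in \text{Mat}_2(k)$ be the matrix with $\eta_{\ii}(c_0) = c$. Applying $\phi$ to $J^2 = \ee$ and invoking the Cayley-Dickson multiplication on $(cJ)^2$ yields
\[ \ee \;=\; \phi(J)^2 \;=\; (cJ)^2 \;=\; \mu\,\overline{c}c \;=\; N(c)\,\ee \;=\; \det(c_0)\,\ee, \]
so $c_0 \in \text{SL}_2(k)$. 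A second use of the Cayley-Dickson formula shows that $\eta_{\ii}(b)\cdot(cJ) = \eta_{\ii}(c_0 b)J$, and multiplicativity of $\phi$ combined with $\phi|_V = \mathrm{id}$ gives
\[ \phi\bigl(\eta_{\ii}(a) + \eta_{\ii}(b)J\bigr) \;=\; \eta_{\ii}(a) + \eta_{\ii}(b)\cdot cJ \;=\; \eta_{\ii}(a) + \eta_{\ii}(c_0 b)J, \]
which is exactly $\gamma_{\ii}(c_0)$ applied to the same element. Hence $\phi = \gamma_{\ii}(c_0)$.

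The main obstacle is the surjectivity argument, and specifically the step that pins down $\phi(J)$ as an element of the form $cJ$ with $c \in \text{SL}_2(k)$. What makes this work is the conjunction of two prior results: the orthogonality of the decomposition $\oo = V \oplus VJ$ (so that the norm-preservation supplied by Theorem \ref{morphismspreservenorm} forces $\phi$ to stabilize $VJ$), and the Cayley-Dickson identity $(cJ)^2 = \mu\,\overline{c}c = N(c)\ee$, which converts the algebraic constraint $\phi(J)^2 = \ee$ into the determinant condition identifying $c_0$ as an element of $\text{SL}_2(k)$. Everything else is bookkeeping with the explicit formulas.
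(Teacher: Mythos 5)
Your proof is correct and follows essentially the same route as the paper's: both reduce surjectivity to showing that $\phi(J) = cJ$ for some $c$ in the embedded $\text{Mat}_2(k)$ with $N(c)=1$, using orthogonality of the decomposition $\oo = \eta_{\ii}(\text{Mat}_2(k)) \oplus \eta_{\ii}(\text{Mat}_2(k))J$ and the association identity $b(cJ) = (cb)J$. The only (harmless) divergence is in how the determinant condition is obtained: the paper deduces $N(u)=1$ from norm preservation via $N(J) = N(\phi(J)) = N(u)N(J)$, whereas you deduce it from multiplicativity applied to $J^2 = \ee$ together with the Cayley--Dickson identity $(cJ)^2 = N(c)\ee$.
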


 \begin{proof}
 Since the matrices $a,b,g$ all associate, $\gamma_{\ii}$ is a homomorphism of groups from $\text{SL}_2(k) \rightarrow \text{Aut}(\oo)$.  It is obvious that the map $\gamma_{\ii}$ has trivial kernel.  Also, from the definition of $\gamma_{\ii}$ the element $g$ acts only on the second component of the decomposed $a + bl$, and therefore acts trivially on the image $\eta_{\ii}(\text{Mat}_2(k) )$ in $\oo$.  Therefore, $\text{SL}_2(k) \subseteq \text{Aut}\bigl(\oo/\text{Mat}_2(k)\bigr)$.

 Conversely, let $\phi \in \text{Aut}\bigl(\oo/\eta_{\ii}(\text{Mat}_2(k) )\bigr)$.  We already have that $\oo \cong \eta_{\ii}(\text{Mat}_2(k) ) \oplus \eta_{\ii}(\text{Mat}_2(k) )l$ for some element $l \in \text{Mat}_2(k)^{\perp}$ with $N(l) \neq 0$.  Since algebra automorphisms preserve norms and thus orthogonal subspaces, $\phi$ maps $\text{Mat}_2(k)^{\perp} \cong \eta_{\ii}(\text{Mat}_2(k) )l$ into itself, and therefore $\phi(l) = ul$, for some $u \in \text{Mat}_2(k)$.  In fact, $N(l) = N(\phi(l)) = N(ul) = N(u)N(l)$ implies that $N(u)=1$.  Therefore, for $a+bl \in \oo$,
 \[ \phi(a+bl) = \phi(a) + \phi(b)\phi(l) = a + b(ul) = a + (ub)l. \]
 The last equality in the above follows from Lemma \ref{associationlemma}.
 \end{proof}
% Springer/Veldkamp do this in a way independent of embeddings.

Recall that this definition began with a choice of decomposition $a+bl$ in the sense of the last section.  Likewise, we can start with any embedding of $\text{Mat}_2(k)$ into $\oo$, and end up with a similarly defined embedding of $\text{SL}_2(k)$ into $\text{Aut}(\oo)$.  All of the verifications we have just performed can also be verified for other embeddings.  In this way, each embedding of $\text{Mat}_2(k)$ into $\oo$ will yield a distinct copy of $SL_2$ into $\text{Aut}(\oo)$.

\begin{comment}
It should also be noted that $\text{Aut}(\oo / \text{Mat}_2(k)) \subset \text{Aut}(\oo / K)$, regardless of the embedding of $\text{Mat}_2(k)$, so all automorphisms of this type are actually contained in those of Section $3.2$.  (Marty:  Only if $K \subseteq \text{Mat}_2(k)$. More accurately, given a tower of composition algebras
\[ k \hookrightarrow K \hookrightarrow \mathfrak{B} \hookrightarrow \mathfrak{C}, \]
we have containments
\[ \text{Aut}(\mathfrak{C}/\mathfrak{B}) \subseteq \text{Aut}(\mathfrak{C}/K) \subseteq \text{Aut}(\mathfrak{C}). \]
\end{comment}

\section{Some (other) subgroups isomorphic to $\text{SL}_{2}(k)$}
\label{ShortSL2}

For each embedding of $\text{Mat}_2(k)$ into $\oo$ we now describe another, distinct way of embedding $\text{SL}_2(k)$ into $\text{Aut}(\oo)$.  Again, we describe the construction with respect to the embedding $\eta_{\ii}$, though there is no reason to prefer this choice.
\begin{definition}
For the choice of embedding $\eta_{\ii}$, we have a related embedding of groups\label{symbol:delta},
\[ \delta_{\ii}: \text{SL}_2(k) \hookrightarrow \text{Aut}(\oo/k). \]

Here, after decomposing an element $v \in \oo$ into $v = \eta_{\ii}(a)+\eta_{\ii}(b)J$, with $a,b \in \text{Mat}_2(k)$ as before, we apply the element $g$ in a different way:
\[ [\delta_{\ii}(g)](\eta_{\ii}(a)+\eta_{\ii}(b)J) = \eta_{\ii}(gag^{-1})+\eta_{\ii}(bg^{-1})J. \]
\end{definition}

That $\delta_{\ii}(\text{SL}_2(k)) \subseteq \text{Aut}(\oo/k)$, and that $\delta_{\ii}$ is an injective morphism of groups, are both verifications very similar to those of Section \ref{LongSL2}.  Again, all verifications hold regardless of the choice of initial decomposition.

Unlike the previous cases however, we note that $\delta_{\ii}(g)$ will act trivially on $a \in \text{Mat}_2(k)$ for all $g \in \text{SL}_2(k)$ if and only if $a$ is a scalar matrix.  Thus, the automorphisms constructed in this way all preserve the field $k$ (or rather, its isomorphic copy in $\oo$; after all, they are linear), but do not preserve $K$ or any copy of $\text{Mat}_2(k)$, and this copy of $\text{SL}_2(k)$ is not subgroup of the form $\text{Aut}(W/V)$ as described before.

\begin{comment}
\[ \text{SO}_4(\zz_p) \cong SL_2(\zz_p) \times SL_2(\zz_p) \hookrightarrow G_2(\qq_p) \]
\end{comment}

\section{Matters of Notation}
\label{notation}

While the embedding of $SL_3(k)$ into $G_2$ was relatively straightforward, the various embeddings of $SL_2(k)$ required us to make a number of choices.  We wish to create a notation for these associated automorphisms that will be even more descriptive, and be consistent with notation that we will use in later chapters.  First, note that so long as only one choice of embedding $\text{Mat}_2(k) \hookrightarrow \oo$ is used, composition of the automorphisms of type $\gamma$ and $\delta$ will simply correspond to matrix multiplication, since all of the involved matrices will be part of the same embedded copy of $\text{Mat}_2(k) \subset \oo$, and will associate.

Second, we note that $SL_2(k)$ is generated by matrices of the form:
\[ \left(
    \begin{Array}{cc}
        1 & x \\
        0 & 1 \\
    \end{Array}
\right)  \text{ and }  \left(
    \begin{Array}{cc}
        1 & 0 \\
        y & 1 \\
    \end{Array}
\right), \text{ with } x,y \in k. \]
Therefore, each embedded copy $\gamma(SL_2) \subset \text{Aut}(\oo)$ is likewise generated by those automorphisms of $\oo$ which act by these matrices, that is, by the automorphisms:

\[ \gamma\left(
    \begin{Array}{cc}
        1 & x \\
        0 & 1 \\
    \end{Array}
\right) \text{ and } \gamma\left(
    \begin{Array}{cc}
        1 & 0 \\
        y & 1 \\
    \end{Array}
\right), \text{ with } x,y \in k. \]
Each embedded copy of $\delta(SL_2)$ is also generated by the automorphisms analogous to these.

\begin{definition}
\label{additivity}
We will denote the following embeddings of the additive group $k^+$ into the multiplicative group $SL_2(k)$ as follows:
\[
e^{+}:  s \mapsto \left(
                                \begin{Array}{cc}
                                  1 & s \\
                                  0 & 1 \\
                                \end{Array}
                              \right)
\quad \text{and} \quad
e^{-}:  s \mapsto \left(
                                \begin{Array}{cc}
                                  1 & 0 \\
                                  s & 1 \\
                                \end{Array}
                              \right).
\]
So, for a fixed $\eta: \text{Mat}_2(k) \hookrightarrow \oo$, we consolidate all of our notation\label{symbol:gammapm}\label{symbol:deltapm} by defining the following maps from additive group $k^+$ to the group $\text{Aut}(\oo)$:
\begin{center}
\begin{tabular}{cc}
  % after \\: \hline or \cline{col1-col2} \cline{col3-col4} ...
  $\gamma^{+} = \gamma \circ e^{+}$, \; & \;$\delta^{+} = \delta \circ e^{+}$, \\
  $\gamma^{-} = \gamma \circ e^{-}$, \; & \;$\delta^{-} = \delta \circ e^{-}$. \\
\end{tabular}
\end{center}
\end{definition}

Defined by a composition of injective morphisms, the maps above are all themselves injective morphisms from the additive group $k^+$ to the group $\text{Aut}(\oo)$, and there are $12$ such maps. As an example, with $x \in k$, the automorphism $\delta^{-}_{\jj}(s)$ acts on an element $a+bK \in \oo$ as
\[ \delta^{-}_{\jj} (x) (a+bK) = \left(\left(\begin{Array}{cc}
        1 & 0 \\
        s & 1 \\
    \end{Array}
\right) \biggl(\: a \:\biggl) \left(\begin{Array}{cc}
        1 & 0 \\
        -s & 1 \\
    \end{Array}
\right)\right) + \left( \biggl(\: b \:\biggl) \left(\begin{Array}{cc}
        1 & 0 \\
        -s & 1 \\
    \end{Array}
\right)\right)\left(\begin{Array}{cc}
        0 & \kk \\
        \kk & 0 \\
    \end{Array}
\right). \]

It is precisely these automorphisms $\gamma^{\pm}(s)$ and $\delta^{\pm}(t)$, for $s,t \in k$, that will act as our Chevalley generators in the remaining chapters.  The automorphisms $\theta(g)$ corresponding to the $SL_3(k)$ subgroup will make a significant contribution as well.

\newpage
\chapter{Chevalley Groups}
\label{chevalleygroups}

\section{Background}
\label{background}

Historically, the identification of many abstract simple groups was done via a careful consideration of simple \emph{complex} Lie groups.  For example, if one defines the group $SL_2(\cc)$ to be the group of two-by-two matrices of determinant one, then the field $\cc$ may be easily replaced by any field or commutative ring (with identity) without alteration of the definition.

But this strategy requires one to construct and study each simple Lie group individually.  Moreover, many other groups which were known (at least formally) to exist via an association to a Lie algebra or root system, failed to admit such a simple definition.  Theoretically, all such algebraic groups can be considered as subgroups of $GL_n$ satisfying some polynomial conditions on the entries, though the size of $n$ and the nature of the polynomial conditions proved to be elusive.  At the time of Chevalley's work in \cite{Chevalley1955}, only the exceptional group $G_2$ had admitted\footnote{The group $G_2$ was treated for the first time in \cite{Dickson1901} and \cite{Dickson1905}.  In Chevalley's introduction of \cite{Chevalley1955}, he states that he has himself applied this method to the groups $F_4$, $E_6$, and $E_7$ in an unedited and unpublished work.} such an identification.

The strength of Chevalley's construction is that it allows us to study all groups which arise from an abstract root system \emph{simultaneously}, regardless of their association to any particular Lie algebra or field.  In its basic form, this construction will begin with a root system, and then identify a set of generators for the group which are in families corresponding to the roots in the root diagram.  One then imposes a set of relations on these generators which will yield the desired group structure.  These relations are explicitly described in \cite{Steinberg1967}, from which the main part of this section is taken.  A group constructed in this manner will be called a \textbf{Chevalley group}.

Our goal in this chapter is to justify the use of the name $G_2$ for $\text{Aut}(\oo)$ by identifying the generators of $\text{Aut}(\oo)$ established in the last section with the Chevalley generators coming from the root diagram of the Lie Algebra $\frak{g}_2$, and then verifying that our generators satisfy the needed Chevalley relations.

\section{Abstract Root Systems}
\label{abstractrootsystems}

To define a Chevalley group $G$, we require only an abstract, reduced root system.  We therefore proceed with a few definitions.

\begin{definition}[\cite{Knapp2002}]
Let $E$\label{symbol:E} be a finite-dimensional real inner product space with inner product $( \cdot , \cdot )$\label{symbol:rootinnerproduct} and norm squared $\left| \,\cdot\, \right|^2$\label{symbol:rootnorm}.  An \textbf{abstract, reduced root system} in $E$ is a finite set $\Phi$\label{symbol:Phi} of nonzero elements of $E$ such that
\begin{enumerate}
\item $\Phi$ spans $E$.
\item If $\alpha$ and $\beta$ are in $\Phi$, then $\displaystyle \frac{2( \beta , \alpha )}{\left| \alpha \right|^2}$ is an integer.
\item For each $\alpha, \beta \in \Phi$, the reflection $\displaystyle s_{\alpha}(\beta) := \beta - \frac{2( \beta , \alpha )}{\left| \alpha \right|^2}\alpha$ is also a root in $\Phi$.
\item For each $\alpha \in \Phi$, if $n\alpha \in \Phi$ then $n = \pm 1$.
\end{enumerate}
\end{definition}

We may choose a decomposition of $\Phi$ into \textbf{positive}\label{symbol:Phiplus} and \textbf{negative} roots $\Phi = \Phi^+ \sqcup \Phi^-$.  The way that we choose this decomposition is unimportant, so long as the positive roots satisfy the following properties:
\begin{enumerate}
\item For any nonzero $\alpha \in \Phi$, exactly one of $\alpha$ and $-\alpha$ is positive.
\item The sum of positive roots is positive (when it is a root at all).
\end{enumerate}

Then, among the positive roots we can find the set of \textbf{simple} roots\label{symbol:Delta} $\Delta$, which are defined by the property that if $\delta \in \Delta$, then $\delta$ cannot be written $\delta = \alpha + \beta$ for $\alpha,\beta \in \Phi^+$.

\begin{definition}
Let $W$ denote the \textbf{Weyl group}\label{symbol:W} of isometries of $E$, generated by the elements $s_{\alpha}$\label{symbol:reflection} (from root axiom (c)) which reflect a root $\beta$ across the hyperplane orthogonal to $\alpha$ in $E$.
\end{definition}

\begin{definition}
An abstract root system is said to be \textbf{reduced} if $\alpha \in \Delta$ implies $2\alpha \notin \Delta$.
\end{definition}

After making some choice of simple roots $\Delta = \{\alpha_{1},\alpha_{2},\dots,\alpha_{n}\}$ (here $n = \dim E$), we define the factors
\[ A_{ij} = \frac{2( \alpha_{i} , \alpha_{j} )}{\left| \alpha_{i} \right|^2}. \]

From the root axiom (b), we are guaranteed that $A_{ij} \in \zz$.  Then, the \textbf{Cartan matrix} of $\Phi$ (with respect to $\Delta$) is defined to be the $n \times n$ matrix $A = \left( A_{ij} \right)$.  Although this matrix will depend not only on our choice of simple roots, but also on their enumeration, distinct choices will lead to matrices which are conjugate by some permutation matrix.

The combinatorial properties imposed on the entries of this matrix by the root axioms lead to the classification of all such Cartan matrices, and to the notion of the Dynkin diagram.  The classification of all Dynkin diagrams, and therefore all abstract reduced root systems, is well-known and can be found in references such as \cite{Humphreys1972} or \cite{Knapp2002}.

\section{The Chevalley Construction}
\label{chevconstruction}

We build our Chevalley group\label{symbol:ChevGroup} $G = G_{\Phi}(k)$ from a root system $\Phi$ and a field $k$.  The Chevalley generators of $G$ will each be elements of the form $e_{\alpha}(t)$\label{symbol:ChevElements}, parameterized by $t \in k$ and $\alpha \in \Phi$.  From these basic generators one can also construct the following secondary elements:

\begin{definition}
For $t \in k^{\times}$, define:
\begin{align*}
w_{\alpha}(t) & := e_{\alpha}(t)e_{-\alpha}(-t^{-1})e_{\alpha}(t), \\
h_{\alpha}(t) & := w_{\alpha}(t)w_{\alpha}(-1).
\end{align*}
\end{definition}

Having these generators and group elements, we describe the relations which define the group $G$, taken from \cite[pg $66$]{Steinberg1967}:

\begin{enumerate}
\item The $e_{\alpha}$ are each homomorphisms from the additive group of $k$ into $G$, that is:
\[ e_{\alpha}(s+t) = e_{\alpha}(s)e_{\alpha}(t) \text{ for all } s,t \in k. \]

\item If $\alpha, \beta \in \Phi$ with $\alpha + \beta \neq 0$, then
\[ [e_{\beta}(t) , e_{\alpha}(s)] = \prod e_{i \alpha + j \beta}(N_{ij} s^i t^j), \]
where the product is taken over all (strictly) positive integers $i,j \in \zz$ such that $i \alpha + j \beta \in \Phi$, and the $N_{ij}$\label{symbol:ChevalleyStructureConstants} are each integers depending on $\alpha,\beta$, but not on $s,t$.

\item Each $h_{\alpha}$ is multiplicative in $k^{\times}$; i.e., $h_{\alpha}(s)h_{\alpha}(t) = h_{\alpha}(st)$ for all $s,t \in k^{\times}$.
\end{enumerate}

Though $G$ is presently only an abstract group, devoid of any topological or geometric properties, the relations just given still impart a significant amount of structure onto the group.  For example, one can show that structure constants $N_{ij}$ described in (b) will be equal to either $\pm1$,$\pm2$, or $\pm3$ for any initial $\Phi$ or choice of $\Delta$.  There is great deal more theory which comes directly from the Chevalley construction alone, the best treatments of which can be found in \cite{Chevalley1955} and \cite{Steinberg1967}.

From our generators, we define the following subgroups:

\begin{definition}[\cite{Rabinoff2003}]  % pg 11.
If $G = G_{\Phi}(k)$ is a Chevalley group constructed as we have just described, then \label{symbol:Torus}\label{symbol:Normalizer}
\begin{gather*}
T_{\Phi} := \bigl\langle\, h_{\alpha}(t) \,\bigr\rangle, \\
N_{\Phi} := \bigl\langle\, w_{\alpha}(t) \,\bigr\rangle,
\end{gather*}
where $\alpha \in \Phi$ and $t \in k^{\times}$.  Then $T_{\Phi} \leq G$ is called the \textbf{Cartan subgroup} of $G$.
\end{definition}

The subgroup $N_{\Phi}$ will also play a role, as evidenced by the following proposition.

\begin{proposition}    % Also Rabinoff, pg 14.
Let $T_{\Phi}$ and $N_{\Phi}$ be as described above, and $W$ be the Weyl group of $\Phi$ described in Definition $13$.  Then:
\begin{enumerate}
\item $T_{\Phi}$ is normal in $N_{\Phi}$.     %In fact, $N = N_G(T)$ if $k$ has more than three elements.
\item The map $\phi: N_{\Phi} \rightarrow W$ that sends $w_{\alpha}(t)$ to the reflection $s_{\alpha}$ induces an isomorphism of $N_{\Phi}/T_{\Phi}$ with $W$.
\end{enumerate}
\end{proposition}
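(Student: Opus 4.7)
Both parts rest on the standard Chevalley conjugation identity
\[
w_\alpha(t) \, e_\beta(s) \, w_\alpha(t)^{-1} = e_{s_\alpha(\beta)}\bigl(c_{\alpha,\beta}(t) \, s\bigr),
\]
where $c_{\alpha,\beta}(t) = \pm t^{m}$ for an integer $m$ depending only on $\alpha,\beta$. This formula is a consequence of relations (a)--(c) of the Chevalley construction (see \cite{Steinberg1967}) and crucially uses root axiom (c) to guarantee $s_\alpha(\beta) \in \Phi$, so that the right-hand side is a legitimate Chevalley generator.

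\textbf{For part (a),} I would substitute the conjugation formula into $w_\beta(s) = e_\beta(s) e_{-\beta}(-s^{-1}) e_\beta(s)$ to obtain an identity $w_\alpha(t) w_\beta(s) w_\alpha(t)^{-1} = w_{s_\alpha(\beta)}(s')$ for a suitable $s'$, and then into $h_\beta(s) = w_\beta(s) w_\beta(-1)$ to conclude
\[
w_\alpha(t) \, h_\beta(s) \, w_\alpha(t)^{-1} \in T_\Phi.
\]
Since the $h_\beta(s)$ generate $T_\Phi$ and the $w_\alpha(t)$ generate $N_\Phi$, normality follows.

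\textbf{For part (b),} I would first establish well-definedness of the assignment $w_\alpha(t) T_\Phi \mapsto s_\alpha$ on generators. From $h_\alpha(t) = w_\alpha(t) w_\alpha(-1)$ and the multiplicativity relation (c), a direct manipulation gives
\[
w_\alpha(t) \, w_\alpha(s)^{-1} = h_\alpha(t) \, h_\alpha(s)^{-1} = h_\alpha(t s^{-1}) \in T_\Phi,
\]
so the coset $w_\alpha(t) T_\Phi$ depends only on $\alpha$. Surjectivity of $\phi$ is then immediate since $W = \langle s_\alpha : \alpha \in \Phi \rangle$.

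\textbf{The main obstacle} is simultaneously verifying that $\phi$ extends to a well-defined homomorphism on all of $N_\Phi / T_\Phi$ and is injective. I would finesse both at once by constructing an auxiliary homomorphism $\psi \colon N_\Phi \to W$ whose existence is manifest. Using the conjugation formula, each $n \in N_\Phi$ permutes the root subgroups $U_\beta := \{e_\beta(s) : s \in k\}$ by some permutation of $\Phi$; since $w_\alpha(t)$ acts as the reflection $s_\alpha$ (by the formula above) while each $h_\beta(s)$ preserves every $U_\gamma$ setwise (again by the formula, which for $w_\gamma = h_\beta$ reduces to a scaling on each root line), one obtains a homomorphism $\psi \colon N_\Phi \to W$ that agrees with $\phi$ on generators, thereby confirming that $\phi$ is well-defined and a homomorphism. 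The inclusion $T_\Phi \subseteq \ker\psi$ is immediate from what was just said, and the reverse inclusion --- that any element of $N_\Phi$ normalizing every $U_\beta$ lies in $T_\Phi$ --- is the genuinely delicate step, following from the linear independence of the roots together with the root-space structure imposed by relation (b). Together these give $\ker\psi = T_\Phi$ and hence the required isomorphism $N_\Phi / T_\Phi \cong W$.
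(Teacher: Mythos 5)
The paper offers no argument of its own for this proposition; it defers entirely to \cite{Steinberg1967}, Lemma $22$, and your sketch follows the same standard route as that reference. The pieces you spell out are correct: the conjugation identity gives $w_\alpha(t)\,w_\beta(s)\,w_\alpha(t)^{-1}=w_{s_\alpha(\beta)}(cs)$ with $c$ a unit, and combining this with $w_\gamma(u)\,w_\gamma(v)^{-1}=h_\gamma(uv^{-1})$ yields $w_\alpha(t)\,h_\beta(s)\,w_\alpha(t)^{-1}=h_{s_\alpha(\beta)}(s)\in T_\Phi$, which settles part (a); the computation $w_\alpha(t)w_\alpha(s)^{-1}=h_\alpha(ts^{-1})$, the construction of $\psi\colon N_\Phi\to W$ from the permutation action on the root subgroups $U_\beta$, the surjectivity of the induced map, and the inclusion $T_\Phi\subseteq\ker\psi$ are all fine.

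The genuine gap is the step you yourself flag as delicate: $\ker\psi\subseteq T_\Phi$. ``Linear independence of the roots together with the root-space structure imposed by relation (b)'' does not prove this. An element $n\in\ker\psi$ is by definition a word in the $w_\alpha(t)$ whose induced permutation of $\Phi$ is trivial; nothing about the geometry of $\Phi$ inside $E$ tells you that such a word can be rewritten as a product of $h$'s --- that assertion \emph{is} the injectivity of $N_\Phi/T_\Phi\to W$ you are trying to prove, so the argument is circular at exactly this point. Two standard ways to close it: (i) show $\ker\psi$ lies in the Borel subgroup $B$ and invoke $N_\Phi\cap B=T_\Phi$, which rests on the uniqueness in the Bruhat decomposition (this is how Steinberg completes the proof); or (ii) staying inside your framework, use the fact that $W$ is presented by the generators $\{s_\alpha\}_{\alpha\in\Phi}$ subject to $s_\alpha^2=1$ and $s_\alpha s_\beta s_\alpha^{-1}=s_{s_\alpha(\beta)}$ (the nontrivial combinatorial input, proved in the appendix to \cite{Steinberg1967}); your conjugation identity together with $w_\alpha(1)^2=h_\alpha(-1)\in T_\Phi$ shows these relations hold among the cosets $w_\alpha(1)T_\Phi$, producing a homomorphism $\varphi\colon W\to N_\Phi/T_\Phi$ with $\bar\psi\circ\varphi=\mathrm{id}_W$, and $\varphi$ is surjective because $w_\alpha(t)=h_\alpha(t)w_\alpha(1)$, so both maps are isomorphisms. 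Without one of these supplements your argument establishes only a surjection $N_\Phi/T_\Phi\rightarrow W$, not the claimed isomorphism.
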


\begin{proof}
Proof of this may be found in \cite{Steinberg1967}, Lemma $22$.
\end{proof}

\section{When More is Known}
\label{moreinfo}

Though the theory of abstract Chevalley groups is extensive, quite a bit more can be said if we associate to the root system its appropriate Lie group or Lie algebra.  For example, at the end of the last section, we were able to define the subgroup $T_{\Phi}$ and the Weyl group $N_{\Phi}/T_{\Phi}$ of $G = G_{\Phi}$ without any topological references whatsoever.  Of course, in the case that $k=\cc$ and $G$ is a semisimple complex Lie group, $T_{\Phi}$ will be the familiar maximal torus of $G$, and its Lie algebra will be a Cartan subalgebra of $\mathfrak{g}$.  These considerations are unnecessary in the abstract case, but in this setting, one often considers the roots to be homomorphisms defined on such a maximal torus.  We will describe these homomorphism in Chapter \ref{standardapartment} where we define the coroot system.

In the case that a root system $\Phi$ can be identified with a well-known algebraic group \textbf{G}, then we might identify known generators of $G$ which are in correspondence with our Chevalley generators $e_{\alpha}(t)$.  Describing the precise correspondence between various generators and roots usually involves some (sometimes lengthy) calculation.  A simple example of such an identification is described in Section \ref{example}.

On the other hand, if one attaches to the root system its appropriate semisimple complex Lie algebra $\mathfrak{g}$, there is an explicit connection of the root $\alpha$ to the generator $e$ via the exponential map.  One can choose a root space decomposition
\[ \mathfrak{g} = \mathfrak{h} \bigoplus_{\alpha \in \Phi} \mathfrak{g}_{\alpha} \]
with a suitable choice of root vectors $E_{\alpha}$ from their respective root spaces that satisfy certain algebraic properties. Chevalley showed in \cite{Chevalley1955} that such a basis $\{ H_{\delta}, E_{\alpha} \}$ of $\mathfrak{g}$ (appropriately called a \textbf{Chevalley basis}) always exists.  One can then formally exponentiate each basis element $E_{\alpha}$ against an invariant $t$ to arrive at elements $e_{\alpha}(t): = \exp(tX_{\alpha})$, which will necessarily satisfy the Chevalley group relations and thus generate\footnote{This is true so long as $k$ is algebraically closed or we generate the simply-connected form.  The latter will be true in our case.} the group $G_{\Phi}$.  This group $G_{\Phi}$ is precisely the Chevalley group corresponding to $\mathfrak{g}$.  Since $t$ is an invariant which can be taken from any field, this approach is general.  We will discuss this exponentiation strategy in more detail in Chapter \ref{G2generators}.

\section{An Example}
\label{example}

The group $SL_2(k)$ can be constructed from rank one root diagram having two roots, $\Phi = \{ \alpha, -\alpha \}$, which is shown in Figure \ref{rootsA} along with its single line of reflection.  However, we have already discussed (in Section \ref{notation}) two generators of $SL_2(k)$, when considered as group of matrices.  Therefore we can make the association, for $t \in k$:
\[ e_{\alpha}(t) = \left(
     \begin{Array}{cc}
       1 & t \\
       0 & 1 \\
     \end{Array}
   \right), \quad e_{-\alpha}(t) = \left(
     \begin{Array}{cc}
       1 & 0 \\
       t & 1 \\
     \end{Array}
   \right).
 \]

\begin{figure}[ht!]
\begin{center}
\begin{tikzpicture}[thin,scale=2]%
    \draw (0:-1) -- (0:1);
    \draw[dashed] (90:0.5) -- (270:0.5);
    \draw{   (0:1) node[circle, draw, fill=black!50,
                        inner sep=0pt, minimum width=4pt, label=right:$\alpha$]{}
            (0:-1) node[circle, draw, fill=black!50,
                        inner sep=0pt, minimum width=4pt, label=left:$-\alpha$]{}
            (0:0) node[circle, draw, fill=black!50,
                        inner sep=0pt, minimum width=4pt]{}
};
\end{tikzpicture}
\caption{The root diagram of type $A_1$.}\label{rootsA}
\end{center}
\end{figure}
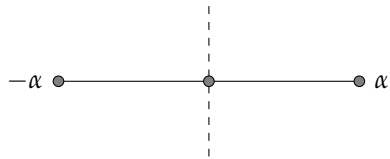

From these generators, we calculate that
\begin{align*}
w_{\alpha}(t) & = e_{\alpha}(t)e_{-\alpha}(-t^{-1})e_{\alpha}(t) \\
    & =  \left(
                           \begin{Array}{cc}
                             1 & t \\
                             0 & 1 \\
                           \end{Array}
                         \right)\left(
                           \begin{Array}{cc}
                             1 & 0 \\
                             -t^{-1} & 1 \\
                           \end{Array}
                         \right)\left(
                           \begin{Array}{cc}
                             1 & t \\
                             0 & 1 \\
                           \end{Array}
                         \right) \\
    & = \left(
                           \begin{Array}{cc}
                             0 & t \\
                             -t^{-1} & 0 \\
                           \end{Array}
                         \right), \\
h_{\phi}(t) & = w_{\phi}(t)w_{\phi}(-1) \\
    & = \left(
                           \begin{Array}{cc}
                             0 & t \\
                             -t^{-1} & 0 \\
                           \end{Array}
                         \right)\left(
                           \begin{Array}{cc}
                             0 & -1 \\
                             1 & 0 \\
                           \end{Array}
                         \right) \\
    & = \left(
                           \begin{Array}{cc}
                             t & 0 \\
                             0 & t^{-1} \\
                           \end{Array}
                         \right).
\end{align*}
Note that the Chevalley relations (a) and (c) are easily verified, while relation (b) is trivial in this case.  We see that the $h_{\alpha}(t)$ indeed generate the maximal torus of diagonal matrices in $SL_2(k)$.  We can also calculate that $W = N_{\Phi}/T_{\Phi} = \zz/2\zz$, which corresponds to our intuition from the root diagram $A_1$.

One may consult \cite{Rabinoff2003} for a similar discussion of the group $\text{Sp}_4(k)$, corresponding to the root system $C_2$.  In our next chapter we will make an identification of Chevalley generators $e_\alpha(t)$ to automorphisms in $\text{Aut}(\oo)$, though in this case it will not be so obvious that our choices of automorphisms actually generate this entire automorphism group.  We will therefore need to apply some extra argument.

\chapter{Chevalley Generators of $G_2$}
\label{G2generators}

\section{Background and Strategy}

As mentioned in Section \ref{moreinfo}, one strategy for identifying Chevalley generators of a Chevalley group $G$ is to first identify a Chevalley basis of the corresponding Lie algebra $\mathfrak{g}$.  For us, with $\mathfrak{g} = \mathfrak{g}_2$, we can allow a choice of simple roots $\Delta = \{ \gamma,\delta \}$, with $\delta$ the short root as shown in Figure \ref{rootsG2}, and a corresponding root space decomposition.

\begin{figure}[ht!]
\begin{center}
\begin{tikzpicture}[thin,scale=1.5]%
    \draw \foreach \x in {0,120,240} {
        (\x+90:1.732) -- (\x+210:1.732)
        (\x-90:1.732) -- (\x+30:1.732)
        (0:0) -- (0:1)
        (0:0) -- (150:1.732)
        (0:0) node[circle, draw, fill=black!50,
                        inner sep=0pt, minimum width=4pt]{}
        (0:1) node[circle, draw, fill=black!50,
                        inner sep=0pt, minimum width=4pt, label=right:$\delta$]{}
        (150:1.732) node[circle, draw, fill=black!50,
                        inner sep=0pt, minimum width=4pt, label=above left:$\gamma$]{}
};
\end{tikzpicture}
\caption{The root diagram of type $G_2$.}\label{rootsG2}
\end{center}
\end{figure}

\begin{definition}
A \textbf{Chevalley basis}\label{symbol:ChevBasis} for the Lie algebra $\mathfrak{g} = \mathfrak{g}_2$ is a basis
\[ \mathfrak{b} = \left\{ H_\gamma,H_\delta,E_\phi \right\}_{\phi \in \Phi} \]
which satisfies the following axioms:\label{symbol:Ealpha}\label{symbol:Halpha}
\begin{enumerate}
\item $[H_\gamma,H_\delta] = 0$.

\item $[H_\gamma,E_\phi] = \frac{2( \gamma, \phi )}{(\phi,\phi)} E_\phi$, and $[H_\delta,X_\phi] = \frac{2( \delta, \phi )}{(\phi,\phi)} E_\phi$.

\item $[E_\phi,E_{-\phi}] = aH_\gamma + bH_\delta$, where $\phi = a\gamma + b\delta$.

\item $[E_\phi,E_\psi] = 0$, if $\phi + \psi \neq 0$ and $\phi + \psi \notin \Phi$.

\item If $\phi + \psi \neq 0$ and $\phi + \psi \in \Phi$, and $r$ is the integer occurring in the $\phi$-string through $\psi$ given by $\psi - r\phi, \dots, \psi, \dots, \psi + q\phi$, then
    \[ [E_\phi,E_\psi] = \pm (r+1) E_{\phi + \psi}. \]
\end{enumerate}
\end{definition}

Exponentiating each basis element $E_{\phi}$ will give group elements $e_\phi(t) = \exp(t \cdot E_\phi)$ in $G_2^{Chev}$ and exponentiating the Chevalley basis relations will show that the $e_\phi(t)$ necessarily satisfy the Chevalley group relations.  Examples of this construction can be found in either \cite{Chevalley1955} or \cite{Steinberg1967}.

However, in our case we already have candidates for our Chevalley generators, the $\gamma^\pm$ and $\delta^\pm$ automorphisms, and we will soon show directly that they satisfy the Chevalley group relations.  Therefore we would like to use a slightly reversed argument.  We will rather describe a specific Lie algebra $\mathfrak{g}$ of Cartan type $G_2$, and a basis of this algebra, such that when our basis is exponentiated it gives precisely our automorphisms $\gamma^\pm$ and $\delta^\pm$.  Since we will have verified the Chevalley group relations, our choice of basis will necessarily be a Chevalley basis in $\mathfrak{g}$, and our $\gamma^\pm$ and $\delta^\pm$ automorphisms will generate a Chevalley group of type $G_2$.

To summarize, we will have the following structures:
\begin{enumerate}
\item A Lie algebra $\mathfrak{g}_2$.

\item A Chevalley basis $\mathfrak{b}$ of $\mathfrak{g}_2$.

\item The Chevalley group $G_2$ generated from exponentiated Chevalley basis elements.

\item The automorphism group $\text{Aut}(\oo)$.

\item The subgroup $G_{\gamma,\delta} \subseteq \text{Aut}(\oo)$ generated by the automorphisms $\gamma^\pm$ and $\delta^{\pm}$.
\end{enumerate}

Our arguments will show that $G_{\gamma,\delta} \cong G_2$.  The fact that $G_{\gamma,\delta}$ is actually all of $\text{Aut}(\oo)$ will be addressed using a result of George Seligman from \cite{Seligman1960}.  Seligman uses as $\mathfrak{g}_2$ the algebra of derivations of $\oo$, and therefore we will follow suit.

\section{Associations and Verifications}
\label{sec:associationsverifications}

We note again that the root diagram of type $G_2$ shows $12$ roots, $6$ long and $6$ short.  Similarly, we also have the $12$ maps of type $\gamma$ and $\delta$, each of which are homomorphisms from the additive group $k^+$ into the group $\text{Aut}(\oo)$.  Therefore we can identify each $\gamma$ with a long root in $\Phi$, and each $\delta$ with a short root.  This identification is shown in Figure \ref{autosandrootsG2}.  We should note that the relative associations are precise; once a choice of association is made between the two simple roots and particular automorphisms, the rest follow necessarily.

\begin{figure}[ht]
\begin{center}
\begin{tikzpicture}[thin,scale=1.5]%
    \draw \foreach \x in {0,120,240} {
        (\x+90:1.732) -- (\x+210:1.732)
        (\x-90:1.732) -- (\x+30:1.732)
        (0:0) -- (0:1)
        (0:0) -- (150:1.732)
        (0:0) node[circle, draw, fill=black!50,
                        inner sep=0pt, minimum width=4pt]{}
        (0:1) node[circle, draw, fill=black!50,
                        inner sep=0pt, minimum width=4pt, label=right:$\delta_{\ii}^{+}$]{}
        (60:1) node[circle, draw, fill=black!50,
                        inner sep=0pt, minimum width=4pt, label=above right:$\delta_{\kk}^{-}$]{}
        (120:1) node[circle, draw, fill=black!50,
                        inner sep=0pt, minimum width=4pt, label=above left:$\delta_{\jj}^{+}$]{}
        (180:1) node[circle, draw, fill=black!50,
                        inner sep=0pt, minimum width=4pt, label=left:$\delta_{\ii}^{-}$]{}
        (240:1) node[circle, draw, fill=black!50,
                        inner sep=0pt, minimum width=4pt, label=below left:$\delta_{\kk}^{+}$]{}
        (300:1) node[circle, draw, fill=black!50,
                        inner sep=0pt, minimum width=4pt, label=below right:$\delta_{\jj}^{-}$]{}
        (30:1.732) node[circle, draw, fill=black!50,
                        inner sep=0pt, minimum width=4pt, label=above right:$\gamma_{\jj}^{-}$]{}
        (90:1.732) node[circle, draw, fill=black!50,
                        inner sep=0pt, minimum width=4pt, label=above:$\gamma_{\ii}^{+}$]{}
        (150:1.732) node[circle, draw, fill=black!50,
                        inner sep=0pt, minimum width=4pt, label=above left:$\gamma_{\kk}^{-}$]{}
        (210:1.732) node[circle, draw, fill=black!50,
                        inner sep=0pt, minimum width=4pt, label=below left:$\gamma_{\jj}^{+}$]{}
        (270:1.732) node[circle, draw, fill=black!50,
                        inner sep=0pt, minimum width=4pt, label=below:$\gamma_{\ii}^{-}$]{}
        (330:1.732) node[circle, draw, fill=black!50,
                        inner sep=0pt, minimum width=4pt, label=below right:$\gamma_{\kk}^{+}$]{}
};
\end{tikzpicture}
\caption{The association of the automorphisms of $\oo$ to the roots of $G_2$.}\label{autosandrootsG2}
\end{center}
\end{figure}
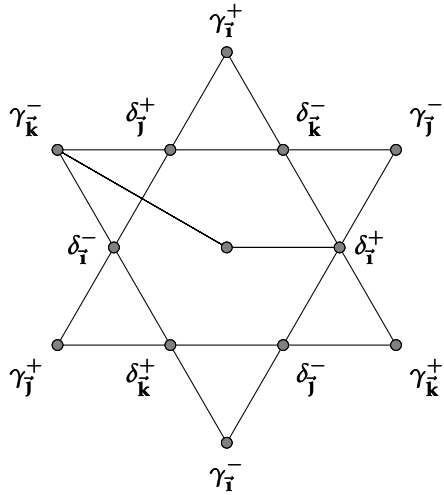

\begin{proposition}
\label{prop:autoverify}
The automorphisms $\gamma^\pm$ and $\delta^\pm$, as placed in Figure \ref{autosandrootsG2}, satisfy the Chevalley group relations.
\end{proposition}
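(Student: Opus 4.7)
The plan is to verify the three Chevalley group relations (a), (b), (c) of Section \ref{chevconstruction} for the twelve automorphisms $\gamma^{\pm}_{\vec u}$ and $\delta^{\pm}_{\vec u}$. Relation (a), that each $e_\phi\colon k^{+}\to \mathrm{Aut}(\oo)$ is additive, is essentially built into Definition \ref{additivity}: each $\gamma^{\pm}_{\vec u}$ and $\delta^{\pm}_{\vec u}$ is a composition of the additive homomorphism $e^{\pm}\colon k^{+}\to\mathrm{SL}_2(k)$ (whose additivity is just the product rule for upper/lower-triangular matrices) with the group homomorphism $\gamma_{\vec u}$ or $\delta_{\vec u}$ of Sections \ref{LongSL2} and \ref{ShortSL2}, so each $e_\phi$ is automatically a $k^{+}$-homomorphism.

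Relation (b) is the main work. For each unordered pair $\{\phi,\psi\}\subset\Phi$ with $\phi+\psi\neq 0$, the strategy is to pick one of the three octonion decompositions from Section \ref{decompositions} adapted to the subscripts $\vec u$ appearing in the two automorphisms, write a general $v\in\oo$ in coordinates, and compute $e_\psi(t)\,e_\phi(s)\,v$ and $e_\phi(s)\,e_\psi(t)\,v$ term by term. The two expressions either coincide --- which must happen when no positive integral combination $i\phi+j\psi$ lies in $\Phi$ --- or differ by a product $\prod e_{i\phi+j\psi}(N_{ij}\,s^i t^j)$, from which the structure constants can be read off directly. To avoid computing all $\binom{12}{2}$ cases independently, I would exploit the Weyl-group symmetry of the labeling in Figure \ref{autosandrootsG2}: the sixfold rotational symmetry of the $G_2$ diagram is realized by cyclically permuting $\vec u\in\{\ii,\jj,\kk\}$ together with sign flips, so it suffices to verify one representative per Weyl-orbit. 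The qualitatively distinct orbits are short--short at $60^\circ$ (sum a long root, one commutator term), short--short at $120^\circ$ (sum a short root, one term), short--long at $30^\circ$ (producing the full long root string $\gamma+\delta,\gamma+2\delta,\gamma+3\delta,2\gamma+3\delta$, so up to four terms), long--long at $60^\circ$ (sum a long root, one term), and the remaining pairs whose sum is not a root and for which the commutator must come out trivial.

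Finally, for relation (c) I would compute $w_\phi(t)=e_\phi(t)\,e_{-\phi}(-t^{-1})\,e_\phi(t)$ and $h_\phi(t)=w_\phi(t)\,w_\phi(-1)$ for one long and one short representative root by direct composition on a general $v\in\oo$. The resulting $h_\phi(t)$ should act as a diagonal torus automorphism scaling each coordinate of $v$ by a fixed integer power of $t$, whence $h_\phi(s)\,h_\phi(t)=h_\phi(st)$ is immediate; the remaining ten roots reduce to these two representatives via the same Weyl symmetry. The principal obstacle is the bookkeeping in (b): the short--long case produces the longest root string and therefore up to four $N_{ij}$ whose signs are very sensitive to the sign conventions adopted in Figure \ref{autosandrootsG2}, in Definition \ref{additivity}, and in the explicit formulas for $\gamma_{\vec u}$ and $\delta_{\vec u}$. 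A single sign misalignment would propagate through the entire commutator table, so the computations must be carried out on a single common decomposition and cross-checked against the known range $N_{ij}\in\{\pm 1,\pm 2,\pm 3\}$ of Chevalley constants for $G_2$.
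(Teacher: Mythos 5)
Your outline is correct, and for relations (a) and (c) it coincides with the paper's argument: additivity is built into Definition \ref{additivity}, and multiplicativity of $h_\alpha$ follows because each $h_\alpha$ lives inside a single embedded copy of $\text{Mat}_2(k)$, where it acts by a diagonal matrix. The real divergence is in how relation (b) is discharged. The paper makes no attempt at a symmetry reduction: it verifies the commutator relation for \emph{every} ordered pair of non-opposite roots by exhaustive symbolic computation in SAGE, and the proof in the text simply points to the resulting tables of structure constants in Appendices A and B. You instead propose to compute one representative per orbit of the sixfold symmetry (cyclic permutation of $\ii,\jj,\kk$ combined with sign flips) and transport the result. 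That reduction is legitimate, but it is not free: to use it you must first verify that the conjugating automorphisms --- e.g.\ $\theta(g)$ for $g$ a $3$-cycle permutation matrix in $\text{SL}_3(k)$, and whatever element realizes $-1$ in the Weyl group --- actually carry each root subgroup $e_\phi(t)$ to $e_{w\phi}(\pm t)$, which is itself a (short) direct computation on octonion coordinates that your write-up takes for granted. It also only transports the \emph{existence} of integer constants cleanly; pinning down each $N_{ij}$ still requires tracking the signs $\pm$ introduced by the conjugation, exactly the bookkeeping hazard you flag. So your route buys a much shorter by-hand verification at the cost of one extra lemma and delicate sign management, while the paper's route buys certainty about every individual constant at the cost of delegating the entire verification to machine computation. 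Either is acceptable as a proof of the proposition as stated.
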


\begin{proof}
First, each automorphism $\gamma$ and $\delta$ is a homomorphism from the additive group of $k$ to the multiplicative group of $\text{Aut}(\oo)$ by Definition \ref{additivity}.

According to the third Chevalley relation, we wish to show that the automorphisms corresponding to the $h_{\alpha}$ are multiplicative.  Since $w_{\alpha}$ and $h_{\alpha}$ are defined in terms of a single $\alpha \in \Phi$, the associated automorphisms will be built using a single choice of embedding of $\text{Mat}_2(k)$ into $\oo$.  Therefore composition of the involved automorphisms just corresponds to multiplication of the involved matrices.  Moreover, all matrices involved will associate with each other.

For example, if $e_{\alpha} = \delta^{+}_{\ii}$, then $w_{\alpha}(t)$ corresponds to an automorphism which acts on an element of $\oo$, split with respect to $\ii$:
\begin{align*}
[w_{\alpha}(t)](a+bJ) & = a + (e_{\alpha}(t)e_{-\alpha}(-t^{-1})e_{\alpha}(t)b)J \\
    & = a + \left( \left(
                           \begin{Array}{cc}
                             1 & t \ii \\
                             0 & 1 \\
                           \end{Array}
                         \right)\left(
                           \begin{Array}{cc}
                             1 & 0 \\
                             -t^{-1} \ii & 1 \\
                           \end{Array}
                         \right)\left(
                           \begin{Array}{cc}
                             1 & t \ii \\
                             0 & 1 \\
                           \end{Array}
                         \right) b \right) J \\
    & = a + \left( \left(
                           \begin{Array}{cc}
                             0 & t \ii \\
                             -t^{-1} \ii & 0 \\
                           \end{Array}
                         \right) b \right)J. \\
\end{align*}
Likewise, in this case $h_{\alpha}$ will correspond to an action by a matrix of the form
\begin{align*}
[h_{\alpha}(t)](a+bJ) & = [w_{\alpha}(t)w_{\alpha}(-1)](a+bJ) \\
    & = a + \left(\left(
                           \begin{Array}{cc}
                             0 & t\ii \\
                             -t^{-1}\ii & 0 \\
                           \end{Array}
                         \right)\left(
                           \begin{Array}{cc}
                             0 & -\ii \\
                             \ii & 0 \\
                           \end{Array}
                         \right)b\right)J \\
    & = a + \left(\left(
                           \begin{Array}{cc}
                             t & 0 \\
                             0 & t^{-1} \\
                           \end{Array}
                         \right)b \right)J.
\end{align*}
Again, since a single root is fixed in the Chevalley definition of $h_{\alpha}$, the action of the corresponding automorphism will always be with respect to a diagonal matrix in $\text{SL}_2(k)$.  Therefore actions of $h_{\alpha}(s)h_{\alpha}(t)$ and $h_{\alpha}(st)$ will be identical.

The computations necessary to verify the second Chevalley relation and identify the necessary $N_{ij}$ are given in Appendices A and B, completing our proof.
\end{proof}

For now, denote by $G_{\gamma,\delta} \subseteq \text{Aut}(\oo)$ the group generated by our $\gamma^\pm$ and $\delta^\pm$.  Though we have not yet proven that these automorphisms actually generate all of $\text{Aut}(\oo)$, we still have a valid definition of the subgroup $T_{\Phi} \subset G_{\gamma,\delta}$ generated by
\[ T_{\Phi} := \bigl\langle h_\alpha(t) \mid \alpha \in \Phi, t \in k^\times \bigl\rangle. \]
This leads us to the following proposition.

\begin{proposition}
\label{prop:thetorus}
Let $T_s$ be the maximal torus of diagonal matrices in $\text{SL}_3(k)$.  Then the image $\theta \left(T_s\right) \subset \text{Aut}(\oo)$ is equal to $T_{\Phi} = \bigl\langle h_\alpha(t) \mid \alpha \in \Phi, t \in k^\times \bigl\rangle \subset G_{\gamma,\delta}$.
\end{proposition}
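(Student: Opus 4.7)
The plan is to prove $\theta(T_s) = T_\Phi$ by double inclusion, using an explicit computation of each $h_\alpha(t)$ as an automorphism of $\oo$. The key observation for setting up these computations is that for every root $\alpha$ identified in Figure \ref{autosandrootsG2}, the root $-\alpha$ is identified with an automorphism sharing the same subscript $\vec{u} \in \{\ii, \jj, \kk\}$ and the opposite superscript, so both $e_\alpha$ and $e_{-\alpha}$ arise from a single group homomorphism $\gamma_{\vec{u}}$ or $\delta_{\vec{u}}$ from $\text{SL}_2(k)$. Since $\gamma_{\vec{u}}$ and $\delta_{\vec{u}}$ are group homomorphisms, the threefold product defining $w_\alpha(t)$ collapses inside the acting $\text{SL}_2(k)$ to the diagonal matrix already computed in Section \ref{example}, yielding $h_\alpha(t) = \gamma_{\vec{u}}(\text{diag}(t, t^{-1}))$ for $\alpha$ long and $h_\alpha(t) = \delta_{\vec{u}}(\text{diag}(t, t^{-1}))$ for $\alpha$ short.

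Next I would compute how each of these diagonal automorphisms acts on the coordinates $(a, v_1, v_2, v_3, w_1, w_2, w_3, d)$ of a general element of $\oo$, using the splittings in equations (2.1)--(2.3). In every case the action fixes $a$ and $d$ and scales each $v_i$, $w_i$ by an explicit integer power of $t$, precisely the form of $\theta(\text{diag}(s_1, s_2, s_3))$, which scales $v_i$ by $s_i$ and $w_i$ by $s_i^{-1}$. For instance, using the $\ii$-splitting one obtains $h_{\gamma_{\ii}^{+}}(t) = \theta(\text{diag}(1, t, t^{-1}))$ and $h_{\delta_{\ii}^{+}}(t) = \theta(\text{diag}(t^2, t^{-1}, t^{-1}))$; analogous computations in the $\jj$- and $\kk$-splittings handle the other ten roots and all land inside $\theta(T_s)$, establishing $T_\Phi \subseteq \theta(T_s)$.

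For the reverse inclusion, observe that $X_\bullet(T_s) = \{(a_1, a_2, a_3) \in \zz^3 \mid a_1 + a_2 + a_3 = 0\}$ is $\zz$-spanned by any two cocharacters of $T_s$ whose exponent vectors have $2 \times 2$ minor-determinant $\pm 1$. The cocharacters $(0, 1, -1)$ and $(1, -1, 0)$ coming from $h_{\gamma_{\ii}^{+}}$ and $h_{\gamma_{\kk}^{+}}$ satisfy this, and a short calculation using $s_1 s_2 s_3 = 1$ gives the explicit factorization $\theta(\text{diag}(s_1, s_2, s_3)) = h_{\gamma_{\ii}^{+}}(s_1 s_2) \cdot h_{\gamma_{\kk}^{+}}(s_1)$, so $\theta(T_s) \subseteq T_\Phi$.

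The main obstacle is the bookkeeping in the second step: for each of the twelve roots one must correctly translate the $\text{diag}(t, t^{-1})$-action on the matrix block $b_0$ through the factor $\eta_{\vec{u}}(b_0) \cdot (\text{splitting element})$ into the scaling of the intrinsic $\oo$-coordinates $(v_i, w_i)$. Once this splitting bookkeeping is straightened out, each individual identification reduces to elementary manipulation of $2 \times 2$ matrices and standard dot/cross product identities in $k^3$.
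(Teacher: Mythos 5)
Your proposal is correct and takes essentially the same route as the paper's own proof: both arguments rest on the explicit identifications $h_{\gamma_{\ii}^{+}}(t) = \theta(\mathrm{diag}(1,t,t^{-1}))$, $h_{\gamma_{\kk}^{+}}(t) = \theta(\mathrm{diag}(t,t^{-1},1))$, and $h_{\delta_{\ii}^{+}}(t) = \theta(\mathrm{diag}(t^2,t^{-1},t^{-1}))$, with your cocharacter-lattice factorization being just the paper's two-generator argument run in the opposite direction, and your plan to compute all twelve $h_\alpha$ directly replacing the paper's use of relations such as $h_{\gamma^{+}}(t) = h_{\gamma^{-}}(t^{-1})$. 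The only small slip is that for roots carrying a minus superscript the threefold product collapses to $\mathrm{diag}(t^{-1},t)$ rather than $\mathrm{diag}(t,t^{-1})$, which is harmless for the inclusions.
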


\begin{proof}
Since $\theta$ is an morphism of groups the image $\theta(T_s)$ can be generated by the two elements
\[ \theta\left(
           \begin{Array}{ccc}
             t &  &  \\
              & t^{-1} &  \\
              &  & 1 \\
           \end{Array}
         \right) \;\text{ and }\; \theta\left(
           \begin{Array}{ccc}
             1 &  &  \\
              & t &  \\
              &  & t^{-1} \\
           \end{Array}
         \right),
 \]
for $t \in k^\times$.  It is then an easy calculation that
\begin{align*}
h_{\gamma^+_{\kk}}(t) & = \gamma_{\kk}\left(
           \begin{Array}{cc}
               t &  \\
                & t^{-1} \\
           \end{Array}
         \right)\left(
           \begin{Array}{cc}
               a & \vec{v} \\
               \vec{w} & d \\
           \end{Array}
         \right) \\
    & = \left(
           \begin{Array}{cc}
               a & v_3 \\
               w_3 & d \\
           \end{Array}
         \right) + \left( \left(
           \begin{Array}{cc}
               t &  \\
                & t^{-1} \\
           \end{Array}
         \right) \left(
           \begin{Array}{cc}
               v_1 & w_2 \\
               -v_2 & w_1 \\
           \end{Array}
         \right)  \right) \cdot \left(
           \begin{Array}{cc}
               0 & \ii \\
               \ii & 0 \\
           \end{Array}
         \right) \\
    & = \left(
           \begin{Array}{cc}
               a & \langle tv_1,t^{-1}v_2,v_3 \rangle \\
               \langle t^{-1}w_1,tw_2,w_3 \rangle & d \\
           \end{Array}
         \right) \\
    & = \theta\left(
           \begin{Array}{ccc}
             t &  &  \\
              & t^{-1} &  \\
              &  & 1 \\
           \end{Array}
         \right)\left(
           \begin{Array}{cc}
               a & \vec{v} \\
               \vec{w} & d \\
           \end{Array}
         \right),
\end{align*}
and likewise that as automorphisms,
\[ \theta\left(
           \begin{Array}{ccc}
             1 &  &  \\
              & t &  \\
              &  & t^{-1} \\
           \end{Array}
         \right) = \gamma_{\ii}\left(
           \begin{Array}{cc}
               t &  \\
                & t^{-1} \\
           \end{Array}
         \right) = h_{\gamma^+_{\ii}}(t). \]
Therefore $T_s \subseteq T_{\Phi}$.  Conversely, while the subgroup $T_{\Phi}$ is defined to be generated by the $h_\alpha$ for $\alpha \in \Phi$, it may actually be generated by a smaller subset of these $h_\alpha$.  For example, it is immediate from the definitions that
\begin{align*}
h_{\gamma^+_{\ii}}(t) & = h_{\gamma^-_{\ii}}(t^{-1}), \\
h_{\gamma^+_{\jj}}(t) & = h_{\gamma^-_{\jj}}(t^{-1}), \\
h_{\gamma^+_{\kk}}(t) & = h_{\gamma^-_{\kk}}(t^{-1}).
\end{align*}
Also, from the above considerations we can show that
\[ h_{\gamma^+_{\kk}}(t)h_{\gamma^+_{\ii}}(t) = \theta\left(
           \begin{Array}{ccc}
             t &  &  \\
              & t^{-1} &  \\
              &  & 1 \\
           \end{Array}
         \right)\theta\left(
           \begin{Array}{ccc}
             1 &  &  \\
              & t &  \\
              &  & t^{-1} \\
           \end{Array}
         \right) = \theta\left(
           \begin{Array}{ccc}
             t &  &  \\
              & 1 &  \\
              &  & t^{-1} \\
           \end{Array}
         \right) = h_{\gamma^-_{\jj}}(t).\]
For the toral elements corresponding to short roots $\delta$, we calculate:
\begin{align*}
h_{\delta^+_{\ii}}(t) & = \delta_{\ii}\left(
           \begin{Array}{cc}
               t &  \\
                & t^{-1} \\
           \end{Array}
         \right)\left(
           \begin{Array}{cc}
               a & \vec{v} \\
               \vec{w} & d \\
           \end{Array}
         \right) \\
    & = \left(
           \begin{Array}{cc}
               t &  \\
                & t^{-1} \\
           \end{Array}
         \right)\left(
           \begin{Array}{cc}
               a & v_1 \\
               w_1 & d \\
           \end{Array}
         \right)\left(
           \begin{Array}{cc}
               t^{-1} &  \\
                & t \\
           \end{Array}
         \right) + \left( \left(
           \begin{Array}{cc}
               v_2 & w_3 \\
               -v_3 & w_2 \\
           \end{Array}
         \right)\left(
           \begin{Array}{cc}
               t^{-1} &  \\
                & t \\
           \end{Array}
         \right)  \right) \left(
           \begin{Array}{cc}
               0 & \jj \\
               \jj & 0 \\
           \end{Array}
         \right) \\
    & = \left(
           \begin{Array}{cc}
               a & \langle t^2 v_1,t^{-1}v_2,t^{-1}v_3 \rangle \\
               \langle t^{-2}w_1,tw_2,tw_3 \rangle & d \\
           \end{Array}
         \right) \\
    & = \theta\left(
           \begin{Array}{ccc}
             t &  &  \\
              & 1 &  \\
              &  & t^{-1} \\
           \end{Array}
         \right)\theta\left(
           \begin{Array}{ccc}
             t &  &  \\
              & t^{-1} &  \\
              &  & 1 \\
           \end{Array}
         \right)\left(
           \begin{Array}{cc}
               a & \vec{v} \\
               \vec{w} & d \\
           \end{Array}
         \right) \\
    & = h_{\gamma^-_{\jj}}(t)h_{\gamma^+_{\kk}}(t)\left(
           \begin{Array}{cc}
               a & \vec{v} \\
               \vec{w} & d \\
           \end{Array}
         \right).
\end{align*}
In this way all $h_\delta$ may be generated from different combinations of the $h_\alpha$, or equivalently, by the image $\theta(T_s)$.  Therefore $T_{\Phi} \subseteq T_s$ and our statement follows.
\end{proof}

\section{The Algebra of Derivations of $\oo$}

Our next task is to specify a Lie algebra of Cartan type $G_2$, and identify an appropriate Chevalley basis of that algebra.  Toward this, we make the following definition.

\begin{definition}
We denote by $\der$ the set of \textbf{derivations}\label{symbol:Der} on $\oo$, that is, the set of $k$-linear operators $D: \oo \rightarrow \oo$ satisfying, for all $x,y \in \oo$
\[ D\left( x  y \right) = x D(y) + D(x)  y. \]
\end{definition}

It is elementary to show that $\der$ is closed under commutation $[d_1,d_2] = d_1d_2 - d_2d_1$ and thus $\der$ is a Lie algebra over $k$.  In \cite{Jacobson1939}, N. Jacobson begins with this definition and proves that $\der$ is a simple, $14$-dimensional Lie algebra of Cartan type $G_2$.  In fact, this statement can be extended to any field of characteristic not $2$ or $3$; c.f. \cite[Theorem $5$]{Seligman1960}.  Another portion of Seligman's work proves the following:

\begin{proposition}[\cite{Seligman1960}]
Let $\text{char}\; k \neq 2,3,5,7,11,13,17$, and $\oo$ be the octonion algebra constructed over $k$.  Then the representation
\begin{align*}
\rho \colon \text{Aut}(\oo) & \longrightarrow \text{Aut}(\der) \\
    g & \longmapsto \left( D \mapsto g^{-1}Dg \right)
\end{align*}
is an isomorphism of groups.
\end{proposition}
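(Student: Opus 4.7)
The plan is to verify in order that $\rho$ is (i) well-defined and multiplicative (up to composition convention), (ii) injective, and (iii) surjective onto $\text{Aut}(\der)$. For (i), given $g \in \text{Aut}(\oo)$ and $D \in \der$, the linear operator $g^{-1}Dg$ satisfies the Leibniz rule because both $g$ and $g^{-1}$ are algebra automorphisms:
\[ (g^{-1}Dg)(xy) = g^{-1}\bigl( D(g(x))\,g(y) + g(x)\,D(g(y)) \bigr) = (g^{-1}Dg)(x)\, y + x\, (g^{-1}Dg)(y), \]
so $\rho(g)$ really does land in $\text{Aut}(\der)$, and the group-homomorphism identity is formal once a composition convention is fixed.

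For (ii), I would argue that the kernel of $\rho$ is trivial. Suppose $g \in \text{Aut}(\oo)$ commutes with every $D \in \der$. The seven-dimensional trace-zero subspace $\oo_0 = \{ x \in \oo \mid T(x) = 0 \}$ is stable under $g$ (since $g$ fixes $\ee$ and preserves $T$) and is the standard irreducible $7$-dimensional representation of $\der \cong \mathfrak{g}_2$. By Schur's lemma, $g|_{\oo_0}$ must be scalar multiplication by some $\lambda \in k$, and Theorem \ref{morphismspreservenorm} combined with $N(x)\ee = x\overline{x}$ forces $\lambda^2 = 1$. If $\lambda = -1$, then $g$ would agree on $\oo_0$ with the octonion involution $x \mapsto \overline{x}$, which by Lemma \ref{lem:normfacts}(e) is an \emph{anti}-automorphism of the noncommutative algebra $\oo$ and hence is not in $\text{Aut}(\oo)$. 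Therefore $\lambda = 1$ and $g = \text{id}$.

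The main obstacle is (iii), surjectivity, and this is where the characteristic restrictions really earn their keep. My strategy is to invoke the structure theory of the simple Lie algebra $\der$: since the Dynkin diagram of type $G_2$ admits no nontrivial symmetries, every Lie algebra automorphism of $\der$ is inner, i.e., a product of exponentials $\exp(\text{ad}\, D)$ for $D \in \der$. It then suffices to realize each such inner automorphism as $\rho(g)$ for some $g \in \text{Aut}(\oo)$. One does this by taking $g = \exp(-D)$ acting on $\oo$ and verifying $g^{-1}D'g = \exp(\text{ad}\, D)(D')$ for every $D' \in \der$ via the standard identity $\text{Ad}(\exp X) = \exp(\text{ad}\, X)$. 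The issue is that $\exp(D)$ must make sense as an element of $\text{Aut}(\oo)$: the hypothesis $\text{char}\, k \neq 2,3,5,7,11,13,17$ ensures that the denominators appearing both in the exponential series for nilpotent $D$ acting on $\oo$ and in the $\mathfrak{g}_2$ structure constants (governing whether $\exp(D)$ preserves octonion multiplication) are invertible. A clean way to package the argument is to check that $\rho$ sends the concrete generators $\gamma_{\vec u}^{\pm}(s)$ and $\delta_{\vec u}^{\pm}(s)$ of $\text{Aut}(\oo)$ from Chapter 3 to the corresponding root-subgroup generators of $\text{Int}(\der)$; combined with Proposition \ref{prop:thetorus} for the toral elements, this covers all of $\text{Int}(\der) = \text{Aut}(\der)$ and finishes the proof.
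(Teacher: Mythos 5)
The paper offers no proof of this proposition at all --- it is quoted directly from \cite{Seligman1960} --- so the only question is whether your argument stands on its own. Your steps (i) and (ii) are essentially sound: conjugation by an algebra automorphism visibly preserves the Leibniz rule and the commutator, and your injectivity argument via the trace-zero subspace $\oo_0$ works, with two small caveats. To extract a scalar from Schur's lemma over a general field $k$ you need the $7$-dimensional representation of $\der$ to be \emph{absolutely} irreducible (so that its commutant is $k$ itself rather than merely a division algebra over $k$); this is true for the split form in the allowed characteristics but deserves a sentence. And the exclusion of $\lambda=-1$ correctly rests on the fact that an anti-automorphism of a bijective sort can only be an automorphism of a commutative algebra, which $\oo$ is not.

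The genuine gap is in (iii). Over $\cc$ the slogan ``the $G_2$ diagram has no symmetries, hence every automorphism of $\mathfrak{g}_2$ is inner'' is standard, but over an arbitrary field of positive characteristic it is not an off-the-shelf fact: the identification of the full group $\text{Aut}(\der)$ with the group generated by the exponentials $\exp(t\cdot\text{ad}\,E_\alpha)$ is precisely the content of the theorem of Seligman quoted immediately after this proposition in the paper, and the peculiar list of excluded primes ($2,3,5,7,11,13,17$) signals that this step requires genuine case-by-case work rather than general structure theory (in small characteristic the automorphism group of a Lie algebra of type $G_2$ can behave unexpectedly). As written, your surjectivity argument therefore assumes the hard half of the result being proved. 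The remaining part of your plan --- realizing each $\exp(\text{ad}\,D)$, for $D$ nilpotent, as $\rho(g)$ with $g=\exp(D)\in\text{Aut}(\oo)$, and matching the generators $\gamma^{\pm}$, $\delta^{\pm}$ to root subgroups as in Proposition \ref{prop:thetorus} --- is fine once the relevant factorials are invertible, but by itself it only shows that $\rho$ surjects onto the subgroup of $\text{Aut}(\der)$ generated by those exponentials, not onto all of $\text{Aut}(\der)$, unless Seligman's generation theorem is imported as an external input.
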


\begin{definition}
Let $ad$ denote the \textbf{adjoint representation} of $\der$,
\begin{align*}
\text{ad} \colon \der & \longrightarrow \text{End}(\der) \\
    D & \longmapsto [D,-],
\end{align*}
and by the simplicity of $\der$, this is faithful.
\end{definition}

\begin{definition}
Let $\der^{nil} : = \{ D \in \der \mid \text{ad}\,D \text{ is nilpotent.} \}$  We then define the \textbf{exponential map} into the automorphism group of $\der$:
\begin{align*}
\exp \colon \der^{nil} & \longrightarrow \text{Aut}(\der) \\
     D & \longmapsto \exp(t \cdot \text{ad}\,D ),
\end{align*}
where $\exp$ is defined by
\[ \exp(X) := \sum_{k=0}^{\infty} \frac{X^k}{k!}, \]
where the sum terminates as $X$ is assumed to be ad-nilpotent.
\end{definition}

This gives us a diagram which describes the relationships we have so far:
\[ \der^{nil} \underset{\exp}{\longrightarrow} \text{Aut}(\der) \overset{\sim}{\underset{\rho}{\longleftarrow}} \text{Aut}(\oo). \]

Now suppose that $\mathfrak{b} = \{ H_{\alpha},H_{\beta},E_{\phi} \}$ is a Chevalley basis of $\mathfrak{g}_2$.  If the $E_phi$ are ad-nilpotent (as will be the case for us), then we exponentiate our basis elements and define
\[ e_{\alpha}(t) := \exp(t \cdot \text{ad}\, E_\alpha ) \in \text{Aut}(\der). \]
In this case we have the following theorem.

\begin{theorem}[\cite{Seligman1960}]
Let $\text{char}\; k \neq 2,3,5,7,11,13,17$, and $\oo$ be the octonion algebra constructed over $k$.  Let $G_2 \subset \text{Aut}(\der)$ be the group generated by all automorphisms of $\der$ of the form
\[ e_{\alpha}(t) = \exp(t \cdot \text{ad}\, E_\alpha), \]
where $E_\alpha$ runs through all the root vectors relative to all standard Cartan subalgebras of $\der$.  Then $G_2$ coincides with the full automorphism group $\text{Aut}(\der)$, and therefore $G_2 \cong \text{Aut}(\der) \cong \text{Aut}(\oo)$.
\end{theorem}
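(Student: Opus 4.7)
The plan is to exploit the isomorphism $\rho\colon \text{Aut}(\oo) \xrightarrow{\sim} \text{Aut}(\der)$ already established and reduce the statement to showing that every automorphism of $\der$ lies in the subgroup generated by the $e_\alpha(t) = \exp(t\cdot \text{ad}\,E_\alpha)$. The inclusion $G_2 \subseteq \text{Aut}(\der)$ is immediate from the definition of $\exp$ on $\der^{nil}$, so the real work is the reverse containment. Once that is in hand, composing with $\rho^{-1}$ transports the conclusion back to $\text{Aut}(\oo)$, and the identifications in Figure \ref{autosandrootsG2} (via Proposition \ref{prop:autoverify}) guarantee that our candidate generators $\gamma^\pm$ and $\delta^\pm$ are precisely the $\rho^{-1}$-images of the $e_\alpha(t)$.

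Next, I would pin down the Chevalley basis of $\der$ that makes this bridge explicit. For each long root $\alpha$ and short root $\beta$ in the placement of Figure \ref{autosandrootsG2}, define the derivation $E_\alpha \in \der$ by differentiating the one-parameter subgroup $t \mapsto \gamma^{\pm}_{\vec u}(t)$ or $t \mapsto \delta^{\pm}_{\vec u}(t)$ at $t=0$ and transporting through $\rho$. Together with $H_\gamma = \frac{d}{dt}\bigl|_{t=1}h_{\gamma}(t)$ and $H_\delta = \frac{d}{dt}\bigl|_{t=1}h_{\delta}(t)$, these $14$ elements fill out the $14$-dimensional simple Lie algebra $\der$. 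The Chevalley basis axioms (a)--(e) follow by differentiating the Chevalley group relations verified in Proposition \ref{prop:autoverify}: relations (a) and (c) of the group presentation yield axioms (a), (b), and (c) of the basis after linearizing in $t$, while relation (b) of the group (computed in the appendices the author references) yields basis axioms (d) and (e) with the stated structure constants $N_{ij} \in \{\pm 1, \pm 2, \pm 3\}$.

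With a Chevalley basis in hand, the heart of the argument is to show that the exponentials of its root vectors (taken across all \emph{standard} Cartan subalgebras) generate the full group $\text{Aut}(\der)$. The strategy is to equip $\text{Aut}(\der)$ with the structure of a linear algebraic group; since $\der$ is simple of type $G_2$, the identity component $\text{Aut}(\der)^\circ$ is a connected simple algebraic group of type $G_2$. By standard Chevalley theory (or equivalently, by the Bruhat decomposition relative to the Borel subgroup determined by $\Phi^+$), this identity component is generated by its maximal torus $T_\Phi$ together with the root subgroups $U_\alpha = \{e_\alpha(t)\}$. Proposition \ref{prop:thetorus} already identifies $T_\Phi$ as the image of the diagonal torus in $\theta(\text{SL}_3(k))$, so both generating ingredients lie in $G_2$. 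Allowing the root vectors to range over all standard Cartan subalgebras — equivalently, conjugating the reference basis by the group generated so far — accounts for the possibility that a single Cartan does not detect every element.

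The main obstacle is the last step: proving $\text{Aut}(\der) = \text{Aut}(\der)^\circ$, i.e., that there are no \emph{disconnected} outer automorphisms that could lie outside the group generated by exponentials. Over an algebraically closed field this follows from the fact that the Dynkin diagram of $G_2$ has no non-trivial symmetries, so $\text{Out}(\der)$ is trivial; but over a general field $k$ a Galois descent argument is required, and this is where the excluded primes $2,3,5,7,11,13,17$ enter — they guarantee that certain eigenvalue and rationality arguments in Seligman's monograph go through without torsion pathologies in the structure constants or in the exponential series. Since writing out the full Galois descent and the necessary case analysis on characteristic is precisely what occupies the relevant portion of \cite{Seligman1960}, I would invoke that reference at this stage rather than reprove it, having reduced the question to exactly the form Seligman treats.
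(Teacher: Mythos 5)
The paper offers no proof of this theorem at all: it is quoted directly from Seligman's monograph, and the genuinely hard step --- that the exponentials $e_\alpha(t)$ taken over all standard Cartan subalgebras exhaust $\text{Aut}(\der)$ over a field subject to the stated characteristic restrictions --- is precisely the point at which you, too, end up invoking \cite{Seligman1960}, so your proposal reduces to the same citation at the same place. The surrounding scaffolding you supply (differentiating the $\gamma^\pm$ and $\delta^\pm$ to produce a Chevalley basis of $\der$, identifying the torus via $\theta$, and exponentiating back) is sound, but it is the content of the sections the paper develops around this theorem rather than of a proof of the theorem itself.
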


We note that Seligman proves this theorem by describing an explicit Chevalley basis of $\der$, and exponentiating that specific basis.  Also, \cite{Seligman1960} does not explicitly describe any individual automorphisms in $\text{Aut}(\oo)$.  We will be choosing our own basis for $\der$, but by \cite[page $6$]{Steinberg1967}, any Chevalley basis of $\der$ is unique up to some sign changes and automorphisms of $\der$, so Seligman's results will still be valid for our choices.

\section{A Chevalley Basis of $\der$}

According to our last theorem, we are left to show that our automorphisms $\gamma^\pm$ and $\delta^\pm$ are of the required form for some Chevalley basis elements $E_\alpha$, in which case we will have that $G_{\gamma,\delta} = G_2^ = \text{Aut}(\oo)$.  We do this by taking the algebraic derivative of each of the automorphisms we have.

We begin with the long roots automorphisms $\gamma^\pm$.  Define $E_{\gamma^{\pm}} \in \text{End}(\oo)$ by:
\[ E_{\gamma^{\pm}}(x) : = \lim_{t \rightarrow 0} \frac{\gamma^\pm(t)x - x}{t}. \]
That is, for each $u \in \{ \pm\ii,\pm\jj,\pm\kk \}$ we have $x = a + b l_u$ and
\begin{align*}
E_{\gamma_u^{+}}(a + bl_u) & = \lim_{t \rightarrow 0} \frac{\gamma_u^{+}(t)x - x}{t} \\
    & = \lim_{t \rightarrow 0} \; \frac{1}{t} \; \left( \left(
                                                         \begin{Array}{cc}
                                                           0 & t \\
                                                           0 & 0 \\
                                                         \end{Array}
                                                       \right) b \right)l_u \\
    & = \left( \left(
                                                         \begin{Array}{cc}
                                                           0 & 1 \\
                                                           0 & 0 \\
                                                         \end{Array}
                                                       \right) b \right)l_u.
\end{align*}
Likewise, for $E_{\gamma_u^{-}} \in \text{End}(\oo)$ we have
\[ E_{\gamma_u^{-}}(a + bl_u) = \left( \left(
                                                         \begin{Array}{cc}
                                                           0 & 0 \\
                                                           1 & 0 \\
                                                         \end{Array}
                                                       \right) b \right)l_u. \]

Next we apply the same operation to the short roots $\delta^\pm$, using the same definitions:
\[ E_{\delta^{\pm}}(x) : = \lim_{t \rightarrow 0} \frac{\delta^\pm(t)x - x}{t}. \]
Again, for each $u \in \{ \pm\ii,\pm\jj,\pm\kk \}$ and $x = a + b l_u$ we have
\begin{align*}
E_{\delta_u^{+}}(a + bl_u) & = \lim_{t \rightarrow 0} \frac{\delta_u^{+}(t)x - x}{t} \\
    & = \lim_{t \rightarrow 0} \; \frac{1}{t} \; \left[ \left( \left(
                                                         \begin{Array}{cc}
                                                           1 & t \\
                                                           0 & 1 \\
                                                         \end{Array}
                                                       \right)(a_{ij}) \left(
                                                         \begin{Array}{cc}
                                                           1 & -t \\
                                                           0 & 1 \\
                                                         \end{Array}
                                                       \right) - (a_{ij}) \right)+ \left( (b_{ij}) \left(
                                                         \begin{Array}{cc}
                                                           0 & -t \\
                                                           0 & 0 \\
                                                         \end{Array}
                                                       \right)  \right)l_u \right] \\
    & = \lim_{t \rightarrow 0} \; \frac{1}{t} \; \left[ \left( \left(
                                                         \begin{Array}{cc}
                                                           1 & t \\
                                                           0 & 1 \\
                                                         \end{Array}
                                                       \right)(a_{ij}) - (a_{ij})\left(
                                                         \begin{Array}{cc}
                                                           1 & t \\
                                                           0 & 1 \\
                                                         \end{Array}
                                                       \right) \right)\left(
                                                         \begin{Array}{cc}
                                                           1 & -t \\
                                                           0 & 1 \\
                                                         \end{Array}
                                                       \right)+ \left( (b_{ij}) \left(
                                                         \begin{Array}{cc}
                                                           0 & -t \\
                                                           0 & 0 \\
                                                         \end{Array}
                                                       \right)  \right)l_u \right] \\
    & = \lim_{t \rightarrow 0} \; \frac{1}{t} \; \left[ \left(
                                                         \begin{Array}{cc}
                                                           ta_{21} & ta_{22} - ta_{11} \\
                                                           0 & - ta_{21}\\
                                                         \end{Array}
                                                       \right)  \left(
                                                         \begin{Array}{cc}
                                                           1 & -t \\
                                                           0 & 1 \\
                                                         \end{Array}
                                                       \right)+ \left( (b_{ij}) \left(
                                                         \begin{Array}{cc}
                                                           0 & -t \\
                                                           0 & 0 \\
                                                         \end{Array}
                                                       \right)  \right)l_u \right] \\
    & = \lim_{t \rightarrow 0} \; \left[ \left(
                                                         \begin{Array}{cc}
                                                           a_{21} & a_{22} - a_{11} \\
                                                           0 & - a_{21}\\
                                                         \end{Array}
                                                       \right)  \left(
                                                         \begin{Array}{cc}
                                                           1 & -t \\
                                                           0 & 1 \\
                                                         \end{Array}
                                                       \right)+ \left( (b_{ij}) \left(
                                                         \begin{Array}{cc}
                                                           0 & -1 \\
                                                           0 & 0 \\
                                                         \end{Array}
                                                       \right)  \right)l_u \right] \\
    & =  \left(
                                                         \begin{Array}{cc}
                                                           a_{21} & a_{22} - a_{11} \\
                                                           0 & - a_{21}\\
                                                         \end{Array}
                                                       \right) + \left( (b_{ij}) \left(
                                                         \begin{Array}{cc}
                                                           0 & -1 \\
                                                           0 & 0 \\
                                                         \end{Array}
                                                       \right)  \right)l_u  \\
    & = \left[ \left( \begin{Array}{cc}
                                                           0 & 1 \\
                                                           0 & 0 \\
                                                         \end{Array}
                                                       \right), (a_{ij}) \right] + \left((b_{ij}) \left(
                                                         \begin{Array}{cc}
                                                           0 & -1 \\
                                                           0 & 0 \\
                                                         \end{Array}
                                                       \right)  \right)l_u.
\end{align*}
In this expression the square brackets denote the commutator bracket.  Likewise, for $E_{\delta_u^{-}} \in \text{End}(\oo)$ we have
\[ E_{\delta_u^{-}}(a + bl_u) = \left[ \left( \begin{Array}{cc}
                                                           0 & 0 \\
                                                           1 & 0 \\
                                                         \end{Array}
                                                       \right), (a_{ij}) \right] + \left((b_{ij}) \left(
                                                         \begin{Array}{cc}
                                                           0 & 0 \\
                                                           -1 & 0 \\
                                                         \end{Array}
                                                       \right)  \right)l_u. \]

These define a set $\left\{ E_\phi \right\}_{\phi \in \Phi}$.  It can be checked directly that each of these $E_\phi$ is are derivations, and nilpotent with $E_\phi^2 = 0$, so for any other $D \in \der$:
\begin{align*}
\text{ad}\,(E_\phi)^3(D) & = \text{ad}(E_\phi)^2(E_\phi D-D E_\phi) \\
    & = \text{ad}(E_\phi)(-2E_\phi D E_\phi) \\
    & = 0.
\end{align*}
Thus each $\text{ad}\,(E_\phi)^3 = 0$ and $E_\phi$ is ad-nilpotent.  It is also clear that the $E_\phi$ exponentiate to our original octonion automorphisms.

For the two simple roots $\delta = \delta_{\ii}^+$ and $\gamma = \gamma_{\kk}^-$ in $\Delta$, define
\[ H_\gamma := [E_\gamma, E_{-\gamma}] \quad\text{and}\quad H_\delta := [E_\delta, E_{-\delta}].\]

\begin{proposition}
The combined set $\mathfrak{b} : = \{ H_\gamma, H_\delta, E_\phi \mid \phi \in \Phi \}$ defines a Chevalley basis of $\der$.
\end{proposition}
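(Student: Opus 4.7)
The plan is to verify each of the five Chevalley-basis axioms in order, exploiting wherever possible the group-theoretic relations already established in Proposition \ref{prop:autoverify} and Proposition \ref{prop:thetorus}, and then differentiating them at the identity.

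First, a dimension count: $\der$ has dimension $14$ by the Jacobson--Seligman result cited above, while $\mathfrak{b}$ contains $2+12=14$ elements. Once axiom (b) is verified, the twelve $E_\phi$ will sit in twelve distinct simultaneous eigenspaces of $\text{ad}\,H_\gamma$ and $\text{ad}\,H_\delta$, none of which contains either $H$, so linear independence (hence basis-ness) is automatic. Axiom (a) is immediate from Proposition \ref{prop:thetorus}: the one-parameter subgroups $\exp(tH_\gamma)$ and $\exp(tH_\delta)$ lie in the abelian torus $\theta(T_s)$, so differentiating their commutator at the identity gives $[H_\gamma,H_\delta]=0$.

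For axiom (b), I would differentiate the standard Chevalley conjugation relation $h_\alpha(t)\,e_\phi(s)\,h_\alpha(t)^{-1} = e_\phi\!\left(t^{\langle\phi,\alpha^\vee\rangle}s\right)$, which in $G_{\gamma,\delta}$ is a formal consequence of relations (a) and (c) of the Chevalley presentation. Differentiating once in $s$ and once in $t$ at the origin produces $[H_\alpha,E_\phi] = \langle\phi,\alpha^\vee\rangle E_\phi$, matching the prescribed eigenvalue $2(\phi,\alpha)/(\alpha,\alpha)$. Axioms (d) and (e) will then be handled together by differentiating the Chevalley commutator formula $[e_\phi(s),e_\psi(t)] = \prod e_{i\phi+j\psi}(N_{ij}s^it^j)$ of Proposition \ref{prop:autoverify}: the mixed partial $\partial_s\partial_t$ at the origin of both sides gives $[E_\phi,E_\psi] = N_{11}E_{\phi+\psi}$ when $\phi+\psi\in\Phi$ and zero otherwise. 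The explicit integers computed in Appendices A and B then confirm $|N_{11}|=r+1$ in each case, which is exactly axiom (e), with axiom (d) falling out for free.

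The main obstacle is axiom (c), since it does not descend directly from any one-parameter group identity. For $\phi=\gamma$ or $\phi=\delta$ the formula $[E_\phi,E_{-\phi}]=H_\phi$ holds by definition. For each of the ten non-simple roots $\phi = a\gamma+b\delta$, the plan is to use axiom (e) to write $E_\phi$ as an explicit scalar multiple of an iterated bracket of $E_\gamma$ and $E_\delta$, then apply the Jacobi identity to reduce $[E_\phi,E_{-\phi}]$ to a $\zz$-linear combination of $H_\gamma$ and $H_\delta$. The subtle point is that the coefficients must emerge as exactly $a$ and $b$; I would pin them down by pairing both sides of the putative identity against axiom (b) with $\alpha=\gamma$ and $\alpha=\delta$ in turn, since the Cartan matrix of $G_2$ is non-degenerate and therefore forces a unique match. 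If a slicker argument is elusive, this reduces to a finite calculation over the six positive roots using the concrete formulas for $E_\phi$ given above, which is tedious but mechanical.
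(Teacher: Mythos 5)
Your proposal is correct and follows the same overarching strategy as the paper's proof---recovering the Chevalley basis relations from the already-verified Chevalley group relations by differentiation---but you execute it with considerably more care. The paper disposes of the verification in a single sentence (``the Chevalley group relations can be recovered by the Chevalley basis relations via exponentiation, and vice versa via differentiation'') and then asserts that $H_\gamma$, $H_\delta$ ``were constructed in order to satisfy their needed relations,'' which really only settles axiom (c) for the two simple roots. You correctly observe that axiom (c) for the ten non-simple roots does not descend from any of the group relations (relation (b) of the Chevalley presentation explicitly excludes the case $\phi+\psi=0$), and you supply the missing Jacobi-identity and pairing argument to pin down the coefficients $a$ and $b$; this fills a genuine gap that the paper's proof glosses over. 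You also handle linear independence differently: the paper compares $\mathfrak{b}$ against an abstract second Chevalley basis via a morphism ``mapping basis elements to corresponding basis elements,'' whereas you use the dimension count $14 = 2 + 12$ together with the fact that the twelve $E_\phi$ lie in distinct joint weight spaces of $\text{ad}\,H_\gamma$ and $\text{ad}\,H_\delta$---an argument that is both more elementary and more convincing. The one point to make explicit is that your axiom (e) step silently identifies the group-level constant $N_{11}$ with the Lie algebra structure constant $N_{\phi,\psi}$; this identification is standard and is corroborated by the tables of Appendix A, but it deserves a sentence rather than an implicit appeal.
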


\begin{proof}
The Chevalley group relations can be recovered by the Chevalley basis relations via exponentiation, and vice versa via differentiation.  Therefore the elements $E_\phi$ will satisfy the relevant Chevalley basis relations, while $H_\gamma$, $H_\delta$ were constructed in order to satisfy their needed relations.

Now, let $\mathfrak{b}'$ be any other Chevalley basis for $\der$.  We have a Lie algebra morphism
\[ f: \mathfrak{b}' \longrightarrow \mathfrak{b} \]
which maps basis elements to corresponding basis elements, and this map is injective as $\der$ is simple.  Therefore $\mathfrak{b}$ is linearly independent and our statement follows.
\end{proof}

%%%%%%%%%%%%%%%%%%%%%%%%%%%%%%%%%%%%%%%%%%%%%%%%%%%%%%%%%%%%%%%%%%%%%%%%%%%%%%%%%%%%%%%%%%%%%%%%%%%%%%

\part{Part II: Local Fields}

\chapter{Introduction}

In Part II we discuss certain structures in $G_2$ that arise only when it is constructed over a local, non-archimedean field.  In particular, we will take $k = \qp$, though many results will be true in a more general context.  We will assume that the reader is familiar with basic notions related to local non-archimedean fields.  If not, \cite{Serre1968} or \cite{Serre1973} are standard references.

One important fact about $G_2$ constructed over $\qp$ is the following, from \cite{Bruhat1987-2}:

\begin{theorem}
For $k = \qq_p$, any octonion algebra $\oo$ constructed over $k$ will necessarily be split.  Therefore, there is only one possible octonion algebra over $k = \qq_p$, up to isomorphism.
\end{theorem}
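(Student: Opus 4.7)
The plan is to show that the norm form on any octonion algebra over $\qp$ must be isotropic, and then invoke a uniqueness principle to conclude.

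First I would verify the standard characterization that an octonion algebra $\oo$ contains zero divisors (i.e., is split) precisely when its norm $N$ is \emph{isotropic}, meaning it has a nonzero null vector. One direction is immediate from Lemma \ref{lem:normfacts}(g): if $v \in \oo$ is nonzero with $N(v) = 0$, then $v \overline{v} = 0$, exhibiting a zero divisor. Conversely, if $v w = 0$ with both factors nonzero, then by the multiplicativity of $N$ we have $N(v) N(w) = 0$, so one of the factors is a nonzero isotropic vector.

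Next I would invoke the classical theorem from quadratic form theory over local non-archimedean fields: every nondegenerate quadratic form over $\qp$ of dimension at least five is isotropic. This is a standard consequence of the classification of $p$-adic quadratic forms via the Hasse invariant and the Hilbert symbol, and can be found in Serre's \emph{A Course in Arithmetic}. Applied to the $8$-dimensional norm $N$ on $\oo$, this immediately produces a nonzero $v$ with $N(v) = 0$, so $\oo$ is split.

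For uniqueness, I would appeal to the general fact from \cite{Jacobson1958} that a composition algebra is determined up to isomorphism by its norm form. Any split octonion algebra has an isotropic norm, and it is a further standard fact that the norm form of a composition algebra, whenever isotropic, is in fact hyperbolic, i.e., equivalent to a direct sum of hyperbolic planes. Since the $8$-dimensional hyperbolic form is unique up to equivalence, any two split octonion algebras over $\qp$ have equivalent norm forms and are therefore isomorphic.

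The main obstacle is the classical statement that every five-dimensional (and hence every higher-dimensional) nondegenerate quadratic form over $\qp$ is isotropic. Since this is well-documented, the cleanest approach is to cite it rather than reprove it; a direct argument would require working through the Hasse--Minkowski classification together with case analysis involving the $p$-adic Hilbert symbol and the structure of the squares in $\qp^\times$.
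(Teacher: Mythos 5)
Your argument is correct, but note that the paper does not actually prove this statement at all: it is quoted from the literature (the citation is to Bruhat--Tits, and the surrounding discussion defers to Gan--Yu's remark that such an algebra is necessarily split), so what you have written is a genuine proof where the paper has only a reference. Your outline is the standard argument underlying that citation, and each step is sound: the equivalence of ``split'' with ``isotropic norm'' follows exactly as you say from $v\overline{v}=N(v)\ee$ and multiplicativity of $N$ (with the small observation that $\overline{v}\neq 0$ whenever $v\neq 0$, since the involution is bijective); the isotropy of every nondegenerate quadratic form of dimension at least $5$ over $\qp$ is the classical fact that the $u$-invariant of a $p$-adic field is $4$; and the uniqueness step is standard. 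One simplification worth noting for the last step: rather than going through the fact that isotropic norm forms of composition algebras are hyperbolic (a Pfister-form argument), you can directly cite the theorem, already alluded to in the paper's own comments on \cite{Jacobson1958}, that any two split composition algebras of the same dimension over the same field are isomorphic. That shortcut buys you a proof that never mentions Witt theory, at the cost of leaning on a slightly less elementary structural result about composition algebras; your route keeps everything inside quadratic form theory once isotropy is established. Either way, the proof is complete modulo the two standard citations you identify.
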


\begin{comment}
\begin{proof}
Omitted.  Don't prove this; Gan-Yu say ``we remind the reader that if... then such a group is necessarily split \footnote{Jacobson bottom of page $64$ has a similar statement for finite fields (without proof).  See Marty's email for a sketch of the proof.  Note also the use of Phister forms.  [SV] refers to O'Meara for this proof.}
\footnote{Note also that Jacobson, p $63$, states that any two split composition algebras of the same dimension are isomorphic.  Therefore, if we can show that any octionion algebra of $k = \qq_p$ has zero divisors, then this octionion algebra will be unique up to isomorphism (of composition algebras!)  Also, it is Hurwitz's Theorem that any normed (in the analyitic sense!) division algebra is isomorphic to either $\rr$, $\cc$, $\hh$, or $\oo$.}
\end{proof}
\end{comment}

Therefore in our case, we need only consider the split form of $G_2$.  We continue to follow the work of Bruhat and Tits in Chapter \ref{standardapartment}, where we construct the standard apartment of the Bruhat-Tits building $\mathcal{B}(G_2)$, based on our previous choices of maximal torus and root systems.  Though the building itself is a larger structure, most of our work will be focused within the apartment itself.

In Chapter \ref{orders} we define octonion orders, lattice filtrations and sequences, and valuations on $\oo$.  The narrative there will follow along very closely with \cite{GanYu2003}, and our attempt will be to place their work in a more concrete setting using the explicit automorphisms we now have.

\chapter{Construction of the Standard Apartment}
\label{standardapartment}

\section{Coroots and the Coroot Diagram}
\label{corootdefinitions}

We again let $\Phi$ be the root system used in the Chevalley construction of our group $G_2$.  We will choose as simple roots $\delta := \delta_{\ii}^{+}$ and $\gamma := \gamma_{\kk}^{-}$ and the set of positive roots $\Phi^{+}$ corresponding to this choice.  We begin by describing the purely abstract coroot system.

\begin{definition}
From each root $\alpha \in \Phi$, define the corresponding \textbf{coroot}
\[ \alpha^{\vee} : = \dfrac{2}{( \alpha, \alpha )}\alpha = \dfrac{2}{\| \alpha \|^2}\alpha. \]
These coroots will also form an abstract root system\label{symbol:Phivee} $\Phi^{\vee}$ of the same rank as $\Phi$, which we will call the dual root system or \textbf{coroot system}, and denote it by $\Phi^{\vee}$.
\end{definition}

Since each coroot is just a scaled root, the angle between our simple coroots is equal to that of the simple roots.   It is also easy to see that the coroot corresponding to a short root will be long, and the coroot corresponding to a long root will be short.  The coroot system for our group $G_2$ is shown in Figure \ref{corootsG2}, with the simple coroots labeled.

\begin{figure}[ht]
\begin{center}
\begin{tikzpicture}[thin,scale=1.5]%
    \draw \foreach \x in {0,120,240} {
        (\x+120:1.732) -- (\x+240:1.732)
        (\x-60:1.732) -- (\x+60:1.732)
        (0:0) -- (0:1.732)
        (0:0) -- (150:1)};
    \draw{ (0:0) node[circle, draw, fill=black!50,
                        inner sep=0pt, minimum width=4pt]{}
        (0:1.732) node[circle, draw, fill=black!50,
                        inner sep=0pt, minimum width=4pt, label=right:$\delta^{\vee}$]{}
        (150:1) node[circle, draw, fill=black!50,
                        inner sep=0pt, minimum width=4pt, label=above left:$\gamma^{\vee}$]{}
    };
\end{tikzpicture}
\label{corootsG2}\caption{The coroot diagram of $\mathfrak{g}_2$.}
\end{center}
\end{figure}
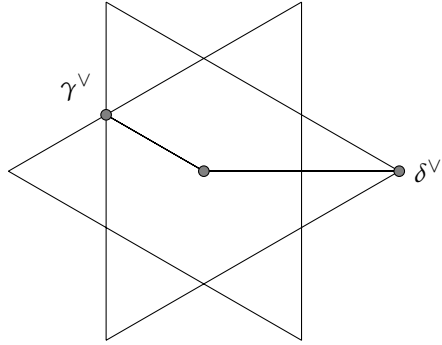

In the setting of algebraic groups, we often like to consider our coroots to to be homomorphisms from $k^{\times} \rightarrow T$, where $T$ is some maximal torus of the group.  Luckily, we already have homomorphisms of this type, namely, the $h_{\alpha}$ described in the Chevalley construction, and which generate such a subgroup $T_{\Phi}$.  Therefore we can define the action of $\alpha^{\vee}$ on $k^{\times}$ to be equal to that of $h_{\alpha}$, for each root $\alpha \in \Phi$:
\[ \alpha^{\vee} = h_{\alpha} \colon k^{\times} \rightarrow T_{\Phi}. \]

In the same setting we would also like to consider our roots themselves to be homomorphisms $\alpha \colon T \rightarrow k^{\times}$ for each $\alpha \in \Phi$.  This association is a bit more complex.

\begin{proposition}
For each pair of roots $\alpha,\beta \in \Phi$ we have the following identity of Chevalley elements, where $u,t \in k^\times$:
\[ h_{\beta}(t)e_{\alpha}(u)h_{\beta}(t)^{-1} = e_{\alpha}(t^{2(\alpha,\beta )/( \beta,\beta )}u). \]
This defines a map on generators $\alpha \colon h_\beta(t) \mapsto t^{2(\alpha,\beta )/( \beta,\beta )}$ which then extends to a homomorphism $\alpha \colon T_{\Phi} \longrightarrow k^{\times}$.
\end{proposition}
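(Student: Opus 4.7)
The plan is to derive the conjugation formula from the interaction between the Weyl elements $w_{\beta}(t)$ and the root subgroup generators $e_{\alpha}(u)$, then use the factorization $h_{\beta}(t) = w_{\beta}(t)w_{\beta}(-1)$ to reduce the toral conjugation to two applications of Weyl conjugation.

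First I would establish (or invoke from Steinberg's relations) the fundamental Weyl conjugation rule
\[
w_{\beta}(t)\,e_{\alpha}(u)\,w_{\beta}(t)^{-1} = e_{s_{\beta}(\alpha)}\!\left(c_{\alpha,\beta}\, t^{-\langle \alpha,\beta^{\vee}\rangle}\, u\right),
\]
where $\langle \alpha,\beta^{\vee}\rangle = 2(\alpha,\beta)/(\beta,\beta)$ and $c_{\alpha,\beta} = \pm 1$ is a sign that depends only on $\alpha$ and $\beta$ (not on $t$). This is the standard rule governing the action of a Weyl representative on a root subgroup; it follows from the Chevalley commutation relations (b) expanded around $e_{\beta}(t)e_{-\beta}(-t^{-1})e_{\beta}(t)$, together with the fact that the reflection $s_{\beta}$ is the corresponding element of $W \cong N_{\Phi}/T_{\Phi}$.

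Applying this rule twice, once with $-1$ in place of $t$ and once with $t$, gives
\[
h_{\beta}(t)\,e_{\alpha}(u)\,h_{\beta}(t)^{-1} = w_{\beta}(t)\,e_{s_{\beta}(\alpha)}\!\left(c_{\alpha,\beta}\,(-1)^{-\langle \alpha,\beta^{\vee}\rangle}\, u\right)\,w_{\beta}(t)^{-1}.
\]
Applying the rule again, and using $s_{\beta}^{2} = 1$, returns us to the $\alpha$-root subgroup. Since the sign $c$ depends only on the pair of roots involved and the inner reflection sends $s_{\beta}(\alpha)$ back to $\alpha$, the two occurrences of the sign combine to $c_{\alpha,\beta}\,c_{s_{\beta}(\alpha),\beta} \cdot (-1)^{-\langle \alpha,\beta^{\vee}\rangle} \cdot t^{-\langle s_{\beta}(\alpha),\beta^{\vee}\rangle}$. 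Using $\langle s_{\beta}(\alpha),\beta^{\vee}\rangle = -\langle\alpha,\beta^{\vee}\rangle$, the two powers of $t$ and $-1$ combine to give $t^{\langle \alpha,\beta^{\vee}\rangle}$ times an overall sign, and the overall sign is forced to be $+1$ by specializing to $t=1$ (where both sides must equal $e_{\alpha}(u)$). This yields the claimed formula.

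For the second assertion, the key observation is that the conjugation action of $T_{\Phi}$ preserves the root subgroup $U_{\alpha} = \{e_{\alpha}(u): u \in k\}$ (since $h_{\beta}(t)$ acts on it as shown, and $T_{\Phi}$ is generated by the $h_{\beta}(t)$). Because each $e_{\alpha}$ is an isomorphism of $k^{+}$ onto $U_{\alpha}$, conjugation by any $h \in T_{\Phi}$ determines a scalar $\alpha(h) \in k^{\times}$ via $h\,e_{\alpha}(u)\,h^{-1} = e_{\alpha}(\alpha(h)u)$, and the assignment $h \mapsto \alpha(h)$ is automatically a homomorphism to $k^{\times}$ because conjugation is a group action and $e_{\alpha}$ is additive. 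The formula just established gives the value $\alpha(h_{\beta}(t)) = t^{2(\alpha,\beta)/(\beta,\beta)}$ on generators, consistent across the defining relation $h_{\beta}(s)h_{\beta}(t) = h_{\beta}(st)$ by Chevalley relation (c).

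The main obstacle I expect is controlling the signs $c_{\alpha,\beta}$ carefully enough to see they cancel; this is the classical subtle point in Steinberg's treatment. The trick of testing at $t=1$ to fix the overall sign sidesteps having to compute $c_{\alpha,\beta}$ explicitly, and is the cleanest route.
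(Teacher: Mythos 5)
Your argument is correct, and it is precisely the argument the paper points to: the paper's ``proof'' is only a citation of Lemmas $19$ and $20$ of Steinberg's notes, and those lemmas are exactly your two steps --- the Weyl-element conjugation rule $w_{\beta}(t)e_{\alpha}(u)w_{\beta}(t)^{-1} = e_{s_{\beta}(\alpha)}(c\,t^{-\langle\alpha,\beta^{\vee}\rangle}u)$ followed by the factorization $h_{\beta}(t)=w_{\beta}(t)w_{\beta}(-1)$. Your device of evaluating at $t=1$ (where $h_{\beta}(1)=1$ forces the accumulated sign to be $+1$) is a clean and valid way to dispose of the sign bookkeeping that Steinberg handles by tracking the constants $c_{\alpha,\beta}$ directly.
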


\begin{proof}
We refer the reader to \cite[Lemmas $19$ and $20$]{Steinberg1967} and their proofs.
\end{proof}

\begin{definition}\footnote{We use this definition as our group $G_2$ is simply-connected and adjoint.}\label{symbol:charactersandcocharacters}
For $T = T_{\Phi}$, we denote by $X^\bullet (T)$ the group of characters from $T \rightarrow k^\times$ generated by the roots and by $X_\bullet (T)$ the group of cocharacters from $k^\times \rightarrow T$ generated by the coroots.
\end{definition}

We next establish a connection between the roots and coroots.  It is well known, or easily calculated, that the root system of type $G_2$ has Cartan matrix $A = \left(
                   \begin{Array}{cc}
                     2 & -3 \\
                     -1 & 2 \\
                   \end{Array}
                 \right)$.  We use these Cartan integers to make the following definiton.

\begin{definition}\label{symbol:pairing}
For $\delta,\gamma \in \Delta$ the simple roots in the $G_2$ diagram, define the pairings
\begin{align*}
\langle \delta, \delta^{\vee} \rangle & := \frac{2( \delta,\delta )}{( \delta,\delta )} = 2, \\
\langle \delta, \gamma^{\vee} \rangle & := \frac{2( \gamma,\delta )}{( \delta,\delta )} = -1, \\
\langle \gamma, \delta^{\vee} \rangle & := \frac{2( \delta,\gamma )}{( \gamma,\gamma )} = -3, \\
\langle \gamma, \gamma^{\vee} \rangle & := \frac{2( \gamma,\gamma )}{( \gamma,\gamma )} = 2.
\end{align*}
These identifications extend to a non-symmetric bilinear pairing between $\Phi$ and $\Phi^\vee$.
\end{definition}

To close this section, we remind ourselves that each coroot is simply a re-scaled root, so that an explicit connection between roots and coroots can be seen from comparing the root and coroot diagrams.  Seeing that the highest long root, $3\delta + 2\gamma$, lies at the same relative angle as the coroot $\delta^{\vee} + 2\gamma^{\vee}$ should lead us to believe the identity $(3\delta + 2\gamma)^{\vee} = \delta^{\vee} + 2\gamma^{\vee}$.  Indeed, we can verify these connections algebraically by using the following relation:

\begin{proposition}
Let $\alpha,\beta \in \Phi$, and $s_\alpha(\beta) \in W$ be the reflection of $\beta$ through the hyperplane orthogonal to $\alpha$ in the root diagram.  Then
\[ (s_\alpha(\beta))^{\vee} = \beta^{\vee} - \langle \alpha,\beta^{\vee}\rangle\alpha^{\vee}. \]
\end{proposition}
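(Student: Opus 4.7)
The plan is to prove the identity by pure computation from the definitions, using only one nontrivial ingredient: that the Weyl reflection $s_\alpha$ is an isometry of the inner product on $E$, so in particular $(s_\alpha(\beta), s_\alpha(\beta)) = (\beta,\beta)$.

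First I would unfold the left-hand side using the definition of the coroot operation:
\[ (s_\alpha(\beta))^\vee = \frac{2}{(s_\alpha(\beta), s_\alpha(\beta))} \, s_\alpha(\beta). \]
Applying the isometry observation, the denominator simplifies to $(\beta,\beta)$. Next I would substitute the explicit formula $s_\alpha(\beta) = \beta - \frac{2(\beta,\alpha)}{(\alpha,\alpha)}\alpha$ from root axiom (c), yielding
\[ (s_\alpha(\beta))^\vee = \frac{2}{(\beta,\beta)}\beta \;-\; \frac{2(\beta,\alpha)}{(\alpha,\alpha)} \cdot \frac{2}{(\beta,\beta)}\alpha. \]

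Now I would regroup the factors in the second term so that the coefficient of $\alpha$ splits as $\langle\alpha,\beta^\vee\rangle \cdot \frac{2}{(\alpha,\alpha)}$. Since $\frac{2}{(\beta,\beta)}\beta = \beta^\vee$, $\frac{2}{(\alpha,\alpha)}\alpha = \alpha^\vee$, and $\frac{2(\alpha,\beta)}{(\beta,\beta)} = \langle \alpha, \beta^\vee\rangle$ by Definition of the pairing, this immediately gives
\[ (s_\alpha(\beta))^\vee = \beta^\vee - \langle \alpha, \beta^\vee\rangle \alpha^\vee. \]

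There is really no hard step here; the whole argument is a matter of careful bookkeeping with the scaling factors, and the only thing one has to notice is that reflections preserve norms so that the denominator $(s_\alpha(\beta), s_\alpha(\beta))$ collapses to $(\beta,\beta)$. If I wanted to make the proof more conceptual, I could alternatively observe that the map $\alpha \mapsto \alpha^\vee$ intertwines the Weyl group action on $\Phi$ with the dual Weyl group action on $\Phi^\vee$, in the sense that $s_\alpha$ on roots corresponds to $s_{\alpha^\vee}$ on coroots; then the claim becomes the standard formula for a reflection applied to $\beta^\vee$ in the coroot system. Either route is short, and I would present the direct computation for clarity.
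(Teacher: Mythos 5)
Your proof is correct and follows essentially the same route as the paper's: expand $(s_\alpha(\beta))^{\vee}$ from the definition of the coroot, substitute the reflection formula from root axiom (c), and identify the resulting coefficient of $\alpha^{\vee}$ with the pairing $\langle \alpha,\beta^{\vee}\rangle$. Your version is in fact somewhat more careful than the paper's terse argument, since you make explicit the one nontrivial point --- that $s_\alpha$ is an isometry, so $(s_\alpha(\beta),s_\alpha(\beta))=(\beta,\beta)$ --- and your bookkeeping lands on the coefficient $\frac{2(\alpha,\beta)}{(\beta,\beta)}$, which is the value of $\langle\alpha,\beta^{\vee}\rangle$ consistent with the Cartan integers recorded in the paper's definition of the pairing.
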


\begin{proof}
Recall that the coroots have exactly the same relative angles as the roots they are built from, so from the definition of $s_\alpha(\beta)$ (root axiom (c)), we have
\[ (s_\alpha(\beta))^{\vee} = \beta^{\vee} - \frac{2(\beta,\alpha)}{(\alpha,\alpha)}\alpha^{\vee}. \]
The definition of $\langle \alpha,\beta^{\vee}\rangle$ completes the identity.
\end{proof}

Now, suppose that $\alpha$ is any root.  Then negative roots and coroots coincide:
\begin{align*}
(s_\alpha(\alpha))^{\vee} & = \alpha^{\vee} - \langle \alpha,\alpha^{\vee}\rangle\alpha^{\vee} \\
    & = \alpha^{\vee} - 2\alpha^{\vee} \\
    & = -\alpha^{\vee},
\intertext{and therefore}
(-\alpha)^{\vee} & = (s_\alpha(\alpha))^{\vee} = -(\alpha^{\vee}).
\end{align*}

Likewise, for $\gamma,\delta$ our long and short (respectively) simple roots:
\begin{align*}
(s_\delta(\gamma))^{\vee} & = \gamma^{\vee} - \langle\delta,\gamma^{\vee}\rangle\delta^{\vee} \\
    & = \gamma^{\vee} - (-1)\delta^{\vee} \\
    & = \delta^{\vee} + \gamma^{\vee}.
\end{align*}
Therefore, since the reflection of $\gamma$ across the hyperplane orthogonal to $\delta$ is $3\delta + \gamma$,
\[ (3\delta + \gamma)^{\vee} = (s_\delta(\gamma))^{\vee} = \delta^{\vee} + \gamma^{\vee}. \]

Then, we can use the last fact to show that
\begin{align*}
(s_\gamma(3\delta + \gamma))^{\vee} & = (3\delta + \gamma)^{\vee} - \langle\gamma,(3\delta + \gamma)^{\vee}\rangle\gamma^{\vee} \\
    & = \delta^{\vee} + \gamma^{\vee} - \langle\gamma,\delta^{\vee} + \gamma^{\vee}\rangle\gamma^{\vee} \\
    & = \delta^{\vee} + \gamma^{\vee} - \left(\langle\gamma,\delta^{\vee}\rangle + \langle\gamma,\gamma^{\vee}\rangle\right)\gamma^{\vee} \\
    & = \delta^{\vee} + \gamma^{\vee} - ((-3) + (2))\gamma^{\vee} \\
    & = \delta^{\vee} + 2\gamma^{\vee}.
\end{align*}
Since the reflection of $3\delta + \gamma$ across the hyperplane orthogonal to $\gamma$ is $3\delta + 2\gamma$,
\[ (3\delta + 2\gamma)^{\vee} = s_\gamma(3\delta + \gamma))^{\vee} = \delta^{\vee} + 2\gamma^{\vee}. \]

Working in this way we can calculate all of the following identities:
\begin{align*}
(\delta + \gamma)^{\vee} & = \delta^{\vee} + 3\gamma^{\vee}, \\
(2\delta + \gamma)^{\vee} & = 2\delta^{\vee} + 3\gamma^{\vee}, \\
(3\delta + \gamma)^{\vee} & = \delta^{\vee} + \gamma^{\vee}, \\
(3\delta + 2\gamma)^{\vee} & = \delta^{\vee} + 2\gamma^{\vee}. \\
\end{align*}

Again, since $-(\alpha^{\vee}) = (-\alpha)^{\vee}$ for all roots $\alpha$, this gives the coroot corresponding to every root.

\section{Affine Roots, the Apartment, and Hyperplanes}

\begin{definition}\label{symbol:apartment}
With $T = \bigl\langle h_\alpha(t) \bigl\rangle$ and the corresponding choice of roots and coroots, we define the \textbf{affine apartment} $\mathcal{A}$ to be the full coroot space $\mathcal{A} = E^{\star} = X_{\bullet}(T) \otimes \rr$.
\end{definition}

From this definition alone, the apartment of any rank two coroot system (of any Cartan type) will be identical.  However, there is much substructure on $\mathcal{A}$ that will serve to distinguish between such apartments.  Toward this, we make some new definitions:

\begin{definition}\newnot{symbol:hyperplane}
Let $\alpha \in \Phi$, and $n \in \zz$.  We define a new (affine) functional on $\mathcal{A}$:
\begin{align*}
\alpha + n \colon \mathcal{A} & \longrightarrow \rr \\
    x & \longmapsto \langle\alpha,x\rangle + n,
\end{align*}
and define
\begin{align*}
H_{\alpha+n} & = \{ x \in \mathcal{A} \mid ( \alpha+n )( x ) = 0 \} \\
    & = \{ x \in \mathcal{A} \mid \langle \alpha, x \rangle = -n \}.
\end{align*}
\end{definition}

Because $\langle -,- \rangle$ is bilinear, this zero locus $(\alpha + n)^{-1}(0)$ forms an affine subspace of codimension $1$ in the plane of coroots.  For example, consider the hyperplane $H_{\delta}$.  Since $\langle \delta,\delta^{\vee} \rangle= 2$, the coroot $\delta^{\vee}$ will not lie in $H_{\delta}$.  However, we do have:
\begin{align*}
\langle \delta,\delta^{\vee}+2\gamma^{\vee} \rangle & = \langle \delta,\delta^{\vee} \rangle + 2\langle \delta,\gamma^{\vee} \rangle \\
    & = (2) + 2(-1) \\
    & = 0.
\end{align*}
Since this pairing is bilinear, we also know that $\langle \delta,-\delta^{\vee}-2\gamma^{\vee} \rangle = -\langle \delta,\delta^{\vee}+2\gamma^{\vee} \rangle = 0$, and this is enough to determine the hyperplane $H_{\delta} \subseteq \mathcal{A}$, shown in Figure \ref{deltahyperplane}.

\begin{figure}[ht!]
\begin{center}
\begin{tikzpicture}[thin,scale=1.5]%
    \draw \foreach \x in {0,120,240} {
        (\x+120:1.732) -- (\x+240:1.732)
        (\x-60:1.732) -- (\x+60:1.732)
        (0:0) -- (0:1.732)
        (0:0) -- (150:1)};
    \draw[dashed]{
        (0:0) -- (90:3) node[above]{$H_{\delta}$}
        (0:0) -- (90:-3)
    };
    \draw{ (0:0) node[circle, draw, fill=black!50,
                        inner sep=0pt, minimum width=4pt]{}
        (0:1.732) node[circle, draw, fill=black!50,
                        inner sep=0pt, minimum width=4pt, label=right:$\delta^{\vee}$]{}
        (150:1) node[circle, draw, fill=black!50,
                        inner sep=0pt, minimum width=4pt, label=above left:$\gamma^{\vee}$]{}
    };
\end{tikzpicture}
\caption{A hyperplane corresponding to $\delta$.}\label{deltahyperplane}
\end{center}
\end{figure}
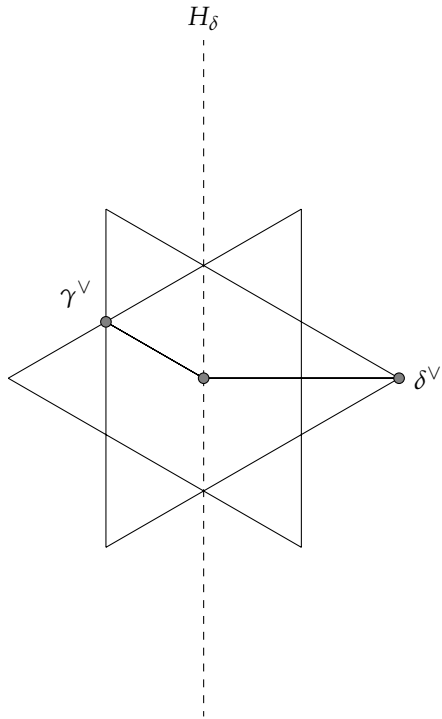

\newpage

Likewise, since $\langle \delta, \gamma^{\vee} \rangle = \delta(\gamma^{\vee}) = -1$, we have that $\gamma^{\vee}$ lies in the hyperplane $H_{\delta+1}$.  We also have that
\begin{align*}
\langle \delta,-\delta^{\vee}-\gamma^{\vee} \rangle & = -\langle \delta,\delta^{\vee} \rangle - \langle \delta,\gamma^{\vee} \rangle \\
    & = -(2) -(-1) \\
    & = -1.
\end{align*}

Therefore the coroot $-\delta^{\vee}-\gamma^{\vee}$ also lies in the hyperplane $H_{\delta+1}$.  Again, since our pairing is bilinear, any linear combination of these two coroots will also lie in our hyperplane.  In Figure \ref{alldeltahyperplanes}, we display a number of hyperplanes corresponding to the root $\delta$.

\begin{figure}[ht!]
\begin{center}
\begin{tikzpicture}[thin,scale=1.5]%
    \draw \foreach \x in {0,120,240} {
        (\x+120:1.732) -- (\x+240:1.732)
        (\x-60:1.732) -- (\x+60:1.732)
        (0:0) -- (0:1.732)
        (0:0) -- (150:1)};
    \draw[dashed] \foreach \x in {-2,-1,0,1,2}{
        (0:.866*\x) -- ++(90:3) node[above]{$H_{\delta - (\x)}$}
        (0:.866*\x) -- ++(90:-3)
    };
    \draw{ (0:0) node[circle, draw, fill=black!50,
                        inner sep=0pt, minimum width=4pt]{}
        (0:1.732) node[circle, draw, fill=black!50,
                        inner sep=0pt, minimum width=4pt, label=right:$\delta^{\vee}$]{}
        (150:1) node[circle, draw, fill=black!50,
                        inner sep=0pt, minimum width=4pt, label=above left:$\gamma^{\vee}$]{}
    };
\end{tikzpicture}
\caption{The hyperplanes corresponding to $\delta$.}\label{alldeltahyperplanes}
\end{center}
\end{figure}

We use the same strategy to identify hyperplanes $H_{\gamma + n}$ corresponding to the other simple root $\gamma \in \Phi$.  These hyperplanes are shown in Figure \ref{gammahyperplanes} below.  Note that the coroot $\gamma^{\vee}$ lies on the hyperplane $H_{\gamma - 2}$ as expected, and that the origin is again contained in the hyperplane $H_{\gamma - 0}$.

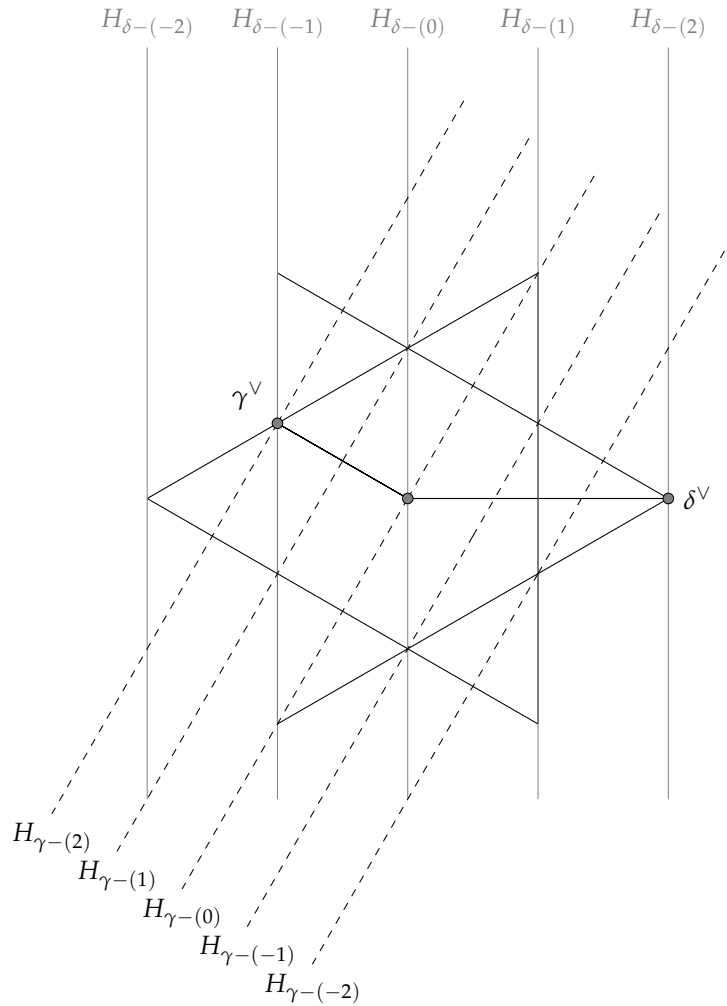
\begin{figure}[ht!]
\begin{center}
\begin{tikzpicture}[thin,scale=2.0]%
    \draw \foreach \x in {0,120,240} {
        (\x+120:1.732) -- (\x+240:1.732)
        (\x-60:1.732) -- (\x+60:1.732)
        (0:0) -- (0:1.732)
        (0:0) -- (150:1)};
    \draw[help lines] \foreach \x in {-2,-1,0,1,2}{
        (0:.866*\x) -- ++(90:3) node[above]{$H_{\delta - (\x)}$}
        (0:.866*\x) -- ++(90:-2)
    };
    \draw[dashed] \foreach \x in {-2,-1,0,1,2}{
        (150:.5*\x) -- ++(60:2.5)
        (150:.5*\x) -- ++(60:-3) node[below]{$H_{\gamma - (\x)}$}
    };
    \draw{ (0:0) node[circle, draw, fill=black!50,
                        inner sep=0pt, minimum width=4pt]{}
        (0:1.732) node[circle, draw, fill=black!50,
                        inner sep=0pt, minimum width=4pt, label=right:$\delta^{\vee}$]{}
        (150:1) node[circle, draw, fill=black!50,
                        inner sep=0pt, minimum width=4pt, label=above left:$\gamma^{\vee}$]{}
    };
\end{tikzpicture}
\caption{The hyperplanes corresponding to $\gamma$.}\label{gammahyperplanes}
\end{center}
\end{figure}

\newpage

In Figure \ref{deltagammahyperplanes}, we show the hyperplanes corresponding to the non-simple root $3\delta + \gamma$.  Note that according to our calculations in Section \ref{corootdefinitions}, or by comparing the root and coroot diagrams, we have $(3\delta + \gamma)^{\vee} = \delta^{\vee} + \gamma^{\vee}$.  We find that
\begin{align*}
\langle 3\delta + \gamma,\delta^{\vee}+\gamma^{\vee} \rangle & = \langle 3\delta,\delta^{\vee}+\gamma^{\vee} \rangle + \langle \gamma,\delta^{\vee}+\gamma^{\vee} \rangle \\
    & = 3\langle \delta,\delta^{\vee} \rangle + 3\langle \delta,\gamma^{\vee} \rangle + \langle \gamma,\delta^{\vee} \rangle + \langle \gamma,\gamma^{\vee} \rangle \\
    & = 3(2) + 3(-1) + (-3) + (2) \\
    & = 2,
\end{align*}
as we should expect.  The remainder of the $H_{3\delta + \gamma + n}$ hyperplanes are labeled as well.

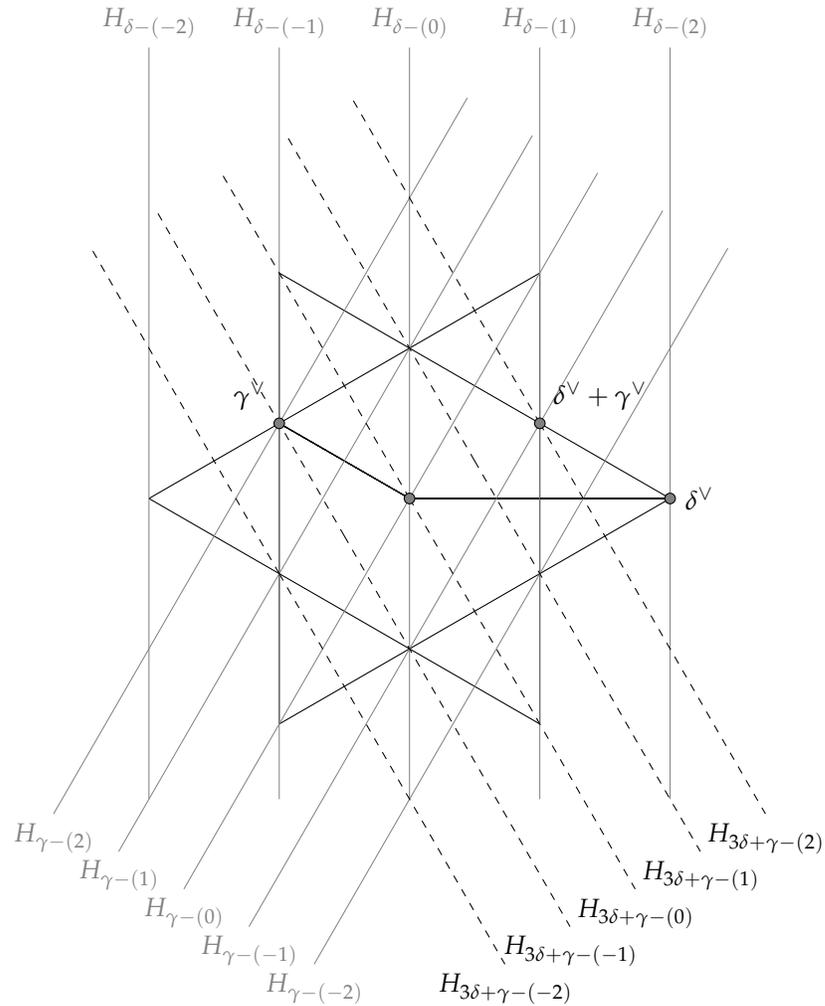
\begin{figure}[ht!]
\begin{center}
\begin{tikzpicture}[thin,scale=2.0]%
    \draw \foreach \x in {0,120,240} {
        (\x+120:1.732) -- (\x+240:1.732)
        (\x-60:1.732) -- (\x+60:1.732)
        (0:0) -- (0:1.732)
        (0:0) -- (150:1)};
    \draw[help lines] \foreach \x in {-2,-1,0,1,2}{
        (0:.866*\x) -- ++(90:3) node[above]{$H_{\delta - (\x)}$}
        (0:.866*\x) -- ++(90:-2)
    };
    \draw[help lines] \foreach \x in {-2,-1,0,1,2}{
        (150:.5*\x) -- ++(60:2.5)
        (150:.5*\x) -- ++(60:-3) node[below]{$H_{\gamma - (\x)}$}
    };
    \draw[dashed] \foreach \x in {-2,-1,0,1,2}{
        (30:.5*\x) -- ++(120:2.5)
        (30:.5*\x) -- ++(120:-3) node[below]{$H_{3\delta + \gamma - (\x)}$}
    };
    \draw{ (0:0) node[circle, draw, fill=black!50,
                        inner sep=0pt, minimum width=4pt]{}
        (0:1.732) node[circle, draw, fill=black!50,
                        inner sep=0pt, minimum width=4pt, label=right:$\delta^{\vee}$]{}
        (150:1) node[circle, draw, fill=black!50,
                        inner sep=0pt, minimum width=4pt, label=above left:$\gamma^{\vee}$]{}
        (30:1) node[circle, draw, fill=black!50,
                        inner sep=0pt, minimum width=4pt, label=above right:$\delta^{\vee} + \gamma^{\vee}$]{}
    };
\end{tikzpicture}
\caption{The hyperplanes corresponding to $3\delta + \gamma$.}\label{deltagammahyperplanes}
\end{center}
\end{figure}

After all hyperplanes $H_{\alpha + n}$ for all $12$ roots $\alpha \in \Phi$ have been positively identified, we see in Figure \ref{apartmentG2} that the standard affine apartment is divided up into infinitely many $30,60,90$ triangles, each of which is called a \textbf{chamber} of the apartment.  For purposes of later identification, we will refer to the vertices that connect edges at an incidence angle of $30^\circ$ as \textbf{type $1$} points.  Vertices which have incidence angles of $90^\circ$ will be called \textbf{type $2$} points, and those that have incidence angles of $60^\circ$ will be called \textbf{type $3$} points.

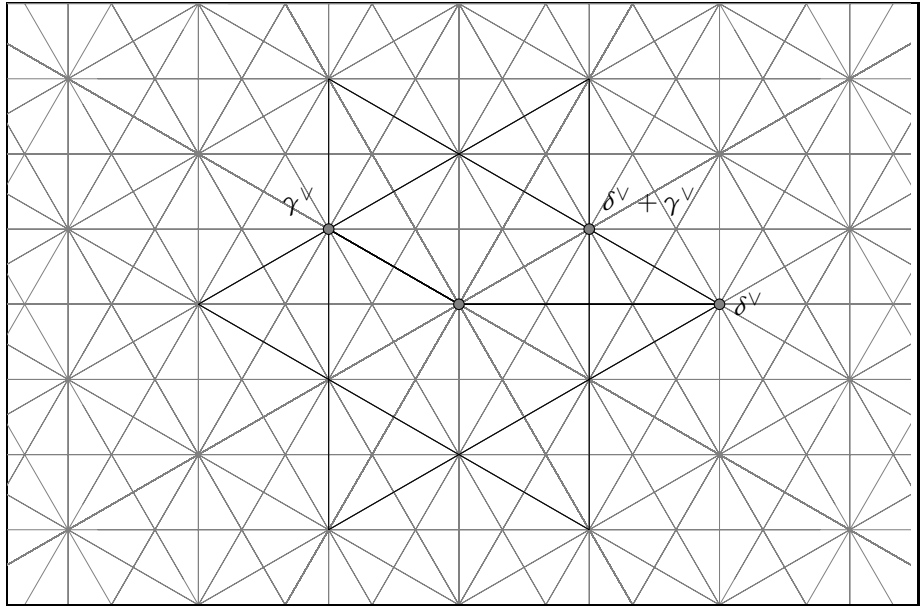
\begin{figure}[ht!]
\begin{center}
\frame{%
\begin{tikzpicture}[thin,scale=2.0]%
    \clip (-3,-2) rectangle (3,2);
    \draw[help lines] \foreach \z in {0,1,...,5} {
            \foreach \y in {0,1,...,5} {
              \foreach \x in {0,1,...,11} {
        (0:0) -- ++(30*\x:5)
        (60*\y + 30:\z) -- ++(30*\x:5)
    }}};
    \draw \foreach \x in {0,120,240} {
        (\x+120:1.732) -- (\x+240:1.732)
        (\x-60:1.732) -- (\x+60:1.732)
        (0:0) -- (0:1.732)
        (0:0) -- (150:1)};
    \draw{ (0:0) node[circle, draw, fill=black!50,
                        inner sep=0pt, minimum width=4pt]{}
        (0:1.732) node[circle, draw, fill=black!50,
                        inner sep=0pt, minimum width=4pt, label=right:$\delta^{\vee}$]{}
        (150:1) node[circle, draw, fill=black!50,
                        inner sep=0pt, minimum width=4pt, label=above left:$\gamma^{\vee}$]{}
        (30:1) node[circle, draw, fill=black!50,
                        inner sep=0pt, minimum width=4pt, label=above right:$\delta^{\vee} + \gamma^{\vee}$]{}
    };
\end{tikzpicture}
}
\caption{The Standard Apartment of $G_2$.}\label{apartmentG2}
\end{center}
\end{figure}

Recall that the construction of this apartment was contingent on a choice of maximal torus $T \subset G$, or equivalently from a choice of Chevalley generators $e_{\alpha}$ for $\alpha \in \Phi$.  An alternate choice of torus will yield a distinct apartment, though the two will have identical structures.  These distinct apartments can be ``glued together'' along the hyperplane edges, and the greater structure thus obtained is known as the \textbf{Bruhat-Tits building}\label{symbol:building} $\mathcal{B}(G_2)$ of $G_2$.   However, in this current work we have already made a steadfast choice of Chevalley generators, so we will have little need of this larger structure for now, and will focus on the affine apartment obtained from our choices.

\chapter{Lattice Filtrations and Octonion Orders}
\label{orders}

\section{Lattices and Orders in $\oo$}
\label{latticesandorders}

For the current section we will briefly step away from our discussion of roots, coroots, and the affine apartment.  We outline here some general definitions and facts about octonion orders, in the sense of \cite{GanYu2003}.  We will quickly return to our affine apartment at the end of Section \ref{algebravaluations}.

Also, for the first time the discrete valuation of the field $k = \qp$ will come into play.  Therefore, in this section and for the remainder of this paper, the field over which we work will be fixed, with $k = \qp$ and the uniformizer $p \neq 2$.

\begin{definition}\label{symbol:lattice}
A \textbf{lattice} in $\oo$ is a finitely generated $\zz_{p}$-submodule $\Lambda$ of $\oo$, such that $\Lambda \otimes \qq_{p} = \oo$.  An \textbf{order} in $\oo$ is a $\zp$-lattice which is also a unital subring.
\end{definition}

\begin{definition}\label{symbol:latticefiltration}
A \textbf{lattice filtration} in $\oo$ is a set of lattices $\{\Lambda_r\}_{r \in \rr}$, totally ordered with respect to containment, satisfying the following axioms:
\begin{enumerate}
\item $\bigcup_{r} \Lambda_{r} = \oo.$

\item $\bigcap_{r} \Lambda_{r} = \{ 0 \}.$

\item If $r,s \in \rr$ and $r \leq s$, then $\Lambda_{r} \supset \Lambda_{s}.$

\item If $r \in \rr$, then $\Lambda_{s} = \bigcap_{r < s} \Lambda_{r}.$
\end{enumerate}
\end{definition}

\begin{definition}
Let $\{ \Lambda_r \}$ be a lattice filtration in $\oo$.  For any $r \in \rr$, we take
\[ \Lambda_{r^{+}} = \bigcup_{r<s} \Lambda_{s}. \]
The \textbf{jumps} of the lattice $(\Lambda_r)$ are the real numbers $r \in \rr$ for which $\Lambda_{r^{+}} \neq \Lambda_{r}$.
\end{definition}

\begin{definition}
A lattice filtration is called a \textbf{lattice sequence} if, for every $r \in \rr$, we have $\Lambda_{r+1} = p \cdot \Lambda_{r}$.
\end{definition}

We should note that a given lattice sequence $\{ \Lambda_r \}$ may or may not have lattices intermediate to $\Lambda_{r}$ and $\Lambda_{r+1} = p \cdot \Lambda_{r}$.  However, there will be finitely many jumps in the filtration between $0$ and $1$.  Let $j_i \in \rr$ such that $0 < j_1 < j_2 < \dots < j_{r-1} < 1$, and we have containments
\[ \dots \subsetneq p \,\Lambda_{j_1} \subsetneq p \,\Lambda_{0} \subsetneq \Lambda_{j_{r-1}} \subsetneq \dots \subsetneq \Lambda_{j_2} \subsetneq \Lambda_{j_1} \subsetneq \Lambda_{0} \subsetneq p^{-1} \Lambda_{j_{r-1}} \subsetneq \dots \]

The maximum number of intermediate lattices for a given sequence is called the \textbf{rank} of the sequence.

As an example, we may consider the standard lattice $\underline\Lambda := \oo(\zp)$ of octonion elements with purely integer entries.  This is certainly a $\zp$-module of $\oo$ which, after extending scalars, is equal to $\oo$, but it is also closed under multiplication and contains the identity. Therefore it is an order in $\oo$.

We may also define the individual lattices of a lattice filtration, with $r \in \rr$, by
\[ \underline\Lambda_r := p^{\lfloor r \rfloor} \underline\Lambda = \oo(p^{\lfloor r \rfloor} \zp).  \]

Note that $\{ \underline\Lambda_r \}$ is a lattice sequence by definition.  For $r \geq 0$, each of these lattices will be an order in $\oo$, but not so for negative $r$.  Also, because of the discrete valuation on $\qp$, the jumps of this lattice filtration will occur precisely at $r \in \zz$.  We will refer to $\{ \underline\Lambda_r \}$ as the \textbf{standard lattice sequence}\label{symbol:standardlattice}.

\begin{proposition}
\label{orderconditions}
Let $\Lambda$ be the following lattice in $\oo$:
\[ \Lambda = \left(
  \begin{Array}{cc}
    p^{a_1}\zp & \bigl\langle p^{a_2}\zp,p^{a_3}\zp,p^{a_4}\zp \bigr\rangle \\
    \bigl\langle p^{a_5}\zp,p^{a_6}\zp,p^{a_7}\zp \bigr\rangle & p^{a_8}\zp \\
  \end{Array}
\right). \]
That is, the $i^{\text{th}}$ entry of an element in this lattice should have valuation at minimum $a_i$.  Then $\Lambda$ is an order in $\oo$ if and only if all of the following relations are satisfied:
\begin{align*}
a_2 + a_3 & \geq a_7, \quad\quad a_2+a_5 \geq 0, \quad\quad a_1 = a_8 = 0.\\
a_2 + a_4 & \geq a_6, \quad\quad a_3+a_6 \geq 0, \\
a_3 + a_4 & \geq a_5, \quad\quad a_4+a_7 \geq 0, \\
a_5 + a_6 & \geq a_4, \\
a_5 + a_7 & \geq a_3, \\
a_6 + a_7 & \geq a_2, \\
\end{align*}
\end{proposition}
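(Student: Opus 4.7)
The plan is to reduce the order condition to two checks—that $\Lambda$ contains the identity $\ee$ and that $\Lambda$ is closed under multiplication—and then to unpack each check using the explicit octonion product formula from Section \ref{objects}. By construction $\Lambda$ is a finitely generated $\zp$-module with $\Lambda \otimes \qp = \oo$, so these two conditions exhaust the definition of an order.

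First I would handle the diagonal entries. Containment of $\ee$ forces $1 \in p^{a_1}\zp$ and $1 \in p^{a_8}\zp$, giving $a_1, a_8 \leq 0$. The reverse inequalities come from multiplicative closure applied to the scalar elements: $p^{a_1}\ee \in \Lambda$ squares to $p^{2a_1}\ee$, which must again lie in $\Lambda$, forcing $a_1 \geq 0$; symmetrically for $a_8$. Together these pin down $a_1 = a_8 = 0$.

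Next I would derive the remaining nine inequalities by expanding $uv$ for generic elements $u, v$ whose entries $x_i, y_j$ have valuations at least $a_i, a_j$. Each of the eight entries of $uv$ is a sum of monomials $x_i y_j$, and demanding that every such monomial have valuation $\geq a_\ell$ (the required valuation of that entry) yields one inequality per monomial. The $(1,1)$ entry, which equals $x_1 y_1 + \vec{v}_x \circ \vec{w}_y$, reproduces $a_1 = 0$ and produces $a_2+a_5, a_3+a_6, a_4+a_7 \geq 0$; the $(2,2)$ entry produces the same three from $\vec{w}_x \circ \vec{\phi}_y$ together with $a_8 = 0$. The $(1,2)$ entry involves $-\vec{w}_x \times \vec{\psi}_y$, whose three components generate $a_6+a_7 \geq a_2$, $a_5+a_7 \geq a_3$, $a_5+a_6 \geq a_4$, and the $(2,1)$ entry involves $\vec{v}_x \times \vec{\phi}_y$, producing the final three $a_3+a_4 \geq a_5$, $a_2+a_4 \geq a_6$, $a_2+a_3 \geq a_7$.

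The converse direction is verified by the same term-by-term valuation check: if all nine inequalities and the diagonal equalities hold, each monomial in each entry of $uv$ has valuation at least the prescribed $a_\ell$, so $uv \in \Lambda$. The only real obstacle is bookkeeping—matching the nine inequalities to the correct monomials coming from the two cross products and two dot products, with signs and index permutations placed correctly. To avoid mistakes I would write out $\vec{v} \times \vec{\phi}$ and $\vec{w} \times \vec{\psi}$ in coordinates once at the start and then read the inequalities directly off the resulting list of monomials. No deeper structural argument is required.
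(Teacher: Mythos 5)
Your proposal is correct and follows essentially the same route as the paper: force $a_1=a_8=0$ from containment of $\ee$ together with multiplicative closure, then expand $\Lambda\cdot\Lambda$ entry by entry via the explicit product formula and read the nine inequalities off the dot-product and cross-product monomials (your assignment of monomials to inequalities matches the paper's exactly). One minor slip: $p^{a_1}\ee$ need not lie in $\Lambda$ when $a_1<a_8$, but this is harmless because the $x_1y_1$ term in the $(1,1)$ entry of the generic product already forces $2a_1\geq a_1$, as you note.
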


\begin{proof}
We begin with the last condition, that $a_1 = a_8 = 0$.  For $\Lambda$ to be an order, it must first contain the identity, which forces $a_1$ and $a_8$ to be nonpositive.  Furthermore, if $a_1 < 0$, then by performing the multiplication
\[ \Lambda^2 = \left(
  \begin{Array}{cc}
    p^{a_1}\zp & \bigl\langle p^{a_2}\zp,p^{a_3}\zp,p^{a_4}\zp \bigr\rangle \\
    \bigl\langle p^{a_5}\zp,p^{a_6}\zp,p^{a_7}\zp \bigr\rangle & p^{a_8}\zp \\
  \end{Array}
\right)^2, \]
we see that the upper-left entry of the product will be in
\[ p^{2a_1}\zp + p^{a_2+a_5}\zp + p^{a_3+a_6}\zp + p^{a_4+a_7}\zp. \]
Since $2a_1 < a_1$, this is not contained in $p^{a_1}\zp$ for any values of the remaining exponents. Therefore $\Lambda$ is not closed under multiplication and is not an order in $\oo$.  The lower-right entry of the product can also be examined to show that $a_8\nless0$.

Now suppose that $a_1 = a_8 = 0$.  Therefore $\ee \in \oo$, and we check to see when $\Lambda$ is closed under multiplication by performing the multiplication $\Lambda^2$.  We will examine each entry of this product individually.

In the upper-left entry, we find
\[ \zp + p^{a_2+a_5}\zp + p^{a_3+a_6}\zp + p^{a_4+a_7}\zp, \]
which will be contained in $\zp$ so long as $a_2+a_5$, $a_3+a_6$, $a_4+a_7$ are all nonnegative.  This gives us three more of our relations, and computing the lower-right entry will yield the same result.

Next, computing the upper-right (vector) entry we find
\[ \bigl\langle p^{a_2}\zp,p^{a_3}\zp,p^{a_4}\zp \bigr\rangle + \bigl\langle p^{a_2}\zp,p^{a_3}\zp,p^{a_4}\zp \bigr\rangle - \bigl\langle p^{a_6+a_7}\zp,p^{a_5+a_7}\zp,p^{a_5+a_6}\zp \bigr\rangle \]
This will be contained in $\Lambda$ if and only if $a_6+a_7 \geq a_2$, $a_5+a_7 \geq a_3$, and $a_5+a_6 \geq a_4$, which gives us three more of our relations.  The final three relations can be found be examining the lower-left (vector) entry in the same way.
\end{proof}

It is clear from this argument that our standard lattice sequence $\underline\Lambda_r$ satisfies all of these relations trivially. It is also clear that writing out octonion elements in this way is notationally cumbersome.  We introduce some new notation to help:

\begin{definition}
We will use the following notation to describe the relevant lattices in $\oo$:
\begin{align*}
\Lambda \left(
  \begin{Array}{ccc|c}
    a_2 & a_3 & a_4 & a_1 \\
    a_5 & a_6 & a_7 & a_8 \\
  \end{Array}
\right)
& = \left(
  \begin{Array}{cc}
    p^{a_1}\zp & \bigl\langle p^{a_2}\zp,p^{a_3}\zp,p^{a_4}\zp \bigr\rangle \\
    \bigl\langle p^{a_5}\zp,p^{a_6}\zp,p^{a_7}\zp \bigr\rangle & p^{a_8}\zp \\
  \end{Array}
\right).
\end{align*}
If the diagonal valuations are absent, they are to be interpreted as zeros.  Likewise, we will denote lattice filtrations in $\oo$, indexed by $r \in\rr$, by:
\begin{align*}
\Lambda_r \left(
  \begin{Array}{ccc|c}
    a_2 & a_3 & a_4 & a_1 \\
    a_5 & a_6 & a_7 & a_8 \\
  \end{Array}
\right) & = \left\{ p^{\lfloor r \rfloor} \cdot \Lambda \left(
  \begin{Array}{ccc|c}
    a_2 & a_3 & a_4 & a_1 \\
    a_5 & a_6 & a_7 & a_8 \\
  \end{Array}
\right) \right\}_{r \in \rr} \\
& = \left\{ \Lambda \left(
  \begin{Array}{ccc|c}
    a_2+ \lfloor r \rfloor & a_3+ \lfloor r \rfloor & a_4+ \lfloor r \rfloor & a_1+ \lfloor r \rfloor \\
    a_5+ \lfloor r \rfloor & a_6+ \lfloor r \rfloor & a_7+ \lfloor r \rfloor & a_8+ \lfloor r \rfloor \\
  \end{Array}
\right) \right\}_{r \in \rr}.
\end{align*}
\end{definition}

Once again, due to the fact that $k=\qp$ has a discrete valuation, all of these lattice filtrations are also lattice sequences.  The individual lattices in the sequence may or may not be orders, based on whether or not the indices satisfy the relations in Proposition \ref{orderconditions}.

\begin{definition}\label{symbol:duallattice}
Given a lattice $\Lambda$ in $\oo$ as defined above, the \textbf{dual lattice} associated to $\Lambda$ is defined to be
\[ \Lambda^\ast  = \{ x \in \oo \mid T(xy) \in \zp, \forall y \in \Lambda \}. \]
\end{definition}

\begin{proposition}
Let $\Lambda$ be a lattice in $\oo$:
\[ \Lambda = \Lambda \left(
  \begin{Array}{ccc|c}
    a_2 & a_3 & a_4 & a_1 \\
    a_5 & a_6 & a_7 & a_8 \\
  \end{Array}
\right). \]
Then
\[ \Lambda^\ast = \Lambda \left(
  \begin{Array}{ccc|c}
    -a_5 & -a_6 & -a_7 & -a_8 \\
    -a_2 & -a_3 & -a_4 & -a_1 \\
  \end{Array}
\right). \]
\end{proposition}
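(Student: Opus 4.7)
The plan is to reduce the condition $T(xy) \in \zp$ for all $y \in \Lambda$ to a finite family of valuation inequalities, one for each coordinate of $x$, by testing $y$ against a natural $\zp$-basis of $\Lambda$. The lattice $\Lambda$ is free of rank eight over $\zp$, with one generator per coordinate position in the Zorn matrix, namely $p^{a_i}$ times the appropriate unit. By $\zp$-linearity of the trace pairing, $x \in \Lambda^\ast$ if and only if $T(x \cdot p^{a_i}e_i) \in \zp$ for each of these eight generators.

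Next I would expand $T(xy)$ explicitly using the Zorn product formula from Section~\ref{objects}. Since the trace on $\oo$ is the sum of the two diagonal entries and the cross-product pieces of the Zorn product live only off the diagonal, the trace collapses to a clean pairing: writing $x = (a,\vec v\,;\,\vec w, d)$ and $y = (\alpha,\vec\phi\,;\,\vec\psi,\delta)$, every coordinate of $x$ pairs with exactly one ``mirror'' coordinate of $y$. The two off-diagonal vector entries pair component-by-component against each other through dot products, while the two diagonal entries pair (via the involution implicit in the symmetric form $B(v,w) = \tfrac{1}{2}T(v\overline w)$) against the opposite diagonal.

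With this pairing table in hand, plugging each generator $p^{a_i}e_i$ of $\Lambda$ into the trace isolates a single coordinate of $x$ and yields one inequality of the form $\val(x_j) \geq -a_i$, where $j$ is the mirror of $i$. Reading these eight inequalities off the pairing table reproduces precisely the valuation pattern for $\Lambda^\ast$ stated in the proposition. The reverse inclusion follows formally: if $x$ satisfies the eight lower bounds, then it pairs into $\zp$ against every $\zp$-combination of the generators and hence against every element of $\Lambda$.

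The one place mistakes are easy to make is the bookkeeping of the mirror pairing---in particular the fact that the $(1,1)$-diagonal valuation of $\Lambda^\ast$ is $-a_8$ rather than $-a_1$, and symmetrically on the other diagonal. This diagonal swap is precisely the shadow of the involution built into the symmetric bilinear form associated to $N$, and is the main subtlety worth checking carefully. A useful sanity check before writing out the general case is to run the argument on a single nontrivial example, such as $a_1 = 1$ with all other $a_i = 0$, where the swap is the only nonzero effect and is easy to verify by hand.
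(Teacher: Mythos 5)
Your overall strategy is the same as the paper's: expand $T(xy)$ via the Zorn multiplication formula, observe that only the diagonal entries of the product contribute to the trace, and read off one valuation inequality per coordinate. (The paper phrases this as multiplying the two lattices entrywise as sets; testing against a $\zp$-basis of $\Lambda$ is the same computation.) Your off-diagonal bookkeeping is right: positions $2,3,4$ of $x$ pair with positions $5,6,7$ of $y$ and vice versa, which produces the exponents $-a_5,-a_6,-a_7$ and $-a_2,-a_3,-a_4$ exactly as in the paper.

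The gap sits precisely at the point you flag as the main subtlety. Under the stated definition $\Lambda^\ast = \{x \in \oo \mid T(xy)\in\zp \text{ for all } y\in\Lambda\}$, the trace of the Zorn product of $x=(a,\vec{v};\vec{w},d)$ and $y=(\alpha,\vec{\phi};\vec{\psi},\delta)$ is $a\alpha + \vec{v}\circ\vec{\psi} + d\delta + \vec{w}\circ\vec{\phi}$: each diagonal entry of $x$ pairs with the \emph{same-position} diagonal entry of $y$, and no involution enters anywhere. Your appeal to ``the involution implicit in $B(v,w)=\tfrac{1}{2}T(v\overline{w})$'' imports a different pairing than the one in the definition; the swap would be correct if the dual were cut out by $T(x\overline{y})\in\zp$, but that is not what $T(xy)\in\zp$ gives. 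Testing $x$ against the generators $p^{a_1}b_4$ and $p^{a_8}b_{-4}$ yields $\val(a)\geq -a_1$ and $\val(d)\geq -a_8$ --- no swap. This is in fact exactly what the paper's own computation produces ($d_1=-a_1$ and $d_8=-a_8$); the swap appears only in the final transcription into the $\Lambda(\cdot)$ notation, an inconsistency between the paper's computation and its displayed conclusion that is harmless downstream because every lattice subsequently dualized has $a_1=a_8=0$. Your own proposed sanity check ($a_1=1$, all other $a_i=0$) would expose this: the definition puts the valuation $-1$ in the upper-left corner of the dual, not the lower-right one.
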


\begin{proof}
We perform a simple calculation (on the level of sets) to determine the octonion elements which satisfy this condition. Set
\[ \Lambda^\ast = \left(
  \begin{Array}{cc}
    p^{d_1}\zp & \bigl\langle p^{d_2}\zp,p^{d_3}\zp,p^{d_4}\zp \bigr\rangle \\
    \bigl\langle p^{d_5}\zp,p^{d_6}\zp,p^{d_7}\zp \bigr\rangle & p^{d_8}\zp \\
  \end{Array}
\right). \]
Then we multiply $\Lambda^\ast \cdot \Lambda$, writing our vectors as columns for convenience:
\[
\left(
  \begin{Array}{cc}
    p^{d_1}\zp & \left(
                   \begin{Array}{c}
                     p^{d_2}\zp \\
                     p^{d_3}\zp \\
                     p^{d_4}\zp \\
                   \end{Array}
                 \right) \\
    \left(
                   \begin{Array}{c}
                     p^{d_5}\zp \\
                     p^{d_6}\zp \\
                     p^{d_7}\zp \\
                   \end{Array}
                 \right) & p^{d_8}\zp
  \end{Array}
\right)\left(
  \begin{Array}{cc}
    p^{a_1}\zp & \left(
                   \begin{Array}{c}
                     p^{a_2}\zp \\
                     p^{a_3}\zp \\
                     p^{a_4}\zp \\
                   \end{Array}
                 \right) \\
    \left(
                   \begin{Array}{c}
                     p^{a_5}\zp \\
                     p^{a_6}\zp \\
                     p^{a_7}\zp \\
                   \end{Array}
                 \right) & p^{a_8}\zp
  \end{Array}
\right).
\]
Since we are only concerned with the trace of this product, we only consider its diagonal entries.  For the upper-left entry, we find that
\[ p^{d_1+a_1}\zp + p^{d_2 + a_5}\zp + p^{d_3 + a_6}\zp + p^{d_4 + a_7}\zp, \]
which will be in $\zp$ for all element of $\Lambda$ precisely when:
\begin{align*}
d_1 & = -a_1, \quad\quad d_3 = -a_6, \\
d_2 & = -a_5, \quad\quad d_4 = -a_7.
\end{align*}
Likewise, the lower-right entry will yield
\[ p^{d_5 + a_2}\zp + p^{d_6 + a_3}\zp + p^{d_7 + a_4}\zp + p^{d_8 + a_8}\zp, \]
which will be in $\zp$ for all element of $\Lambda$ precisely when:
\begin{align*}
d_5 & = -a_2, \quad\quad d_7 = -a_4,\\
d_6 & = -a_3, \quad\quad d_8 = -a_8.
\end{align*}
Translating this result to the notation we have defined gives us the desired result.
\end{proof}

\section{Maximinorante Algebra Valuations}
\label{algebravaluations}

In a general context, let $V$ be any unital $k$-algebra with $k$ a local non-archimedean field.  We would like to extend our regular valuation $\val \colon k \rightarrow \zz$ to a valuation on all of $V$, denoted also by $\val$, in such a way that the two valuations coincide on $k \subseteq V$.

\begin{definition}[\cite{GanYu2003}]\label{symbol:val}
Let $V$ be any unital $k$-algebra with $k$ a local non-archimedean field.  A \textbf{valuation}\footnote{In \cite{GanYu2003}, these maps are called a \textbf{norm} on $V$. Since we will work with composition algebras which already have a norm $N$, we will avoid this term in the current context.} on $V$ is a map $\val \colon V \rightarrow \rr \cup \{ \infty \}$ such that for all $v,w \in V$:
\begin{enumerate}
\item $\val(v+w) \geq \inf \{ \val(v),\val(w) \}$.

\item $\val(av) = \val(a) + \val(v)$ for all $a \in k$.

\item $\val(v) = \infty$ if and only if $v=0$.
\end{enumerate}
We call $\val$ an \textbf{algebra valuation} if it additionally satisfies:
\begin{enumerate}
\addtocounter{enumi}{3}
\item $\val(v \cdot w) \geq \val(v) + \val(w)$.
\end{enumerate}
\end{definition}

We reiterate that in expression (b), the valuation on $a$ is the regular valuation defined on $k$, while the valuation on $v$ and $av$ is the one defined on $V$.

Since the $k$-algebra that we would like to discuss is also a composition algebra with a quadratic norm $N$ and an associated nondegenerate bilinear form $B$, we give some adjectives to describe the relationship between these structures.

\begin{definition}(\cite{GanYu2003})
Let $(V,N)$ be a composition algebra over $k$, with $\text{char}\, k \neq 2$.  We say that an algebra valuation $\val$ on $V$ \textbf{minorizes} $N$ and $B$ if, for all $v,w \in V$,
\[ \val(B(v,w)) \geq \val(v) + \val(w). \]
Among valuations, we say that $\val_1 \geq \val_2$ if $\val_1(v) \geq \val_2(v)$ for all $v \in V$.  Thus, if $\val$ is a maximal element in the set of algebra valuations minorizing $N$ and $B$, we say that $\val$ is \textbf{maximinorante}.
\end{definition}

\begin{comment}(No need for gradings in this context.)
Now, we again restrict our attention to the algebra $V = \oo$.  To establish a connection between the algebra valuations we have just defined and the lattice sequences of Section \ref{latticesandorders}, we describe one more structure that can be placed on the lattice sequences:

\begin{definition}
Let $\{ \Lambda_r \}$ be a lattice sequence in $\oo$.  A \textbf{grading} on $\{ \Lambda_r \}$ is a strictly decreasing (with respect to containment in the sequence) map $\Gamma \colon \{ \Lambda_r \} \rightarrow \rr$ such that for each $a \in k$ and $\Lambda \in \{ \Lambda_r \}$:
\[ \Gamma( a \cdot \Lambda ) = \val(a) + \Gamma( \Lambda ). \]
\end{definition}
\end{comment}

\begin{proposition}
\label{prop:valuationbijection}
There is a bijection between the set of valuations $\val$ on $\oo$, and the set of lattice sequences on $\oo$.
\end{proposition}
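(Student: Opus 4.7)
The plan is to construct mutually inverse maps. Given a valuation $\val$, define a family $L(\val)$ by
\[ L(\val)_r := \{ x \in \oo \mid \val(x) \geq r \}, \quad r \in \rr. \]
Given a lattice sequence $\mathcal{L} = \{ \Lambda_r \}$, define a function $V(\mathcal{L})$ by $V(\mathcal{L})(0) := \infty$ and $V(\mathcal{L})(x) := \sup \{ r \in \rr \mid x \in \Lambda_r \}$ for $x \neq 0$. One then verifies that each map has image of the correct type and that $V \circ L$ and $L \circ V$ are the identity.

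To see that $L(\val)$ is a lattice sequence, note first that each $L(\val)_r$ is a $\zp$-submodule of $\oo$ by valuation axioms (a) and (b). The four lattice filtration axioms are then immediate: the total ordering and axiom (c) are direct from the definition, axiom (a) comes from finiteness of $\val(x)$ for $x \neq 0$, axiom (b) from valuation axiom (c), and axiom (d) from left-continuity of the sup on $\rr$. The sequence condition $L(\val)_{r+1} = p \cdot L(\val)_r$ is valuation axiom (b) applied with $a = p$, since any $x$ with $\val(x) \geq r+1$ can be written as $x = p \cdot (x/p)$ with $\val(x/p) \geq r$. Symmetrically, $V(\mathcal{L})$ is a valuation: axiom (c) follows from lattice filtration axioms (a) and (b); axiom (a) holds because each $\Lambda_r$ is closed under addition; and axiom (b) follows by writing $a = u p^n$ with $u \in \zp^\times$ and using $\Lambda_{r+n} = p^n \Lambda_r$ combined with the fact that multiplication by a unit preserves each $\Lambda_r$. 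Checking that $L$ and $V$ are mutually inverse is then a direct unwinding of definitions, with both identities relying on lattice filtration axiom (d) to guarantee that the relevant supremum is attained by the filtration.

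The main obstacle will be verifying that each $L(\val)_r$ is genuinely a \emph{lattice}, meaning finitely generated as a $\zp$-module with $L(\val)_r \otimes \qp = \oo$. A priori axioms (a)--(c) only produce a $\zp$-submodule. To handle this I would fix a $\qp$-basis $e_1, \ldots, e_8$ of $\oo$ and observe that each $\val(e_i)$ is finite by axiom (c); valuation axiom (a) then yields
\[ \val\!\left( \sum a_i e_i \right) \geq \min_i \bigl\{ \val(a_i) + \val(e_i) \bigr\}, \]
so that $L(\val)_r$ contains the standard lattice $\oo(p^M \zp)$ once $M$ is large enough to absorb the $\val(e_i)$. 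The reverse containment---that $L(\val)_r$ is contained in some $\oo(p^m \zp)$---amounts to bounding $\val$ from below on the standard lattice; this is the delicate point and rests on the standard fact that any function on a finite-dimensional $\qp$-vector space satisfying valuation axioms (a)--(c) is equivalent to the coordinate-wise valuation with respect to any basis. Once $L(\val)_r$ is sandwiched between two standard lattices, it is automatically a free $\zp$-module of full rank $8$, completing the argument.
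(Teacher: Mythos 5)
Your construction is exactly the one the paper uses: the maps $\val \mapsto \{x : \val(x) \ge r\}$ and $\{\Lambda_r\} \mapsto \sup\{r : x \in \Lambda_r\}$, with the same axiom-by-axiom verification. The only difference is that you are more careful than the paper on two points it glosses over --- checking that each $L(\val)_r$ is genuinely a finitely generated full-rank $\zp$-module (via sandwiching between standard lattices and equivalence of norms on a finite-dimensional $\qp$-space) and checking that the two maps are mutually inverse --- and both of those additions are correct.
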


\begin{proof}
Given a valuation $\val \colon \oo \rightarrow \rr \cup \{ \infty \}$, we construct the individual lattices
\[ \Lambda^{\val}_r := \{ x \in \oo \mid \val(x) \geq r \}. \]
Since $\val(p \cdot \Lambda) = \val(p) + \val(\Lambda) = 1 + \val(\Lambda)$, we have that
\[ \Lambda^{\val}_{r+1} = \{ x \in \oo \mid \val(x) \geq r+1 \} = p \cdot \Lambda^{\val}_{r}, \]
and this is indeed a lattice sequence in $\oo$.

\begin{comment}
A grading on this lattice sequence is given by
\[ \Gamma( \Lambda^{\val}_r ) := \inf_{x \in \Lambda^{\val}_r } \val(x). \]
We verify that for every lattice $\Lambda$ in the sequence, we have
\begin{align*}
\Gamma( a\Lambda ) & = \inf_{x \in a\Lambda } \val(x) \\
    & = \inf_{x \in \Lambda } \val(ax) \\
    & = \inf_{x \in \Lambda } \val(a) + \val(x) \\
    & = \val(a) + \inf_{x \in \Lambda } \val(x) \\
    & = \val(a) + \Gamma( \Lambda ).
\end{align*}
\end{comment}

Conversely, let $\{ \Lambda_r \}$ be a lattice sequence, and let $x \in \oo$.  Let $\Lambda_x$ be the smallest (with respect to containment in the lattice sequence) member of $\{ \Lambda_r \}$ containing $x$, and let $\val(x) : = r$.  Note that since $0 \in \oo$ is contained in every lattice of every sequence, there is no such `smallest member' of $\{ \Lambda_r \}$ which contains it, and from this we get condition (c) of the definition of algebra valuations.  Condition (b) follows from the fact that for $x \in \oo$ and $a \in \rr$ we have $\Lambda_{ax} = a\Lambda_{x}$.

Finally, for condition (a), let $x,y \in \oo$.  Then in the sequence $\{ \Lambda_r \}$, we may assume (without loss of generality) that $\Lambda_x \subset \Lambda_y$ and $\val(x) \geq \val(y)$.  Then, since $\Lambda_y$ contains both $x$ and $y$ and thus $x+y$, and $\Lambda_{x+y}$ is the smallest such lattice, we have $\Lambda_{x+y} \subset \Lambda_y$.  Therefore
\[ \val(x+y) = \inf\{\val(x),\val(y) \}. \]
\end{proof}

Given the above bijection, we find that the definitions of this chapter are all connected via the following important theorem.

\begin{theorem}[{\cite[Theorem $7.3$]{GanYu2003}}]
\label{ganyumaintheorem}
There is a bijection between the points in the building $\mathcal{B}(G)$ of $G = \text{Aut}(\oo)$ and the set of maximinorante algebra valuations for $(\oo,N)$.  In this bijection, the type $1$ points of $\mathcal{B}(G)$ correspond to those algebra valuations which take values in $\val(k)$, which in turn correspond to maximal orders in $\oo$.
\end{theorem}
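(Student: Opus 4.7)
My plan is to follow the strategy of \cite{GanYu2003}, leveraging the embedding of $G_2 = \text{Aut}(\oo)$ into $\text{SO}(\oo,N)$ and the classical Bruhat--Tits description of the building of the latter. By Theorem \ref{morphismspreservenorm} every $\varphi \in \text{Aut}(\oo)$ satisfies $N(\varphi v)=N(v)$, so there is a closed embedding $G_2 \hookrightarrow \text{O}(\oo,N)$; since $G_2$ is connected the image lies in $\text{SO}(\oo,N)$. Functoriality of Bruhat--Tits theory then produces a canonical, $G_2(k)$-equivariant embedding of buildings $\iota : \mathcal{B}(G_2) \hookrightarrow \mathcal{B}(\text{SO}(\oo))$. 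A theorem of Bruhat and Tits identifies $\mathcal{B}(\text{SO}(\oo))$ with the set of maximinorante norms on $(\oo,N)$, each point $y$ corresponding to a valuation $\val_y$ on $\oo$ minorizing both $N$ and $B$. The candidate bijection of the theorem is then $x \mapsto \val_{\iota(x)}$.

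The central step, and the main obstacle, is to characterize the image of $\iota$ inside $\mathcal{B}(\text{SO}(\oo))$ as exactly those maximinorante norms that are \emph{algebra} valuations. I would verify this first on the standard apartment $\mathcal{A}$ built in Chapter \ref{standardapartment}, then propagate to all of $\mathcal{B}(G_2)$ by $G_2(k)$-equivariance. Concretely, a point $x \in \mathcal{A}$ has affine coordinates in the coroot space $X_\bullet(T_\Phi)\otimes\rr$; using Proposition \ref{prop:thetorus} together with Proposition \ref{prop:valuationbijection}, the associated valuation corresponds to an explicit lattice sequence
\[ \Lambda_r\left(\begin{Array}{ccc|c} a_2 & a_3 & a_4 & a_1 \\ a_5 & a_6 & a_7 & a_8 \end{Array}\right), \]
whose exponents depend affinely on the coordinates of $x$. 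A direct calculation on the octonion product, entirely parallel to the one in the proof of Proposition \ref{orderconditions}, shows that the algebra condition $\val(vw)\geq\val(v)+\val(w)$ holds automatically for all such $x$, and in fact cuts out exactly the image of $\iota$ inside the ambient maximinorante norms. For surjectivity onto maximinorante algebra valuations, one argues that the stabilizer in $G_2(k)$ of such a valuation is a parahoric subgroup, which by Bruhat--Tits theory determines a unique facet of $\mathcal{B}(G_2)$ and hence a unique preimage under $\iota$.

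For the second statement, the type $1$ points of $\mathcal{B}(G_2)$ are precisely the vertices of $\mathcal{A}$ lying in the coroot lattice $X_\bullet(T_\Phi)$, where every affine functional $\alpha+n$ takes integer values. Under the bijection just constructed these correspond to valuations $\val$ with $\val(\oo) \subseteq \zz = \val(k)$; equivalently, by Proposition \ref{prop:valuationbijection} the associated lattice sequence has jumps only at integers and is determined by the single lattice $\Lambda_0$. Because $\val$ is an algebra valuation with $\val(\ee)=0$, the lattice $\Lambda_0$ is multiplicatively closed and unital, hence an order in the sense of Section \ref{latticesandorders}; its maximality follows from the maximinorance of $\val$, since a strictly larger order would yield a strictly smaller valuation still satisfying axioms (a)--(d). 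Conversely, every maximal order defines such a valuation by Proposition \ref{prop:valuationbijection}, and the criterion of Proposition \ref{orderconditions} together with $G_2(k)$-transitivity on the set of maximal orders closes the correspondence.
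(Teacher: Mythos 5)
The paper does not actually prove this statement: it is imported verbatim as Theorem $7.3$ of \cite{GanYu2003}, and the only ``proof'' on offer is the citation. Your outline reproduces the top-level strategy that Gan and Yu themselves use and that the abstract of this thesis describes --- embed $\text{Aut}(\oo)$ into $\text{SO}(\oo,N)$ via Theorem \ref{morphismspreservenorm}, identify $\mathcal{B}(\text{SO}(\oo))$ with maximinorante norms, and characterize the image of $\mathcal{B}(G_2)$ --- so the architecture is right. But the two places where you defer to ``a direct calculation'' and to parahoric formalism are exactly where all of the content of the theorem lives, and as written neither step goes through. The claim that submultiplicativity $\val(vw)\geq\val(v)+\val(w)$ ``holds automatically'' for every point of the embedded apartment and ``cuts out exactly the image of $\iota$'' is the entire theorem; verifying it is not parallel to the integer bookkeeping of Proposition \ref{orderconditions}, because a general point of $\mathcal{A}$ gives a lattice \emph{sequence} with irrational jumps, not a single lattice, and the converse direction (every maximinorante algebra valuation arises from a point of $\mathcal{B}(G_2)$ rather than merely of $\mathcal{B}(\text{SO}(\oo))$) is where Gan and Yu spend most of their effort. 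Your surjectivity argument is moreover circular: to know that the stabilizer of a maximinorante algebra valuation is a parahoric subgroup of $G_2(k)$ you essentially need to already know that the valuation corresponds to a point of $\mathcal{B}(G_2)$.

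There is also a concrete sign error in the last paragraph. With the dictionary of Proposition \ref{prop:valuationbijection}, enlarging the order $\Lambda_0$ enlarges every term of the associated lattice sequence and therefore produces a \emph{larger} valuation (each $x$ lies in lattices of higher index), not a smaller one. The correct argument is: if $\Lambda_0$ were not maximal, a strictly larger order would yield a strictly larger valuation still minorizing $N$ and $B$, contradicting the maximality built into ``maximinorante.'' As you have written it --- a larger order giving a smaller valuation --- no contradiction with maximinorance arises, so the deduction that type $1$ points give \emph{maximal} orders does not follow. Fixing the direction repairs that step, but the central characterization of the image of the building embedding still needs to be supplied (or, as the thesis does, honestly outsourced to \cite{GanYu2003}).
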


We now have several different but related structures: maximinorante algebra valuations on $\oo$, certain lattice sequences and orders in $\oo$, and points in our apartment $\mathcal{A}$.  Theorem \ref{ganyumaintheorem} establishes a direct connection between these objects, and will allow us to label our apartment $\mathcal{A}$ in significant detail.

\section{Orders of Type $1$}
\label{type1points}

By Theorem \ref{ganyumaintheorem}, the type $1$ points are in bijection with the maximal orders in $\oo$.  We will denote the set of maximal orders by $\mathcal{V}_1$, and we describe some of these maximal orders below.

We recall that $\text{SL}_3(k) \subset G_2$ acts on octonion elements by the map $\theta$, as defined in Section \ref{SL3}.  Therefore it also acts on lattices in $\oo$.  In particular, consider the following toral elements of $T_s \subset \text{SL}_3(k)$:
\[ g = \left(
     \begin{Array}{ccc}
       p & 0 & 0 \\
       0 & p^{-1} & 0 \\
       0 & 0 & 1 \\
     \end{Array}
   \right) \quad\text{and}\quad g^{-T} = g^{-1} = \left(
     \begin{Array}{ccc}
       p^{-1} & 0 & 0 \\
       0 & p & 0 \\
       0 & 0 & 1 \\
     \end{Array}
   \right).
 \]
Applying the associated automorphism $\theta(g)$ to the standard lattice sequence gives the following result:
\begin{align*}
\theta(g)\left( \underline\Lambda \right) & = \left(
  \begin{Array}{cc}
    \zp & g \cdot \zp^3 \\
    g^{-T} \cdot \zp^3 & \zp \\
  \end{Array}
\right) \\
& = \left(
  \begin{Array}{cc}
    \zp & \bigl\langle p\,\zp,p^{-1}\zp,\zp \bigr\rangle  \\
    \bigl\langle p^{-1}\zp,p\,\zp,\zp \bigr\rangle & \zp \\
  \end{Array}
\right) \\
& = \Lambda\left(
  \begin{Array}{ccc|c}
    1 & -1 & 0 & 0 \\
    -1 & 1 & 0 & 0 \\
  \end{Array}
\right).
\end{align*}

\begin{proposition}
The lattice
\[ \Lambda = \Lambda\left(
  \begin{Array}{ccc|c}
    1 & -1 & 0 & 0 \\
    -1 & 1 & 0 & 0 \\
  \end{Array}
\right) \]
is a maximal order in $\oo$.
\end{proposition}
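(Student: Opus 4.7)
The plan is to first verify that $\Lambda$ is an order, and then deduce maximality by transporting it to the standard order $\underline{\Lambda}$ via the very automorphism that produced it. For the first step, inserting the indices $(a_1,\dots,a_8)=(0,1,-1,0,-1,1,0,0)$ into Proposition \ref{orderconditions} yields only the inequalities $0\geq 0$, $1\geq 1$, and $-1\geq -1$, all of which are in fact saturated; together with $a_1=a_8=0$, this shows $\Lambda$ is an order, and the saturation of every condition is already a strong hint that the order is maximal.

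The displayed computation immediately preceding the proposition exhibits $\Lambda = \theta(g)(\underline{\Lambda})$ for $g=\text{diag}(p,p^{-1},1)\in \text{SL}_3(k)$. Since $\theta(g)$ is a $k$-algebra automorphism of $\oo$, it permutes the set of $\zp$-orders in $\oo$ and preserves containment, and hence sends maximal orders to maximal orders. It therefore suffices to prove that the standard order $\underline{\Lambda}=\oo(\zp)$ is maximal in $\oo$.

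For the maximality of $\underline{\Lambda}$, the cleanest route is Theorem \ref{ganyumaintheorem}: by the bijection of Proposition \ref{prop:valuationbijection}, $\underline{\Lambda}$ corresponds to the integer-valued valuation $x\mapsto \min_i \val(x_i)$ on $\oo$. One checks that this is an algebra valuation minorizing both $N$ and $B$ (using multiplicativity of $N$ together with $B(v,w)=\tfrac12 T(v\overline{w})$), and that it is the maximinorante valuation attached to the origin of $\mathcal{A}$, a type $1$ vertex. The Gan--Yu bijection of Theorem \ref{ganyumaintheorem} then identifies it with a maximal order, namely $\underline{\Lambda}$, and we are done.

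The main obstacle, should a self-contained argument be preferred over citing Theorem \ref{ganyumaintheorem}, is ruling out orders $\Lambda'\supsetneq \underline{\Lambda}$ directly. Any such $\Lambda'$ can be put into the normal form of Proposition \ref{orderconditions} with all $a_i\leq 0$ (since $\Lambda'\supset \underline{\Lambda}$), and then the pair-sum conditions $a_i+a_j\geq 0$ force every $a_i=0$; the real work is in conjugating a general order containing $\underline{\Lambda}$ into this diagonal-like normal form via the stabilizer of $\underline{\Lambda}$ in $\text{Aut}(\oo)$, which is precisely the step that makes appealing to Gan--Yu attractive.
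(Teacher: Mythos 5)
Your verification that $\Lambda$ is an order is the same as the paper's, but your maximality argument takes a genuinely different route. The paper argues directly: since each column sum $a_2+a_5$, $a_3+a_6$, $a_4+a_7$ (and likewise every cross-condition) is exactly zero, any strictly larger lattice of the same coordinate-wise shape must lower some exponent and thereby violate one of the inequalities of Proposition \ref{orderconditions}, so no order properly contains $\Lambda$. You instead transport the problem to $\underline{\Lambda}$ via the automorphism $\theta(g)$ and cite Theorem \ref{ganyumaintheorem} for the maximality of the standard order. Both arguments are workable, and each has a soft spot that you have in fact correctly diagnosed: the paper's direct argument tacitly assumes that an order containing $\Lambda$ is again of the normal form $\Lambda(a_i)$, which is exactly the step you flag as ``the real work''; your route avoids that, but leans on the unverified assertion that $x \mapsto \min_i \val(a_i)$ is \emph{maximinorante} --- establishing maximality among minorizing valuations is essentially equivalent to establishing maximality of the order, so the appeal to Gan--Yu shifts rather than eliminates the work. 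That said, your reduction to $\underline{\Lambda}$ via $\theta(g)$ is a genuinely useful observation the paper does not exploit in its proof: it reduces all six type $1$ orders in the orbit of the torus to a single computation at the origin.
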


\begin{proof}
It is easily checked that $\Lambda$ satisfies the properties of Proposition \ref{orderconditions}.  In particular, the sum of each column in the matrix of valuations is equal to zero.  Any lattice containing $\Lambda$ must have an decreased valuation in at least one entry, which would cause the relevant sum to become negative, in which case the lattice in question would not be an order.
\end{proof}

Note that there are six orders (total) which are analogous to this one, each produced by an analogous toral element $h_\alpha(p)$, where $p$ is the uniformizer and $\alpha$ ranges over the six long roots, associated to the six short coroots. These $h_\alpha(p)$ can be interpreted as acting by $\theta$ automorphisms via the matrices
\[ \left(
     \begin{Array}{ccc}
       p & 0 & 0 \\
       0 & p^{-1} & 0 \\
       0 & 0 & 1 \\
     \end{Array}
   \right), \left(
     \begin{Array}{ccc}
       1 & 0 & 0 \\
       0 & p & 0 \\
       0 & 0 & p^{-1} \\
     \end{Array}
   \right), \left(
     \begin{Array}{ccc}
       p^{-1} & 0 & 0 \\
       0 & 1 & 0 \\
       0 & 0 & p \\
     \end{Array}
   \right), \]
and their inverses.  The six maximal orders created by these toral elements are identified in yellow in Figure \ref{Type1Vertices}.  In this way, the torus of $\text{SL}_3(k)$, which is equal to the torus $T$ in $G_2$ by Proposition \ref{prop:thetorus} and the same torus used to identify the coroot lattice and the apartment, acts on its apartment by translation.  More specifically, $T$ acts transitively on the type $1$ vertices of $\mathcal{A}$.

\newpage

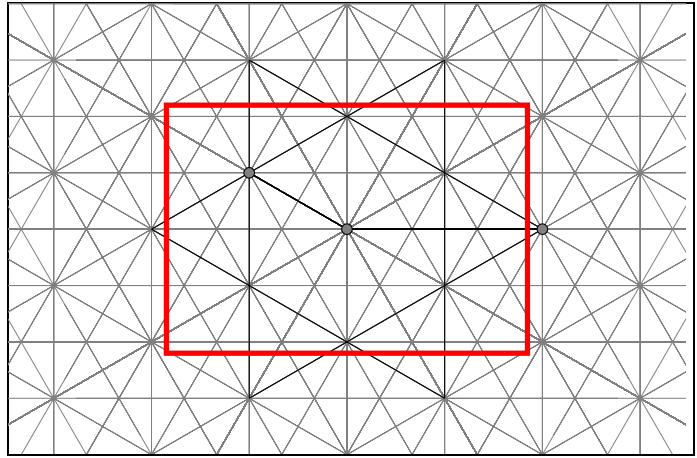
\begin{figure}[ht!]
\begin{center}
\frame{%
\begin{tikzpicture}[thin,scale=1.5]%
    \clip (-3,-2) rectangle (3,2);
    \draw[help lines] \foreach \z in {0,1,...,5} {
            \foreach \y in {0,1,...,5} {
              \foreach \x in {0,1,...,11} {
        (0:0) -- ++(30*\x:5)
        (60*\y + 30:\z) -- ++(30*\x:5)
    }}};
    \draw \foreach \x in {0,120,240} {
        (\x+120:1.732) -- (\x+240:1.732)
        (\x-60:1.732) -- (\x+60:1.732)
        (0:0) -- (0:1.732)
        (0:0) -- (150:1)};
    \draw{ (0:0) node[circle, draw, fill=black!50,
                        inner sep=0pt, minimum width=4pt]{}
        (0:1.732) node[circle, draw, fill=black!50,
                        inner sep=0pt, minimum width=4pt]{}
        (150:1) node[circle, draw, fill=black!50,
                        inner sep=0pt, minimum width=4pt]{}
    };
    \begin{scope}[color=red,line width=2pt]
    \draw (-1.6,-1.1) rectangle (1.6,1.1);
    \end{scope}
\end{tikzpicture}
}
\caption{The Highlighted Area of the Apartment}\label{zoom}
\end{center}
\end{figure}

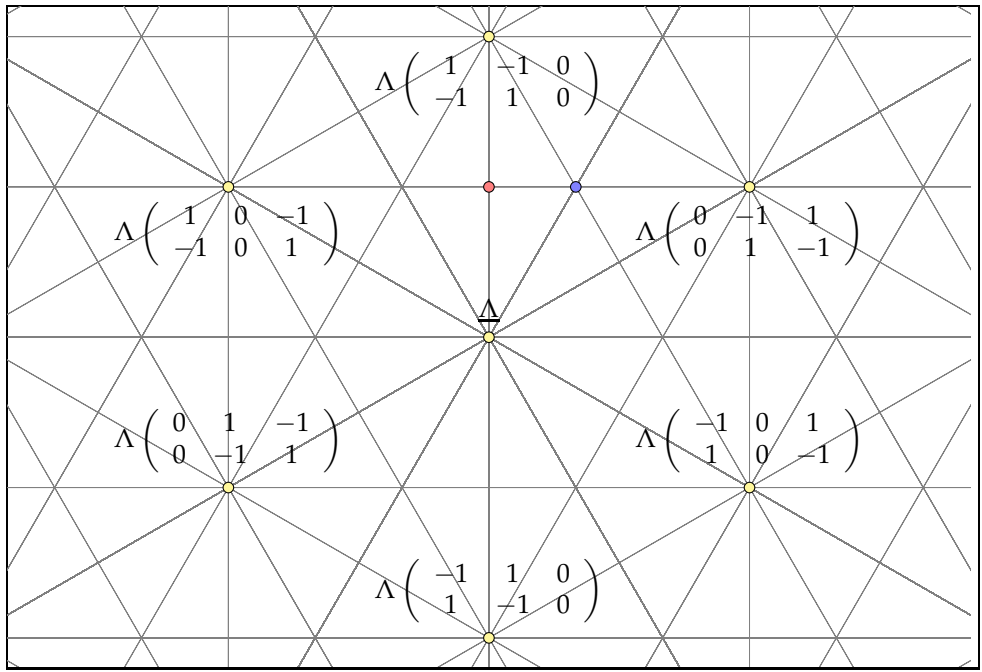
\begin{figure}[ht!]
\begin{center}
\frame{%
\begin{tikzpicture}[thin,scale=2]%
    \clip (-3.2,-2.2) rectangle (3.2,2.2);
    \draw[help lines] \foreach \z in {0,1,...,5} {
            \foreach \y in {0,1,...,5} {
              \foreach \x in {0,1,...,11} {
        (0:0) -- ++(30*\x:10)
        (60*\y + 30:2*\z) -- ++(30*\x:10)
    }}};
    \draw{ (0:0) node[circle, draw, fill=yellow!50,
                        inner sep=0pt, minimum width=4pt, label=above: $\underline\Lambda$]{}
        (30:2) node[circle, draw, fill=yellow!50,
                        inner sep=0pt, minimum width=4pt, label=below: $\Lambda\left(
  \begin{Array}{ccc}
    0 & -1 & 1 \\
    0 & 1 & -1 \\
  \end{Array}
\right)$]{}
        (90:2) node[circle, draw, fill=yellow!50,
                        inner sep=0pt, minimum width=4pt, label=below: $\Lambda\left(
  \begin{Array}{ccc}
    1 & -1 & 0 \\
    -1 & 1 & 0 \\
  \end{Array}
\right)$]{}
        (150:2) node[circle, draw, fill=yellow!50,
                        inner sep=0pt, minimum width=4pt, label=below: $\Lambda\left(
  \begin{Array}{ccc}
    1 & 0 & -1 \\
    -1 & 0 & 1 \\
  \end{Array}
\right)$]{}
        (210:2) node[circle, draw, fill=yellow!50,
                        inner sep=0pt, minimum width=4pt, label=above: $\Lambda\left(
  \begin{Array}{ccc}
    0 & 1 & -1 \\
    0 & -1 & 1 \\
  \end{Array}
\right)$]{}
        (270:2) node[circle, draw, fill=yellow!50,
                        inner sep=0pt, minimum width=4pt, label=above: $\Lambda\left(
  \begin{Array}{ccc}
    -1 & 1 & 0 \\
    1 & -1 & 0 \\
  \end{Array}
\right)$]{}
        (330:2) node[circle, draw, fill=yellow!50,
                        inner sep=0pt, minimum width=4pt, label=above: $\Lambda\left(
  \begin{Array}{ccc}
    -1 & 0 & 1 \\
    1 & 0 & -1 \\
  \end{Array}
\right)$]{}
    };
    \draw{ (90:1) node[circle, draw, fill=red!50,
                        inner sep=0pt, minimum width=4pt]{}
    };
    \draw{ (60:1.1547) node[circle, draw, fill=blue!50,
                        inner sep=0pt, minimum width=4pt]{}
    };
\end{tikzpicture}
}
\caption{Maximal orders associated to the vertices of type $1$.}\label{Type1Vertices}
\end{center}
\end{figure}

\newpage

While the conditions given by Gan and Yu have been sufficient to determine the maximal orders associated to each of our type $1$ vertices, according to Theorem \ref{ganyumaintheorem}, there are also lattice sequences (resp. maximinorante valuations) associated to each of these points of $\mathcal{A}$.

Creating a lattice sequence from each of our type $1$ orders $\Lambda$ is straightforward, by noting that
\[ \cdots \subsetneq p^2 \Lambda \subsetneq p \Lambda \subsetneq \Lambda \subsetneq p^{-1} \Lambda \subsetneq p^{-2} \Lambda \subsetneq \cdots \]

\begin{comment}
Since any grading $\Gamma$ on this sequence should satisfy $\Gamma(a \cdot \Lambda) = \val(a) + \Gamma(\Lambda)$, there is also an obvious choice of grading, where
\[ \Lambda_i = p^{i} \Lambda \quad \text{and} \quad \Gamma(\Lambda_i) = i. \]
\end{comment}

This construction is independent of the type $1$ maximal order $\Lambda$ that we choose to start from.  Now that we have a lattice sequence identified with each $\Lambda$, we also have an associated algebra valuation as defined in Proposition \ref{prop:valuationbijection}; for each $x \in \mathcal{A}$:
\[ \val_1(x) = \sup \{ r \in \rr \mid x \in \Lambda_r). \]

In this case, we have identified all our structures by first starting with a maximal order in $\oo$, constructing a lattice sequence, and then identifying an algebra valuation $\val_1$ constructed from that lattice sequence.  For later points in $\mathcal{A}$, we will need to do this in a reverse order, by first defining an algebra valuation and then constructing our lattice sequence.  When that occurs, it will be useful for us to have a more detailed description of $\val_1$.  Toward that end, we choose a standard basis $\mathfrak{s} = \{ b_{\pm 1},b_{\pm 2},b_{\pm 3},b_{\pm 4} \}$ for $\oo$ as an $8$-dimensional $k$-vector space, with
\begin{align*}
b_1 & = \left(
        \begin{Array}{cc}
          0 & \langle 1,0,0 \rangle \\
          0 & 0 \\
        \end{Array}
      \right), \quad\quad\quad b_{-1} = \left(
        \begin{Array}{cc}
          0 & 0 \\
          \langle 1,0,0 \rangle & 0 \\
        \end{Array}
      \right), \\
b_2 & = \left(
        \begin{Array}{cc}
          0 & \langle 0,1,0 \rangle \\
          0 & 0 \\
        \end{Array}
      \right), \quad\quad\quad b_{-2} = \left(
        \begin{Array}{cc}
          0 & 0 \\
          \langle 0,1,0 \rangle & 0 \\
        \end{Array}
      \right), \\
b_3 & = \left(
        \begin{Array}{cc}
          0 & \langle 0,0,1 \rangle \\
          0 & 0 \\
        \end{Array}
      \right), \quad\quad\quad b_{-3} = \left(
        \begin{Array}{cc}
          0 & 0 \\
          \langle 0,0,1 \rangle & 0 \\
        \end{Array}
      \right), \\
b_4 & = \left(
        \begin{Array}{cc}
          1 & 0 \\
          0 & 0 \\
        \end{Array}
      \right),  \quad\quad\quad \hspace{10mm} b_{-4} = \left(
        \begin{Array}{cc}
          0 & 0 \\
          0 & 1 \\
        \end{Array}
      \right).
\end{align*}

For each maximal order $\Lambda$, define an intermediate function (depending on $\Lambda$) $v_1^{\Lambda} \colon \mathfrak{s} \rightarrow \zz$ on these basis elements, and extend to scalar multiples using the identity $v_1^{\Lambda}(a_i b_i) = \val(a_i) + v_1^{\Lambda}(b_i)$.  Finally, define our valuation:
\[ \val_1(x) := \min_i \left\{ v_1^{\Lambda}(a_i b_i) \right\}, \quad \text{for } x = \sum a_i b_i \in \oo. \]

For example, given the lattice sequence constructed from the order
\[ \Lambda = \Lambda\left(
  \begin{Array}{ccc}
    1 & -1 & 0 \\
    -1 & 1 & 0 \\
  \end{Array}
\right),\]
we define the intermediate function
\begin{align} v_1^{\Lambda}(b_i) & = \left\{
  \begin{Array}{rl}
    0, & \quad\text{if } i \in \{\pm4,\pm3\} \\
    1, & \quad\text{if } i \in \{2,-1\} \\
    -1, & \quad\text{if } i \in \{1,-2\}.
  \end{Array}
\right.
\end{align}

Then, for any lattice element $x \in \Lambda_0 = \Lambda\left(
  \begin{Array}{ccc}
    1 & -1 & 0 \\
    -1 & 1 & 0 \\
  \end{Array}
\right)$, we have that
\begin{align*}
\val_1(x) & = \min_i \left\{ v_1^{\Lambda}(a_i b_i) \right\} \\
    & = \min_i \left\{ \val(a_i) + v_1^{\Lambda}(b_i) \right\} \\
    & = \min \left\{ \val(a_1) -1, \val(a_2) + 1, \val(a_{-1}) + 1, \val(a_{-2}) -1, \val(a_{\pm3}), \val(a_{\pm4}) \right\} \\
    & \geq 0.
\end{align*}

Likewise, for any lattice element $x \in \Lambda_2 = \Lambda\left(
  \begin{Array}{ccc|c}
    3 & 1 & 2 & 2 \\
    1 & 3 & 2 & 2 \\
  \end{Array}
\right)$, we have that
\begin{align*}
\val_1(x) & = \min_i \left\{ v_1^{\Lambda}(a_i b_i) \right\} \\
    & = \min_i \left\{ \val(a_i) + v_1^{\Lambda}(b_i) \right\} \\
    & = \min \left\{ \val(a_1) -1, \val(a_2) + 1, \val(a_{-1}) + 1, \val(a_{-2}) -1, \val(a_{\pm3}), \val(a_{\pm4}) \right\} \\
    & \geq 2.
\end{align*}

As another example, given the lattice sequence constructed from the order
\[ \Lambda = \Lambda\left(
  \begin{Array}{ccc}
    0 & -1 & 1 \\
    0 & 1 & -1 \\
  \end{Array}
\right),\]
we define a distinct intermediate function
\[ v_1^{\Lambda}(b_i) = \left\{
  \begin{Array}{rl}
    0, & \quad\text{if } i \in \{\pm4,\pm1\} \\
    1, & \quad\text{if } i \in \{2,-3\} \\
    -1, & \quad\text{if } i \in \{3,-2\}.
  \end{Array}
\right. \]
This will define another algebra valuation consistent with this new type $1$ vertex.

As a final example, for the standard lattice sequence $\underline\Lambda_r$, we can simply take $v_1^{\Lambda}(b_i) = 0$ for all $b_i$, in which case our valuation becomes $\val_1(x) = \min\left\{ \val(a_i) \right\}$.  This defines an algebra valuation unique to the origin of $\mathcal{A}$.

It is easily verified that these algebra valuations are consistent with those that we have defined formally in Proposition \ref{prop:valuationbijection}, and satisfy all the needed properties and necessary relations to their respective lattice sequences.  Other, analogous algebra valuations may be identified for the lattice sequences corresponding to any other type $1$ point in $\mathcal{A}$.

\section{Orders of Type $2$}
\label{type2points}

With regard to the type $2$ vertices of the apartment $\mathcal{A}$, we use the following proposition which is a restatement of Lemma $9.4$ and Theorem $9.14$ of \cite{GanYu2003}.

\begin{proposition}[\cite{GanYu2003}]
\label{type2conditions}
Let $\mathcal{V}_2$ be the set of orders $\Lambda$ in $\oo$ satisfying:
\begin{enumerate}
\item $\Lambda \subsetneq \Lambda^\ast \subsetneq p^{-1}\Lambda$.

\item $\Lambda^{\ast 2} \subset p^{-1}\Lambda$.
\end{enumerate}
Then $\mathcal{V}_2$ is in bijection with the set of vertices of type $2$ in $\mathcal{B}(G_2)$.  Let $\Lambda_1 \in \mathcal{V}_1$ and $\Lambda_2 \in \mathcal{V}_2$.  Let $x_1$ be the type $1$ vertex in $\mathcal{B}(G_2)$ associated to $\Lambda_1$, $x_2$ be the type $2$ vertex associated to $\Lambda_2$.  Then $x_1$ is incident to $x_2$ if and only if $\Lambda_2 \subset \Lambda_1$.
\end{proposition}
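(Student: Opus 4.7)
The plan is to piggyback on Theorem \ref{ganyumaintheorem}, which already gives a bijection between points of $\mathcal{B}(G_2)$ and maximinorante algebra valuations on $\oo$, together with Proposition \ref{prop:valuationbijection}, which identifies valuations with lattice sequences. Since type $1$ vertices correspond to valuations with image in $\val(k) = \zz$, the natural candidates for type $2$ vertices are valuations with image in $\tfrac{1}{2}\zz$. Such a valuation $\val_2$ produces a lattice sequence with two distinct lattices per integer translate: some $\Lambda$ at level $0$ and an intermediate lattice at level $\tfrac{1}{2}$ (with $p\Lambda$ arriving at level $1$, and so on). The crux of the proposition is the identification of the intermediate lattice with the dual $\Lambda^\ast$ from Section \ref{latticesandorders}.

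To establish this, I would first produce a concrete example. Choose the midpoint of an edge from the origin to a neighboring type $1$ vertex (such as the red dot in Figure \ref{Type1Vertices}), construct the corresponding valuation $\val_2$ following the basis-level recipe used for $\val_1$ at the end of Section \ref{type1points} but now assigning half-integer values on the standard basis $\mathfrak{s}$, and compute the level $0$ and level $\tfrac{1}{2}$ lattices directly. Comparison with the dual-lattice formula should verify that the half-integer lattice is precisely the dual of the integer lattice. The identification is in fact forced by maximinorance, since the valuation minorizes $B(x,y) = \tfrac{1}{2}T(xy)$, so the jump at $\tfrac{1}{2}$ and the shift operator given by the trace pairing must coincide.

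With the identification $\Lambda^\ast = $ ``next lattice up'' in hand, the conditions become transparent. Condition (a) records that the filtration has a strict intermediate jump between $\Lambda$ and $p^{-1}\Lambda$, which is exactly what distinguishes a type $2$ vertex from a type $1$ vertex; condition (b) follows from the algebra valuation inequality, since two elements of $\Lambda^\ast$ have $\val_2 \geq -\tfrac{1}{2}$ and so their product has $\val_2 \geq -1$ and lies in $p^{-1}\Lambda$. For the converse direction, starting from $\Lambda \in \mathcal{V}_2$, the dual $\Lambda^\ast$ supplies a strictly intermediate lattice, and condition (b) is precisely what is needed to upgrade the resulting rank $2$ filtration into an algebra valuation; maximinorance can then be checked by comparison with the neighboring type $1$ vertices.

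Finally, the incidence claim follows by tracking the filtration along the edge from $x_1$ to $x_2$: continuously deforming a valuation along the edge corresponds to refining the rank $1$ sequence at $x_1$ by inserting an extra lattice as we approach $x_2$, so the zero-level lattice $\Lambda_2$ at the type $2$ endpoint is automatically contained in the zero-level lattice $\Lambda_1$ at the type $1$ endpoint. Conversely, $\Lambda_2 \subset \Lambda_1$ lets one interpolate a straight-line family of valuations between the two, witnessing incidence. The main obstacle in this approach is carrying out the duality identification cleanly, that is, matching the trace-pairing definition of $\Lambda^\ast$ with the intermediate lattice arising from a half-integer valuation; once this is in place, the rest is bookkeeping about filtrations and the containment statements in (a).
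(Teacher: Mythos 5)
The first thing to say is that the paper does not prove this statement at all: it is introduced as ``a restatement of Lemma $9.4$ and Theorem $9.14$ of \cite{GanYu2003}'' and is used as imported machinery, so there is no in-paper argument to compare yours against. Your proposal is therefore an attempt to re-derive the Gan--Yu result from Theorem \ref{ganyumaintheorem} and Proposition \ref{prop:valuationbijection}, and as such it has real gaps. The most serious one is the converse direction of the bijection: given an order $\Lambda$ satisfying (a) and (b), you must produce a maximinorante algebra valuation whose associated point is a \emph{vertex of type $2$}, and not merely some point of the building. You assert that condition (b) ``is precisely what is needed'' to upgrade the rank-$2$ filtration $\Lambda \subsetneq \Lambda^\ast \subsetneq p^{-1}\Lambda$ to an algebra valuation and that maximinorance ``can then be checked,'' but this is exactly the content of Gan--Yu's Theorem $9.14$ and is where all the work lives; nothing in your sketch supplies it. Relatedly, your opening premise --- that type $2$ vertices are exactly the maximinorante valuations with image in $\tfrac{1}{2}\zz$ --- is itself an unproven classification statement. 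Theorem \ref{ganyumaintheorem} as quoted only identifies the type $1$ points (values in $\val(k)$); which denominators correspond to which vertex types, and why no non-vertex point of the apartment produces a half-integer-valued maximinorante valuation, is part of what needs proving, not something you can take as given.

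A second, more concrete issue: condition (a) does not by itself separate type $2$ from type $3$. The type $3$ orders of Proposition \ref{type3conditions} satisfy the identical condition $\Lambda \subsetneq \Lambda^\ast \subsetneq p^{-1}\Lambda$, so your narrative ``condition (a) is exactly what distinguishes a type $2$ vertex from a type $1$ vertex'' tells only part of the story; the burden of distinguishing type $2$ from type $3$ falls entirely on condition (b), and your one-line justification of (b) (two elements of valuation $\geq -\tfrac{1}{2}$ multiply to valuation $\geq -1$) only establishes the easy forward implication. You would need to show that failure of (b) forces the vertex to be of a different type, which again requires the unestablished dictionary between denominators of valuation values and vertex types. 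Finally, verifying the identification of the level-$\tfrac{1}{2}$ lattice with $\Lambda^\ast$ in a single example, as you propose, does not establish it in general; the ``forced by maximinorance'' remark is the right intuition but is not an argument. In short, your sketch reproduces the correct picture but at every step defers to precisely the lemmas of \cite{GanYu2003} that the proposition is quoting, so as a standalone proof it is incomplete.
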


\begin{proposition}
Let $\Lambda$ be the following lattice in $\oo$:
\[ \Lambda = \Lambda\left(
  \begin{Array}{ccc|c}
    1 & 0 & 0 & 0\\
    0 & 1 & 0 & 0\\
  \end{Array}
\right). \]
Then $\Lambda$ is a type $2$ order in $\oo$; i.e., it satisfies the conditions of Proposition \ref{type2conditions}.
\end{proposition}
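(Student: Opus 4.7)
The plan is to directly verify that $\Lambda$ is an order and that it satisfies conditions (a) and (b) of Proposition \ref{type2conditions}. I will proceed in four steps, the first three of which amount to arithmetic comparisons of valuation tuples, while the fourth is the substantive computation.

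First, I would check that $\Lambda$ is an order by running the tuple $(a_1,\ldots,a_8)=(0,1,0,0,0,1,0,0)$ through the ten inequalities of Proposition \ref{orderconditions}; each reduces to an immediate arithmetic statement. Second, I would apply the dual-lattice formula from Section \ref{latticesandorders} to obtain
\[ \Lambda^\ast = \Lambda\left(\begin{array}{ccc|c} 0 & -1 & 0 & 0 \\ -1 & 0 & 0 & 0 \end{array}\right). \]
Third, since $p^{-1}\Lambda$ has valuation tuple $(-1,0,-1,-1,-1,0,-1,-1)$ (each entry of $\Lambda$ decreased by $1$), the strict containments $\Lambda \subsetneq \Lambda^\ast \subsetneq p^{-1}\Lambda$ follow at once from entrywise comparison, with properness coming, e.g., from the strict inequalities in the diagonal and in the off-diagonal coordinates of valuation $-1$.

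The substantive step is condition (b): $\Lambda^{\ast 2} \subset p^{-1}\Lambda$. I would write two generic elements
\[ x = \left(\begin{array}{cc} a & \vec{v} \\ \vec{w} & d \end{array}\right), \quad y = \left(\begin{array}{cc} \alpha & \vec{\phi} \\ \vec{\psi} & \delta \end{array}\right) \]
of $\Lambda^\ast$, with diagonal entries in $\zp$, $\vec{v} = \langle v_1, p^{-1}v_2, v_3\rangle$, $\vec{w} = \langle p^{-1}w_1, w_2, w_3\rangle$ (and analogously for $\vec{\phi},\vec{\psi}$), where all scalars lie in $\zp$. Applying the Zorn multiplication formula from Section \ref{objects}, I would expand each of the eight components of $xy$ and verify that its valuation meets the lower bound prescribed by the corresponding entry of $p^{-1}\Lambda$.

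The main obstacle lies in the cross-product terms of the off-diagonal entries of $xy$. A priori, the two $p^{-1}$-scaled coordinates in $\vec w$ and $\vec \psi$ (resp. $\vec v$ and $\vec \phi$) could multiply and produce a term of valuation $-2$, which would violate (b). The crucial observation that saves the argument is that the $j$th coordinate of $\vec{u}\times\vec{v}$ is $u_{j+1}v_{j+2} - u_{j+2}v_{j+1}$, which never pairs the $j$th coordinate of $\vec u$ with the $j$th coordinate of $\vec v$. Since the valuation $-1$ coordinate of both $\vec w$ and $\vec \psi$ sits in position $1$, and likewise for $\vec v$ and $\vec \phi$ in position $2$, no such pairing occurs in $\vec w \times \vec \psi$ or $\vec v \times \vec \phi$, and each cross product has valuation no worse than $-1$. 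A parallel check on the dot products $\vec v \circ \vec \psi$ and $\vec w \circ \vec \phi$ shows they also have valuation $\geq -1$, matching the diagonal entries of $p^{-1}\Lambda$. This structural cancellation is what distinguishes $\Lambda$ as a genuine type $2$ order rather than merely a lattice sandwiched between $\Lambda$ and $p^{-1}\Lambda$.
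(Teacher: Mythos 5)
Your proposal is correct and follows essentially the same route as the paper: verify the order conditions, compute $\Lambda^\ast$ from the dual-lattice formula, read off the strict containments entrywise, and then check $\Lambda^{\ast 2} \subset p^{-1}\Lambda$ via the Zorn multiplication (the paper carries out this last step symbolically on the lattice entries and finds $\Lambda^{\ast 2} = p^{-1}\Lambda$ exactly, which is the same computation your coordinate-by-coordinate valuation bounds perform). Your remark that the cross product never pairs the $j$th coordinates of its two arguments, so no valuation $-2$ term can arise, is a nice explanation of why the paper's computation comes out as it does, but it is the same argument, not a different one.
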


\begin{proof}
That $\Lambda$ is an order is easily checked by verifying the conditions of Proposition \ref{orderconditions}. We also have $\Lambda^\ast = \Lambda\left(
  \begin{Array}{ccc|c}
    0 & -1 & 0 & 0\\
    -1 & 0 & 0 & 0\\
  \end{Array}
\right)$,
and clearly
\[ \Lambda\left(
  \begin{Array}{ccc|c}
    1 & 0 & 0 & 0\\
    0 & 1 & 0 & 0\\
  \end{Array}
\right) \subsetneq \Lambda\left(
  \begin{Array}{ccc|c}
    0 & -1 & 0 & 0\\
    -1 & 0 & 0 & 0\\
  \end{Array}
\right) \subsetneq \Lambda\left(
  \begin{Array}{ccc|c}
    0 & -1 & -1 & -1\\
    -1 & 0 & -1 & -1\\
  \end{Array}
\right). \]
so condition (a) is satisfied.  Next we calculate $\Lambda^{\ast 2}$:
\begin{align*}
\Lambda^{\ast 2} & = \Lambda\left(
  \begin{Array}{ccc|c}
    0 & -1 & 0 & 0\\
    -1 & 0 & 0 & 0\\
  \end{Array}
\right)^2 \\
    & = \left(
  \begin{Array}{cc}
    \zp & \left(
                   \begin{Array}{c}
                     \zp \\
                     p^{-1}\zp \\
                     \zp \\
                   \end{Array}
                 \right) \\
    \left(
                   \begin{Array}{c}
                     p^{-1}\zp \\
                     \zp \\
                     \zp \\
                   \end{Array}
                 \right) & \zp
  \end{Array}
\right)\left(
  \begin{Array}{cc}
    \zp & \left(
                   \begin{Array}{c}
                     \zp \\
                     p^{-1}\zp \\
                     \zp \\
                   \end{Array}
                 \right) \\
    \left(
                   \begin{Array}{c}
                     p^{-1}\zp \\
                     \zp \\
                     \zp \\
                   \end{Array}
                 \right) & \zp
  \end{Array}
\right) \\
    & = \left(
  \begin{Array}{cc}
    p^{-1}\zp & \left(
                   \begin{Array}{c}
                     \zp \\
                     p^{-1}\zp \\
                     p^{-1}\zp \\
                   \end{Array}
                 \right) \\
    \left(
                   \begin{Array}{c}
                     p^{-1}\zp \\
                     \zp \\
                     p^{-1}\zp \\
                   \end{Array}
                 \right) & p^{-1}\zp
  \end{Array}
\right) \\
    & = \Lambda\left(
  \begin{Array}{ccc|c}
    0 & -1 & -1 & -1\\
    -1 & 0 & -1 & -1\\
  \end{Array}
\right).
\end{align*}
In this case, we have that $\Lambda^{\ast 2} = p^{-1}\Lambda$, so condition (b) is also satisfied and $\Lambda$ is a type $2$ order in $\oo$.
\end{proof}

Since this order is contained in both the standard lattice sequence $\underline\Lambda$ and also in the sequence $\Lambda\left(
  \begin{Array}{ccc|c}
    1 & 0 & 0 & 0\\
    0 & 1 & 0 & 0\\
  \end{Array}
\right)$, the vertex associated to it in $\mathcal{A}$ is incident to both of the associated vertices of the other two.  But there is only one such vertex in the building, labeled in red in Figure \ref{Type1Vertices}.

There will likewise be five more orders of a similar form, which we also identify in red and place in their appropriate positions in Figure \ref{Type2Vertices}.  The type $1$ vertices continue to be shown in yellow for reference.

\begin{figure}[ht!]
\begin{center}
\frame{%
\begin{tikzpicture}[thin,scale=2]%
    \clip (-3.2,-2.2) rectangle (3.2,2.2);
    \draw[help lines] \foreach \z in {0,1,...,5} {
            \foreach \y in {0,1,...,5} {
              \foreach \x in {0,1,...,11} {
        (0:0) -- ++(30*\x:10)
        (60*\y + 30:2*\z) -- ++(30*\x:10)
    }}};
    \draw{ (0:0) node[circle, draw, fill=yellow!50,
                        inner sep=0pt, minimum width=4pt, label=above: $\underline\Lambda$]{}
        (30:2) node[circle, draw, fill=yellow!50,
                        inner sep=0pt, minimum width=4pt]{}
        (90:2) node[circle, draw, fill=yellow!50,
                        inner sep=0pt, minimum width=4pt]{}
        (150:2) node[circle, draw, fill=yellow!50,
                        inner sep=0pt, minimum width=4pt]{}
        (210:2) node[circle, draw, fill=yellow!50,
                        inner sep=0pt, minimum width=4pt]{}
        (270:2) node[circle, draw, fill=yellow!50,
                        inner sep=0pt, minimum width=4pt]{}
        (330:2) node[circle, draw, fill=yellow!50,
                        inner sep=0pt, minimum width=4pt]{}
    };
    \draw{ (30:1) node[circle, draw, fill=red!50,
                        inner sep=0pt, minimum width=4pt, label=right: $\Lambda\left(
  \begin{Array}{ccc|c}
    0 & 0 & 1 & 0\\
    0 & 1 & 0 & 0\\
  \end{Array}
\right)$]{}
        (90:1) node[circle, draw, fill=red!50,
                        inner sep=0pt, minimum width=4pt, label=above: $\Lambda\left(
  \begin{Array}{ccc|c}
    1 & 0 & 0 & 0\\
    0 & 1 & 0 & 0\\
  \end{Array}
\right)$]{}
        (150:1) node[circle, draw, fill=red!50,
                        inner sep=0pt, minimum width=4pt, label=left: $\Lambda\left(
  \begin{Array}{ccc|c}
    1 & 0 & 0 & 0\\
    0 & 0 & 1 & 0\\
  \end{Array}
\right)$]{}
        (210:1) node[circle, draw, fill=red!50,
                        inner sep=0pt, minimum width=4pt, label=left: $\Lambda\left(
  \begin{Array}{ccc|c}
    0 & 1 & 0 & 0\\
    0 & 0 & 1 & 0\\
  \end{Array}
\right)$]{}
        (270:1) node[circle, draw, fill=red!50,
                        inner sep=0pt, minimum width=4pt, label=below: $\Lambda\left(
  \begin{Array}{ccc|c}
    0 & 1 & 0 & 0\\
    1 & 0 & 0 & 0\\
  \end{Array}
\right)$]{}
        (330:1) node[circle, draw, fill=red!50,
                        inner sep=0pt, minimum width=4pt, label=right: $\Lambda\left(
  \begin{Array}{ccc|c}
    0 & 0 & 1 & 0\\
    1 & 0 & 0 & 0\\
  \end{Array}
\right)$]{}
    };
    \draw{ (60:1.1547) node[circle, draw, fill=blue!50,
                        inner sep=0pt, minimum width=4pt]{}
    };
\end{tikzpicture}
}
\caption{Octonion orders associated to the vertices of type $2$.}\label{Type2Vertices}
\end{center}
\end{figure}

In this case, we define our related structures by first defining algebra valuations corresponding to each of these type $2$ points.  Using our geometric intuition from $\mathcal{A}$, and noting that each type $2$ vertex lies in between two type $1$ vertices, we can define the relevant valuation by averaging the two type $1$ valuations.

Following with our previous examples, take $\Lambda = \Lambda\left(
  \begin{Array}{ccc|c}
    1 & 0 & 0 & 0\\
    0 & 1 & 0 & 0\\
  \end{Array}
\right)$, and for our chosen basis $B$ of $\oo$, take the intermediate function
\[ v_2^{\Lambda}(b_i) = \frac{1}{2}v_1^{\Lambda}(b_i), \]
with the $v_1^{\Lambda}$ defined in $(8.1)$ of the last section.  That is:
\[ v_2^{\Lambda}(b_i) = \left\{
  \begin{Array}{rl}
    0, & \quad\text{if } i \in \{\pm4,\pm3\} \\
    1/2, & \quad\text{if } i \in \{2,-1\} \\
    -1/2, & \quad\text{if } i \in \{1,-2\}.
  \end{Array}
\right. \]

We use this to find the valuation of several octonion elements.  For a lattice element $x \in \Lambda = \Lambda\left(
  \begin{Array}{ccc|c}
    1 & 0 & 0 & 0\\
    0 & 1 & 0 & 0\\
  \end{Array}
\right)$ we have:
\begin{align*}
\val_2(x) & = \min \left\{ \val(a_1) - \frac{1}{2}, \val(a_2) + \frac{1}{2}, \val(a_{-1}) + \frac{1}{2}, \val(a_{-2}) -\frac{1}{2}, \val(a_{\pm3}), \val(a_{\pm4}) \right\} \\
    & \geq \min \left\{ \frac{1}{2}, \frac{1}{2}, \frac{1}{2},\frac{1}{2}, 0, 0 \right\} \\
    & = 0.
\end{align*}

For a lattice element $x \in \Lambda^\ast = \Lambda\left(
  \begin{Array}{ccc|c}
    0 & -1 & 0 & 0\\
    -1 & 0 & 0 & 0\\
  \end{Array}
\right)$ we have:
\begin{align*}
\val_2(x) & = \min \left\{ \val(a_1) - \frac{1}{2}, \val(a_2) + \frac{1}{2}, \val(a_{-1}) + \frac{1}{2}, \val(a_{-2}) -\frac{1}{2}, \val(a_{\pm3}), \val(a_{\pm4}) \right\} \\
    & \geq \min \left\{ -\frac{1}{2}, -\frac{1}{2}, -\frac{1}{2},-\frac{1}{2}, 0, 0 \right\} \\
    & = -\frac{1}{2}.
\end{align*}

For a lattice element $x \in p^{-1}\Lambda = \Lambda\left(
  \begin{Array}{ccc|c}
    0 & -1 & -1 & -1\\
    -1 & 0 & -1 & -1\\
  \end{Array}
\right)$ we have:
\begin{align*}
\val_2(x) & = \min \left\{ \val(a_1) - \frac{1}{2}, \val(a_2) + \frac{1}{2}, \val(a_{-1}) + \frac{1}{2}, \val(a_{-2}) -\frac{1}{2}, \val(a_{\pm3}), \val(a_{\pm4}) \right\} \\
    & \geq \min \left\{ -\frac{1}{2}, -\frac{1}{2}, -\frac{1}{2},-\frac{1}{2}, -1, -1 \right\} \\
    & = -1.
\end{align*}

This valuation leads us to the lattice sequence
\[ \cdots \subsetneq p^{-1}\Lambda \subsetneq \Lambda^\ast \subsetneq \Lambda \subsetneq p(\Lambda^\ast) \subsetneq p\Lambda \subsetneq \cdots \]
Again, algebra valuations and lattice sequences for all type $2$ vertices can be found in this way.

\section{Orders of Type $3$}
\label{type3points}

Next, we turn to the vertices of type $3$ in $\mathcal{A}$.  Again, the following proposition is a restatement of Lemma $9.12$ and Theorem $9.14$ of \cite{GanYu2003}.

\begin{proposition}[\cite{GanYu2003}]
\label{type3conditions}
Let $\mathcal{V}_3$ be the set of orders $\Lambda$ in $\oo$ satisfying:
\begin{enumerate}
\item $\Lambda \subsetneq \Lambda^\ast \subsetneq p^{-1}\Lambda$.

\item $M := p \,\Lambda^{\ast 2} + \Lambda$ is a self-dual lattice.
\end{enumerate}
Then $\mathcal{V}_3$ is in bijection with the set of vertices of type $3$ in $\mathcal{B}(G_2)$.  Take $\Lambda_1 \in \mathcal{V}_1$, $\Lambda_2 \in \mathcal{V}_2$, and $\Lambda_3 \in \mathcal{V}_3$.  Let $x_i$ be the type $i$ vertex in $\mathcal{B}(G_2)$ associated to $\Lambda_i$.  Then:
\begin{enumerate}
\item $x_1$ is incident to $x_3$ if and only if $\Lambda_3 \subset \Lambda_1$.

\item $x_2$ is incident to $x_3$ if and only if $\Lambda_3 \subset \Lambda_2$.
\end{enumerate}
\end{proposition}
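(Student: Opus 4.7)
The plan is to derive this proposition as a corollary of Theorem \ref{ganyumaintheorem}, the Gan--Yu bijection between points of $\mathcal{B}(G_2)$ and maximinorante algebra valuations on $\oo$. The main task is to translate the combinatorial position ``type $3$ vertex in the apartment'' into the purely algebraic conditions (a) and (b), exactly as was done in Section \ref{type2points} for type $2$.

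For the forward direction, start with a type $3$ vertex $x \in \mathcal{A}$, take the associated maximinorante valuation $\val_x$ via Theorem \ref{ganyumaintheorem}, and form the lattice sequence $\{\Lambda_r^x\}$ of Proposition \ref{prop:valuationbijection}. Put $\Lambda := \Lambda_0^x$. Because $x$ is not a type $1$ vertex, $\val_x$ takes values strictly outside $\val(k) = \zz$, and examination of the jumps between $0$ and $1$ (parallel to the type $2$ calculation) forces the chain $\Lambda \subsetneq \Lambda^\ast \subsetneq p^{-1}\Lambda$, giving (a). Condition (b) is the feature that distinguishes type $3$ from type $2$: for type $2$ we computed explicitly that $\Lambda^{\ast 2} = p^{-1}\Lambda$, whereas the geometry of a type $3$ vertex forces $\Lambda^{\ast 2}$ to be strictly smaller than $p^{-1}\Lambda$ but still large enough that the combination $M := p\,\Lambda^{\ast 2} + \Lambda$ is self-dual. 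This mirrors the fact that a type $3$ vertex sits at a point whose valuation has a single additional nontrivial jump producing a self-dual intermediate lattice.

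For the converse, given $\Lambda \in \mathcal{V}_3$, I would construct the candidate lattice filtration using the chain
\[ \Lambda \subsetneq M \subsetneq \Lambda^\ast \subsetneq p^{-1}M \subsetneq p^{-1}\Lambda \]
and its $p$-translates, then convert it into an algebra valuation via Proposition \ref{prop:valuationbijection}. The verifications that this valuation minorizes $N$ and is actually maximinorante are direct translations of Gan--Yu's Lemma $9.12$ and Theorem $9.14$, which I would invoke rather than reprove. The incidence claims then follow formally: in the bijection of Theorem \ref{ganyumaintheorem}, incidence of vertices in $\mathcal{B}(G_2)$ corresponds to compatibility of the associated lattice filtrations, and such compatibility reduces to containment of the distinguished base lattices. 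For (a), containment of the type $3$ filtration in the type $1$ filtration is equivalent to $\Lambda_3 \subset \Lambda_1$, because the type $1$ filtration has only integer jumps and is therefore determined by its member at height $0$; for (b), the analogous argument applied to the finer type $2$ filtration yields $\Lambda_3 \subset \Lambda_2$.

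The main obstacle is verifying that condition (b) is precisely what picks out type $3$ vertices and not some other non-type-$1$ point, since (a) alone is shared with the type $2$ orders of Section \ref{type2points}. Distinguishing the two cases requires a careful analysis of how the octonion multiplication behaves on $\Lambda^\ast$, specifically understanding when $\Lambda^{\ast 2}$ fills up all of $p^{-1}\Lambda$ versus leaving a self-dual slice in between. This is the computational core of Gan--Yu's classification, and it is at this step that I would ultimately defer to \cite{GanYu2003}.
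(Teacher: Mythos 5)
The paper gives no argument for this proposition at all: it is presented as a restatement of Lemma $9.12$ and Theorem $9.14$ of \cite{GanYu2003} and cited without proof, so your decision to defer the substantive verifications to Gan--Yu is consistent with what the paper actually does. The problem is that the connective material you wrap around that citation contains two concrete errors, either of which would sink a written-out version. First, you assert that at a type $3$ vertex the geometry forces $\Lambda^{\ast 2}$ to be \emph{strictly smaller} than $p^{-1}\Lambda$. This is backwards: if $\Lambda^{\ast 2} \subseteq p^{-1}\Lambda$ then $p\,\Lambda^{\ast 2} \subseteq \Lambda$, hence $M = p\,\Lambda^{\ast 2} + \Lambda = \Lambda$, and by condition (a) we have $M = \Lambda \subsetneq \Lambda^{\ast} = M^{\ast}$, so $M$ could never be self-dual and condition (b) would be unsatisfiable. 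The containment $\Lambda^{\ast 2} \subseteq p^{-1}\Lambda$ is exactly the type $2$ condition of Proposition \ref{type2conditions}(b); what distinguishes type $3$ is that this containment \emph{fails}, so that $p\,\Lambda^{\ast 2}$ genuinely enlarges $\Lambda$ and produces a self-dual lattice strictly between $\Lambda$ and $\Lambda^{\ast}$. The worked example in Section \ref{type3points} confirms this: there $\Lambda^{\ast 2}$ strictly contains $p^{-1}\Lambda$.

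Second, the chain you propose for the converse direction, $\Lambda \subsetneq M \subsetneq \Lambda^{\ast} \subsetneq p^{-1}M \subsetneq p^{-1}\Lambda$, is internally inconsistent: $\Lambda \subsetneq M$ forces $p^{-1}\Lambda \subsetneq p^{-1}M$, so $p^{-1}M$ cannot sit between $\Lambda^{\ast}$ and $p^{-1}\Lambda$. The correct period of the type $3$ lattice sequence, as the explicit valuation computation in Section \ref{type3points} shows (jumps at multiples of $1/3$), is $\Lambda \subsetneq M \subsetneq \Lambda^{\ast} \subsetneq p^{-1}\Lambda$: exactly two intermediate lattices between consecutive $p$-translates, not three. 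If you intend the sketch to carry any weight beyond the bare citation, these two points need to be corrected; as written they would lead the converse construction to a filtration that is not a lattice sequence and to a criterion for type $3$ that no order satisfies.
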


\begin{corollary}
The triangle formed in $\mathcal{B}(G_2)$ by $\{ x_1,x_2,x_3 \}$ is a chamber if and only if $\Lambda_3 \subset \Lambda_2 \subset \Lambda_1$.
\end{corollary}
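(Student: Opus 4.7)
The plan is to obtain this corollary as an immediate consequence of the incidence criteria already recorded in Propositions \ref{type2conditions} and \ref{type3conditions}; no new analysis of orders, valuations, or duals is needed beyond threading those three criteria together.

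For the forward implication, I would argue that if $\{x_1, x_2, x_3\}$ forms a chamber, then, because the chambers in the apartment of $G_2$ are $30$-$60$-$90$ triangles whose three vertices carry the three distinct types, $x_1, x_2, x_3$ are pairwise incident as the faces of a common $2$-simplex. Applying Proposition \ref{type2conditions} to the edge $\{x_1, x_2\}$ gives $\Lambda_2 \subset \Lambda_1$, while the two clauses of Proposition \ref{type3conditions} applied to $\{x_1, x_3\}$ and $\{x_2, x_3\}$ give $\Lambda_3 \subset \Lambda_1$ and $\Lambda_3 \subset \Lambda_2$ respectively. These nest into the claimed chain $\Lambda_3 \subset \Lambda_2 \subset \Lambda_1$.

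For the converse, given $\Lambda_3 \subset \Lambda_2 \subset \Lambda_1$ (which also entails $\Lambda_3 \subset \Lambda_1$ by transitivity), the same three propositions supply pairwise incidence of $x_1, x_2, x_3$ in $\mathcal{B}(G_2)$. The one step I then have to pin down is that three pairwise incident vertices of three distinct types actually span a chamber of the building; this is the only content beyond containment chasing. I expect this to be the main (but mild) obstacle in writing out a fully rigorous proof. It can be read off from the description of the apartment in Chapter \ref{standardapartment}: each chamber is a $30$-$60$-$90$ triangle with exactly one vertex of each type, and the simplicial structure of $\mathcal{B}(G_2)$ is such that pairwise incidence among three distinct-type vertices forces joint membership in a single chamber. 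With that geometric fact in hand, the three pairwise incidences assembled above produce the desired chamber, completing the corollary.
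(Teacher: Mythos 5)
Your proposal is correct and is exactly the derivation the paper intends: the corollary is stated without proof as an immediate consequence of the incidence criteria in Propositions \ref{type2conditions} and \ref{type3conditions}, and your containment-chasing reproduces that. The one point you flag as needing care --- that three pairwise incident vertices of the three distinct types span a chamber --- is the standard flag-complex property of the (two-dimensional) building $\mathcal{B}(G_2)$, consistent with the description of the apartment as tiled by $30$-$60$-$90$ triangles carrying one vertex of each type, so your argument is complete.
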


To identify these type $3$ orders, it is best to use the incidence condition described in the above proposition.  That is, a type $3$ order must simultaneously be contained in each of the orders which have associated vertices incident to its vertex.  For example, we may examine the vertex marked in blue from Figure \ref{Type1Vertices}. It should be contained in $\underline\Lambda$ and also in each of the following orders:
\begin{align*}
\Lambda\left(
  \begin{Array}{ccc|c}
    1 & 0 & 0 & 0\\
    0 & 1 & 0 & 0\\
  \end{Array}
\right), & \quad \quad \Lambda\left(
  \begin{Array}{ccc|c}
    1 & -1 & 0 & 0\\
    -1 & 1 & 0 & 0\\
  \end{Array}
\right), \\
\Lambda\left(
  \begin{Array}{ccc|c}
    0 & 0 & 1 & 0\\
    0 & 1 & 0 & 0\\
  \end{Array}
\right), & \quad \quad \Lambda\left(
  \begin{Array}{ccc|c}
    0 & -1 & 1 & 0\\
    0 & 1 & -1 & 0\\
  \end{Array}
\right).
\end{align*}
It should also be contained in a sixth, as yet unidentified type $2$ order, but the information we have so far is sufficient to determine that our type $3$ lattice must be the following:
\[ \Lambda = \Lambda\left(
  \begin{Array}{ccc|c}
    1 & 0 & 1 & 0\\
    0 & 1 & 0 & 0\\
  \end{Array}
\right). \]

\begin{proposition}
Let $\Lambda$ be the following lattice in $\oo$:
\[ \Lambda = \Lambda\left(
  \begin{Array}{ccc|c}
    1 & 0 & 1 & 0\\
    0 & 1 & 0 & 0\\
  \end{Array}
\right). \]
Then $\Lambda$ is a type $3$ order in $\oo$; i.e., it satisfies the conditions of Proposition\ref{type3conditions}.
\end{proposition}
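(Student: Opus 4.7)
The plan is to verify in turn each of the three requirements that make $\Lambda$ a type~$3$ order: first that $\Lambda$ is an order in the sense of Proposition \ref{orderconditions}, then that the chain $\Lambda \subsetneq \Lambda^\ast \subsetneq p^{-1}\Lambda$ holds, and finally that $M := p\,\Lambda^{\ast 2} + \Lambda$ is self-dual.

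For the first step, I would read off the exponents $(a_2,a_3,a_4,a_1,a_5,a_6,a_7,a_8) = (1,0,1,0,0,1,0,0)$ and check each inequality listed in Proposition \ref{orderconditions}. All hold by inspection; for example $a_2 + a_3 = 1 \geq 0 = a_7$, $a_5 + a_6 = 1 \geq 1 = a_4$, and each of the column sums $a_2 + a_5$, $a_3 + a_6$, $a_4 + a_7$ equals $1 \geq 0$. For the second step, I would apply the dual-lattice formula to compute
\[ \Lambda^\ast = \Lambda\left(\begin{Array}{ccc|c} 0 & -1 & 0 & 0 \\ -1 & 0 & -1 & 0 \\ \end{Array}\right), \]
and then compare valuations position-by-position with those of $\Lambda$ and $p^{-1}\Lambda = \Lambda\left(\begin{Array}{ccc|c} 0 & -1 & 0 & -1 \\ -1 & 0 & -1 & -1 \\ \end{Array}\right)$ to establish the two strict inclusions. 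Strictness of the inner inclusion is supplied by the off-diagonal positions (for instance $a_2$: $1 > 0$), while strictness of the outer is witnessed by the two diagonal positions $a_1, a_8$.

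The substantive work lies in condition (b): computing $\Lambda^{\ast 2}$ via the explicit multiplication rule in $\oo$. For each of the eight coordinate positions of a product I would tabulate the valuations of all contributing terms. The key observation is that the middle component of the upper-right vector of the product contains the cross-product term $-(w_3 \psi_1 - w_1 \psi_3)$, where both $w_1, w_3$ and both $\psi_1, \psi_3$ lie in $p^{-1}\zp$; this term attains valuation $-2$, while every other position is bounded below by $-1$ or $0$, with each of these bounds achievable by representatives that zero out the other summands. Putting the entries together yields
\[ \Lambda^{\ast 2} = \Lambda\left(\begin{Array}{ccc|c} -1 & -2 & -1 & -1 \\ -1 & 0 & -1 & -1 \\ \end{Array}\right). \]
Scaling by $p$ and taking the pointwise minimum of valuations with those of $\Lambda$ then gives
\[ M = \Lambda\left(\begin{Array}{ccc|c} 0 & -1 & 0 & 0 \\ 0 & 1 & 0 & 0 \\ \end{Array}\right), \]
and a final application of the dual formula confirms $M^\ast = M$.

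The main obstacle is the bookkeeping in the $\Lambda^{\ast 2}$ computation: eight output entries, each a sum of four or five products of matrix and vector components, and for each I must both identify the smallest valuation that appears and verify that it is actually realized by a genuine pair of elements of $\Lambda^\ast$. I would guard against errors by laying out a small table per entry listing each contributing product together with its valuation, and in particular by exhibiting an explicit pair of elements of $\Lambda^\ast$ (supported only on $w_3$ and $\psi_1$ respectively) whose product realizes the delicate $p^{-2}$ value in the upper-right middle position.
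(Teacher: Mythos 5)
Your proposal is correct and follows essentially the same route as the paper's proof: verify the order conditions of Proposition \ref{orderconditions}, compute $\Lambda^\ast$ and the two strict inclusions, compute $\Lambda^{\ast 2}$ entrywise from the octonion multiplication rule (arriving at the same matrix of valuations, with the $-2$ coming from the cross-product term exactly as you identify), and form $M = p\,\Lambda^{\ast 2} + \Lambda$ and check self-duality. Your added care about exhibiting elements that realize the minimal valuations is a reasonable refinement the paper leaves implicit, but it does not change the argument.
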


\begin{proof}
Just as in the type $2$ case, we may check that $\Lambda$ is an order by verifying the conditions of Proposition \ref{orderconditions}. Next, in this case $\Lambda^\ast = \Lambda\left(
  \begin{Array}{ccc|c}
    0 & -1 & 0 & 0\\
    -1 & 0 & -1 & 0\\
  \end{Array}
\right)$,
so clearly
\[ \Lambda\left(
  \begin{Array}{ccc|c}
    1 & 0 & 1 & 0\\
    0 & 1 & 0 & 0\\
  \end{Array}
\right) \subsetneq \Lambda\left(
  \begin{Array}{ccc|c}
    0 & -1 & 0 & 0\\
    -1 & 0 & -1 & 0\\
  \end{Array}
\right) \subsetneq \Lambda\left(
  \begin{Array}{ccc|c}
    0 & -1 & 0 & -1\\
    -1 & 0 & -1 & -1\\
  \end{Array}
\right). \]
so condition (a) is satisfied.  With an aim to producing $M$, we calculate $\Lambda^{\ast 2}$:
\begin{align*}
\Lambda^{\ast 2} & = \Lambda\left(
  \begin{Array}{ccc|c}
    0 & -1 & 0 & 0\\
    -1 & 0 & -1 & 0\\
  \end{Array}
\right)^2 \\
    & = \left(
  \begin{Array}{cc}
    \zp & \left(
                   \begin{Array}{c}
                     \zp \\
                     p^{-1}\zp \\
                     \zp \\
                   \end{Array}
                 \right) \\
    \left(
                   \begin{Array}{c}
                     p^{-1}\zp \\
                     \zp \\
                     p^{-1}\zp \\
                   \end{Array}
                 \right) & \zp
  \end{Array}
\right)\left(
  \begin{Array}{cc}
    \zp & \left(
                   \begin{Array}{c}
                     \zp \\
                     p^{-1}\zp \\
                     \zp \\
                   \end{Array}
                 \right) \\
    \left(
                   \begin{Array}{c}
                     p^{-1}\zp \\
                     \zp \\
                     p^{-1}\zp \\
                   \end{Array}
                 \right) & \zp
  \end{Array}
\right) \\
    & = \left(
  \begin{Array}{cc}
    p^{-1}\zp & \left(
                   \begin{Array}{c}
                     p^{-1}\zp \\
                     p^{-2}\zp \\
                     p^{-1}\zp \\
                   \end{Array}
                 \right) \\
    \left(
                   \begin{Array}{c}
                     p^{-1}\zp \\
                     \zp \\
                     p^{-1}\zp \\
                   \end{Array}
                 \right) & p^{-1}\zp
  \end{Array}
\right) \\
    & = \Lambda\left(
  \begin{Array}{ccc|c}
    -1 & -2 & -1 & -1\\
    -1 & 0 & -1 & -1\\
  \end{Array}
\right).
\end{align*}
Therefore we have that
\begin{align*}
M & = p\Lambda^{\ast 2} +\Lambda \\
    & = p \,\Lambda\left(
  \begin{Array}{ccc|c}
    -1 & -2 & -1 & -1\\
    -1 & 0 & -1 & -1\\
  \end{Array}
\right) + \Lambda\left(
  \begin{Array}{ccc|c}
    1 & 0 & 1 & 0\\
    0 & 1 & 0 & 0\\
  \end{Array}
\right) \\
    & = \Lambda\left(
  \begin{Array}{ccc|c}
    0 & -1 & 0 & 0\\
    0 & 1 & 0 & 0\\
  \end{Array}
\right) + \Lambda\left(
  \begin{Array}{ccc|c}
    1 & 0 & 1 & 0\\
    0 & 1 & 0 & 0\\
  \end{Array}
\right) \\
    & = \Lambda\left(
  \begin{Array}{ccc|c}
    0 & -1 & 0 & 0\\
    0 & 1 & 0 & 0\\
  \end{Array}
\right).
\end{align*}
Thus $M$ is self-dual and condition (b) is also satisfied, so $\Lambda$ is a type $3$ order in $\oo$.
\end{proof}

\begin{comment}
\[  \dots \subsetneq \Lambda \subsetneq M \subsetneq \Lambda^\ast \subsetneq p^{-1}\Lambda \subsetneq \dots \]
\end{comment}

By applying this method of detecting a type $3$ order based on the surrounding, already identified orders, we can obtain all the type $3$ orders.  We list those nearest to the origin in blue, in Figure \ref{Type3Vertices}.

\begin{figure}[ht!]
\begin{center}
\frame{%
\begin{tikzpicture}[thin,scale=2]%
    \clip (-3.2,-2.2) rectangle (3.2,2.2);
    \draw[help lines] \foreach \z in {0,1,...,5} {
            \foreach \y in {0,1,...,5} {
              \foreach \x in {0,1,...,11} {
        (0:0) -- ++(30*\x:10)
        (60*\y + 30:2*\z) -- ++(30*\x:10)
    }}};
    \draw{ (0:0) node[circle, draw, fill=yellow!50,
                        inner sep=0pt, minimum width=4pt, label=above: $\underline\Lambda$]{}
        (30:2) node[circle, draw, fill=yellow!50,
                        inner sep=0pt, minimum width=4pt]{}
        (90:2) node[circle, draw, fill=yellow!50,
                        inner sep=0pt, minimum width=4pt]{}
        (150:2) node[circle, draw, fill=yellow!50,
                        inner sep=0pt, minimum width=4pt]{}
        (210:2) node[circle, draw, fill=yellow!50,
                        inner sep=0pt, minimum width=4pt]{}
        (270:2) node[circle, draw, fill=yellow!50,
                        inner sep=0pt, minimum width=4pt]{}
        (330:2) node[circle, draw, fill=yellow!50,
                        inner sep=0pt, minimum width=4pt]{}
    };
    \draw{ (30:1) node[circle, draw, fill=red!50,
                        inner sep=0pt, minimum width=4pt]{}
        (90:1) node[circle, draw, fill=red!50,
                        inner sep=0pt, minimum width=4pt]{}
        (150:1) node[circle, draw, fill=red!50,
                        inner sep=0pt, minimum width=4pt]{}
        (210:1) node[circle, draw, fill=red!50,
                        inner sep=0pt, minimum width=4pt]{}
        (270:1) node[circle, draw, fill=red!50,
                        inner sep=0pt, minimum width=4pt]{}
        (330:1) node[circle, draw, fill=red!50,
                        inner sep=0pt, minimum width=4pt]{}
    };
    \draw{ (0:1.1547) node[circle, draw, fill=blue!50,
                        inner sep=0pt, minimum width=4pt, label=right: $\Lambda\left(
  \begin{Array}{ccc|c}
    0 & 0 & 1 & 0\\
    1 & 1 & 0 & 0\\
  \end{Array}
\right)$]{}
        (60:1.1547) node[circle, draw, fill=blue!50,
                        inner sep=0pt, minimum width=4pt, label=above right: $\Lambda\left(
  \begin{Array}{ccc|c}
    1 & 0 & 1 & 0\\
    0 & 1 & 0 & 0\\
  \end{Array}
\right)$]{}
        (120:1.1547) node[circle, draw, fill=blue!50,
                        inner sep=0pt, minimum width=4pt, label=above left: $\Lambda\left(
  \begin{Array}{ccc|c}
    1 & 0 & 0 & 0\\
    0 & 1 & 1 & 0\\
  \end{Array}
\right)$]{}
        (180:1.1547) node[circle, draw, fill=blue!50,
                        inner sep=0pt, minimum width=4pt, label=left: $\Lambda\left(
  \begin{Array}{ccc|c}
    1 & 1 & 0 & 0\\
    0 & 0 & 1 & 0\\
  \end{Array}
\right)$]{}
        (240:1.1547) node[circle, draw, fill=blue!50,
                        inner sep=0pt, minimum width=4pt, label=below left: $\Lambda\left(
  \begin{Array}{ccc|c}
    0 & 1 & 0 & 0\\
    1 & 0 & 1 & 0\\
  \end{Array}
\right)$]{}
        (300:1.1547) node[circle, draw, fill=blue!50,
                        inner sep=0pt, minimum width=4pt, label=below right: $\Lambda\left(
  \begin{Array}{ccc|c}
    0 & 1 & 1 & 0\\
    1 & 0 & 0 & 0\\
  \end{Array}
\right)$]{}
    };
\end{tikzpicture}
}
\caption{Octonion orders associated to the vertices of type $3$.}\label{Type3Vertices}
\end{center}
\end{figure}
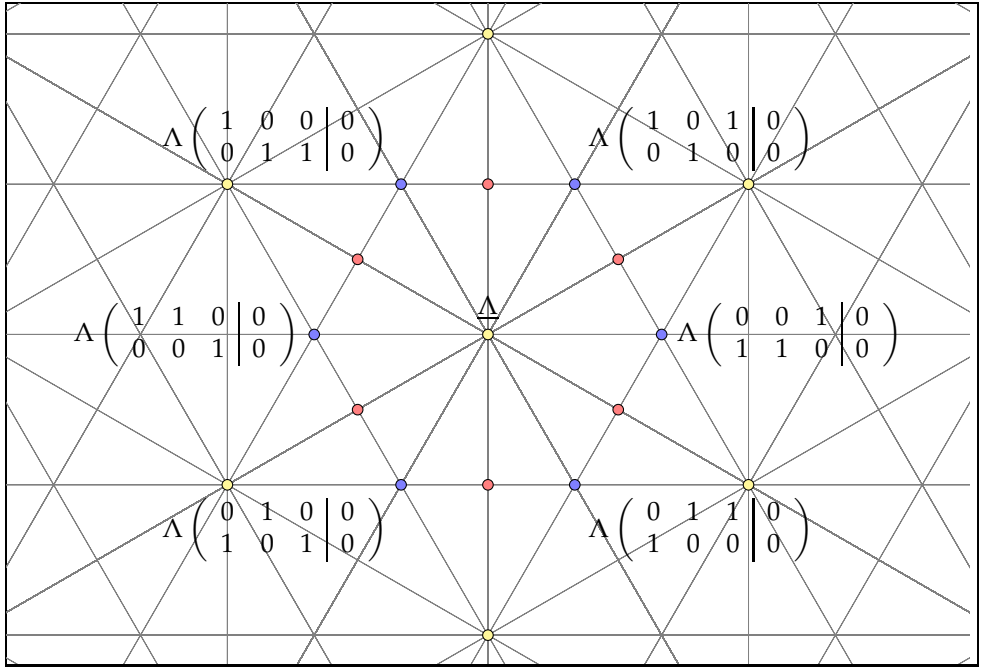

Once more, we define algebra valuations corresponding to the type $3$ vertices.  This time we note that each type $3$ vertex lies at the barycenter of a triangle created by three type $1$ vertices, and therefore we define the relevant valuation by averaging over the three type $1$ valuations.

To use the work of our examples in Section \ref{type1points}, we take $\Lambda = \Lambda\left(
  \begin{Array}{ccc|c}
    1 & 0 & 1 & 0\\
    0 & 1 & 0 & 0\\
  \end{Array}
\right)$, and for our chosen basis $B$ of $\oo$, take the intermediate function $v_3(b_i)$ as follows:
\begin{align*}
v_3^{\Lambda}(b_1) & = \frac{1}{3}(-1+0+0) = -\frac{1}{3}, & v_3^{\Lambda}(b_{-1}) = \frac{1}{3}(1+0+0) = \frac{1}{3},\\
v_3^{\Lambda}(b_2) & = \frac{1}{3}(1+1+0) = \frac{2}{3}, & v_3^{\Lambda}(b_{-2}) = \frac{1}{3}(-1-1+0) = -\frac{2}{3},\\
v_3^{\Lambda}(b_3) & = \frac{1}{3}(0-1+0) = -\frac{1}{3}, & v_3^{\Lambda}(b_{-3}) = \frac{1}{3}(0+1+0) = \frac{1}{3},\\
v_3^{\Lambda}(b_4) & = \frac{1}{3}(0+0+0) = 0, & v_3^{\Lambda}(b_{-4}) = \frac{1}{3}(0+0+0) = 0.
\end{align*}

We again use this to find the valuation of several octonion elements.  For a lattice element $x \in \Lambda = \Lambda\left(
  \begin{Array}{ccc|c}
    1 & 0 & 1 & 0\\
    0 & 1 & 0 & 0\\
  \end{Array}
\right)$ we have:
\begin{align*}
\val_3(x) & \geq \min \left\{ 1-\frac{1}{3}, \frac{2}{3}, 1-\frac{1}{3},\frac{1}{3}, 1- \frac{2}{3},\frac{1}{3},0, 0 \right\} = 0.
\end{align*}

For a lattice element $x \in M = \Lambda\left(
  \begin{Array}{ccc|c}
    0 & -1 & 0 & 0\\
    0 & 1 & 0 & 0\\
  \end{Array}
\right)$ we have:
\begin{align*}
\val_3(x) & \geq \min \left\{ -\frac{1}{3}, \frac{2}{3}-1, -\frac{1}{3},\frac{1}{3}, \frac{2}{3}+1,\frac{1}{3},0, 0 \right\}  = -\frac{1}{3}.
\end{align*}

For a lattice element $x \in \Lambda^\ast = \Lambda\left(
  \begin{Array}{ccc|c}
    0 & -1 & 0 & 0\\
    -1 & 0 & -1 & 0\\
  \end{Array}
\right)$ we have:
\begin{align*}
\val_3(x) & \geq \min \left\{ -\frac{1}{3}, \frac{2}{3}-1, -\frac{1}{3},\frac{1}{3}-1, \frac{2}{3},\frac{1}{3}-1,0, 0 \right\} = -\frac{2}{3}.
\end{align*}

For a lattice element $x \in p^{-1}\Lambda = \Lambda\left(
  \begin{Array}{ccc|c}
    0 & -1 & 0 & -1\\
    -1 & 0 & -1 & -1\\
  \end{Array}
\right)$ we have:
\begin{align*}
\val_3(x) & \geq \min \left\{ -\frac{1}{3}, \frac{2}{3}-1, -\frac{1}{3},\frac{1}{3}-1, \frac{2}{3},\frac{1}{3}-1,-1,-1 \right\}  = -1.
\end{align*}

This valuation leads us to the lattice sequence
\[ \cdots \subsetneq p^{-1}\Lambda \subsetneq \Lambda^\ast \subsetneq M \subsetneq \Lambda \subsetneq p(\Lambda^\ast) \subsetneq pM \subsetneq p\Lambda \subsetneq \cdots \]
Once again, algebra valuations and lattice sequences for all type $3$ vertices can be found in this way.

\section{Other Points in $\mathcal{A}$}
\label{otherpoints}

We conclude this chapter by identifying the algebra valuations and lattice sequences corresponding to a few points in the apartment which do not lie on vertices.  In Figure \ref{OtherOrders}, the points that we will address are marked in orange and green.

\begin{figure}[ht!]
\begin{center}
\frame{%
\begin{tikzpicture}[thin,scale=2]%
    \clip (-3.2,-2.2) rectangle (3.2,2.2);
    \draw[help lines] \foreach \z in {0,1,...,5} {
            \foreach \y in {0,1,...,5} {
              \foreach \x in {0,1,...,11} {
        (0:0) -- ++(30*\x:10)
        (60*\y + 30:2*\z) -- ++(30*\x:10)
    }}};
    \draw{ (0:0) node[circle, draw, fill=yellow!50,
                        inner sep=0pt, minimum width=4pt, label=above: $\underline\Lambda$]{}
        (30:2) node[circle, draw, fill=yellow!50,
                        inner sep=0pt, minimum width=4pt]{}
        (90:2) node[circle, draw, fill=yellow!50,
                        inner sep=0pt, minimum width=4pt]{}
        (150:2) node[circle, draw, fill=yellow!50,
                        inner sep=0pt, minimum width=4pt]{}
        (210:2) node[circle, draw, fill=yellow!50,
                        inner sep=0pt, minimum width=4pt]{}
        (270:2) node[circle, draw, fill=yellow!50,
                        inner sep=0pt, minimum width=4pt]{}
        (330:2) node[circle, draw, fill=yellow!50,
                        inner sep=0pt, minimum width=4pt]{}
    };
    \draw{ (30:1) node[circle, draw, fill=red!50,
                        inner sep=0pt, minimum width=4pt]{}
        (90:1) node[circle, draw, fill=red!50,
                        inner sep=0pt, minimum width=4pt]{}
        (150:1) node[circle, draw, fill=red!50,
                        inner sep=0pt, minimum width=4pt]{}
        (210:1) node[circle, draw, fill=red!50,
                        inner sep=0pt, minimum width=4pt]{}
        (270:1) node[circle, draw, fill=red!50,
                        inner sep=0pt, minimum width=4pt]{}
        (330:1) node[circle, draw, fill=red!50,
                        inner sep=0pt, minimum width=4pt]{}
    };
    \draw{ (0:1.1547) node[circle, draw, fill=blue!50,
                        inner sep=0pt, minimum width=4pt]{}
        (60:1.1547) node[circle, draw, fill=blue!50,
                        inner sep=0pt, minimum width=4pt]{}
        (120:1.1547) node[circle, draw, fill=blue!50,
                        inner sep=0pt, minimum width=4pt]{}
        (180:1.1547) node[circle, draw, fill=blue!50,
                        inner sep=0pt, minimum width=4pt]{}
        (240:1.1547) node[circle, draw, fill=blue!50,
                        inner sep=0pt, minimum width=4pt]{}
        (300:1.1547) node[circle, draw, fill=blue!50,
                        inner sep=0pt, minimum width=4pt]{}
    };
    \draw{
        (90:0.667) node[circle, draw, fill=orange!50,
                        inner sep=0pt, minimum width=4pt]{}
    };
    \draw{
        (60:0.57735) node[circle, draw, fill=green!50,
                        inner sep=0pt, minimum width=4pt]{}
    };
\end{tikzpicture}
}
\caption{Other octonion orders in $\mathcal{A}$.}\label{OtherOrders}
\end{center}
\end{figure}
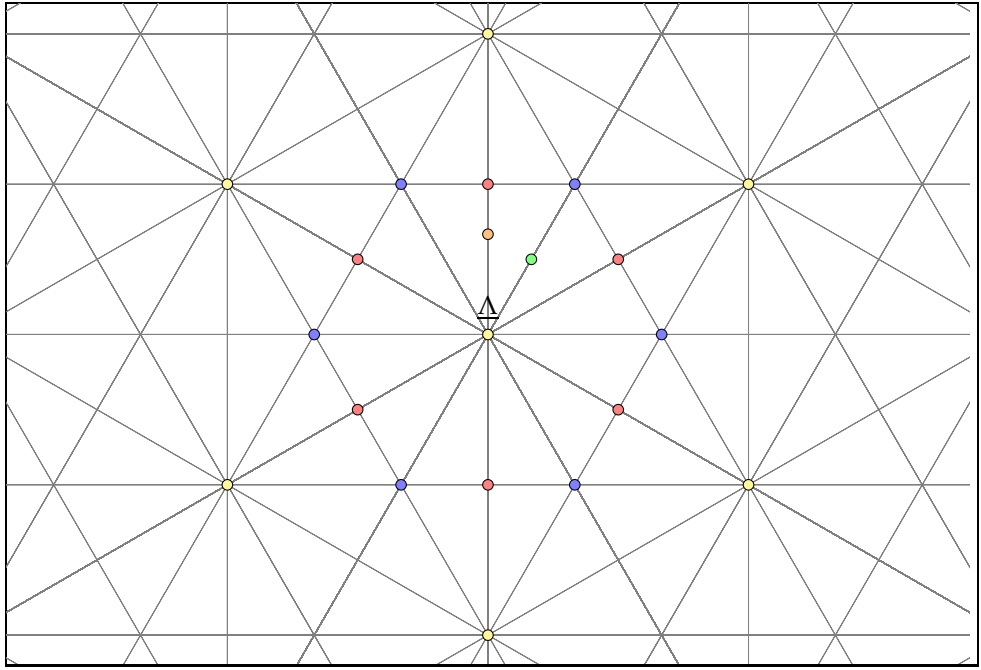

Starting with the point in orange, we note that it is at the barycenter of a triangle formed by $\underline\Lambda$ and two type $3$ vertices.  Therefore we can use the `averaging' strategy we have already employed to construct a new algebra valuation corresponding to that point.  However, it will be simpler for us to note that the point also lies two-thirds of the distance from $\underline\Lambda$ to a type $2$ vertex that we have already identified.  Therefore, for each $b_i \in B$ we can take
\[ v_4^{\Lambda}(b_i) = \frac{2}{3} v_2^{\Lambda}(b_i), \]
where $v_2$ was defined in Section \ref{type2points}. More explicitly:
\[ v_2^{\Lambda}(b_i) = \left\{
  \begin{Array}{rl}
    0, & \quad\text{if } i \in \{\pm4,\pm3\} \\
    1/3, & \quad\text{if } i \in \{2,-1\} \\
    -1/3, & \quad\text{if } i \in \{1,-2\}.
  \end{Array}
\right. \]
Again, our new valuation will be
\[ \val_4(x) := \min \left\{ v_4^{\Lambda}(a_i b_i) \right\}, \quad \text{for } x = \sum a_i b_i \in \oo. \]

Now, we work backwards to find the lattices which will make up our associated lattice sequence.  Let
\[ \Lambda^{\val_4}_0 = \Lambda\left(
  \begin{Array}{ccc|c}
    \lambda_1 & \lambda_2 & \lambda_3 & \lambda_4\\
    \lambda_{-1} & \lambda_{-2} & \lambda_{-3} & \lambda_{-4}\\
  \end{Array}
\right) \]
be the octonion order identified to our point in $\mathcal{A}$ and corresponding to the $r=0$ index in our desired lattice sequence.  Then we should have that $\val_4(x) \geq 0$ for all $x \in \Lambda^{\val_4}_0$.  Recall that each $\lambda_i$ is an integer.  Therefore for
\[ \min \left\{ \lambda_1-\frac{1}{3} , \lambda_2 +\frac{1}{3}, \lambda_3,\lambda_{-1} + \frac{1}{3}, \lambda_{-2}-\frac{1}{3}, \lambda_{-3}, \lambda_{\pm4} \right\} \geq 0, \]
the lowest valuations we may have are $\lambda_1 = \lambda_{-2} = 1$, with the rest equal to zero. Our order is now identified as
\[ \Lambda^{\val_4}_0 = \Lambda\left(
  \begin{Array}{ccc}
    1 & 0 & 0 \\
    0 & 1 & 0 \\
  \end{Array}
\right). \]
We note that the next jump in our lattice sequence occurs at $\Lambda^{\val_4}_{1/3}$, which we may identify in a similar way, by considering:
\[ \min \left\{ \lambda_1-\frac{1}{3} , \lambda_2 +\frac{1}{3}, \lambda_3,\lambda_{-1} + \frac{1}{3}, \lambda_{-2}-\frac{1}{3}, \lambda_{-3}, \lambda_{\pm4} \right\} \geq \frac{1}{3}. \]
In this case we find that
\[ \Lambda^{\val_4}_{1/3} = \Lambda\left(
  \begin{Array}{ccc|c}
    1 & 0 & 1 & 1 \\
    0 & 1 & 1 & 1 \\
  \end{Array}
\right). \]

The following jump in our lattice sequence occurs at $\Lambda^{\val_4}_{2/3}$, which we again identify:
\[ \min \left\{ \lambda_1-\frac{1}{3} , \lambda_2 +\frac{1}{3}, \lambda_3,\lambda_{-1} + \frac{1}{3}, \lambda_{-2}-\frac{1}{3}, \lambda_{-3}, \lambda_{\pm4} \right\} \geq \frac{2}{3}. \]
In this case we find that
\[ \Lambda^{\val_4}_{1/3} = \Lambda\left(
  \begin{Array}{ccc|c}
    1 & 1 & 1 & 1 \\
    1 & 1 & 1 & 1 \\
  \end{Array}
\right). \]

Finally, we identify $\Lambda^{\val_4}_0$:
\[ \min \left\{ \lambda_1-\frac{1}{3} , \lambda_2 +\frac{1}{3}, \lambda_3,\lambda_{-1} + \frac{1}{3}, \lambda_{-2}-\frac{1}{3}, \lambda_{-3}, \lambda_{\pm4} \right\} \geq 1. \]
In this case we find that
\[ \Lambda^{\val_4}_1 = \Lambda\left(
  \begin{Array}{ccc|c}
    2 & 1 & 1 & 1 \\
    1 & 2 & 1 & 1 \\
  \end{Array}
\right) = p\Lambda^{\val_4}_0. \]

Therefore we obtain the lattice sequence corresponding to our orange point in $\mathcal{A}$:
\[ \cdots \subsetneq \Lambda\left(
  \begin{Array}{ccc|c}
    2 & 1 & 1 & 1 \\
    1 & 2 & 1 & 1 \\
  \end{Array}
\right) \subsetneq \Lambda\left(
  \begin{Array}{ccc|c}
    1 & 1 & 1 & 1 \\
    1 & 1 & 1 & 1 \\
  \end{Array}
\right) \subsetneq \Lambda\left(
  \begin{Array}{ccc|c}
    1 & 0 & 1 & 1 \\
    0 & 1 & 1 & 1 \\
  \end{Array}
\right) \subsetneq \Lambda\left(
  \begin{Array}{ccc}
    1 & 0 & 0 \\
    0 & 1 & 0 \\
  \end{Array}
\right) \subsetneq \cdots \]

Finally, we tackle the point labeled in green in Figure \ref{OtherOrders}, which we notice lie halfway between $\underline\Lambda$ and the type $3$ vertex that we identified in Section \ref{type3conditions}.  Therefore we set
\[ v_5^{\Lambda}(b_i) = \frac{1}{2} v_3^{\Lambda}(b_i), \]
where $v_3$ was defined in Section \ref{type3conditions}. Explicitly:
\begin{align*}
v_5^{\Lambda}(b_1) & = -\frac{1}{6}, & v_5^{\Lambda}(b_{-1}) = \frac{1}{6},\\
v_5^{\Lambda}(b_2) & = \frac{1}{3}, & v_5^{\Lambda}(b_{-2}) = -\frac{1}{3},\\
v_5^{\Lambda}(b_3) & = -\frac{1}{6}, & v_5^{\Lambda}(b_{-3}) = \frac{1}{6},\\
v_5^{\Lambda}(b_4) & = 0, & v_5^{\Lambda}(b_{-4})  = 0.
\end{align*}
Again, our new valuation will be
\[ \val_5(x) := \min \left\{ v_5^{\Lambda}(a_i b_i) \right\}, \quad \text{for } x = \sum a_i b_i \in \oo. \]

We again work backwards to find the lattices which will make up our associated lattice sequence.  Again let
\[ \Lambda^{\val_5}_0 = \Lambda\left(
  \begin{Array}{ccc|c}
    \lambda_1 & \lambda_2 & \lambda_3 & \lambda_4\\
    \lambda_{-1} & \lambda_{-2} & \lambda_{-3} & \lambda_{-4}\\
  \end{Array}
\right). \]
be the octonion order identified to our green point in $\mathcal{A}$ and corresponding to the $r=0$ index in our desired lattice sequence.  Then since
\[ \min \left\{ \lambda_1-\frac{1}{6} , \lambda_2 +\frac{1}{3}, \lambda_3-\frac{1}{6},\lambda_{-1} + \frac{1}{6}, \lambda_{-2}-\frac{1}{3}, \lambda_{-3}+\frac{1}{6}, \lambda_{\pm4} \right\} \geq 0, \]
the lowest valuations we may have are $\lambda_1 = \lambda_{-2} = \lambda_3 = 1$, and the rest are equal to zero. Our order is now identified as
\[ \Lambda^{\val_5}_0 = \Lambda\left(
  \begin{Array}{ccc}
    1 & 0 & 1 \\
    0 & 1 & 0 \\
  \end{Array}
\right). \]

The next jump in our lattice sequence occurs at $\Lambda^{\val_4}_{1/6}$, which we identify by considering:
\[ \min \left\{ \lambda_1-\frac{1}{6} , \lambda_2 +\frac{1}{3}, \lambda_3-\frac{1}{6},\lambda_{-1} + \frac{1}{6}, \lambda_{-2}-\frac{1}{3}, \lambda_{-3}+\frac{1}{6}, \lambda_{\pm4} \right\} \geq \frac{1}{6}, \]
In this case we find that
\[ \Lambda^{\val_5}_{1/6} = \Lambda\left(
  \begin{Array}{ccc|c}
    1 & 0 & 1 & 1 \\
    0 & 1 & 0 & 1 \\
  \end{Array}
\right). \]

We spare the reader the remaining arguments to show that the following lattices make up the rest of our lattice sequence:
\begin{align*}
\Lambda^{\val_5}_{1/3} & = \Lambda\left(
  \begin{Array}{ccc|c}
    1 & 0 & 1 & 1 \\
    1 & 1 & 1 & 1 \\
  \end{Array}
\right) \\
\Lambda^{\val_5}_{1/2} = \Lambda^{\val_5}_{2/3}& = \Lambda\left(
  \begin{Array}{ccc|c}
    1 & 1 & 1 & 1 \\
    1 & 1 & 1 & 1 \\
  \end{Array}
\right) \\
\Lambda^{\val_5}_{5/6} & = \Lambda\left(
  \begin{Array}{ccc|c}
    1 & 1 & 1 & 1 \\
    1 & 2 & 1 & 1 \\
  \end{Array}
\right) \\
\Lambda^{\val_5}_{1} & = \Lambda\left(
  \begin{Array}{ccc|c}
    2 & 1 & 2 & 1 \\
    1 & 2 & 1 & 1 \\
  \end{Array}
\right) = p \cdot \Lambda^{\val_5}_0.
\end{align*}

These lattices now form the lattice sequence associated to the green point in our figure.

%%%%%%%%%%%%%%%%%%%%%%%%%%%%%%%%%%%%%%%%%%%%%%%%%%%%%%%%%%%%%%%%%%%%%%%%%%%%%%%%%%%%%%%%%%%%%%%%%%%%%%%%%%

\appendix

\chapter{Tables of Chevalley Constants}

All calculations performed in this section were done with the aid of the SAGE software system.  The precise code created for this task is included in Appendix B for reference.

The tables included in this appendix are meant to display the particular constants $N_{ij}$ which arise from our choices of Chevalley generators for $G_2$.  To remind ourselves, our generators should satisfy the following relations \cite[pg $66$]{Steinberg1967}:

\begin{enumerate}
\item The $e_{\alpha}$ are each homomorphisms from the additive group of $k$ into $G$, that is: \[ e_{\alpha}(s+t) = e_{\alpha}(s)e_{\alpha}(t) \quad \text{ for all } s,t \in k.\]

\item If $\alpha, \beta \in \Phi$ with $\alpha + \beta \neq 0$, then
\[ [e_{\beta}(t) , e_{\alpha}(s)] = \prod e_{i \alpha + j \beta}(N_{ij} s^i t^j), \]
where the product is taken over all (strictly) positive integers $i,j \in \zz$ such that $i \alpha + j \beta \in \Phi$, and the $N_{ij}$ are each integers depending on $\alpha,\beta$, but not on $s,t$.

\item Each $h_{\alpha}$ is multiplicative in $k^{\times}$; i.e., $h_{\alpha}(s)h_{\alpha}(t) = h_{\alpha}(st)$ for all $s,t \in k^{\times}$.
\end{enumerate}

We have already verified relations (a) and (c) from Section \ref{sec:associationsverifications}, so we now concentrate on relation (b).  Figure \ref{autosandrootsG2append} shows our chosen Chevalley generators, and their association to the roots in the $G_2$ root diagram:

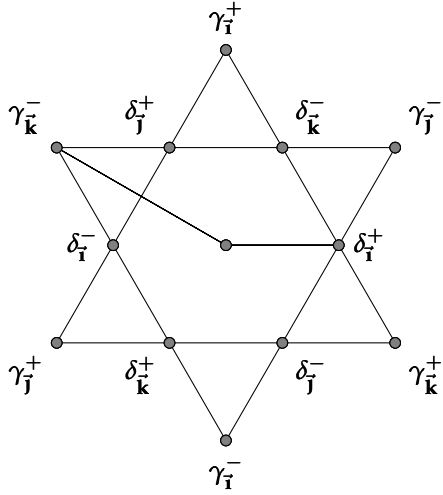
\begin{figure}[!h]
\begin{center}
\begin{tikzpicture}[thin,scale=1.5]%
    \draw \foreach \x in {0,120,240} {
        (\x+90:1.732) -- (\x+210:1.732)
        (\x-90:1.732) -- (\x+30:1.732)
        (0:0) -- (0:1)
        (0:0) -- (150:1.732)
        (0:0) node[circle, draw, fill=black!50,
                        inner sep=0pt, minimum width=4pt]{}
        (0:1) node[circle, draw, fill=black!50,
                        inner sep=0pt, minimum width=4pt, label=right:$\delta_{\ii}^{+}$]{}
        (60:1) node[circle, draw, fill=black!50,
                        inner sep=0pt, minimum width=4pt, label=above right:$\delta_{\kk}^{-}$]{}
        (120:1) node[circle, draw, fill=black!50,
                        inner sep=0pt, minimum width=4pt, label=above left:$\delta_{\jj}^{+}$]{}
        (180:1) node[circle, draw, fill=black!50,
                        inner sep=0pt, minimum width=4pt, label=left:$\delta_{\ii}^{-}$]{}
        (240:1) node[circle, draw, fill=black!50,
                        inner sep=0pt, minimum width=4pt, label=below left:$\delta_{\kk}^{+}$]{}
        (300:1) node[circle, draw, fill=black!50,
                        inner sep=0pt, minimum width=4pt, label=below right:$\delta_{\jj}^{-}$]{}
        (30:1.732) node[circle, draw, fill=black!50,
                        inner sep=0pt, minimum width=4pt, label=above right:$\gamma_{\jj}^{-}$]{}
        (90:1.732) node[circle, draw, fill=black!50,
                        inner sep=0pt, minimum width=4pt, label=above:$\gamma_{\ii}^{+}$]{}
        (150:1.732) node[circle, draw, fill=black!50,
                        inner sep=0pt, minimum width=4pt, label=above left:$\gamma_{\kk}^{-}$]{}
        (210:1.732) node[circle, draw, fill=black!50,
                        inner sep=0pt, minimum width=4pt, label=below left:$\gamma_{\jj}^{+}$]{}
        (270:1.732) node[circle, draw, fill=black!50,
                        inner sep=0pt, minimum width=4pt, label=below:$\gamma_{\ii}^{-}$]{}
        (330:1.732) node[circle, draw, fill=black!50,
                        inner sep=0pt, minimum width=4pt, label=below right:$\gamma_{\kk}^{+}$]{}
};
\end{tikzpicture}
\caption{The association of the automorphisms of $\oo$ to the roots of $G_2$.}\label{autosandrootsG2append}
\end{center}
\end{figure}

In the following series of tables, we label each row with the automorphism $\alpha$ being applied first, and label each column with  automorphisms $\beta$ having non-trivial commutator bracket with the first.  A trivial commutator bracket will mean that either the two automorphisms in question commute, or that the Chevalley relation is vacuous; i.e., that $\alpha + \beta = 0$.
In the product on the right side of relation (b), we have applied first the $\beta$ string in ascending order, and then the $2\beta$ string, and then the $3\beta$ string.  The indices in each $N_{ij}$ correspond to the linear combination $i\alpha+j\beta$; i.e., corresponding to the arguments $N_{ij} s^{i}t^{j}$.

\newpage

\begin{table}[!ht]
\begin{center}
\begin{tabular}{|l|l|l|l|l|l|l|}
  \hline
  % after \\: \hline or \cline{col1-col2} \cline{col3-col4} ...
                   & $\gamma_{\jj}^{+}(t)$ & $\delta_{\kk}^{+}(t)$ & $\delta_{\jj}^{-}(t)$ & $\gamma_{\kk}^{+}(t)$  \\
  \hline
  $\gamma_{\ii}^{+}(s)$&$N_{11}= 1$&$N_{11}= 1$&$N_{11}=-1$&$N_{11}=-1$\\
                       &           &$N_{12}= 1$&$N_{12}=-1$&           \\
                       &           &$N_{13}= 1$&$N_{13}= 1$&           \\
  \hline
\end{tabular}
\caption{Chevalley constants for commutators of type $\left[ - ,\gamma_{\ii}^{+}(s) \right]$.}\label{gammaiplus}
\end{center}
\end{table}

\begin{table}[!ht]
\begin{center}
\begin{tabular}{|l|l|l|l|l|l|l|}
  \hline
  % after \\: \hline or \cline{col1-col2} \cline{col3-col4} ...
                   & $\gamma_{\jj}^{-}(t)$ & $\delta_{\kk}^{-}(t)$ & $\delta_{\jj}^{+}(t)$ & $\gamma_{\kk}^{-}(t)$  \\
  \hline
  $\gamma_{\ii}^{-}(s)$&$N_{11}=-1$&$N_{11}=-1$&$N_{11}= 1$&$N_{11}= 1$\\
                       &           &$N_{12}= 1$&$N_{12}=-1$&           \\
                       &           &$N_{13}=-1$&$N_{13}=-1$&           \\
  \hline
\end{tabular}
\caption{Chevalley constants for commutators of type $\left[ - ,\gamma_{\ii}^{-}(s) \right]$.}\label{gammaiminus}
\end{center}
\end{table}

\begin{table}[!ht]
\begin{center}
\begin{tabular}{|l|l|l|l|l|l|l|}
  \hline
  % after \\: \hline or \cline{col1-col2} \cline{col3-col4} ...
                   & $\gamma_{\ii}^{+}(t)$ & $\delta_{\kk}^{-}(t)$ & $\delta_{\ii}^{+}(t)$ & $\gamma_{\kk}^{+}(t)$  \\
  \hline
  $\gamma_{\jj}^{+}(s)$&$N_{11}=-1$&$N_{11}=-1$&$N_{11}= 1$&$N_{11}= 1$\\
                       &           &$N_{12}=-1$&$N_{12}= 1$&           \\
                       &           &$N_{13}= 1$&$N_{13}= 1$&           \\
  \hline
\end{tabular}
\caption{Chevalley constants for commutators of type $\left[ - ,\gamma_{\jj}^{+}(s) \right]$.}\label{gammajplus}
\end{center}
\end{table}

\begin{table}[!ht]
\begin{center}
\begin{tabular}{|l|l|l|l|l|l|l|}
  \hline
  % after \\: \hline or \cline{col1-col2} \cline{col3-col4} ...
                   & $\gamma_{\ii}^{-}(t)$ & $\delta_{\kk}^{+}(t)$ & $\delta_{\ii}^{-}(t)$ & $\gamma_{\kk}^{-}(t)$  \\
  \hline
  $\gamma_{\jj}^{-}(s)$&$N_{11}= 1$&$N_{11}= 1$&$N_{11}=-1$&$N_{11}=-1$\\
                       &           &$N_{12}=-1$&$N_{12}= 1$&           \\
                       &           &$N_{13}=-1$&$N_{13}=-1$&           \\
  \hline
\end{tabular}
\caption{Chevalley constants for commutators of type $\left[ - ,\gamma_{\jj}^{-}(s) \right]$.}\label{gammajminus}
\end{center}
\end{table}

\begin{table}[!ht]
\begin{center}
\begin{tabular}{|l|l|l|l|l|l|l|}
  \hline
  % after \\: \hline or \cline{col1-col2} \cline{col3-col4} ...
                   & $\gamma_{\ii}^{+}(t)$ & $\delta_{\jj}^{-}(t)$ & $\delta_{\ii}^{-}(t)$ & $\gamma_{\jj}^{+}(t)$  \\
  \hline
  $\gamma_{\kk}^{+}(s)$&$N_{11}= 1$&$N_{11}= 1$&$N_{11}=-1$&$N_{11}=-1$\\
                       &           &$N_{12}= 1$&$N_{12}=-1$&           \\
                       &           &$N_{13}= 1$&$N_{13}= 1$&           \\
  \hline
\end{tabular}
\caption{Chevalley constants for commutators of type $\left[ - ,\gamma_{\kk}^{+}(s) \right]$.}\label{gammakplus}
\end{center}
\end{table}

\begin{table}[!ht]
\begin{center}
\begin{tabular}{|l|l|l|l|l|l|l|}
  \hline
  % after \\: \hline or \cline{col1-col2} \cline{col3-col4} ...
                   & $\gamma_{\ii}^{-}(t)$ & $\delta_{\jj}^{-}(t)$ & $\delta_{\ii}^{+}(t)$ & $\gamma_{\jj}^{-}(t)$  \\
  \hline
  $\gamma_{\kk}^{-}(s)$&$N_{11}=-1$&$N_{11}=-1$&$N_{11}= 1$&$N_{11}= 1$\\
                       &           &$N_{12}= 1$&$N_{12}=-1$&           \\
                       &           &$N_{13}=-1$&$N_{13}=-1$&           \\
  \hline
\end{tabular}
\caption{Chevalley constants for commutators of type $\left[ - ,\gamma_{\kk}^{-}(s) \right]$.}\label{gammakminus}
\end{center}
\end{table}

\newpage

\begin{table}[!h]
\begin{center}
\begin{tabular}{|l|l|l|l|l|l|l|}
  \hline
  % after \\: \hline or \cline{col1-col2} \cline{col3-col4} ...
                   & $\gamma_{\kk}^{-}(t)$ & $\delta_{\jj}^{+}(t)$ & $\delta_{\kk}^{-}(t)$ & $\gamma_{\jj}^{+}(t)$ & $\delta_{\kk}^{+}(t)$ & $\delta_{\jj}^{-}(t)$ \\
  \hline
  $\delta_{\ii}^{+}(s)$&$N_{11}=-1$&$N_{11}=-2$&$N_{11}= 3$&$N_{11}=-1$&$N_{11}= 2$&$N_{11}= 3$\\
                       &$N_{21}= 1$&$N_{21}=-3$&           &$N_{21}=-1$&$N_{21}= 3$&           \\
                       &$N_{31}= 1$&$N_{12}=-3$&           &$N_{31}=-1$&$N_{12}= 3$&           \\
                       &$N_{32}= 2$&           &           &$N_{32}=-2$&           &           \\
  \hline
\end{tabular}
\caption{Chevalley constants for commutators of type $\left[ - ,\delta_{\ii}^{+}(s) \right]$.}\label{gammaconstants}
\end{center}
\end{table}

\begin{table}[!h]
\begin{center}
\begin{tabular}{|l|l|l|l|l|l|l|}
  \hline
  % after \\: \hline or \cline{col1-col2} \cline{col3-col4} ...
                   & $\gamma_{\jj}^{-}(t)$ & $\delta_{\kk}^{-}(t)$ & $\delta_{\jj}^{+}(t)$ & $\gamma_{\kk}^{+}(t)$ & $\delta_{\jj}^{-}(t)$ & $\delta_{\kk}^{+}(t)$ \\
  \hline
  $\delta_{\ii}^{-}(s)$&$N_{11}= 1$&$N_{11}=-2$&$N_{11}=-3$&$N_{11}= 1$&$N_{11}= 2$&$N_{11}=-3$\\
                       &$N_{21}=-1$&$N_{21}= 3$&           &$N_{21}= 1$&$N_{21}=-3$&           \\
                       &$N_{31}= 1$&$N_{12}= 3$&           &$N_{31}=-1$&$N_{12}=-3$&           \\
                       &$N_{32}=-2$&           &           &$N_{32}= 2$&           &           \\
  \hline
\end{tabular}
\caption{Chevalley constants for commutators of type $\left[ - ,\delta_{\ii}^{-}(s) \right]$.}\label{gammaconstants}
\end{center}
\end{table}

\begin{table}[!h]
\begin{center}
\begin{tabular}{|l|l|l|l|l|l|l|}
  \hline
  % after \\: \hline or \cline{col1-col2} \cline{col3-col4} ...
                   & $\gamma_{\kk}^{+}(t)$ & $\delta_{\ii}^{+}(t)$ & $\delta_{\kk}^{-}(t)$ & $\gamma_{\ii}^{-}(t)$ & $\delta_{\kk}^{+}(t)$ & $\delta_{\ii}^{-}(t)$ \\
  \hline
  $\delta_{\jj}^{+}(s)$&$N_{11}=-1$&$N_{11}= 2$&$N_{11}= 3$&$N_{11}=-1$&$N_{11}=-2$&$N_{11}= 3$\\
                       &$N_{21}=-1$&$N_{21}= 3$&           &$N_{21}= 1$&$N_{21}=-3$&           \\
                       &$N_{31}=-1$&$N_{12}= 3$&           &$N_{31}= 1$&$N_{12}=-3$&           \\
                       &$N_{32}=-2$&           &           &$N_{32}= 2$&           &           \\
  \hline
\end{tabular}
\caption{Chevalley constants for commutators of type $\left[ - ,\delta_{\jj}^{+}(s) \right]$.}\label{gammaconstants}
\end{center}
\end{table}

\begin{table}[!h]
\begin{center}
\begin{tabular}{|l|l|l|l|l|l|l|}
  \hline
  % after \\: \hline or \cline{col1-col2} \cline{col3-col4} ...
                   & $\gamma_{\ii}^{+}(t)$ & $\delta_{\kk}^{-}(t)$ & $\delta_{\ii}^{+}(t)$ & $\gamma_{\kk}^{-}(t)$ & $\delta_{\ii}^{-}(t)$ & $\delta_{\kk}^{+}(t)$ \\
  \hline
  $\delta_{\jj}^{-}(s)$&$N_{11}= 1$&$N_{11}= 2$&$N_{11}=-3$&$N_{11}= 1$&$N_{11}=-2$&$N_{11}=-3$\\
                       &$N_{21}= 1$&$N_{21}=-3$&           &$N_{21}=-1$&$N_{21}= 3$&           \\
                       &$N_{31}=-1$&$N_{12}=-3$&           &$N_{31}= 1$&$N_{12}= 3$&           \\
                       &$N_{32}= 2$&           &           &$N_{32}=-2$&           &           \\
  \hline
\end{tabular}
\caption{Chevalley constants for commutators of type $\left[ - ,\delta_{\jj}^{-}(s) \right]$.}\label{gammaconstants}
\end{center}
\end{table}

\begin{table}[!h]
\begin{center}
\begin{tabular}{|l|l|l|l|l|l|l|}
  \hline
  % after \\: \hline or \cline{col1-col2} \cline{col3-col4} ...
                   & $\gamma_{\ii}^{+}(t)$ & $\delta_{\jj}^{+}(t)$ & $\delta_{\ii}^{-}(t)$ & $\gamma_{\jj}^{-}(t)$ & $\delta_{\ii}^{+}(t)$ & $\delta_{\jj}^{-}(t)$ \\
  \hline
  $\delta_{\kk}^{+}(s)$&$N_{11}=-1$&$N_{11}= 2$&$N_{11}= 3$&$N_{11}=-1$&$N_{11}=-2$&$N_{11}= 3$\\
                       &$N_{21}=-1$&$N_{21}= 3$&           &$N_{21}= 1$&$N_{21}=-3$&           \\
                       &$N_{31}=-1$&$N_{12}= 3$&           &$N_{31}= 1$&$N_{12}=-3$&           \\
                       &$N_{32}=-2$&           &           &$N_{32}= 2$&           &           \\
  \hline
\end{tabular}
\caption{Chevalley constants for commutators of type $\left[ - ,\delta_{\kk}^{+}(s) \right]$.}\label{gammaconstants}
\end{center}
\end{table}

\begin{table}[!h]
\begin{center}
\begin{tabular}{|l|l|l|l|l|l|l|}
  \hline
  % after \\: \hline or \cline{col1-col2} \cline{col3-col4} ...
                   & $\gamma_{\jj}^{+}(t)$ & $\delta_{\ii}^{-}(t)$ & $\delta_{\jj}^{+}(t)$ & $\gamma_{\ii}^{-}(t)$ & $\delta_{\jj}^{-}(t)$ & $\delta_{\ii}^{+}(t)$ \\
  \hline
  $\delta_{\kk}^{-}(s)$&$N_{11}= 1$&$N_{11}= 2$&$N_{11}=-3$&$N_{11}= 1$&$N_{11}=-2$&$N_{11}=-3$\\
                       &$N_{21}= 1$&$N_{21}=-3$&           &$N_{21}=-1$&$N_{21}= 3$&           \\
                       &$N_{31}=-1$&$N_{12}=-3$&           &$N_{31}= 1$&$N_{12}= 3$&           \\
                       &$N_{32}= 2$&           &           &$N_{32}=-2$&           &           \\
  \hline
\end{tabular}
\caption{Chevalley constants for commutators of type $\left[ - ,\delta_{\kk}^{-}(s) \right]$.}\label{gammaconstants}
\end{center}
\end{table}

\chapter{Computations in SAGE}

\begin{lstlisting}
#OCTONION MUTLIPLICATION

R.<A1,A2,A3,A4,A5,A6,A7,A8,B1,B2,B3,B4,B5,B6,B7,B8> = PolynomialRing(QQ)
def octonion_mult(A1,A2,A3,A4,A5,A6,A7,A8,B1,B2,B3,B4,B5,B6,B7,B8):
    C1,C8 = var('C1,C8')
    alpha1 = vector([A2,A3,A4])
    alpha2 = vector([A5,A6,A7])
    beta1 = vector([B2,B3,B4])
    beta2 = vector([B5,B6,B7])
    gamma1 = (A1*beta1) + (B8*alpha1) - (alpha2.cross_product(beta2))
    gamma2 = (B1*alpha2) + (A8*beta2) + (alpha1.cross_product(beta1))
    C1 = (A1*B1) + (alpha1.dot_product(beta2))
    C8 = (alpha2.dot_product(beta1)) + (A8*B8)
    product = vector([C1,gamma1[0],gamma1[1],gamma1[2],gamma2[0],gamma2[1],gamma2[2],C8])
    return product
\end{lstlisting}

\begin{lstlisting}
#AUTOMORPHISM DEFINITIONS

def AutoG2(type,sign,embedding,constant,A):
    S = matrix([[1,((sign+1)%2)*constant],[(sign%2)*constant,1]])
    if type == 0:
        if embedding == 0:
            M1 = vector([A[0],A[1],0,0,A[4],0,0,A[7]])
            M2 = matrix([[A[2],A[6]],[-A[3],A[5]]])
            U = S*M2
            V = octonion_mult(U[0,0],U[0,1],0,0,U[1,0],0,0,U[1,1],0,0,1,0,0,1,0,0)
            return M1 + V
        elif embedding == 1:
            M1 = vector([A[0],0,A[2],0,0,A[5],0,A[7]])
            M2 = matrix([[A[3],A[4]],[-A[1],A[6]]])
            U = S*M2
            V = octonion_mult(U[0,0],0,U[0,1],0,0,U[1,0],0,U[1,1],0,0,0,1,0,0,1,0)
            return M1 + V
        elif embedding == 2:
            M1 = vector([A[0],0,0,A[3],0,0,A[6],A[7]])
            M2 = matrix([[A[1],A[5]],[-A[2],A[4]]])
            U = S*M2
            V = octonion_mult(U[0,0],0,0,U[0,1],0,0,U[1,0],U[1,1],0,1,0,0,1,0,0,0)
            return M1 + V
        else:
            print "Please enter a '0' for a 'i'-type embedding, a '1' for a 'j'-type embedding, or a '2' for a 'k'-type embedding."
    elif type == 1:
        if embedding == 0:
            M1 = matrix([[A[0],A[1]],[A[4],A[7]]])
            M2 = matrix([[A[2],A[6]],[-A[3],A[5]]])
            T = (S)*(M1)*(S^-1)
            U = (M2)*(S^-1)
            V = octonion_mult(U[0,0],U[0,1],0,0,U[1,0],0,0,U[1,1],0,0,1,0,0,1,0,0)
            M = vector([T[0,0],T[0,1],0,0,T[1,0],0,0,T[1,1]])
            return M + V
        elif embedding == 1:
            M1 = matrix([[A[0],A[2]],[A[5],A[7]]])
            M2 = matrix([[A[3],A[4]],[-A[1],A[6]]])
            T = (S)*(M1)*(S^-1)
            U = (M2)*(S^-1)
            V = octonion_mult(U[0,0],0,U[0,1],0,0,U[1,0],0,U[1,1],0,0,0,1,0,0,1,0)
            M = vector([T[0,0],0,T[0,1],0,0,T[1,0],0,T[1,1]])
            return M + V
        elif embedding == 2:
            M1 = matrix([[A[0],A[3]],[A[6],A[7]]])
            M2 = matrix([[A[1],A[5]],[-A[2],A[4]]])
            T = (S)*(M1)*(S^-1)
            U = (M2)*(S^-1)
            V = octonion_mult(U[0,0],0,0,U[0,1],0,0,U[1,0],U[1,1],0,1,0,0,1,0,0,0)
            M = vector([T[0,0],0,0,T[0,1],0,0,T[1,0],T[1,1]])
            return M + V
        else:
            print "Please enter '0' for an 'i'-type embedding, '1' for a 'j'-type embedding, or '2' for a 'k'-type embedding."
    else:
        print "Please enter '0' for a gamma-type automorphism or '1' for a delta-type automorphism."
\end{lstlisting}

\begin{lstlisting}
#CHEVALLEY COMMUTATOR

def Chevalley_Commutator(type1,sign1,embedding1,constant1,type2,sign2,embedding2,constant2,X):
    Z1 = AutoG2(type1,sign1,embedding1,constant1,X)
    Z2 = AutoG2(type2,sign2,embedding2,constant2,Z1)
    Z3 = AutoG2(type1,sign1,embedding1,-constant1,Z2)
    Z4 = AutoG2(type2,sign2,embedding2,-constant2,Z3)
    return Z4
\end{lstlisting}

\begin{lstlisting}
#DISPLAY FUNCTION

def Print_Entries(B):
    print
    print "******************************"
    print
    print B[0]
    print B[1]
    print B[2]
    print B[3]
    print B[4]
    print B[5]
    print B[6]
    print B[7]
\end{lstlisting}

\begin{lstlisting}
#GAMMA I PLUS

N11,N12,N13,N21,N23,N31,N32 = var('N11,N12,N13,N21,N23,N31,N32')
R.<s,t,a,b,c,d,e,f,g,h> = PolynomialRing(QQ)
X = vector([a,b,c,d,e,f,g,h])

V = Chevalley_Commutator(0,0,0,s,0,0,1,t,X)
W = AutoG2(0,1,2,N11*s*t,X)
B = V - W
print Print_Entries(B)

V = Chevalley_Commutator(0,0,0,s,1,0,2,t,X)
W1 = AutoG2(1,0,1,N11*s*t,X)
W2 = AutoG2(1,1,0,N12*s*t^2,W1)
W3 = AutoG2(0,0,1,N13*s*t^3,W2)
B = V - W3
print Print_Entries(B)

V = Chevalley_Commutator(0,0,0,s,1,1,1,t,X)
W1 = AutoG2(1,1,2,N11*s*t,X)
W2 = AutoG2(1,0,0,N12*s*t^2,W1)
W3 = AutoG2(0,0,2,N13*s*t^3,W2)
B = V - W3
print Print_Entries(B)

V = Chevalley_Commutator(0,0,0,s,0,0,2,t,X)
W = AutoG2(0,1,1,N11*s*t,X)
B = V - W
print Print_Entries(B)

#GAMMA I PLUS
\end{lstlisting}

\begin{lstlisting}
#GAMMA I MINUS

N11,N12,N13,N21,N23,N31,N32 = var('N11,N12,N13,N21,N23,N31,N32')
R.<s,t,a,b,c,d,e,f,g,h> = PolynomialRing(QQ)
X = vector([a,b,c,d,e,f,g,h])

V = Chevalley_Commutator(0,1,0,s,0,1,1,t,X)
W = AutoG2(0,0,2,N11*s*t,X)
B = V - W
print Print_Entries(B)

V = Chevalley_Commutator(0,1,0,s,1,1,2,t,X)
W1 = AutoG2(1,1,1,N11*s*t,X)
W2 = AutoG2(1,0,0,N12*s*t^2,W1)
W3 = AutoG2(0,1,1,N13*s*t^3,W2)
B = V - W3
print Print_Entries(B)

V = Chevalley_Commutator(0,1,0,s,1,0,1,t,X)
W1 = AutoG2(1,0,2,N11*s*t,X)
W2 = AutoG2(1,1,0,N12*s*t^2,W1)
W3 = AutoG2(0,1,2,N13*s*t^3,W2)
B = V - W3
print Print_Entries(B)

V = Chevalley_Commutator(0,1,0,s,0,1,2,t,X)
W = AutoG2(0,0,1,N11*s*t,X)
B = V - W
print Print_Entries(B)

#GAMMA I MINUS
\end{lstlisting}

\begin{lstlisting}
#GAMMA J PLUS

V = Chevalley_Commutator(0,0,1,s,0,0,0,t,X)
W = AutoG2(0,1,2,N11*s*t,X)
B = V - W
print Print_Entries(B)

V = Chevalley_Commutator(0,0,1,s,1,1,2,t,X)
W1 = AutoG2(1,1,0,N11*s*t,X)
W2 = AutoG2(1,0,1,N12*s^2*t,W1)
W3 = AutoG2(0,0,0,N13*s^3*t,W2)
B = V - W3
print Print_Entries(B)

V = Chevalley_Commutator(0,0,1,s,1,0,0,t,X)
W1 = AutoG2(1,0,2,N11*s*t,X)
W2 = AutoG2(1,1,1,N12*s*t^2,W1)
W3 = AutoG2(0,0,2,N13*s*t^3,W2)
B = V - W3
print Print_Entries(B)

V = Chevalley_Commutator(0,0,1,s,0,0,2,t,X)
W = AutoG2(0,1,0,N11*s*t,X)
B = V - W
print Print_Entries(B)

#GAMMA J PLUS
\end{lstlisting}

\begin{lstlisting}
#GAMMA J MINUS

V = Chevalley_Commutator(0,1,1,s,0,1,0,t,X)
W = AutoG2(0,0,2,N11*s*t,X)
B = V - W
print Print_Entries(B)

V = Chevalley_Commutator(0,1,1,s,1,0,2,t,X)
W1 = AutoG2(1,0,0,N11*s*t,X)
W2 = AutoG2(1,1,1,N12*s*t^2,W1)
W3 = AutoG2(0,1,0,N13*s*t^3,W2)
B = V - W3
print Print_Entries(B)

V = Chevalley_Commutator(0,1,1,s,1,1,0,t,X)
W1 = AutoG2(1,1,2,N11*s*t,X)
W2 = AutoG2(1,0,1,N12*s*t^2,W1)
W3 = AutoG2(0,1,2,N13*s*t^3,W2)
B = V - W3
print Print_Entries(B)

V = Chevalley_Commutator(0,1,1,s,0,1,2,t,X)
W = AutoG2(0,0,0,N11*s*t,X)
B = V - W
print Print_Entries(B)

#GAMMA J MINUS
\end{lstlisting}

\begin{lstlisting}
#GAMMA K PLUS

N11,N12,N13,N21,N23,N31,N32 = var('N11,N12,N13,N21,N23,N31,N32')
R.<s,t,a,b,c,d,e,f,g,h> = PolynomialRing(QQ)
X = vector([a,b,c,d,e,f,g,h])

V = Chevalley_Commutator(0,0,2,s,0,0,0,t,X)
W = AutoG2(0,1,1,N11*s*t,X)
B = V - W
print Print_Entries(B)

V = Chevalley_Commutator(0,0,2,s,1,0,1,t,X)
W1 = AutoG2(1,0,0,N11*s*t,X)
W2 = AutoG2(1,1,2,N12*s*t^2,W1)
W3 = AutoG2(0,0,0,N13*s*t^3,W2)
B = V - W3
print Print_Entries(B)

V = Chevalley_Commutator(0,0,2,s,1,1,0,t,X)
W1 = AutoG2(1,1,1,N11*s*t,X)
W2 = AutoG2(1,0,2,N12*s*t^2,W1)
W3 = AutoG2(0,0,1,N13*s*t^3,W2)
B = V - W3
print Print_Entries(B)

V = Chevalley_Commutator(0,0,2,s,0,0,1,t,X)
W = AutoG2(0,1,0,N11*s*t,X)
B = V - W
print Print_Entries(B)

#GAMMA K PLUS
\end{lstlisting}

\begin{lstlisting}
#GAMMA K MINUS

N11,N12,N13,N21,N23,N31,N32 = var('N11,N12,N13,N21,N23,N31,N32')
R.<s,t,a,b,c,d,e,f,g,h> = PolynomialRing(QQ)
X = vector([a,b,c,d,e,f,g,h])

V = Chevalley_Commutator(0,1,2,s,0,1,0,t,X)
W = AutoG2(0,0,1,N11*s*t,X)
B = V - W
print Print_Entries(B)

V = Chevalley_Commutator(0,1,2,s,1,1,1,t,X)
W1 = AutoG2(1,1,0,N11*s*t,X)
W2 = AutoG2(1,0,2,N12*s*t^2,W1)
W3 = AutoG2(0,1,0,N13*s*t^3,W2)
B = V - W3
print Print_Entries(B)

V = Chevalley_Commutator(0,1,2,s,1,0,0,t,X)
W1 = AutoG2(1,0,1,N11*s*t,X)
W2 = AutoG2(1,1,2,N12*s*t^2,W1)
W3 = AutoG2(0,1,1,N13*s*t^3,W2)
B = V - W3
print Print_Entries(B)

V = Chevalley_Commutator(0,1,2,s,0,1,1,t,X)
W = AutoG2(0,0,0,N11*s*t,X)
B = V - W
print Print_Entries(B)

#GAMMA K MINUS
\end{lstlisting}

\begin{lstlisting}
#DELTA I PLUS

N11,N12,N13,N21,N23,N31,N32 = var('N11,N12,N13,N21,N23,N31,N32')
R.<s,t,a,b,c,d,e,f,g,h> = PolynomialRing(QQ)
X = vector([a,b,c,d,e,f,g,h])

V = Chevalley_Commutator(1,0,0,s,1,1,2,t,X)
W1 = AutoG2(0,1,1,N11*s*t,X)
B = V - W1
print Print_Entries(B)

V = Chevalley_Commutator(1,0,0,s,1,0,1,t,X)
W1 = AutoG2(1,1,2,N11*s*t,X)
W2 = AutoG2(0,1,1,N21*s^2*t,W1)
W3 = AutoG2(0,0,0,N12*s*t^2,W2)
B = V - W3
print Print_Entries(B)

V = Chevalley_Commutator(1,0,0,s,0,1,2,t,X)
W1 = AutoG2(1,0,1,N11*s*t,X)
W2 = AutoG2(1,1,2,N21*s^2*t,W1)
W3 = AutoG2(0,1,1,N31*s^3*t,W2)
W4 = AutoG2(0,0,0,N32*s^3*t^2,W3)
B = V - W4
print Print_Entries(B)

V = Chevalley_Commutator(1,0,0,s,1,1,1,t,X)
W1 = AutoG2(0,0,2,N11*s*t,X)
B = V - W1
print Print_Entries(B)

V = Chevalley_Commutator(1,0,0,s,1,0,2,t,X)
W1 = AutoG2(1,1,1,N11*s*t,X)
W2 = AutoG2(0,0,2,N21*s^2*t,W1)
W3 = AutoG2(0,1,0,N12*s*t^2,W2)
B = V - W3
print Print_Entries(B)

V = Chevalley_Commutator(1,0,0,s,0,0,1,t,X)
W1 = AutoG2(1,0,2,N11*s*t,X)
W2 = AutoG2(1,1,1,N21*s^2*t,W1)
W3 = AutoG2(0,0,2,N31*s^3*t,W2)
W4 = AutoG2(0,1,0,N32*s^3*t^2,W3)
B = V - W4
print Print_Entries(B)

#DELTA I PLUS
\end{lstlisting}

\begin{lstlisting}
#DELTA I MINUS

N11,N12,N13,N21,N23,N31,N32 = var('N11,N12,N13,N21,N23,N31,N32')
R.<s,t,a,b,c,d,e,f,g,h> = PolynomialRing(QQ)
X = vector([a,b,c,d,e,f,g,h])

V = Chevalley_Commutator(1,1,0,s,1,0,1,t,X)
W1 = AutoG2(0,1,2,N11*s*t,X)
B = V - W1
print Print_Entries(B)

V = Chevalley_Commutator(1,1,0,s,1,1,2,t,X)
W1 = AutoG2(1,0,1,N11*s*t,X)
W2 = AutoG2(0,1,2,N21*s^2*t,W1)
W3 = AutoG2(0,0,0,N12*s*t^2,W2)
B = V - W3
print Print_Entries(B)

V = Chevalley_Commutator(1,1,0,s,0,1,1,t,X)
W1 = AutoG2(1,1,2,N11*s*t,X)
W2 = AutoG2(1,0,1,N21*s^2*t,W1)
W3 = AutoG2(0,1,2,N31*s^3*t,W2)
W4 = AutoG2(0,0,0,N32*s^3*t^2,W3)
B = V - W4
print Print_Entries(B)

V = Chevalley_Commutator(1,1,0,s,1,0,2,t,X)
W1 = AutoG2(0,0,1,N11*s*t,X)
B = V - W1
print Print_Entries(B)

V = Chevalley_Commutator(1,1,0,s,1,1,1,t,X)
W1 = AutoG2(1,0,2,N11*s*t,X)
W2 = AutoG2(0,0,1,N21*s^2*t,W1)
W3 = AutoG2(0,1,0,N12*s*t^2,W2)
B = V - W3
print Print_Entries(B)

V = Chevalley_Commutator(1,1,0,s,0,0,2,t,X)
W1 = AutoG2(1,1,1,N11*s*t,X)
W2 = AutoG2(1,0,2,N21*s^2*t,W1)
W3 = AutoG2(0,0,1,N31*s^3*t,W2)
W4 = AutoG2(0,1,0,N32*s^3*t^2,W3)
B = V - W4
print Print_Entries(B)

#DELTA I MINUS
\end{lstlisting}

\begin{lstlisting}
#DELTA J PLUS

N11,N12,N13,N21,N23,N31,N32 = var('N11,N12,N13,N21,N23,N31,N32')
R.<s,t,a,b,c,d,e,f,g,h> = PolynomialRing(QQ)
X = vector([a,b,c,d,e,f,g,h])

V = Chevalley_Commutator(1,0,1,s,1,1,2,t,X)
W1 = AutoG2(0,0,0,N11*s*t,X)
B = V - W1
print Print_Entries(B)

V = Chevalley_Commutator(1,0,1,s,1,0,0,t,X)
W1 = AutoG2(1,1,2,N11*s*t,X)
W2 = AutoG2(0,0,0,N21*s^2*t,W1)
W3 = AutoG2(0,1,1,N12*s*t^2,W2)
B = V - W3
print Print_Entries(B)

V = Chevalley_Commutator(1,0,1,s,0,0,2,t,X)
W1 = AutoG2(1,0,0,N11*s*t,X)
W2 = AutoG2(1,1,2,N21*s^2*t,W1)
W3 = AutoG2(0,0,0,N31*s^3*t,W2)
W4 = AutoG2(0,1,1,N32*s^3*t^2,W3)
B = V - W4
print Print_Entries(B)

V = Chevalley_Commutator(1,0,1,s,1,1,0,t,X)
W1 = AutoG2(0,1,2,N11*s*t,X)
B = V - W1
print Print_Entries(B)

V = Chevalley_Commutator(1,0,1,s,1,0,2,t,X)
W1 = AutoG2(1,1,0,N11*s*t,X)
W2 = AutoG2(0,1,2,N21*s^2*t,W1)
W3 = AutoG2(0,0,1,N12*s*t^2,W2)
B = V - W3
print Print_Entries(B)

V = Chevalley_Commutator(1,0,1,s,0,1,0,t,X)
W1 = AutoG2(1,0,2,N11*s*t,X)
W2 = AutoG2(1,1,0,N21*s^2*t,W1)
W3 = AutoG2(0,1,2,N31*s^3*t,W2)
W4 = AutoG2(0,0,1,N32*s^3*t^2,W3)
B = V - W4
print Print_Entries(B)

#DELTA J PLUS
\end{lstlisting}

\begin{lstlisting}
#DELTA J MINUS

N11,N12,N13,N21,N23,N31,N32 = var('N11,N12,N13,N21,N23,N31,N32')
R.<s,t,a,b,c,d,e,f,g,h> = PolynomialRing(QQ)
X = vector([a,b,c,d,e,f,g,h])

V = Chevalley_Commutator(1,1,1,s,1,0,0,t,X)
W1 = AutoG2(0,0,2,N11*s*t,X)
B = V - W1
print Print_Entries(B)

V = Chevalley_Commutator(1,1,1,s,1,1,2,t,X)
W1 = AutoG2(1,0,0,N11*s*t,X)
W2 = AutoG2(0,0,2,N21*s^2*t,W1)
W3 = AutoG2(0,1,1,N12*s*t^2,W2)
B = V - W3
print Print_Entries(B)

V = Chevalley_Commutator(1,1,1,s,0,0,0,t,X)
W1 = AutoG2(1,1,2,N11*s*t,X)
W2 = AutoG2(1,0,0,N21*s^2*t,W1)
W3 = AutoG2(0,0,2,N31*s^3*t,W2)
W4 = AutoG2(0,1,1,N32*s^3*t^2,W3)
B = V - W4
print Print_Entries(B)

V = Chevalley_Commutator(1,1,1,s,1,0,2,t,X)
W1 = AutoG2(0,1,0,N11*s*t,X)
B = V - W1
print Print_Entries(B)

V = Chevalley_Commutator(1,1,1,s,1,1,0,t,X)
W1 = AutoG2(1,0,2,N11*s*t,X)
W2 = AutoG2(0,1,0,N21*s^2*t,W1)
W3 = AutoG2(0,0,1,N12*s*t^2,W2)
B = V - W3
print Print_Entries(B)

V = Chevalley_Commutator(1,1,1,s,0,1,2,t,X)
W1 = AutoG2(1,1,0,N11*s*t,X)
W2 = AutoG2(1,0,2,N21*s^2*t,W1)
W3 = AutoG2(0,1,0,N31*s^3*t,W2)
W4 = AutoG2(0,0,1,N32*s^3*t^2,W3)
B = V - W4
print Print_Entries(B)

#DELTA J MINUS
\end{lstlisting}

\begin{lstlisting}
#DELTA K PLUS

N11,N12,N13,N21,N23,N31,N32 = var('N11,N12,N13,N21,N23,N31,N32')
R.<s,t,a,b,c,d,e,f,g,h> = PolynomialRing(QQ)
X = vector([a,b,c,d,e,f,g,h])

V = Chevalley_Commutator(1,0,2,s,1,1,0,t,X)
W1 = AutoG2(0,0,1,N11*s*t,X)
B = V - W1
print Print_Entries(B)

V = Chevalley_Commutator(1,0,2,s,1,0,1,t,X)
W1 = AutoG2(1,1,0,N11*s*t,X)
W2 = AutoG2(0,0,1,N21*s^2*t,W1)
W3 = AutoG2(0,1,2,N12*s*t^2,W2)
B = V - W3
print Print_Entries(B)

V = Chevalley_Commutator(1,0,2,s,0,0,0,t,X)
W1 = AutoG2(1,0,1,N11*s*t,X)
W2 = AutoG2(1,1,0,N21*s^2*t,W1)
W3 = AutoG2(0,0,1,N31*s^3*t,W2)
W4 = AutoG2(0,1,2,N32*s^3*t^2,W3)
B = V - W4
print Print_Entries(B)

V = Chevalley_Commutator(1,0,2,s,1,1,1,t,X)
W1 = AutoG2(0,1,0,N11*s*t,X)
B = V - W1
print Print_Entries(B)

V = Chevalley_Commutator(1,0,2,s,1,0,0,t,X)
W1 = AutoG2(1,1,1,N11*s*t,X)
W2 = AutoG2(0,1,0,N21*s^2*t,W1)
W3 = AutoG2(0,0,2,N12*s*t^2,W2)
B = V - W3
print Print_Entries(B)

V = Chevalley_Commutator(1,0,2,s,0,1,1,t,X)
W1 = AutoG2(1,0,0,N11*s*t,X)
W2 = AutoG2(1,1,1,N21*s^2*t,W1)
W3 = AutoG2(0,1,0,N31*s^3*t,W2)
W4 = AutoG2(0,0,2,N32*s^3*t^2,W3)
B = V - W4
print
print Print_Entries(B)

#DELTA K PLUS
\end{lstlisting}

\begin{lstlisting}
#DELTA K MINUS

N11,N12,N13,N21,N23,N31,N32 = var('N11,N12,N13,N21,N23,N31,N32')
R.<s,t,a,b,c,d,e,f,g,h> = PolynomialRing(QQ)
X = vector([a,b,c,d,e,f,g,h])

V = Chevalley_Commutator(1,1,2,s,1,0,1,t,X)
W1 = AutoG2(0,0,0,N11*s*t,X)
B = V - W1
print Print_Entries(B)

V = Chevalley_Commutator(1,1,2,s,1,1,0,t,X)
W1 = AutoG2(1,0,1,N11*s*t,X)
W2 = AutoG2(0,0,0,N21*s^2*t,W1)
W3 = AutoG2(0,1,2,N12*s*t^2,W2)
B = V - W3
print Print_Entries(B)

V = Chevalley_Commutator(1,1,2,s,0,0,1,t,X)
W1 = AutoG2(1,1,0,N11*s*t,X)
W2 = AutoG2(1,0,1,N21*s^2*t,W1)
W3 = AutoG2(0,0,0,N31*s^3*t,W2)
W4 = AutoG2(0,1,2,N32*s^3*t^2,W3)
B = V - W4
print Print_Entries(B)

V = Chevalley_Commutator(1,1,2,s,1,0,0,t,X)
W1 = AutoG2(0,1,1,N11*s*t,X)
B = V - W1
print Print_Entries(B)

V = Chevalley_Commutator(1,1,2,s,1,1,1,t,X)
W1 = AutoG2(1,0,0,N11*s*t,X)
W2 = AutoG2(0,1,1,N21*s^2*t,W1)
W3 = AutoG2(0,0,2,N12*s*t^2,W2)
B = V - W3
print Print_Entries(B)

V = Chevalley_Commutator(1,1,2,s,0,1,0,t,X)
W1 = AutoG2(1,1,1,N11*s*t,X)
W2 = AutoG2(1,0,0,N21*s^2*t,W1)
W3 = AutoG2(0,1,1,N31*s^3*t,W2)
W4 = AutoG2(0,0,2,N32*s^3*t^2,W3)
B = V - W4
print Print_Entries(B)

#DELTA K MINUS
\end{lstlisting}

\nocite{Casselman}
\nocite{Knapp2002}
\nocite{Jacquet1975}
\nocite{BushHen2006}
\nocite{GanYu2003}
\nocite{Involutions1998}
\bibliographystyle{plain}
\bibliography{ThesisTokorcheck}

\end{document}